\newcommand{\wcL}{\widetilde{\mathcal{L}}}
\newcommand{\wGamma}{\widetilde{\Gamma}}
\newcommand{\fh}{\mathfrak{h}}
\newcommand{\cI}{\mathcal{I}}
\newcommand{\RR}{\mathbb{R}}
\newcommand{\PP}{\mathbb{P}}
\newcommand{\EE}{\mathbb{E}}
\newcommand{\wcB}{\widetilde{\cB}}
\newcommand{\0}{\mathbf{0}}
\newcommand{\ualpha}{\underline{\alpha}}
\newcommand{\cJ}{\mathcal{J}}
\newcommand{\tL}{\mathtt{L}}
\newcommand{\tR}{\mathtt{R}}
\newcommand{\NN}{\mathbb{N}}
\newtheorem{theorem}{Theorem}
\newtheorem{question}[theorem]{Question}
\newtheorem{lemma}[theorem]{Lemma}
\newtheorem{definition}[theorem]{Definition}
\newtheorem{proposition}[theorem]{Proposition}
\theoremstyle{definition}
\newcommand{\cR}{\mathcal{R}}
\newcommand{\cA}{\mathcal{A}}
\newcommand{\cB}{\mathcal{B}}
\newcommand{\cC}{\mathcal{C}}
\newcommand{\cD}{\mathcal{D}}
\newcommand{\cE}{\mathcal{E}}
\newcommand{\cW}{\mathcal{W}}
\newcommand{\cL}{\mathcal{L}}
\newcommand{\cS}{\mathcal{S}}
\newcommand{\inte}{\mathrm{int}}
\newcommand{\g}{\mathrm{g}}
\newcommand{\ZZ}{\mathbb{Z}}
\newcommand{\LL}{\mathbb{L}}
\newcommand{\cH}{\mathcal{H}}
\newcommand{\oalpha}{\overline{\alpha}}
\newcommand{\cT}{\mathcal{T}}
\begin{document}

\title[]{The metric removability of interfaces in the directed landscape}
\author[]{Manan Bhatia}
\address{Manan Bhatia, Department of Mathematics, Massachusetts Institute of Technology, Cambridge, MA, USA}
\email{mananb@mit.edu}
\date{}
\maketitle
\begin{abstract}
  The directed landscape is a prominent model of random geometry which is believed to be the universal scaling limit of all planar random geometries in the Kardar-Parisi-Zhang universality class. It comes equipped with a few different natural simple curves associated to it, such as geodesics and interfaces. Given such a curve, one might wonder whether the geometry off this curve determines the entire landscape, or if in fact, there is non-trivial extra information actually present ``on'' the curve. In this paper, we show that the former is true for an interface in the directed landscape, while the latter is true for a geodesic instead. Further, as is used in the proof of the first assertion above, we show that the set of times where any geodesic intersects an interface a.s.\ has dimension zero.
\end{abstract}
\tableofcontents
\section{Introduction}
\label{sec:intro}
The directed landscape, constructed in the work \cite{DOV18} as the scaling limit of a certain integrable last passage percolation model, is a prominent continuous model of random geometry. The importance of the directed landscape stems from its expected universality-- the belief that even though it was constructed via integrability, and is known \cite{DV21} to be the scaling limit of a few integrable last passage percolation models, it is in fact the universal space-time scaling limit of all the discrete random geometries and growth models in the (1+1)-dimensional Kardar-Parisi-Zhang (KPZ) \cite{KPZ86} universality class. 

Formally, the directed landscape is a random continuous function from the space
\begin{equation}
  \label{eq:95}
  \RR_{\uparrow}^4=\{(x,s;y,t)\in \RR^4: s<t\}
\end{equation}
to $\RR$ and satisfies, for any $s<r<t$, the composition law
\begin{equation}
  \label{eq:96}
  \cL(x,s;y,t)=\max_{z\in \RR}\{ \cL(x,s;z,r)+ \cL(z,r;y,t)\}.
\end{equation}
Intuitively, $\cL(x,s;y,t)$ should be interpreted as the directed ``distance'' from $(x,s)$ to $(y,t)$, in a geometry where paths are only allowed to traverse upwards in time, where we think of the time axis as the vertical axis and the space axis as the horizontal axis. Indeed, for any continuous function (thereafter simply called a path) $\psi\colon [s,t]\rightarrow \RR$ with $\psi(s)=x$ and $\psi(t)=y$, we can define the length
\begin{equation}
  \label{eq:97}
  \ell(\psi;\cL)=\inf_{k\in \NN} \inf_{s=r_0<r_1<\cdots<r_k=t}\sum_{i=1}^k\cL(\psi(r_{i-1}),r_{i-1};\psi(r_{i}),r_{i}),
\end{equation}
and then almost surely, for every $(x,s;y,t)\in \RR_\uparrow^4$, we have the equality $\cL(x,s;y,t)=\sup_\psi \ell(\psi;\cL)$, where the supremum is over all paths $\psi$ from $(x,s)$ to $(y,t)$ as above. In fact, almost surely, for all $(x,s;y,t)\in \RR_{\uparrow}^4$, the above supremum is attained \cite{DOV18} by a path $\gamma_{(x,s)}^{(y,t)}$, and such a path is called a geodesic from $(x,s)$ to $(y,t)$. Further, for any fixed $(x,s;y,t)\in \RR_\uparrow^4$, there is an a.s.\ unique geodesic $\gamma_{(x,s)}^{(y,t)}$.

In contrast to what happens in Euclidean geometry, the geodesics $\gamma_{(x,s)}^{(y,t)}$ are highly irregular objects. For example, for any fixed $(x,s;y,t)\in \RR_\uparrow^4$, the geodesic $\gamma_{(x,s)}^{(y,t)}$, viewed as a real valued function on $[s,t]$ is a.s.\ H\"older $2/3-$ regular \cite{DOV18} but not H\"older $2/3$ regular \cite{DSV22}, and the set of times where $\gamma_{(0,0)}^{(0,1)}$ intersects the line of points having space coordinate zero, a.s.\ has Hausdorff dimension $1/3$ \cite{GZ22}. Importantly, geodesics in the directed landscape enjoy the property of coalescence \cite{BSS19,GZ22,RV21}, wherein geodesics tend to merge with each other.

Apart from finite geodesics, one can even define \cite{BSS22} (see also \cite{RV21,GZ22}) semi-infinite geodesics in the directed landscape. Indeed, for any $\theta\in \RR$ and $p=(y,t)\in \RR^2$, a path $\Gamma_p^\theta\colon (-\infty,t]\rightarrow \RR$ is said to be a downward $\theta$-directed semi-infinite geodesic emanating from $p$ if every finite segment of $\Gamma_p^\theta$ is a geodesic, and further, $\Gamma_p^\theta(-s)/s\rightarrow \theta$ as $s\rightarrow \infty$. It can be shown that such geodesics almost surely exist simultaneously for all $\theta\in \RR$ and $ p\in \RR^2$, and for any fixed value of $\theta,p$, there is an a.s.\ unique geodesic $\Gamma^\theta_p$. In fact, it turns out that for any all $\theta$ lying outside a random countable set $\Xi_{\downarrow}$ \cite{BSS22}, the set $\cT_\downarrow^\theta=\bigcup_{p\in \RR^2}\inte(\Gamma_p^\theta)$ forms a tree \cite{Bha23}, where the union above is over all possible geodesics $\Gamma_p$ and $\inte(\Gamma_p^\theta)$ refers to the graph of $\Gamma_p^\theta$ but with the endpoint $p$ removed. Further, for $\theta\in \Xi_\downarrow^c$, the tree $\cT_\downarrow^\theta$ comes interlocked with a dual tree $\cI_\uparrow^\theta$ consisting of interfaces which go upwards, and we shall use $\{\Upsilon_p^\theta\}_{p\in \RR^2}$ to denote such interfaces, where we note that a point $p\in \RR^2$ might have multiple interfaces emanating from it, but for any fixed $p,\theta$, the interface $\Upsilon_p^\theta$ is a.s.\ unique. For more details regarding the objects $\cT_\downarrow^\theta, \cI_\uparrow^\theta$ from \cite{Bha23}, we refer the reader to Section \ref{sec:interf}. In this paper, we shall often work with $\theta=0$, and in this case, we shall often omit it from the notation; for instance, $\Gamma_p$ shall refer to $\Gamma_p^0$.

Among models of random geometry, a very unique and mysterious property of the directed landscape is its signed nature, that is, $\cL(x,s;y,t)$ can take both positive and negative values. As a result, for any $(x,s;y,t)\in \RR_\uparrow^4$, if we write $\cL(x,s;y,t)$ as the sum of lengths of small segments of the geodesic $\gamma_{(x,s)}^{(y,t)}$, there is a non-trivial cancellation occurring in this sum. Indeed, it can be shown \cite{DSV22} that the length function $s\mapsto \cL(0,0;\gamma_{(0,0)}^{(0,1)}(s))$ is a.s.\ H\"older $1/3-$ and not H\"older $1/3$, and thus if one were to naively add up the absolute values of the $\varepsilon^{-1}$ many segments of time length $\varepsilon$ on a geodesic, then this would yield a quantity diverging as $\varepsilon^{-2/3+o(1)}$, thereby pointing towards the intricacy of the cancellation structure present in the picture.%

Due to the non-triviality of the above-mentioned cancellation, it is for instance  possible that for the geodesic $\gamma_{(0,0)}^{(0,1)}$, there are subsets $S\subseteq [0,1]$ of measure zero such that points $(\gamma_{(0,0)}^{(0,1)}(s),s)$ for $s\in S$ still have a non-zero contribution to the length of a geodesic. The following question, which I heard for the first time from B{\'a}lint Vir{\'a}g at the IAS is related to controlling the length accumulated by a geodesic at the points where it intersects a vertical line.

\begin{question}
  \label{ques:vert}
  Given the lengths of all paths $\gamma\colon [0,1]\rightarrow \RR$ which do not touch the line $\{(0,s):s\in [0,1]\}$, can one obtain the values $\cL(p;q)$ for all $p=(x,s),q=(y,t)$ with $0<s<t<1$?
\end{question}

We now note that the above question can in fact be placed in a broader context, and we now define such a general notion.
\begin{definition}[informal]
  \label{def:metric}
  For $-\infty\leq s_0<t_0\leq \infty$, we say that a path $\xi\colon [s_0,t_0]\rightarrow \RR$ is metrically removable, if any $\cL(x,s;y,t)$ can be recovered given only the lengths of paths which never intersect the curve $\xi$.
\end{definition}

On doing a literature search, it appears that a similar but somewhat different notion with the same name as above is defined in the work \cite{KKR19}. However, in the present setting of two dimensional random geometry, where the focus is more on random curves, the above definition seems more appropriate. We are now ready to state the first result of this paper.
\begin{theorem}
  \label{thm:1}
  Almost surely, the interface $\Upsilon_{(0,0)}$ is metrically removable.
  
\end{theorem}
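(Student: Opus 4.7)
The aim is to show that almost surely, for every $(x,s;y,t)\in\RR_\uparrow^4$,
\[
\cL(x,s;y,t) \;=\; \sup \{\ell(\psi;\cL) : \psi \text{ a path from } (x,s) \text{ to } (y,t),\ \psi((s,t))\cap\Upsilon_{(0,0)} = \emptyset\}.
\]
The inequality $\leq$ is automatic since the geodesic attains the unrestricted supremum. For the reverse direction, I plan to produce, for every $\eta>0$, an admissible path $\tilde\gamma$ with $\ell(\tilde\gamma;\cL) \geq \cL(x,s;y,t) - O(\eta)$, by perturbing the a.s.\ unique geodesic $\gamma := \gamma_{(x,s)}^{(y,t)}$. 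The central input is the dimension-zero intersection statement advertised in the abstract: the set $\cJ := \{r\in(s,t) : (\gamma(r),r)\in\Upsilon_{(0,0)}\}$ almost surely has Hausdorff dimension zero. Fixing a small $\varepsilon>0$ to be tuned against the regularity of $\cL$, this lets me cover $\cJ$ by finitely many disjoint open intervals $(a_j,b_j)_{j=1}^N$ with $\sum_j (b_j-a_j)^{1/3-\varepsilon} < \eta$, while $\gamma$ keeps strictly positive distance from $\Upsilon_{(0,0)}$ on the compact complement.

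On each $[a_j,b_j]$ I plan to replace $\gamma|_{[a_j,b_j]}$ by a detour $\tilde\gamma_j$ from $(\gamma(a_j),a_j)$ to $(\gamma(b_j),b_j)$, built as a three-piece concatenation of $\cL$-geodesics through intermediate points
\[
p_j^- := (\gamma(a_j)+\sigma_j\delta_j,\ a_j+\xi_j), \qquad p_j^+ := (\gamma(b_j)+\sigma_j\delta_j,\ b_j-\xi_j),
\]
for a suitable sign $\sigma_j\in\{\pm 1\}$ and scales $\delta_j\asymp(b_j-a_j)^{2/3}$ and $\xi_j\ll b_j-a_j$. Since each piece is itself a geodesic, $\ell(\tilde\gamma_j;\cL)$ equals the sum of the three associated $\cL$-values, so the defect
\[
D_j := \cL(\gamma(a_j),a_j;\gamma(b_j),b_j) - \ell(\tilde\gamma_j;\cL)
\]
can be controlled using the H\"older-$(1/3{-})$ temporal and locally Brownian spatial regularity of $\cL$: the short entry and exit pieces each contribute $O((b_j-a_j)^{1/3-\varepsilon})$, and shifting the endpoints of the middle piece by $(\delta_j,\xi_j)$ changes the corresponding $\cL$-value by the same order. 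Summing, $\sum_j D_j \lesssim \sum_j (b_j-a_j)^{1/3-\varepsilon} < C\eta$. Splicing the $\tilde\gamma_j$'s into $\gamma$ outside the cover yields the required $\tilde\gamma$, and letting $\eta\to0$ closes the pointwise statement; upgrading to simultaneity over all $(x,s;y,t)\in\RR_\uparrow^4$ then follows from the continuity of $\cL$ applied to a countable dense set of quadruples.

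The main obstacle I expect is establishing the viability of the detour: namely, showing that for each $j$ one can choose a side $\sigma_j$ and a scale $\delta_j\asymp(b_j-a_j)^{2/3}$ so that the middle geodesic from $p_j^-$ to $p_j^+$ stays strictly on the chosen side of $\Upsilon_{(0,0)}$ throughout $[a_j+\xi_j,\ b_j-\xi_j]$. This requires a uniform comparison between the transverse fluctuations of landscape geodesics on the parabolic scale of $b_j-a_j$ (standard) and the local fluctuations of $\Upsilon_{(0,0)}$ on the same scale (available from \cite{Bha23}), cast in a multi-scale form robust enough to be applied simultaneously along the random cover of $\cJ$ extracted from the dimension-zero input.
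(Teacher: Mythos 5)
Your overall strategy---cover the intersection set using the dimension-zero input, replace $\gamma$ on each cover interval by a detour built from geodesic segments, and sum the resulting length errors---is the same as the paper's. The paper uses the quantitative dyadic bound (Proposition~\ref{prop:10}) rather than Hausdorff dimension zero to produce the cover, but your route to a cover with $\sum_j(b_j-a_j)^{1/3-\varepsilon}<\eta$ is an acceptable variant, and the error bookkeeping you sketch is in the right spirit.

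The genuine gap is exactly the one you flag at the end, but the repair you propose is not the right one. You want a middle geodesic between laterally shifted points $p_j^-,p_j^+$ to remain strictly on one side of $\Upsilon_{(0,0)}$, and suggest that a comparison of geodesic transversal fluctuations with interface fluctuations at the scale $(b_j-a_j)^{2/3}$ should secure this. But both a generic $\cL$-geodesic over a time window of length $b_j-a_j$ and the interface over the same window fluctuate on the \emph{same} order $(b_j-a_j)^{2/3}$, so no fixed lateral shift on that scale can decouple them; even if the endpoints of the middle piece land on the desired side, nothing prevents the geodesic in between from crossing back over, and quantifying ``with high probability'' is unusable since you need it simultaneously along a random cover. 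The mechanism the paper actually uses is qualitative, not quantitative: by Lemma~\ref{lem:33} the interface $\Upsilon_{\0}$ is the interface from the Busemann initial condition, and by Lemma~\ref{lem:19} (together with Proposition~\ref{prop:11}) the left-most and right-most semi-infinite geodesics $\underline{\Gamma}_{(\Upsilon_{\0}(r),r)},\overline{\Gamma}_{(\Upsilon_{\0}(r),r)}$ emanating \emph{downward from a point on the interface} stay strictly on one side of $\Upsilon_{\0}$ for all times in $(0,r)$. The paper therefore does not shoot a geodesic between two shifted endpoints; it uses these interface-emanating semi-infinite geodesics as the detour (the paths $\pi_j^{\tL},\pi_j^{\tR}$ of \eqref{eq:119}), connecting to them by a geodesic $\psi_j^{*_j}$ that stays on the correct side by geodesic ordering, since $\gamma_p^q$ itself stays on that side up to the first intersection time. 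This structural/duality fact is the ingredient your proposal is missing, and it cannot be replaced by fluctuation estimates alone.

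A secondary issue: your scales are inconsistent. If $\delta_j\asymp(b_j-a_j)^{2/3}$ and $\xi_j\ll b_j-a_j$, the parabolic cost $\delta_j^2/\xi_j$ of each entry/exit piece is $\gg(b_j-a_j)^{1/3}$, so the claimed $O((b_j-a_j)^{1/3-\varepsilon})$ bound fails; you need $\xi_j$ to be a definite fraction of $b_j-a_j$. This is a fixable parameter choice (compare \eqref{eq:37} in the paper, where the time displacement is bounded below by $\varepsilon_n$), but as written it is an error. Finally, Definition~\ref{def:rem} only requires the identity $\cL^g_\xi(p;q)=\cL(p;q)$ for fixed $p,q$, so the simultaneity step at the end of your proposal is unnecessary, and the paper's $\gamma_n$ are allowed to touch $\Upsilon_{\0}$ at finitely many points rather than avoid it entirely.
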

That is, while we do not investigate metric removability of the vertical line as asked in Question \ref{ques:vert}, we establish metric removability for the curve $\Upsilon_{(0,0)}$, which is a random curve determined by the directed landscape. One might wonder whether it is possible that any reasonable curve is metrically removable. However, this is not true, as we record in the following statement.

\begin{theorem}
  \label{thm:2}
  Almost surely, the geodesic $\gamma_{(0,0)}^{(0,1)}$ is not metrically removable.
\end{theorem}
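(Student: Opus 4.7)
The plan is to show that the length $\cL(0,0;0,1)$, which equals the length $\ell(\gamma;\cL)$ of the geodesic $\gamma := \gamma_{(0,0)}^{(0,1)}$, is not a measurable function of the lengths of paths that never intersect $\gamma$.

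As a first step, I would argue that the collection of avoiding-path lengths is equivalent to knowing $\cL(p;q)$ for all pairs $(p;q)$ of off-$\gamma$ points lying in the same connected component of $(\RR\times [0,1])\setminus\mathrm{graph}(\gamma)$ (together with pairs whose time coordinates lie outside this strip). The key observation is that any continuous $\psi$ avoiding $\gamma$ must, by the intermediate value theorem, stay strictly on one side of $\gamma$ throughout $[0,1]$; and the length $\ell(\psi;\cL)$ in \eqref{eq:97} is an infimum of sums of $\cL$-values evaluated at pairs of off-$\gamma$ points along $\psi$. Call the resulting collection the off-$\gamma$ data and let $\sigma_{\mathrm{off}}$ denote the $\sigma$-algebra it generates. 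In contrast, $\cL(0,0;0,1)=\ell(\gamma;\cL)$ is the limit along refining partitions of the sums $\sum\cL(\gamma(r_{i-1}),r_{i-1};\gamma(r_i),r_i)$, whose summands evaluate $\cL$ only at on-$\gamma$ pairs and are not directly captured by $\sigma_{\mathrm{off}}$.

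The substantive step is to show that $\cL(0,0;0,1)$ has strictly positive conditional variance given $\sigma_{\mathrm{off}}$. My plan is to construct a coupling of two valid directed landscapes $\cL^{(1)},\cL^{(2)}$ which agree on all off-$\gamma$ data but disagree on the value assigned to $\cL(0,0;0,1)$, contradicting any candidate recovery formula. The construction would exploit the local Brownian structure of the length profile $s\mapsto \cL(0,0;\gamma(s),s)$ along $\gamma$, as described in \cite{DSV22}: at sufficiently fine scales, the fluctuations of this profile admit a decomposition with a Brownian noise component that is independent of the ambient off-$\gamma$ landscape. A resampling of this Brownian noise, while keeping the off-$\gamma$ landscape fixed, yields a second landscape that is indistinguishable from the original via off-$\gamma$ observations but which assigns $\gamma$ a different total length.

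The main obstacle is the rigorous execution of this coupling: specifically, formalizing the conditional independence of the on-$\gamma$ fluctuations from the off-$\gamma$ data, and verifying that the resampled object is a bona fide directed landscape. A statement in the spirit of a Brownian Gibbs-type property, analogous to the one enjoyed by the parabolic Airy line ensemble used in \cite{DOV18}, but transferred to the on-$\gamma$/off-$\gamma$ splitting of $\cL$, would suffice; establishing or invoking such a statement is where the technical weight of the proof would lie.
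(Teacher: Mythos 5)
Your proposal takes a genuinely different route from the paper, and the route you sketch has a substantial unfilled gap at precisely the step you flag.

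The paper's argument is by contradiction and is much shorter and more elementary. Writing $\gamma=\gamma_{(0,0)}^{(0,1)}$, suppose $\gamma$ were metrically removable. Then for any rational $p,q$ with $0<s<t<1$ and any $\delta>0$ there is a path $\psi$ from $p$ to $q$ meeting $\gamma$ only finitely often with $\ell(\psi;\cL)\geq \cL(p;q)-\delta$. Between consecutive intersection times, $\psi$ and the restriction of $\gamma$ are almost disjoint, so each block contributes at least the sum of the two lengths to the extended landscape $\cL^*$ at the appropriate doubled endpoints (Section \ref{sec:disj-optim}). Chaining these via the reverse triangle inequality for $\cL^*$ (Proposition \ref{prop:12}) and letting $\delta\to 0$ gives $\cL^*((\gamma(s),x),s;(\gamma(t),y),t)\geq \cL(\gamma(s),s;\gamma(t),t)+\cL(p;q)$. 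Sending $p\to(0,0)$, $q\to(0,1)$ and using continuity of $\cL^*$ (Proposition \ref{prop:24}) forces $\cL^*((0,0),0;(0,0),1)=2\cL(0,0;0,1)$, which by Proposition \ref{prop:20} would produce two almost disjoint geodesics from $(0,0)$ to $(0,1)$ --- contradicting the a.s.\ uniqueness of that geodesic. No resampling or conditional-independence input is needed.

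By contrast, your plan rests on a claim that there is a Brownian noise component of $s\mapsto\cL(0,0;\gamma(s),s)$ that is conditionally independent of the off-$\gamma$ data and that can be resampled while keeping the remaining structure a valid directed landscape. Neither assertion is available in the literature, and both are delicate: \cite{DSV22} gives modulus-of-continuity information about the length profile but nothing like an independent-increments decomposition of the on-$\gamma$ fluctuations given the off-$\gamma$ landscape; and even granting such a decomposition, a naive resampling along $\gamma$ would have to be shown to be compatible with the composition law \eqref{eq:96} and with the metric composition of lengths across the curve, which is not automatic. In fact, Theorem \ref{thm:1} shows that for the interface $\Upsilon_{\0}$ the off-curve data \emph{does} determine $\cL$, so any such independence would have to be a curve-specific phenomenon; establishing it for geodesics is essentially a new, substantial result rather than a lemma one can invoke. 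You correctly note that carrying out the coupling is "where the technical weight would lie," but this is exactly the content of the theorem; the proposal as written does not close it. If you want a self-contained proof, the disjoint-optimizer route above is the one to pursue; it converts the failure of metric removability into the statement that $(0,0)$ and $(0,1)$ would need two almost disjoint geodesics, which is directly falsified by a.s.\ geodesic uniqueness.

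One more small caution: your reduction of "lengths of avoiding paths" to "$\cL(p;q)$ for same-side pairs" is not quite right. The quantity relevant for the gluing is $\cL_\gamma^{\tL}(p;q)$ or $\cL_\gamma^{\tR}(p;q)$, the supremum of $\ell(\psi;\cL)$ over paths $\psi$ with interior in the corresponding side, and this can be strictly smaller than $\cL(p;q)$ even when $p,q$ lie on the same side, since the $\cL$-geodesic between them could cross $\gamma$. The correct off-$\gamma$ observable is the restricted landscape of Definition \ref{def:side}, and Definition \ref{def:recons} then specifies exactly how the gluing is formed; Definition \ref{def:rem} asks only that this specific gluing reproduce $\cL$, which is weaker than measurability with respect to a $\sigma$-algebra, though your non-measurability claim would indeed be sufficient if it could be established.
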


Given the detailed study of disjoint optimizers \cite{DZ21} in the directed landscape, Theorem \ref{thm:2} is not difficult to obtain, and thus the focus of the paper is the proof of Theorem \ref{thm:1}. A major step in the proof of Theorem \ref{thm:1} will be to develop good Hausdorff dimension bounds on the intersection $\Upsilon_{(0,0)}$ and a geodesic $\gamma_p^q$. In fact, we shall go further, and pin down the precise value of the above-mentioned dimension. We now state this independently interesting result on the fractal geometry of the directed landscape.

\begin{theorem}
  \label{thm:3}
 Almost surely, for all $(p;q)\in \RR_\uparrow^4$, the set of times $s$ for which $\gamma_p^q(s)=\Upsilon_{(0,0)}(s)$ has Hausdorff dimension zero. Further, if we consider the metric $d_{\mathrm{KPZ}}$ on $\RR^2$ defined by $d_{\mathrm{KPZ}}(x,s;y,t)=|x-y|^{1/2}+|s-t|^{1/3}$ and define the geodesic frame $\cW$ as the union of interiors of all geodesics in the directed landscape, then almost surely, $\cI_\uparrow\cap \cW$ is an infinite set whose Hausdorff dimension with respect to $d_{\mathrm{KPZ}}$ is equal to $0$.
\end{theorem}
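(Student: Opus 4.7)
The overall strategy is to first establish the dimension bound on
\[
S_{p,q} := \{s \in [s_p, s_q] : \gamma_p^q(s) = \Upsilon_{(0,0)}(s)\}
\]
for a fixed pair $(p, q)$, then promote to the simultaneous almost-sure statement via coalescence, and finally use the fixed-endpoint result as the engine for the $\cI_\uparrow \cap \cW$ statement. The key structural observation is that $s \in S_{p,q}$ precisely when $(\gamma_p^q(s), s) \in \Upsilon_{(0,0)}$, equivalently the downward $0$-directed semi-infinite geodesic from $(\gamma_p^q(s), s)$ passes through the origin; such a point is in particular a non-uniqueness ($\NU$) point of the tree $\cT_\downarrow^0$ whose two outgoing branches flank the origin at time $0$. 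So $S_{p,q}$ is the pullback under $\gamma_p^q$ of a very sparse subset of the plane.

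For fixed $(p,q)$, I plan to show $\dim_H S_{p,q} = 0$ via an $\varepsilon$-covering argument: partition $[s_p, s_q]$ into $\varepsilon^{-1}$ intervals of length $\varepsilon$ and bound the expected number of intervals meeting $S_{p,q}$ by $\varepsilon^{-\delta}$ for every $\delta > 0$. The required per-interval estimate
\[
\PP\bigl[[s_0, s_0 + \varepsilon] \cap S_{p, q} \ne \emptyset\bigr] \leq C \varepsilon^{1 - \delta}
\]
is substantially stronger than the naive bound $\sim \varepsilon^{2/3}$ one would obtain by merely requiring $|\gamma_p^q(s_0) - \Upsilon_{(0,0)}(s_0)| \lesssim \varepsilon^{2/3}$; the improvement must come from combining (i) a one-point bound on this difference, with (ii) a disjoint/twin-geodesic estimate (in the spirit of \cite{DZ21,BSS19,GZ22}) which captures the additional probabilistic cost of having an $\NU$ point at $(\gamma_p^q(s), s)$ whose two branches straddle the origin at time $0$. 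The \emph{main obstacle} is obtaining a sharp enough version of this estimate, as it requires control on the joint law of the interface position, the local environment of the geodesic, and the $0$-directed semi-infinite geodesic tree; I anticipate this will require deploying the coalescent-tree machinery for $\cI_\uparrow$ developed in \cite{Bha23}.

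To upgrade from a per-endpoint a.s.\ statement to ``almost surely for all $(p, q)$'', I will combine the countable a.s.\ bound over $(p, q) \in \QQ^4 \cap \RR_\uparrow^4$ with a coalescence argument. For any $(p, q)$ and any $\eta > 0$, the restriction $\gamma_p^q|_{[s_p + \eta, s_q - \eta]}$ coincides (by coalescence of finite geodesics with nearby endpoints) with some rational-endpoint geodesic $\gamma_{p'}^{q'}$, so that $S_{p, q} \cap [s_p + \eta, s_q - \eta] \subseteq S_{p', q'}$. Writing $S_{p, q}$ as $\bigcup_{n \in \NN} S_{p, q} \cap [s_p + 1/n, s_q - 1/n]$ (modulo the two endpoint times, which are singletons and negligible for dimension) then yields $\dim_H S_{p, q} = 0$ simultaneously for all $(p, q)$.

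For the $\cI_\uparrow \cap \cW$ statement, infinitude follows from a qualitative argument: a single finite geodesic in $\cW$ crosses infinitely many interfaces of $\cI_\uparrow$, since interfaces are plentiful and a geodesic cannot coincide with any interface on an interval (which would already contradict the dimension bound we are about to prove). For the $d_{\mathrm{KPZ}}$-dim zero bound, I reduce to a countable union. By coalescence of finite geodesics, $\cW$ is the union of images of countably many rational-endpoint geodesics $\{\gamma_{p_i}^{q_i}\}_i$ (up to singletons at the endpoints). By the coalescent tree structure of $\cI_\uparrow$ from \cite{Bha23}, $\cI_\uparrow$ is the union of countably many interfaces $\{\Upsilon_{r_j}\}_j$ emanating from rational starting points (again up to a singleton at each starting point). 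Hence $\cI_\uparrow \cap \cW \subseteq \bigcup_{i, j} \gamma_{p_i}^{q_i} \cap \Upsilon_{r_j}$, each summand has Euclidean Hausdorff dimension zero by the translation-invariant version of Part 1 (shifting $r_j$ to the origin), and countable union preserves dim zero in the Euclidean metric. Finally, since $d_{\mathrm{KPZ}}(x, s; y, t) \leq C \bigl(|x-y| + |s-t|\bigr)^{1/3}$ for small distances, the identity map is locally Hölder, so Euclidean dim zero upgrades to $d_{\mathrm{KPZ}}$-dim zero.
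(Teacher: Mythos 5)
Your high-level strategy matches the paper's: reduce to a per-time-point estimate of the form $\PP(|\gamma_p^q(s)-\Upsilon_{\0}(s)|\le\delta)\le\delta^{3/2-o(1)}$ (equivalently, your $\varepsilon^{1-\delta}$ per-interval bound), upgrade to all $(p;q)$ by countable stability and geodesic coalescence, and then deduce the $d_{\mathrm{KPZ}}$-dimension statement from the $\beta$-fattened box count. The structural reformulation (``on the interface'' $\iff$ the left- and right-most $0$-directed geodesics from $(\gamma_p^q(s),s)$ straddle $\0$ at time $0$) is also the correct one, and it is what feeds into the $\delta^{1-o(1)}$ atypical-star bound from \cite{Bha22}.

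However, there is a genuine gap exactly where you flag the ``main obstacle,'' and your anticipated resolution (deploying the coalescent-tree machinery of \cite{Bha23}) is not what resolves it. The difficulty is that the $\delta^{3/2}$ target factors as a $\delta^{1}$ disjoint-geodesic cost times a $\delta^{1/2}$ density cost for the difference profile landing near $0$, but these two events are \emph{not} independent --- they involve the joint law of the interface location, the finite geodesic $\gamma_p^q$, and the semi-infinite geodesic tree, all built from the same landscape. Establishing that the difference profile retains a $\delta^{1/2}$ density \emph{conditionally} on the disjoint-geodesic event is nontrivial, and the paper explicitly sidesteps it rather than proving it directly. The mechanism the paper uses is not tree machinery but a decoupling trick: sample a drift $X\sim\mathrm{Unif}[-M,M]$ independent of $\cL$, work with the interface $\Upsilon_{\0}^{X,K}$ for the (truncated) Busemann-plus-drift initial condition $\fh_X^K$, and also resample the landscape below time $0$ to form $\wcL$ so that $\wGamma_q$ is decoupled from the interface data. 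The density factor then becomes a statement about the exogenous $X$ landing in a random interval of length $\lesssim\delta^{1/2-\nu}$, which costs $O(\delta^{1/2-\nu})$ by independence of $X$ from $\sigma(\cL,\wcL)$ --- with no conditioning problem at all. Afterwards, a cascade of approximations (semi-infinite $\to$ finite geodesics via Lemma \ref{lem:7}, truncation $K\to\infty$, and finally skew invariance $\cL_X^{\mathrm{sk}}\stackrel{d}{=}\cL$) removes the auxiliary randomness and delivers the result for $\Upsilon_{\0}$ itself. Without this resampling-plus-skew-invariance device (or an equivalent idea), the proposal as written does not establish the key per-interval estimate; the rest of the plan is sound but hangs on this missing step.

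Two smaller remarks: your infinitude argument for $\cI_\uparrow\cap\cW$ (``interfaces are plentiful, a geodesic can't track one on an interval'') is circular/incomplete as stated; the paper instead takes $p_n,q_n$ on opposite sides of $\Upsilon_{\0}$ so each $\gamma_{p_n}^{q_n}$ is topologically forced to cross it. And your Euclidean-to-$d_{\mathrm{KPZ}}$ upgrade at the end is valid (zero dimension is preserved under locally Hölder maps) but is a detour: the paper covers directly by $\varepsilon_n^{2/3}\times\varepsilon_n$ boxes, whose $d_{\mathrm{KPZ}}$-diameters are $\sim\varepsilon_n^{1/3}$, and reads off the dimension from the box count.
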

In the past few years, there has been significant effort to understand the fractal structure of the directed landscape, and the above result is in the same vein. Some of the noteworthy themes that have been explored recently are the study of atypical stars and geodesic networks \cite{BGH21, BGH19, GZ22, Bha22, Ham20, Dau23+}, the study of semi-infinite geodesics across all directions \cite{BSS22,Bha23,Bus23} and the investigation of exceptional times for the KPZ fixed point \cite{CHHM21,Dau22}. Also, we note that the deterministic metric $d_{\mathrm{KPZ}}$ in Theorem \ref{thm:3} has been employed earlier in the literature, namely in the works \cite{CHHM21,Dau23+,BB23}. For the present setting, the most relevant work is \cite{Bha22}, where the dimension of atypical stars, or the set of points of failure of coalescence, is computed along a geodesic between fixed points. As we shall see later, this will play a major role in the proof of Theorem \ref{thm:3}.

The notion of metric removability in Definition \ref{def:metric} is in fact motivated from Liouville quantum gravity ($\gamma$-LQG) \cite{She22,DDG21}-- a prominent one parameter family of models of random geometry, indexed by $\gamma\in (0,2]$, and believed to arise as the scaling limits of a variety of discrete planar map models. These geometries come with a conformal structure attached to them and have an intricate theory where one can  ``cut'' a $\gamma$-LQG surface \cite{She16,DMS14,AHS24} along an independent Schramm Loewner evolution ($\mathrm{SLE}_{\gamma^2}$) curve to yield $\gamma$-LQG surfaces on its two sides which are conditionally independent given the boundary length of the curve. In this context, a natural heavily-investigated question which arises and is parallel to Definition \ref{def:metric}, is that of conformal removability, where a curve is said to be conformally removable if the conformal structure on its two sides determines the overall conformal structure of the surface. It is known that $\mathrm{SLE}_{\gamma^2}$ curves are conformally removable \cite{JS00,RS05,KMS22} for all $\gamma\in (0,2]$, and in fact, also that geodesics in $\gamma$-LQG are conformally removable \cite{MQ20+}. In fact, a closer parallel to Definition \ref{def:metric} is given by the work \cite{HM22}, where the authors construct $\gamma$-LQG as a ``metric gluing'' of smaller $\gamma$-LQG surfaces, or in the language of Definition \ref{def:metric}, they show that appropriate $\mathrm{SLE}_{\gamma^2}$-type curves are metrically removable in $\gamma$-LQG.

\subsection{A precise formulation of metric removability}
\label{sec:restr}
Due to the informal nature of Definition \ref{def:metric}, we now give a precise formulation of metric removability for the directed landscape. We begin by defining the restriction of the directed landscape to an open set $U\subseteq \RR^2$.
  \begin{figure}
  \begin{subfigure}[h]{0.4\linewidth}
    \centering
\includegraphics[width=0.6\linewidth]{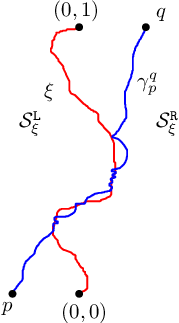}
\end{subfigure}
\hfill
\begin{subfigure}[h]{0.4\linewidth}
  \centering
\includegraphics[width=0.6\linewidth]{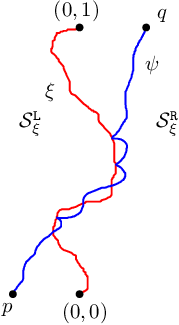}
\end{subfigure}
\caption{\textbf{Left panel}: The geodesic $\gamma_p^q$ here intersects the path $\xi$ uncountably many times. \textbf{Right panel}: Here, for a given $\varepsilon>0$, $\psi$ is a path from $p$ to $q$ intersecting $\xi$ finitely many times but still satisfying $\ell(\psi;\cL)\geq \cL(p;q)-\varepsilon$.}
\label{fig:finhit}
\end{figure}
\begin{definition}
  Given an open set $U\subseteq \RR^2$, we define the restricted landscape $\cL_U$ as follows. For any $p=(x,s), q=(y,t)\in \overline{U}$ with $s<t$, we define
  \begin{equation}
    \label{eq:1}
    \cL_U(p;q)=\sup_{\gamma\colon [s,t]\rightarrow \RR, \inte(\gamma)\subseteq U}\ell(\gamma;\cL),
  \end{equation}
  with the convention that $\cL_U(p;q)=-\infty$ if no path $\gamma$ as in the above exists.
\end{definition}

With some effort, it is possible to verify the measurability of $\cL_U(p;q)$ for any fixed $p,q$ as above, and thus $\cL_U(p;q)$ is a random variable. We can now restrict the directed landscape to the left and right of a given curve $\xi$.
\begin{definition}
  \label{def:side}
    For $-\infty\leq s'_o<t'_0\leq \infty$, and a path $\xi\colon [s'_0,t'_0]\rightarrow \RR$, we define the sets to the left and right of $\xi$ by
  \begin{displaymath}
    \cS^{\tL}_{\xi}=\{(x,r): r\in [s'_0,t'_0], x< \xi(r)\}, \cS^{\tR}_{\xi}=\{(x,r): r\in [s'_0,t'_0], x>  \xi(r)\}.
  \end{displaymath}
 Further, we define $\cL^{\tL}_{\xi}$ and $\cL^{\tR}_{\xi}$ by
  \begin{displaymath}
    \cL^{\tL}_{\xi}=\cL_{\cS^{\tL}_{\xi}},  \cL^{\tR}_{\xi}=\cL_{\cS^{\tR}_{\xi}}.
  \end{displaymath}
\end{definition}
Given a path $\xi$ and the restricted landscapes $\cL_\xi^{\tL},\cL_\xi^{\tR}$, we now define the reconstruction $\cL_\xi^g$ obtained by ``metrically gluing'' $\cL_\xi^{\tL}$ and $\cL_\xi^{\tR}$.
\begin{definition}
  \label{def:recons}
  Given a path $\xi$ as in the above, we define the reconstruction $\cL^{\g}_{\xi}$ such that for any $(p;q)=(x,s;y,t)\in \RR_\uparrow^4$ satisfying  $s'_0\leq s<t\leq t'_0$, $\cL^{\g}_{\xi}(p;q)$ is equal to
  \begin{displaymath}
 \sup_{s<r_1<\dots <r_n<t} \left\{(\cL^{\tL}_{\xi}\vee \cL^{\tR}_{\xi})(p;\xi(r_1),r_1)+\sum_{i=1}^{n-1} (\cL^{\tL}_{\xi}\vee \cL^{\tR}_{\xi})(\xi(r_i),r_i;\xi(r_{i+1}),r_{i+1})+(\cL^{\tL}_{\xi}\vee \cL^{\tR}_{\xi})(\xi(r_n),r_n;q)\right\}.
  \end{displaymath}
  where the supremum is over all partitions $s=r_0< \dots < r_n=t$, and the $\vee$ above simply denotes the maximum.
\end{definition}
In other words, $\cL_\xi^g(p;q)$ is the supremum of $\ell(\psi;\cL)$ over paths $\psi$ from $p$ to $q$ which intersect $\xi$ only finitely many times (see Figure \ref{fig:finhit}). We note that a definition analogous to the above, but in the setting of $\gamma$-LQG appears as \cite[Definition 1]{HM22}. Now, in Definition \ref{def:recons}, it is easy to see by that we always have $\cL_{\xi}^{\g}(p;q)\leq \cL(p;q)$.  We now formulate a precise definition of metric removability, and we take care to ensure that it is defined in terms of countably many operations, so as to not run into measure theoretic difficulties.

\begin{definition}
  \label{def:rem}
  Fix $-\infty\leq s'_0<t'_0\leq \infty$. A possibly random path $\xi\colon [s'_0,t'_0]\rightarrow \RR$ is said to be metrically removable if for any fixed $p=(x_0,s_0)$ and $q=(y_0,t_0)$ with $s_0'<s_0<t_0<t_0'$, we almost surely have $\cL^g_\xi(p;q)=\cL(p;q)$.
\end{definition}
We note that the results of this paper on metric removability (Theorems \ref{thm:1}, \ref{thm:2}) are proved using the above-mentioned definition. In particular, for the rest of the paper, we shall work with the above definition of metric removability.

\subsection{A discussion of upcoming work}
This work is part of a series of three works aimed at solving a certain reconstruction problem in the directed landscape, namely, to show that the directed landscape $\cL$ is determined by the parametrised geodesic tree $\cT_\downarrow$, by which we mean the set $\cT_\downarrow$ along with the lengths of all geodesics contained in it. These works shall strongly use the Peano curve $\eta\colon \RR\rightarrow \RR^2$ snaking between the trees $\cT_\downarrow$ $\cI_\uparrow$ as constructed in the recent work \cite{BB23}. We now give a short description of the above-mentioned upcoming works.

\begin{enumerate}
\item In the upcoming work \cite{Bha24+}, we shall obtain a conditional independence result for the bubbles created by the Peano curve $\eta$. Parametrising $\eta$ by the Lebesgue volume and such that $\eta(0)=0$, we shall show that for any fixed $v_1<\dots <v_n\in \RR$, the objects $\cL_{\eta[v_i,v_{i+1}]}$ defined as the directed landscape restricted to the Peano bubbles $\eta[v_i,v_{i+1}]$, are mutually independent conditional on the Busemann function $\cB_0$ (see Section \ref{sec:busemann}) restricted to the union $\bigcup_{i=1}^n\partial_{v_i}\eta$, where $\partial_{v_i}\eta$ is the topological boundary of the set $\eta(-\infty,v_i]$.

\item Subsequently, in another upcoming work \cite{Bha24++}, we shall show that the conditional variance of any directed landscape length $\cL(p;q)$ given the parametrised geodesic tree $\cT_\downarrow$ is almost surely equal to zero, thereby establishing that the directed landscape can a.s.\ be reconstructed from the parametrised geodesic tree. Apart from using Theorem \ref{thm:1} from the present work along with the conditional independence from the above-mentioned upcoming work \cite{Bha24+}, this shall crucially require results from the work \cite{Bha23} on atypical stars on geodesics and tail estimates from \cite{BB23} on the size of Peano bubbles.
\end{enumerate}
\paragraph{\textbf{Notational comments}} For $a<b\in \RR$, we shall use $[\![a,b]\!]$ to denote the set $[a,b]\cap \ZZ$. Throughout the paper, we shall use $\0$ to denote the point $(0,0)\in \RR^2$ and shall also set $\varepsilon_n=2^{-n}$ for $n\in \NN$. Also, as usual, if we have a function which is $\alpha-\varepsilon$ H\"older continuous for all $\varepsilon>0$, then we shall simply say that the function is $\alpha-$ H\"older regular. As remarked earlier, we shall refer to a continuous function $\eta\colon [s,t]\rightarrow \RR$ as a path and shall call it a semi-infinite path in case either of $s,t$ is equal to $\pm \infty$. For any such path $\xi$, we shall use $\inte(\xi)$ to denote the interior of the path, by which we simply mean the graph of the path $\xi$ with its endpoints removed. As an overload of notation, for a path $\xi$, we shall often use $\xi$ to also denote its graph; for example, if we write $p\in \xi$, then we mean that the point $p$ lies on the graph of $\xi$. Also, we shall often have paths $\xi_1,\xi_2$ which potentially intersect each other only at their endpoints, and in this case, we shall call these paths \emph{almost disjoint}. Throughout the paper, we shall often work with left-most and right-most geodesics (see Proposition \ref{prop:21}), and in this case, we shall underline (resp.\ overline) to denote the left-most (resp.\ right-most geodesic). For example, $\underline{\gamma}_p^q$ refers to the left-most geodesic corresponding to some $(p;q)\in \RR_\uparrow^4$. Finally, in Section \ref{sec:proof-thm-dim}, we shall use $\wcL$ to denote the directed landscape obtained by independently resampling the portion of $\cL$ below the time line $0$.
\paragraph{\textbf{Acknowledgements}} The author thanks B{\'a}lint Vir{\'a}g for the discussion at the IAS in Spring 2023 and also thanks Duncan Dauvergne for the discussion at the Fields institute. The author acknowledges the partial support of the NSF grant DMS-2153742 and also acknowledges the Fields Institute for their hospitality since the work was completed while attending a program there on randomness and geometry. 
\label{sec:broad}

\section{A discussion of the proof strategy}
\label{sec:outline}
We now give an outline of the proof of Theorem \ref{thm:1}. Given a random curve $\xi$ coupled to the directed landscape which one believes to be metrically removable, a possible strategy to demonstrate this is the following.

\begin{enumerate}
\item For fixed rational points $p=(x_0,s_0)$ and $q=(y_0,t_0)$ with $s_0<t_0$, obtain a good a.s.\ upper bound on the Minkowski dimension of the set $\{s\colon \gamma_p^q(s)=\xi(s)\}$. Say we obtain that this dimension is upper bounded by $d$ almost surely.
\item For each point $(\xi(s),s)$ on $\xi$, exhibit finite length paths $\pi^{\tL}_s,\pi^{\tR}_s$ (say emanating downwards from $(\xi(s),s)$) which satisfy $\inte(\pi^{\tL}_s)\subseteq \cS_\xi^{\tL}$ and $\inte(\pi^{\tR}_s)\subseteq \cS_\xi^{\tR}$. Further, we wish to have good simultaneous control (for all $s$ in a compact set) on the transversal fluctuation behaviour of these paths and good bounds on $\ell(\pi^{\tL}_s\lvert_{[s-\delta,s]};\cL),\ell(\pi^{\tR}_s\lvert_{[s-\delta,s]};\cL)$ for all $\delta>0$.
\end{enumerate}
Given a setting resembling the above, one could hope to show metric removability via a local modification argument. Indeed, we divide the interval $[s_0,t_0]$ into sub-intervals of length $\varepsilon_n=2^{-n}$ each and first use (1) to obtain that almost surely, the number of bad intervals, by which we mean those intervals $I$ which contain a point $s$ for which $\gamma_p^q(s)=\xi(s)$, is a.s.\ at most $\varepsilon_n^{-d-o_n(1)}$. Having done so, the strategy is to modify $\gamma_p^q$ only around the bad intervals to create a sequence of paths $\gamma_n$ from $p$ to $q$ with the property that for each $n$, the set $\{s\colon \gamma_n(s)=\xi(s)\}$ is finite, and further, $\ell(\gamma_n;\cL)\rightarrow \ell(\gamma_p^q;\cL)$ as $n\rightarrow \infty$. By the definition of metric removability, it is not hard to see that this would imply $\cL_\xi^g(p;q)=\cL(p;q)$.

\subsubsection*{\textbf{Constructing the paths $\gamma_n$}} We now present a way to construct the desired paths $\gamma_n$, and for this construction, we wish to use the paths $\pi_s^{\tL},\pi_s^{\tR}$ available to us. Now, for each bad interval $I=[r,r+\varepsilon_n]$, we consider the points $\underline{s}_I=\inf\{s\in I\colon \gamma_p^q(s)=\xi(s)\}$ and $\overline{s}_I=\inf\{s\in I\colon \gamma_p^q(s)=\xi(s)\}$, and we note that we potentially have infinitely many intersections of $\gamma_p^q$ and $\xi$ in the time interval $[\underline{s}_I,\overline{s}_I]$, and the aim now is to avoid these intersections by using the paths $\pi^{\tL}_{\overline{s}_I},\pi^{\tR}_{\overline{s}_I}$. Let us consider the generic case when $r<\underline{s}_{I}<\overline{s}_I<r+\varepsilon_n$. In this case, there exists a unique $*_I\in \{\tL,\tR\}$ defined such that $(\gamma_p^q(r),r)\in \cS_\xi^{*_I}$, where we recall that the latter was defined in Definition \ref{def:side}. A possible strategy now is to modify $\gamma_p^q$ by replacing $\gamma_p^q\lvert_{[r,\overline{s}_I]}$ by the concatenation of the curves $\gamma_{(\gamma_p^q(r),r)}^{(\pi^{*_I}_{\overline{s}_I}(\underline{s}_I),\underline{s}_I)}$,    $\pi^{*_I}_{\overline{s}_I}\lvert_{[\underline{s}_I,\overline{s}_I]}$ (see Figure \ref{fig:mod} for a depiction of this step in the actual construction). %

Doing this would ensure the modified path has at most one intersection with $\xi$ in the interval $I$, namely at $\overline{s}_I$. Further, if the length estimate in (2) holds, then one could hope (see Proposition \ref{prop:11}) to have a good upper bound on $|\ell(\pi^{*_I}_{\overline{s}_I}\lvert_{[\underline{s}_I,\overline{s}_I]};\cL)|$. Finally, if the transversal fluctuation bound in (2) holds and if $\underline{s}_I$ is not too close to $r$, then one could also hope to achieve a similar control on $\ell(\gamma_{(\gamma_p^q(r),r)}^{(\pi^{*_I}_{\overline{s}_I}(\underline{s}_I),\underline{s}_I)};\cL)$. If the dimension upper bound $d$ is small enough, then one could hope that even on adding up, the absolute values of the above errors over all bad intervals, we obtain a quantity which still decays to $0$ in $n$, thereby yielding that $\ell(\gamma_n;\cL)\rightarrow \ell(\gamma_p^q;\cL)$.

The above strategy does not seem to work directly in the case when $\xi$ is the deterministic curve given by $\xi(s)=0$ for $s\in [0,1]$ (see Question \ref{ques:vert}), since it yields $d=1/3$, which is not good enough. However, if we instead work with the curve $\xi=\Upsilon_{\0}$, then the above strategy does in fact work, and we now outline why this is the case. To do so, we just explain why (1) and (2) should hold for the interface $\Upsilon_{\0}$. We first begin with condition (2).

\subsubsection*{\textbf{The condition (2) for the interface $\Upsilon_{\0}$}} In some sense, the interface $\Upsilon_{\0}$ is ideal for checking condition (2). Indeed, if we use $\underline{\Gamma}_p,\overline{\Gamma}_p$ to denote the left-most and right-most choices of $\Gamma_p$ (see Proposition \ref{prop:21}), then by \cite{RV21,Bha23}, we know that almost surely, for all $s>0$,
\begin{equation}
  \label{eq:88}
 \underline{\Gamma}_{(\Upsilon_{\0}(s),s)}\lvert_{[0,s)}\subseteq \cS_{\Upsilon_{\0}}^{\tL}, \overline{\Gamma}_{(\Upsilon_{\0}(s),s)}\lvert_{[0,s)}\subseteq \cS_{\Upsilon_{\0}}^{\tR},
\end{equation}
and this leads to a natural choice of the paths $\pi^{\tL}_s$ and $\pi^{\tR}_s$. Note that here, $\pi^{\tL}_s$ and $\pi^{\tR}_s$ are genuine $\cL$-geodesics as opposed to just being paths of finite length, and as a result, it is easy to obtain the required transversal fluctuation and length control described in (2), by simply employing corresponding uniform bounds for $\cL$-geodesics (see Propositions \ref{prop:7}, \ref{prop:17}).

\subsubsection*{\textbf{The condition (1) for the interface $\Upsilon_{\0}$}} Now, with $\xi=\Upsilon_{\0}$, we wish to bound the Minkowski dimension of the set $\{s\colon \gamma_p^q(s)=\xi_{\0}(s)\}$. It turns out that this dimension can be precisely computed-- it is almost surely equal to zero. We note that apart from its usage in proving the metric removability of $\Upsilon_{\0}$, the above dimension computation will be used to yield Theorem \ref{thm:3}.

Now, instead of discussing condition (1) for the path $\xi =\Upsilon_{0}$, we shall discuss the case when $\xi=\LL_0$, where the path $\LL_0$ is defined such that
\begin{equation}
  \label{eq:122}
  \LL_0=\{(y,t): t>0, \cL(1,0;y,t)-\cL(-1,0;y,t)=0\}.
\end{equation}
It can be shown that $\LL_0$ is a.s.\ a semi-infinite path emanating upward and has the property that all points on this path admit almost disjoint geodesics going to $(-1,0)$ and $(1,0)$. We note that in the sense of \cite{RV21} (see Section \ref{sec:interf}), $\LL_0$ can be viewed as the interface from $\0$ for the initial condition given by $\fh(x)=0$ if $|x|=\pm 1$ and $-\infty$ otherwise. Analogously, $\Upsilon_{\0}$ can be interpreted as the initial condition given by a two-sided Brownian motion (given by the Busemann function on the line with time coordinate $0$, see Section \ref{sec:interf}). Thus, both $\LL_0$ and $\Upsilon_{\0}$ can be interpreted as interfaces starting from certain initial conditions. To effectively communicate the intuitive picture, we describe how to verify condition (1) for $\LL_0$ in this section. The actual argument works with $\Upsilon_{\0}$ and uses the same intuition but is more technically involved.

By standard arguments employing transversal fluctuation estimates, it is not difficult to see that it would suffice to show that for any fixed $s\in [s_0,t_0]$ (actually, one needs some uniformity over $s$, but for this section, let us work with a fixed $s$), any fixed $\nu>0$, and for all small enough $\delta>0$, we have
\begin{equation}
  \label{eq:133}
  \PP(|\gamma_p^q(s)-\LL_0(s)|\leq \delta)\leq \delta^{3/2-\nu}.
\end{equation}
Indeed, one could then choose $\delta=\varepsilon^{2/3-o(1)}$ and cover $[s_0,t_0]$ by intervals of size $\varepsilon$. By using the $2/3-$ H\"older nature of $\gamma_p^q$ and $\LL_0$, \eqref{eq:133} would imply that each such interval has at most a $\varepsilon^{1-o(1)}$ probability of containing a time when $\gamma_p^q$ and $\LL_0$ intersect, and this would imply that the Minkowski dimension of such intersection times is a.s.\ zero.

\subsubsection*{\textbf{The estimate \eqref{eq:133}}} Let $\cD_s(x)$ denote the difference profile defined by $\cD_s(x)=\cL(1,0;x,s)-\cL(-1,0;x,s)$, and it can be checked that $\cD_s$ is increasing in the sense that almost surely, for any $y\geq x$, we have $\cD_s(y)\geq \cD_s(x)$. Owing to the $1/2-$ spatial H\"older continuity of $\cL$, $\cD_s$ is also $1/2-$ H\"older continuous, and in fact, by using basic regularity and transversal fluctuation estimates, it is not difficult to show that, for any fixed $\nu>0$, if we define the event $E_\delta$ by
\begin{equation}
  \label{eq:135}
  E_\delta=\{ \cD_s(\gamma_p^q(s)+\delta)-\cD_s(\gamma_p^q(s)-\delta)\geq \delta^{1/2-\nu}\},
\end{equation}
then $\PP(E_\delta)$ decays superpolynomially as $\delta\rightarrow 0$. As a result, it can be checked that we have the following useful inclusion for the event from \eqref{eq:133},
\begin{align}
  \label{eq:134}
  E_\delta^c\cap \{|\gamma_p^q(s)-\LL_0(s)|\leq \delta\}&\subseteq  \{\exists x\in [\gamma_p^q(s)-\delta,\gamma_p^q(s)+\delta]: \overline{\gamma}^{(x,s)}_{(-1,0)} \textrm{ and } \overline{\gamma}^{(x,s)}_{(1,0)} \textrm{ are almost disjoint}\}\nonumber\\
                                                        &\cap \{ |\cD_s(\gamma_p^q(s))-0|\leq \delta^{1/2-\nu}\}\nonumber\\
  &\coloneqq F_\delta\cap G_\delta,
\end{align}
and thus it suffices to obtain a $\delta^{3/2-\nu}$ probability upper bound on the event above. Now, the work \cite{Bha22} considers a version of the event
\begin{equation}
  \label{eq:177}
  \{\exists x\in [\gamma_p^q(s)-\delta,\gamma_p^q(s)+\delta] \textrm{ admitting almost disjoint geodesics } \gamma^{(x,s)}_p\textrm{ and }  \gamma^{(x,s)}_{(1,0)}\}
\end{equation}
for semi-infinite geodesics, for which it proves a $\delta^{1-o(1)}$ probability upper bound. It can be shown that if the event $F_\delta$ occurs then we can either find almost disjoint geodesics $\gamma_p^{(x,s)},\gamma_{(1,0)}^{(x,s)}$ or almost disjoint geodesics $\gamma_p^{(x,s)},\gamma_{(-1,0)}^{(x,s)}$, and this suggests that $\PP(F_\delta)\leq\delta^{1-o(1)}$. To complete the proof, one must now show that
\begin{equation}
  \label{eq:2}
  \PP(G_\delta\lvert F_\delta)\leq \delta^{1/2-o(1)},
\end{equation}
 and intuitively, this would amount to establishing that $\cD_s$ has a density around $0$ even when we \emph{condition} on $F_\delta$. Overall, the events $F_\delta$ and $G_\delta$ involve information about geodesics from three different points: $(-1,0)$, $p$ and $(1,0)$, and as a result, it does not seem easy to directly prove \eqref{eq:2}. Instead of attempting to show \eqref{eq:2} directly and thus tackling the above non-trivial conditioning headfirst, we instead sidestep this difficulty by using the skew invariance of the landscape along with additional randomness independent of $\cL$.

\subsubsection*{\textbf{The random variable $X$ and the interfaces $\LL_X$}} Now, for $r\in \RR$, we define the $r$-difference set
\begin{equation}
  \label{eq:136}
  \LL_r=\{(y,t): t>0, \cD_t(y)=r\}.
\end{equation}
Now, we note that the directed landscape satisfies a certain skew invariance as summarised in Proposition \ref{prop:14}, and we shall use the landscape $\cL_\theta^{\mathrm{sk}}$ defined for $\theta\in \RR$ therein. Now, it can be checked that the set $\LL_0$ for $\cL$ skew-transforms to the $-4\theta$-difference set for $\cL_\theta^{\mathrm{sk}}$. As a result of this, it can be seen that to obtain condition (1) for $\LL_0$, we can equivalently fix an $M>0$, sample $X\sim \mathrm{Unif}[-M,M]$ independently of $\cL$ and establish that condition (1) is a.s.\ satisfied for $\LL_X$ instead of $\LL_0$.

As a result of the above, it suffices to show that the set of times where $\gamma_p^q$ and $\LL_X$ intersect a.s.\ has Minkowski dimension $0$. To do so, we must establish an estimate analogous to \eqref{eq:133}, where $\LL_0$ is now replaced by $\LL_X$. Now, just as in \eqref{eq:134}, we can write
\begin{align}
  \label{eq:138}
    E_\delta^c\cap \{|\gamma_p^q(s)-\LL_X(s)|\leq \delta\}&\subseteq  \{\exists x\in [\gamma_p^q(s)-\delta,\gamma_p^q(s)+\delta]: \overline{\gamma}^{(x,s)}_{(-1,0)} \textrm{ and } \overline{\gamma}^{(x,s)}_{(1,0)} \textrm{ are almost disjoint}\}\nonumber\\
                                                        &\cap \{ |\cD_s(\gamma_p^q(s))-X|\leq \delta^{1/2-\nu}\}\nonumber\\
  &\coloneqq  F_\delta'\cap G_\delta',
\end{align}
for the regularity event $E_\delta$ from \eqref{eq:135} whose probability decays superpolynomially as $\delta\rightarrow 0$. However, this time, we can in fact obtain the desired $\delta^{3/2-o(1)}$ estimate on $\PP(F_\delta'\cap G_\delta')$. Indeed, as earlier, by using the results from \cite{Bha22}, we can obtain $\PP(F_\delta')\leq \delta^{1-o(1)}$. However, now we can write
\begin{equation}
  \label{eq:139}
  G_\delta'=\{X\in [\cD_s(\gamma_p^q(s))-\delta^{1/2-\nu},\cD_s(\gamma_p^q(s))+\delta^{1/2-\nu}]\},
\end{equation}
and due to the independence of $X$ and $\cL$, on the regularity event $E_\delta^c$, we have
\begin{equation}
  \label{eq:140}
  \PP(G_\delta'\lvert \cL)\leq 2\delta^{1/2-\nu}/(2M).
\end{equation}
Combined with the $\delta^{1-o(1)}$ bound on $\PP(F_\delta')$, this implies the desired $\delta^{3/2-o(1)}$ bound on $\PP(F'_\delta\cap G'_\delta)$, thereby showing that the Minkowski dimension of the set of $s$ for which $\gamma_p^q(s)=\LL_X(s)$ should a.s.\ be equal to zero.

\subsubsection*{\textbf{Some comments on the actual proof}}
In the actual proof, we do not work with any of the sets $\LL_0$ or $\LL_X$. Instead, when checking condition (1), we work with interfaces starting from a Brownian motion with a drift $2X$ independent of $\cL$. The argument here is more technical than the one presented above for $\LL_0$ and $\LL_X$ and, for instance, involves first working with initial conditions $\fh_X^K$ obtained by truncating the above initial condition at a level given by a parameter $K$. Also, there are some additional steps required to handle the fact that the results from \cite{Bha22} only concern semi-infinite geodesics. However, though there are the above-mentioned additional technicalities, the argument on the whole is along the same lines as the one outlined in this section.

\section{Preliminaries}
\label{sec:prelim}

\subsection{Some relevant properties of the directed landscape}
\label{sec:geod}
Throughout this paper, we shall require certain properties of the directed landscape and its geodesics which we now introduce. We begin by introducing the basic symmetries enjoyed by the directed landscape.
\begin{proposition}[{\cite[Lemma 10.2]{DOV18}, \cite[Proposition 1.23]{DV21}}]
  \label{prop:14}
  As a random continuous function from $\RR_\uparrow^4$ to $\RR$, the following distributional equalities hold for the directed landscape:
   \begin{itemize}
   \item KPZ $1\colon 2 \colon 3$ scaling: For any $q>0$, $\cL(x,s;y,t)\stackrel{d}{=}q\cL(q^{-2}x,q^{-3}s;q^{-2}y,q^{-3}t)$.
   \item Skew invariance: For any $\theta\in \RR$, if we define $\cL_\theta^{\mathrm{sk}}\colon \RR_\uparrow^4\rightarrow \RR$ by
     \begin{displaymath}
       \cL_\theta^{\mathrm{sk}}(x,s;y,t)= \cL(x+\theta s,s;y+\theta t, t) +\theta^2(t-s) +2\theta(y-x),
     \end{displaymath}
   then $\cL_\theta^{\mathrm{sk}}\stackrel{d}{=}\cL$.
  \item Translation invariance: For any $x_0,s_0\in \RR$, $\cL(x,s;y,t)\stackrel{d}{=}\cL(x+x_0,s+s_0;y+x_0,t+s_0)$.
  \item Flip invariance: $\cL(x,s;y,t)\stackrel{d}{=}\cL(-x,s;-y,t)\stackrel{d}{=}\cL(y,-t;x,-s)$.
  \end{itemize} 
\end{proposition}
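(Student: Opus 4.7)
The plan is to establish each distributional equality by transferring a corresponding invariance from an integrable LPP model, such as Brownian last passage percolation (BLPP), whose $1{:}2{:}3$ scaling limit is $\cL$. Since the convergence is in distribution as random continuous functions on $\RR_\uparrow^4$ and each of the four operations above is a continuous map on this space, it suffices to check the invariance at the discrete level and then pass to the limit.

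The translation, flip, and scaling identities are essentially immediate. Translation invariance follows from the stationarity of the i.i.d.\ Brownian family defining BLPP, together with the translation-invariant form of the composition law \eqref{eq:96}. The spatial flip $(x,y)\mapsto(-x,-y)$ amounts to reindexing the Brownian motions, while the time-reversal flip $(x,s;y,t)\mapsto (y,-t;x,-s)$ corresponds to the graph-reversal symmetry of LPP, which preserves the distribution of the length functional. The KPZ $1{:}2{:}3$ scaling is built into the very definition of the scaling window used to construct $\cL$, so the map $(x,s,\cL)\mapsto (q^{-2}x,q^{-3}s,q^{-1}\cL)$ just relabels the same limit and the identity follows.

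The main obstacle is skew invariance, which I would attack in one of two ways. The \emph{direct} route is to work at the BLPP level and use the bijection $\gamma(r)\mapsto\gamma(r)-\theta r$, which sends admissible staircase paths from $(x+\theta s,s)$ to $(y+\theta t,t)$ onto those from $(x,s)$ to $(y,t)$; a careful comparison of the Brownian increments picked up along the two paths shows that the two lengths differ by a deterministic affine term which, after the $1{:}2{:}3$ rescaling is applied, becomes exactly $\theta^2(t-s)+2\theta(y-x)$. The \emph{axiomatic} route, which I find cleaner, is to check equality of finite-dimensional marginals of $\cL_\theta^{\mathrm{sk}}$ and $\cL$: using the composition law \eqref{eq:96}, the identity reduces to the statement for the Airy sheet $\cS(x,y)=\cL(x,0;y,1)$, where the corresponding skew identity follows from the Airy line ensemble representation of $\cS$ and the parabolic shift invariances of the Airy-$2$ process.

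The one genuine subtlety to watch for is that the skew operation is \emph{not} a bounded continuous functional on $C(\RR_\uparrow^4,\RR)$ with the topology of uniform convergence on compacts, since it involves reparametrizing the space coordinate. So in the ``direct'' route I would restrict attention to $\cL$ evaluated at finitely many points $\{(x_i,s_i;y_i,t_i)\}$, where the operation reduces to evaluating at finitely many shifted points plus an additive affine term, and apply the Portmanteau theorem to these finite-dimensional marginals of the BLPP prelimits.
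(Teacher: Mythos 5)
The paper does not actually prove this proposition; it is quoted from \cite[Lemma 10.2]{DOV18} and \cite[Proposition 1.23]{DV21}, so there is no ``paper's own proof'' for comparison. That said, your proposed route — establishing each symmetry at the prelimit (or Airy-sheet) level and passing to the limit — is exactly how the cited references derive these identities, and your parabolic bookkeeping for the skew term ($\cL(x+\theta s,s;y+\theta t,t)\approx -(x-y)^2/(t-s)+2\theta(x-y)-\theta^2(t-s)$, so adding $\theta^2(t-s)+2\theta(y-x)$ restores the correct parabola) is correct.

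One small clarification on your ``genuine subtlety'': the skew operation \emph{is} in fact a continuous map on $C(\RR_\uparrow^4,\RR)$ equipped with the topology of uniform convergence on compact sets. The coordinate change $\phi_\theta(x,s;y,t)=(x+\theta s,s;y+\theta t,t)$ is a linear homeomorphism of $\RR_\uparrow^4$, so it carries compacts to compacts; if $f_n\to f$ uniformly on $\phi_\theta(K)$ then $f_n\circ\phi_\theta\to f\circ\phi_\theta$ uniformly on $K$, and the deterministic affine correction term does not affect this. Thus the continuous-mapping argument applies directly, and your detour through finite-dimensional marginals and Portmanteau, while perfectly valid, is not needed. (``Boundedness'' is also not the relevant notion here, since the topology of uniform convergence on compacts is not a normed topology.) Aside from this harmless over-caution, the plan is sound; the only remaining work would be to actually verify the prelimit identities in BLPP, in particular that semi-discrete translation invariance holds only approximately (the spatial index is discrete) and is recovered exactly in the limit.
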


Also, we have the following uniform estimate from \cite{DOV18} on the values  $\cL(x,s;y,t)$.
\begin{proposition}[{\cite[Corollary 10.7]{DOV18}}]
  \label{prop:7}
  There exists a random variable $N$ satisfying $\PP(N\geq m)\leq Ce^{-cm^{3/2}}$ for some constants $C,c$ and all $m>0$ such that for all $u=(x,s;y,t)\in \RR_\uparrow^4$, we have
    \begin{equation}
      \label{eq:22}
      |\cL(x,s;y,t)+(x-y)^2/(t-s)|\leq N (t-s)^{1/3} \log^{4/3}\left(\frac{2\|u\|+2}{s}\right)\log^{2/3}(\|u\|+2),
    \end{equation}
    where $\|u\|$ denotes the $L^2$ norm.
  \end{proposition}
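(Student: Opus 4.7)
The plan is to deduce this bound by combining a pointwise one-point estimate with a standard chaining (dyadic covering) argument, making essential use of the scaling and translation symmetries from Proposition \ref{prop:14}. Such uniform estimates on $\cL$ are by now classical for integrable KPZ models, and in the directed landscape the same scheme goes through because we have both strong one-point tails and a polynomial-Hölder modulus of continuity in all four variables.

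First I would set up the reduction to a one-point estimate. For a fixed $u_0=(x_0,s_0;y_0,t_0)\in \RR_\uparrow^4$, the one-point law of $\cL(x_0,s_0;y_0,t_0)+(x_0-y_0)^2/(t_0-s_0)$, once renormalized by $(t_0-s_0)^{1/3}$ via the KPZ $1{:}2{:}3$ scaling of Proposition \ref{prop:14}, matches the known scaling-limit law at the upper tail (the GUE Tracy–Widom law after appropriate affine normalization). In particular its upper tail is of order $\exp(-cm^{3/2})$, which is exactly the exponent that appears in the definition of $N$. Translation invariance then lets one assume the base point is, say, the origin, without loss of generality.

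Next I would carry out the chaining. Cover $\RR_\uparrow^4$ by dyadic boxes of sidelength $2^{-k}$ in space and $2^{-k}$ in time (using the natural $1{:}2{:}3$ scaling so that the space scale is $2^{-2k/3}$ and the time scale is $2^{-k}$, whichever normalization is cleanest). On each such box, combine the pointwise bound at its center with the Hölder modulus of continuity for the directed landscape established in \cite{DOV18}: $\cL$ is $1/2-$ Hölder in the space variables and $1/3-$ Hölder in the time variables, with a modulus whose distribution again has Gaussian-type tails. This allows one to replace $\sup_{u\in\boxx}|\cL(u)+\cdot|$ by the value at the center plus a controlled fluctuation of size $(t-s)^{1/3}\cdot\text{modulus}$. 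Applying the pointwise tail at each center and taking a union bound over the $O(\|u\|^{O(1)} s^{-O(1)})$ many relevant boxes then yields the global estimate. Balancing $m^{3/2}$ against $\log(\text{number of boxes})$ produces an $m\asymp \log^{2/3}$ extraction, while the parabolic normalization $(x-y)^2/(t-s)$ -- which diverges as $\|u\|\to\infty$ or $s\to 0$ -- is what promotes the single $\log^{2/3}$ to the mixed $\log^{4/3}((2\|u\|+2)/s)\log^{2/3}(\|u\|+2)$ decoration.

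The main obstacle is getting the exponents of the logarithmic corrections right, and in particular handling the behaviour as $s\downarrow 0$, where the parabolic term is delicate because the most natural tail bound degenerates. To deal with this one would split into the regime $|t-s|\geq s$, where the KPZ scaling already does all the work, and the regime $|t-s|\ll |s|$, where one instead uses the improved lower tail $\exp(-cm^3)$ of Tracy–Widom together with the monotone coupling of $\cL$ to Brownian last passage increments in the near-characteristic direction. Once the $s\to 0$ regime is absorbed into the denominator inside $\log^{4/3}$, the remaining decorations follow from the clean chaining outlined above. I would not grind through the individual constants; the key point is that the $3/2$ in the tail of $N$ is dictated by the Tracy–Widom upper tail, the $4/3$ and $2/3$ in the log exponents come from balancing $m^{3/2}\asymp \log(\text{boxes})$ against the parabolic blow-up, and everything else is bookkeeping.
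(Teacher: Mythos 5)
This proposition is quoted verbatim from \cite[Corollary 10.7]{DOV18}; the paper supplies no proof of its own, so there is nothing internal to compare your sketch against. Your high-level plan --- a one-point tail estimate of Tracy--Widom type, combined with the $1{:}2{:}3$ scaling and translation symmetries and a dyadic chaining argument using the H\"older modulus of $\cL$ --- is indeed the strategy used in \cite{DOV18}, and the identification of the $3/2$ exponent in the tail of $N$ with the Tracy--Widom upper tail, and of the $\log^{2/3}$ factor with the balance $m^{3/2}\asymp\log(\textrm{number of boxes})$, is correct.

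That said, a few details in your sketch would not survive being made precise. First, the denominator inside the $\log^{4/3}$ should be read as $t-s$ rather than $s$ (this appears to be a typo in the paper's transcription of \cite{DOV18}; note that $s$ ranges over all of $\RR$ so $\log((2\|u\|+2)/s)$ as written is not even well defined). Consequently your discussion of ``the behaviour as $s\downarrow 0$'' is attached to the wrong quantity; the genuinely singular regime is $t-s\to 0$, where the parabolic term $(x-y)^2/(t-s)$ blows up and where one must pay in the number of dyadic boxes. You half-correct this when you switch to discussing $|t-s|$, but the split into ``$|t-s|\geq s$ versus $|t-s|\ll|s|$'' has no natural meaning here. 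Second, the appeal to a ``monotone coupling of $\cL$ to Brownian last passage increments in the near-characteristic direction'' is not a recognizable tool in this context; what is actually available is the Brownian-Gibbs / comparison-to-Brownian-motion control of the Airy line ensemble used to get the modulus of continuity, and you do not need anything beyond that. Third, you do not need the sharper $\exp(-cm^{3})$ lower tail anywhere: the statement asks only for a two-sided bound with a $\exp(-cm^{3/2})$ tail, and that is dominated by the weaker (upper-tail) exponent; invoking the cubic lower tail does no harm but buys nothing. With these corrections your plan becomes essentially the \cite{DOV18} argument, so the verdict is: right architecture, but the handling of the near-diagonal ($t-s\to 0$) regime and the choice of auxiliary inputs need to be tightened before this would count as a proof.
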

  The above result shows that globally, the behaviour of $\cL(x,s;y,t)$ is governed by the parabolic term $(x-y)^2/(t-s)$. However, the local behaviour is Brownian, and we now record an estimate in this direction.
  \begin{proposition}[{\cite[Proposition 1.6]{DOV18}}]
  \label{lem:20}
  Fix $\nu>0$. Then the probability %
  \begin{equation}
    \label{eq:83}
    \PP(\sup_{x,x',y,y'\in [-\delta,\delta]}|\cL(x,0;y,1)-\cL(x',0;y',1)| \geq \delta^{1/2-\nu}) 
  \end{equation}
decays superpolynomially as $\delta\rightarrow 0$.
\end{proposition}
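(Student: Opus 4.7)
The plan is a dyadic chaining argument fed by the Brownian-type two-point tail estimates for the directed landscape. First I would reduce the two-parameter modulus of continuity to a single-parameter one. Writing
\[
|\cL(x,0;y,1) - \cL(x',0;y',1)| \leq |\cL(x,0;y,1) - \cL(x,0;y',1)| + |\cL(x,0;y',1) - \cL(x',0;y',1)|
\]
and invoking the flip invariance of Proposition \ref{prop:14} to interchange the roles of the two spatial variables, it suffices to bound, for fixed $x \in [-\delta,\delta]$, the oscillation
\[
\Omega_\delta(x) := \sup_{y_1, y_2 \in [-\delta,\delta]} |\cL(x,0;y_1,1) - \cL(x,0;y_2,1)|,
\]
uniformly over a net of $x$'s in $[-\delta,\delta]$, and then to union-bound over that net.

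The core input is the Gaussian two-point estimate: for $x$ in a fixed compact set and $y_1, y_2 \in [-1,1]$,
\[
\PP\bigl(|\cL(x,0;y_1,1) - \cL(x,0;y_2,1)| \geq \lambda\, |y_1-y_2|^{1/2}\bigr) \leq C e^{-c\lambda^2},
\]
which encodes the Brownian character of $\cL$ in one spatial argument and is typically extracted from the Brownian Gibbs property of the parabolic Airy line ensemble together with a Radon–Nikodym derivative estimate. Given this, I would run dyadic chaining on $[-\delta,\delta]$: introduce equispaced nets $\mathcal{N}_n$ of cardinality $2^n$ and spacing $\Delta_n = 2^{2-n}\delta$, and apply the two-point estimate to each of the $2^n$ consecutive pairs at level $n$ with threshold $\lambda_n = A\sqrt{n} + \delta^{-\nu}$. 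A union bound gives
\[
\PP\Bigl(\max_{\substack{y, y' \in \mathcal{N}_n \\ \text{consecutive}}} |\cL(x,0;y,1) - \cL(x,0;y',1)| \geq \lambda_n \Delta_n^{1/2}\Bigr) \leq 2^n C e^{-c\lambda_n^2}.
\]
Summing these over $n \geq 0$ produces a geometric decay, and telescoping through the chain together with continuity of $\cL$ upgrades the estimate from countable nets to the sup. The outcome is
\[
\PP\bigl(\Omega_\delta(x) \geq C'\delta^{1/2-\nu}\bigr) \leq C\exp(-c\delta^{-2\nu}),
\]
which is superpolynomial in $\delta$. An analogous estimate in the $x$-variable, combined with a union bound over an $O(\delta^{-\nu/2})$-net of $x$-values in $[-\delta,\delta]$, then completes the proof.

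The main technical obstacle is the Gaussian two-point bound; while local Brownianity of $\cL$ along a horizontal slice is well-known, turning this into a clean sub-Gaussian tail (as opposed to merely a sub-exponential one) is the key analytic input. Once available, the chaining itself is routine, and the only delicacy is in aligning the parameters $A$, $\lambda_n$, and the size of the $x$-net so that every nested error remains superpolynomially small in $\delta$; no individual step requires more than the two-point estimate, flip invariance, and a Borel–Cantelli / union bound combination.
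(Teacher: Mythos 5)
The paper itself gives no proof here: it cites \cite[Proposition~1.6]{DOV18}, which is the quantitative global modulus of continuity for $\cL$. That result produces a random constant $R$ with stretched-exponential tail such that, on the compact set in question, $\sup_{x,x',y,y'\in[-\delta,\delta]}|\cL(x,0;y,1)-\cL(x',0;y',1)| \leq R\,\delta^{1/2}\log^{1/2}(\delta^{-1})$, after which the stated superpolynomial probability bound is immediate because $\delta^{1/2-\nu}/(\delta^{1/2}\log^{1/2}(\delta^{-1}))\to\infty$. Your proposal is instead to re-derive a restricted version of that modulus from scratch by dyadic chaining. That is the right spirit (chaining is precisely the mechanism behind \cite[Proposition~1.6]{DOV18}), but as written there are two problems.

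First, the sub-Gaussian two-point tail $\PP(|\cL(x,0;y_1,1)-\cL(x,0;y_2,1)|\geq\lambda|y_1-y_2|^{1/2})\leq Ce^{-c\lambda^2}$ that you single out as the key analytic input is likely stronger than what the literature provides. Spatial increments of $\cL$ at unit time separation inherit a Tracy--Widom-type upper tail, and the available uniform two-point bounds (and the resulting random constant in \cite{DOV18}, cf.\ the tail in Proposition~\ref{prop:7} of this paper) are of order $e^{-c\lambda^{3/2}}$. This is not fatal: taking $\lambda_n = An^{2/3}+\delta^{-\nu}$ instead of $A\sqrt{n}+\delta^{-\nu}$ still makes $2^n e^{-c\lambda_n^{3/2}}$ summable for $A$ large, and $\sum_n\lambda_n\Delta_n^{1/2}$ is still $O(\delta^{1/2-\nu})$; but your argument should not hinge on a Gaussian tail you have not actually established.

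Second, and more seriously, the reduction to one-dimensional chaining does not close. After bounding $\Omega_\delta(x)$ for each fixed $x$ and union-bounding over a finite net $\cN^x\subset[-\delta,\delta]$, you control $\max_{x\in\cN^x}\Omega_\delta(x)$, but to pass to $\sup_{x\in[-\delta,\delta]}\Omega_\delta(x)$ you must bound $\sup_y|\cL(x,0;y,1)-\cL(x_i,0;y,1)|$ for $x$ between net points. That is again a two-variable oscillation of exactly the type you are trying to estimate; invoking the ``analogous estimate in the $x$-variable'' or mere continuity of $\cL$ does not yield the quantitative probability bound needed, so the step is circular. The clean fix is to chain directly over a two-dimensional dyadic grid $\cN_n\subset[-\delta,\delta]^2$ of cardinality $\asymp 4^n$ with spacing $\Delta_n=2^{1-n}\delta$: bound increments between grid points that differ in one coordinate using the one-variable two-point estimate, union-bound over the $O(4^n)$ such edges at level $n$ (adjusting $\lambda_n$ so that $4^n e^{-c\lambda_n^{3/2}}$ is still geometrically small), sum over $n$, and use continuity only to upgrade the a.s.\ grid bound to the full supremum. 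This is a genuine, if small, modification of what you wrote and is necessary for the argument to go through.
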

We note that \cite[Proposition 1.6]{DOV18} is more quantitative than the above result, but the above simpler result will be sufficient for us. We now turn to estimates involving geodesics in the directed landscape. As we noted in the introduction, the geodesic $\gamma_p^q$ is a.s.\ unique \cite[Theorem 12.1]{DOV18} for any fixed $(p;q)\in \RR_\uparrow^4$. Though there do exist \cite{Dau23+} exceptional pairs $(p;q)$ with multiple such geodesics, there always do exist unique left-most and right-most geodesics, and we record this in the following lemma.
\begin{proposition}[{\cite[Lemma 13.2]{DOV18}, \cite[Theorem 6.5 (i)]{BSS22}}]
  \label{prop:21}
  Almost surely, simultaneously for every $(x,s;y,t)\in \RR_\uparrow^4$, there exist a left-most geodesic $\underline{\gamma}_{(x,s)}^{(y,t)}$ and a right-most geodesic $\overline{\gamma}_{(x,s)}^{(y,t)}$ from $(x,s)$ to $(y,t)$ satisfying
  \begin{equation}
    \label{eq:159}
    \underline{\gamma}_{(x,s)}^{(y,t)}(r)\leq \gamma_{(x,s)}^{(y,t)}(r)\leq \overline{\gamma}_{(x,s)}^{(y,t)}(r)
  \end{equation}
  for all geodesics $\gamma_{(x,s)}^{(y,t)}$ and all $r\in [s,t]$. Similarly, almost surely, simultaneously for all $\theta\in \RR$ and $p=(y,t)\in \RR^2$, there exists a unique left-most downward $\theta$-directed semi-infinite geodesic $\underline{\Gamma}_p^\theta$ and a unique right-most downward $\theta$-directed semi-infinite geodesic $\overline{\Gamma}_p^\theta$ satisfying
  \begin{equation}
    \label{eq:160}
    \underline{\Gamma}_p^\theta(r)\leq \Gamma_p^\theta(r)\leq \overline{\Gamma}_p^\theta(r)
  \end{equation}
  for all geodesics $\Gamma_p^\theta$ and all $r\leq t$.
\end{proposition}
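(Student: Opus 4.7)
The plan is to construct $\underline{\gamma}_{(x,s)}^{(y,t)}$ (respectively $\overline{\gamma}$) as the limit of unique geodesics between rational endpoint pairs approaching $(x,s),(y,t)$ from the left (respectively right), to verify that the limit is both a $\cL$-geodesic and extremal among all geodesics with the same endpoints, and then to extend the scheme to the semi-infinite $\underline{\Gamma}_p^\theta, \overline{\Gamma}_p^\theta$ by an analogous rational approximation of the basepoint $p$.

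First I would fix a probability-one event $\Omega_0$ on which the geodesic $\gamma_p^q$ exists and is unique for every rational $(p;q)\in \RR_{\uparrow}^4\cap\QQ^4$, as guaranteed by \cite[Theorem 12.1]{DOV18}. The rational geodesic tree on $\Omega_0$ enjoys a planar non-crossing property: if $p_1=(x_1,s),p_2=(x_2,s)$ and $q_1=(y_1,t),q_2=(y_2,t)$ are rational with $x_1\leq x_2$ and $y_1\leq y_2$, then $\gamma_{p_1}^{q_1}(r)\leq \gamma_{p_2}^{q_2}(r)$ for $r\in[s,t]$. This follows from the standard swap argument: exchanging tails of the two curves past a strict crossing produces two paths from $p_1$ to $q_2$ and from $p_2$ to $q_1$ respectively whose lengths add up to $\cL(p_1;q_1)+\cL(p_2;q_2)$, forcing these swapped paths to themselves be geodesics and ultimately contradicting uniqueness of one of the four rational geodesics.

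Next, for arbitrary $(x,s;y,t)\in\RR_\uparrow^4$, I would fix rational sequences with $x_n\uparrow x,\ y_n\uparrow y,\ s_n\downarrow s,\ t_n\uparrow t$ and define
\begin{equation}
\underline{\gamma}_{(x,s)}^{(y,t)}(r)=\lim_{n\to\infty}\gamma_{(x_n,s_n)}^{(y_n,t_n)}(r),\qquad r\in(s,t),
\end{equation}
extending to the endpoints by continuity. Non-crossing makes the sequence monotone non-decreasing in $n$, so the pointwise limit exists. Uniform H\"older-$2/3-$ regularity of geodesics in compact regions (via Proposition \ref{prop:7} and a chaining argument) gives equicontinuity, hence uniform convergence and continuity of the limit path $\underline{\gamma}$. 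Joint continuity of $\cL$ together with the composition law \eqref{eq:96} and $\cL(x_n,s_n;y_n,t_n)\to \cL(x,s;y,t)$ then yields $\ell(\underline{\gamma};\cL)=\cL(x,s;y,t)$, so $\underline{\gamma}$ is a geodesic. To see it is leftmost, I would run the same swap argument between the rational geodesic $\gamma_{(x_n,s_n)}^{(y_n,t_n)}$ and any arbitrary geodesic $\gamma$ from $(x,s)$ to $(y,t)$: a strict crossing still produces an alternative geodesic to a rational endpoint pair, contradicting uniqueness. Hence $\gamma_{(x_n,s_n)}^{(y_n,t_n)}\leq \gamma$ pointwise, and letting $n\to\infty$ gives $\underline{\gamma}\leq \gamma$.

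Symmetric reasoning produces $\overline{\gamma}$, and the semi-infinite case is handled identically using the existence of $\theta$-directed semi-infinite geodesics from rational basepoints established in \cite{BSS22}. The main obstacle I expect is the simultaneity assertion: the construction must produce a valid extremal geodesic at \emph{every} $(x,s;y,t)\in\RR_\uparrow^4$ on a single probability-one event. This requires the H\"older and length-continuity estimates to hold uniformly over countable exhaustions of $\RR_\uparrow^4$ by compact sets on one null-set exception, which is made possible by the strong uniform tail of the random variable $N$ in Proposition \ref{prop:7}. For the semi-infinite case there is the added subtlety of verifying that the limit path retains the asymptotic direction $\underline{\Gamma}_p^\theta(-s)/s\to \theta$; this is handled by sandwiching the arbitrary basepoint $p$ between two rational $\theta$-directed geodesics and using monotonicity to propagate the direction to the limit.
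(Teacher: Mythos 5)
This proposition is imported into the paper by citation from \cite{DOV18} and \cite{BSS22}; the paper itself gives no proof, so there is nothing in-text to compare against. Evaluating your proposal on its own terms, there is a genuine gap in the monotonicity and ordering step.

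The difficulty is that your approximating endpoints $(x_n,s_n)$ and $(y_n,t_n)$ live on time levels $s_n\neq s_{n+1}$, $t_n\neq t_{n+1}$. The planar non-crossing argument orders two geodesics whose endpoints sit on common time levels with ordered space coordinates. To compare $\gamma_n$ with $\gamma_{n+1}$ you must restrict $\gamma_{n+1}$ to $[s_n,t_n]$ and then need $x_n\leq\gamma_{n+1}(s_n)$ and $y_n\leq\gamma_{n+1}(t_n)$. But $\gamma_{n+1}(s_{n+1})=x_{n+1}\geq x_n$ at the earlier time $s_{n+1}<s_n$ says nothing about $\gamma_{n+1}(s_n)$: geodesics fluctuate, and $\gamma_{n+1}$ may well dip below $x_n$ at time $s_n$. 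So ``non-crossing makes the sequence monotone'' does not follow as written, and neither does the subsequent comparison of $\gamma_n$ against an arbitrary geodesic $\gamma$ from $(x,s)$ to $(y,t)$, which requires the same unproven ordering $x_n\leq\gamma(s_n)$, $y_n\leq\gamma(t_n)$.

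The standard route avoids rational approximation entirely. Almost surely, for every $(p;q)\in\RR_\uparrow^4$ simultaneously, the set of geodesics from $p$ to $q$ is a nonempty compact subset of $C([s,t])$ (a direct consequence of the uniform precompactness in Proposition \ref{prop:18} applied to a constant sequence), and this set is closed under pointwise minimum and maximum: if $\gamma_1,\gamma_2$ are geodesics from $p$ to $q$, then splitting at their crossing times and using the composition law \eqref{eq:96} shows $\ell(\gamma_1\wedge\gamma_2;\cL)+\ell(\gamma_1\vee\gamma_2;\cL)=2\cL(p;q)$, forcing both $\gamma_1\wedge\gamma_2$ and $\gamma_1\vee\gamma_2$ to be geodesics. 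A compact lattice has a least element, which is $\underline{\gamma}_p^q$. This argument requires no uniqueness input and delivers the simultaneity for free. Your approximation idea can be salvaged, but only by pushing $x_n$ far enough to the left and $y_n$ far enough to the left relative to the gaps $s_n-s$ and $t-t_n$, quantitatively calibrated against a transversal-fluctuation bound so that $x_n\leq\gamma(s_n)$ for all geodesics $\gamma$ from $(x,s)$ to $(y,t)$ — which is a more delicate construction than what you wrote and still needs the compactness input to extract a limit.
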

In \cite[Proposition 12.3]{DOV18}, it was shown that for fixed points $p,q$, the geodesic $\gamma_p^q$ is $2/3-$ H\"older regular. In fact, the above result for geodesics between fixed points is true simultaneously for all geodesics, and we now state a quantitative estimate which, in particular, implies this.
\begin{proposition}[{\cite[Lemma 3.11]{GZ22}}]
    \label{prop:17}
    There is a random variable $S$ and positive constants $C,c$ such that we have the following. For $\ell>0$, the tail estimate $\PP(S>\ell)<Ce^{-c\ell^{9/4}(\log \ell)^{-4}}$ holds. Further, for any $u\in (x,s;y,t)\in \RR_\uparrow^4$, any geodesic $\gamma_{(x,s)}^{(y,t)}$ and any $r$ satisfying $(s+t)/2\leq r<t$,
    \begin{equation}
      \label{eq:124}
      |\gamma_{(x,s)}^{(y,t)}-\frac{x(t-r)+y(r-s)}{t-s}|\leq S(t-r)^{2/3}\log^3(1+(t-r)^{-1}\|u\|),
    \end{equation}
    where $\|u\|$ denotes the $L^2$ norm. Also, by symmetry, a bound similar to the above holds when $s<r<(s+t)/2$.
  \end{proposition}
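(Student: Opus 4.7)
The plan is to prove Proposition~\ref{prop:17} by combining a pointwise transversal fluctuation estimate with a two-level chaining argument that delivers uniformity over the endpoint quadruple $u=(x,s;y,t)$, the interior time $r$, and the choice of geodesic. First, using the KPZ $1{:}2{:}3$ scaling, translation, and flip symmetries recorded in Proposition~\ref{prop:14}, I would reduce to studying geodesics between the canonical endpoints $(0,0)$ and $(0,1)$ with $r\in[1/2,1)$, tracking the global size factor $\|u\|$ through the change of variables; the general statement then follows from rescaling.

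The pointwise bound comes from an energy-competition argument. If some geodesic satisfies $|\gamma_{(0,0)}^{(0,1)}(r)|\geq \ell(1-r)^{2/3}$ with value at time $r$ equal to $z$, the composition law \eqref{eq:96} gives
\begin{displaymath}
  \cL(0,0;z,r)+\cL(z,r;0,1)=\cL(0,0;0,1).
\end{displaymath}
By Proposition~\ref{prop:7}, the mean of the left-hand side is of order $-\ell^2(1-r)^{1/3}$, whereas the right-hand side is $O(1)$, so the event forces a joint upward deviation of the two landscape values of total size at least of order $\ell^2(1-r)^{1/3}$. Invoking the sharp one-point upper tail of the KPZ fixed point (of Tracy--Widom form $\exp(-cm^{3/2})$) and optimizing how this deviation is distributed between the two terms produces a stretched-exponential tail bound in $\ell$.

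To promote this to the uniform bound in the proposition, I would chain in two layers. The outer layer is a dyadic net of endpoint pairs at scales of $\|u\|$ and $t-s$; the $\log^3(1+(t-r)^{-1}\|u\|)$ factor appearing in the conclusion is exactly what absorbs the union-bound costs across these scales. The inner layer is a chaining in $r$ along a dyadic partition of $[(s+t)/2,t)$. Here I would leverage Proposition~\ref{prop:21} so that it suffices to control the left-most and right-most geodesics, which by the sandwich inequality \eqref{eq:159} bracket every other geodesic between the same endpoints; a planarity/ordering argument then transfers a fluctuation of an arbitrary geodesic between points $(p;q)$ to a fluctuation of a left-most or right-most geodesic between nearby lattice endpoints, which is covered by the pointwise bound.

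The principal obstacle is optimizing the chaining so as to produce the sharp $9/4$ tail exponent on $S$ stated in the proposition: a naive union bound would yield a weaker exponent, and reaching $9/4$ requires a careful balance between the number of lattice points at each scale and the stretched-exponential pointwise tail, with all residual logarithmic losses from the double chaining absorbed into the $(\log\ell)^{-4}$ correction.
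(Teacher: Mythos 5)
There is no ``paper's own proof'' to compare against here: Proposition~\ref{prop:17} is imported verbatim from the external reference (Ganguly--Zhang, cited as Lemma~3.11 of \cite{GZ22}), and the present paper gives no argument for it at all. So your proposal is a blind attempt at a result this paper treats as a black box.

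As a strategy, what you outline is sound and is essentially the standard route to such uniform transversal-fluctuation bounds, and it is consistent in spirit with how the cited lemma is established in \cite{GZ22}: reduce by the $1{:}2{:}3$ scaling, translation and flip symmetries of Proposition~\ref{prop:14}; get a pointwise tail for $|\gamma_{(0,0)}^{(0,1)}(r)|$ via the energy-competition argument using the composition law \eqref{eq:96} together with the parabolic curvature and one-point tails from Proposition~\ref{prop:7}; then promote to a uniform statement by chaining over dyadic nets in the endpoints and in $r$, using the sandwiching in Proposition~\ref{prop:21} to reduce from arbitrary geodesics to the countable family of left-/right-most geodesics between lattice endpoints, with the $\log^3$ factor absorbing the union-bound cost. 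One small quantitative point worth flagging: your pointwise computation actually yields a tail of order $\exp(-c\ell^3)$, which is strictly stronger than the stated $\exp(-c\ell^{9/4}(\log\ell)^{-4})$; the degradation to the $9/4$ exponent (with the $(\log\ell)^{-4}$ correction) is entirely a product of the multi-scale chaining and of having to track the full range of $\|u\|$ and $t-r$ simultaneously. You correctly identify that this optimization is the crux, but the sketch does not actually carry it out, so as written this is a plausible roadmap rather than a proof. Since the paper only cites the result, the appropriate conclusion is that your proposal is directionally correct and aligned with the known argument, but incomplete at precisely the step that makes the lemma nontrivial.
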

  Since the above holds for all geodesics, it is easy to see that it implies that the semi-infinite geodesic $\Gamma_{\0}$ must be a.s.\ locally H\"older $2/3-$ regular as well.
We shall also require the following transversal fluctuation estimate for infinite geodesics.
\begin{proposition}[{\cite[Theorem 3.12]{RV21}}]
  \label{lem:12}
  Fix $0<\ualpha<\oalpha$. Then there exist constants $C,c$ such that for any $\theta\in \RR$ and all $\ell>0$, we have
  \begin{equation}
    \label{eq:57}
    \PP(\sup_{t\in [\ualpha, \oalpha]}|\Gamma^\theta_q(-t)-\theta t|\geq \ell)\leq Ce^{-c\ell^3}.
  \end{equation}
\end{proposition}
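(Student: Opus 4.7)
By the skew invariance of Proposition \ref{prop:14}, the landscape $\cL_{-\theta}^{\mathrm{sk}}$ is equidistributed with $\cL$, and the associated shear sends each downward $\theta$-directed semi-infinite geodesic of $\cL$ to a downward $0$-directed semi-infinite geodesic of $\cL_{-\theta}^{\mathrm{sk}}$ while converting the displacement $\Gamma_q^\theta(-t)-\theta t$ to the displacement of the transformed geodesic from its asymptotic direction $0$. Combined with translation invariance to move $q$ to $\0$, the target estimate reduces to showing
\[ \PP\!\left(\sup_{t\in[\ualpha,\oalpha]}|\Gamma_\0(-t)|\geq \ell\right)\leq C e^{-c\ell^3} \]
with constants depending only on $\ualpha,\oalpha$.

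\textbf{Single-time tail via an argmax formula.} The core step is the single-time bound $\PP(|\Gamma_\0(-t_0)|\geq \ell)\leq Ce^{-c\ell^3}$ for a fixed $t_0\in[\ualpha,\oalpha]$. The plan is to represent $\Gamma_\0(-t_0)$ via the Busemann variational formula
\[ \Gamma_\0(-t_0)\in \argmax_{x\in\RR}\bigl\{\cL(x,-t_0;0,0)+\cB_0(x,-t_0)\bigr\}, \]
where $\cB_0$ is the direction-$0$ Busemann function (introduced in Section \ref{sec:busemann}). On the event $\{\Gamma_\0(-t_0)\geq\ell\}$, some $x\geq \ell$ must strictly beat $x=0$ in the above functional. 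Proposition \ref{prop:7} supplies the parabolic decay $\cL(x,-t_0;0,0)=-x^2/t_0+O(\text{log correction})$, and the locally Brownian regularity of $\cB_0(\cdot,-t_0)$ (up to a parabolic drift) makes its oscillation on a window of width $R$ of order $R^{1/2}$ with Gaussian tails. A dyadic decomposition $x\in[2^k\ell,2^{k+1}\ell]$ combined with moderate-deviation estimates for $\cB_0$ on each scale produces the cubic tail $e^{-c\ell^3}$, which is the correct KPZ $1\colon 2\colon 3$ exponent for fluctuations of size $\ell$ over a time window of order one.

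\textbf{Supremum upgrade and main obstacle.} To pass from a single $t_0$ to $\sup_{t\in[\ualpha,\oalpha]}$, I would apply the single-time estimate at an $\ell$-independent finite net of checkpoints in $[\ualpha,\oalpha]$ and control the oscillation of $\Gamma_\0$ between consecutive checkpoints through the $2/3-$ H\"older regularity of $\Gamma_\0\lvert_{[-\oalpha-1,0]}$, which is inherited from Proposition \ref{prop:17} by approximating $\Gamma_\0$ on this compact window by a finite geodesic from a far-away point, as enabled by coalescence. Since the spacing is independent of $\ell$, the union bound loses only a constant factor. The main obstacle is obtaining the sharp cubic exponent in the single-time step: Proposition \ref{prop:17} alone yields only the weaker tail $e^{-c\ell^{9/4}(\log\ell)^{-4}}$, so the argmax/Busemann strategy is essential. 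Its rigorous execution requires marrying the parabolic estimate of Proposition \ref{prop:7} with uniform Brownian moderate-deviation control of $\cB_0$ across all dyadic scales $|x|\gg 1$; the cleanest route is likely to invoke one-point moderate-deviation estimates for the Airy line ensemble, which directly give the $e^{-c\ell^3}$ tail for the argmax of an Airy-type process minus a parabola and thus the desired sharp bound.
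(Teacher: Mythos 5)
The paper does not prove this estimate; it is quoted directly as \cite[Theorem 3.12]{RV21}, so there is no internal proof to compare against. On its own merits, your skew-invariance and translation reduction to $\theta=0$, $q=\0$ is sound, and the Busemann argmax characterisation $\Gamma_\0(-t_0)\in\argmax_x\{\cL(x,-t_0;\0)+\cB_0^{-t_0}(x)\}$ together with a dyadic-scale comparison of the Brownian Busemann profile against the parabola is indeed the standard route to the single-time cubic tail; invoking one-point moderate deviations rather than the global Proposition~\ref{prop:7} (whose random constant $N$ has only a $3/2$-stretched-exponential tail and would itself spoil the exponent) is the right instinct.

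The genuine gap is in your supremum upgrade. You use a fixed, $\ell$-independent mesh and propose to control inter-checkpoint oscillation of $\Gamma_\0$ via the uniform H\"older constant $S$ of Proposition~\ref{prop:17}. But $\PP(S>\ell)\leq Ce^{-c\ell^{9/4}(\log\ell)^{-4}}$, and on a mesh of fixed width the oscillation bound forces the constraint $S\lesssim\ell$, so this step contributes an error probability of order $e^{-c\ell^{9/4}/\log^{4}\ell}$, which dominates $e^{-c\ell^3}$ and ruins the exponent; even an $\ell$-dependent mesh only brings this down to $e^{-c\ell^3/\log^{O(1)}\ell}$, still off by a polylog. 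The defect is that Proposition~\ref{prop:17} is a uniform-over-all-geodesics statement and hence intrinsically lossy. To keep the cubic rate you should instead control the endpoint $|\Gamma_\0(-2\oalpha)|$ once via the argmax formula (cubic tail), note that $\Gamma_\0\lvert_{[-2\oalpha,0]}$ coincides with the finite geodesic between $(\Gamma_\0(-2\oalpha),-2\oalpha)$ and $\0$, and then apply a transversal-fluctuation bound for a \emph{single} finite geodesic between given points — these bounds do have the sharp $e^{-c\ell^3}$ tail — to dominate $\sup_{t\in[\ualpha,\oalpha]}|\Gamma_\0(-t)|$ by the endpoint size plus the single-geodesic wandering. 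That keeps every contribution at the cubic rate without any union bound over a mesh.
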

Now, we discuss some useful results on the convergence properties of geodesics.
\begin{proposition}[{\cite[Lemma 3.1]{DSV22}}]
  \label{prop:18}
  The following holds with probability $1$. For all points $u=(x,s;y,t)\in \RR^4_\uparrow$ and any sequence of points $u_n=(x_n,s_n;y_n,t_n)\in \RR^4_\uparrow$ converging to $u$, every sequence of geodesics $\gamma_{(x_n,s_n)}^{(y_n,t_n)}$ is precompact in the uniform topology, and every subsequential limit is a geodesic from $(x,s)$ to $(y,t)$. 
\end{proposition}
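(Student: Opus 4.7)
The plan is to follow a standard compactness-plus-continuity scheme: first extract a uniform subsequential limit using the equi-regularity of geodesics supplied by Proposition~\ref{prop:17}, then verify that any such limit is itself a geodesic using the continuity of $\cL$ together with the composition law~\eqref{eq:96}. I will work on the almost sure event where $\cL$ is a continuous function on $\RR_\uparrow^4$ and the random variable $S$ of Proposition~\ref{prop:17} is finite; all subsequent claims are deterministic on this event.

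Fix $u=(x,s;y,t)\in\RR_\uparrow^4$ and a sequence $u_n=(x_n,s_n;y_n,t_n)\to u$. For each $n$, apply Proposition~\ref{prop:17} to the geodesic $\gamma_n\coloneqq \gamma_{(x_n,s_n)}^{(y_n,t_n)}$ on the left and right halves of $[s_n,t_n]$, and then, recursively, on dyadically smaller sub-geodesics of $\gamma_n$. Summing the resulting transversal fluctuation bounds yields, for any fixed $\nu>0$, a uniform H\"older modulus of continuity of exponent $2/3-\nu$ for the family $\{\gamma_n\}$, with random constant depending only on $S$ and on a fixed neighborhood of $u$ (the logarithmic factors from Proposition~\ref{prop:17} being absorbed into the arbitrarily small loss $\nu$). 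In particular, for any $\varepsilon>0$, the restrictions $\gamma_n|_{[s+\varepsilon,t-\varepsilon]}$ (defined once $n$ is large enough that $[s+\varepsilon,t-\varepsilon]\subset[s_n,t_n]$) form a uniformly bounded, equicontinuous family, so Arzel\`a--Ascoli together with a diagonal extraction along $\varepsilon=1/k$ produces a subsequence converging uniformly on every compact subinterval of $(s,t)$ to a continuous limit $\gamma\colon(s,t)\to\RR$. The same H\"older control, applied near the endpoint $s_n$, gives a bound of the form $|\gamma_n(r)-x_n|\leq C(r-s_n)^{2/3-\nu}$ uniformly in $n$, so combined with $x_n\to x$ and $s_n\to s$ this forces $\lim_{r\downarrow s}\gamma(r)=x$; setting $\gamma(s)=x$ and, symmetrically, $\gamma(t)=y$ extends $\gamma$ continuously to $[s,t]$ and upgrades the convergence to uniform convergence on all of $[s,t]$.

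For the identification step, pick any $r\in(s,t)$; for all large $n$, $r\in(s_n,t_n)$, and since $\gamma_n$ is a geodesic passing through $(\gamma_n(r),r)$, the composition law~\eqref{eq:96} forces
\[
\cL(x_n,s_n;y_n,t_n)=\cL(x_n,s_n;\gamma_n(r),r)+\cL(\gamma_n(r),r;y_n,t_n).
\]
Continuity of $\cL$ on $\RR_\uparrow^4$ together with the convergences $(x_n,s_n)\to(x,s)$, $(y_n,t_n)\to(y,t)$, $\gamma_n(r)\to\gamma(r)$ allows passage to the limit to produce the identity $\cL(x,s;y,t)=\cL(x,s;\gamma(r),r)+\cL(\gamma(r),r;y,t)$. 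Iterating this identity at any finite set of interior times $s<r_1<\dots<r_{k-1}<t$ shows that for every partition the corresponding sum in the definition~\eqref{eq:97} of $\ell(\gamma;\cL)$ equals $\cL(x,s;y,t)$, whence $\ell(\gamma;\cL)=\cL(x,s;y,t)$ and $\gamma$ is a geodesic from $(x,s)$ to $(y,t)$. The only mildly delicate point of the argument is that the $\gamma_n$ have varying domains $[s_n,t_n]$, which prevents a single one-shot application of Arzel\`a--Ascoli on a fixed function space; this is precisely why the diagonal extraction together with the endpoint-continuity upgrade are needed, and this constitutes the main (modest) technical obstacle. Everything else is a clean consequence of the quantitative regularity in Proposition~\ref{prop:17} and the joint continuity of the random field $\cL$.
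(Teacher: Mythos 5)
The paper does not prove this statement: it is quoted directly from \cite[Lemma 3.1]{DSV22}, so there is no internal argument to compare against. Your compactness-plus-identification scheme is the natural one and, I believe, substantially what the cited reference does. The precompactness step is sound: the random constant $S$ in Proposition~\ref{prop:17} is a single a.s.\ finite variable that works simultaneously for all geodesics, and since $u_n\to u$ the $\|u\|$ terms in the logarithmic factors stay bounded, so dyadic chaining on sub-geodesics of $\gamma_n$ does give a uniform $(2/3-\nu)$-H\"older modulus; the diagonal extraction together with the endpoint-continuity upgrade then correctly handles the fact that the domains $[s_n,t_n]$ vary with $n$.

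The identification step, however, contains a gap as written. You establish the two-point identity $\cL(x,s;y,t)=\cL(x,s;\gamma(r),r)+\cL(\gamma(r),r;y,t)$ for each fixed $r\in(s,t)$, and then assert that ``iterating this identity'' at interior times $r_1<\dots<r_{k-1}$ yields the full partition identity. But the family of two-point identities, taken on its own, does not entail the multi-point identity. Given only $A=\cL(p;b)+\cL(b;q)$ and $A=\cL(p;c)+\cL(c;q)$ with $b$ temporally before $c$, the reverse triangle inequality yields $A\geq\cL(p;b)+\cL(b;c)+\cL(c;q)$ but not equality; for instance $\cL(p;b)=3$, $\cL(b;q)=7$, $\cL(p;c)=7$, $\cL(c;q)=3$, $\cL(b;c)=3$, $A=10$ is consistent with every reverse triangle inequality yet the three-term sum is $9\neq 10$. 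What you actually want to pass to the limit is the $k$-point identity for $\gamma_n$: since $\gamma_n$ is a geodesic, for any fixed $s<r_1<\dots<r_{k-1}<t$ and $n$ large one has
\begin{equation*}
\cL(x_n,s_n;y_n,t_n)=\cL(x_n,s_n;\gamma_n(r_1),r_1)+\sum_{i=1}^{k-2}\cL(\gamma_n(r_i),r_i;\gamma_n(r_{i+1}),r_{i+1})+\cL(\gamma_n(r_{k-1}),r_{k-1};y_n,t_n),
\end{equation*}
and continuity of $\cL$ together with $\gamma_n(r_i)\to\gamma(r_i)$ passes this to the limit term by term, giving exactly the partition sum in~\eqref{eq:97} equal to $\cL(x,s;y,t)$. (Equivalently, re-run your limiting argument on the sub-geodesics $\gamma_n|_{[r_i,t_n]}$, whose left endpoints also converge; but ``iterating'' the already-derived limit identity is not by itself a valid step.) This is a small and easily repaired slip; the overall architecture of the proof is correct.
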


\begin{proposition}[{\cite[Lemma B.12]{BSS22}, \cite[Lemma 3.3]{DSV22}, \cite[Theorem 2]{Bha23}}]
  \label{prop:19}
  The following holds with probability $1$. Let $u=(x,s;y,t)\in \RR^4_\uparrow$ and $u_n=(x_n,s_n;y_n,t_n)\in \RR^4_\uparrow$ be a sequence converging to $u$ admitting geodesics $\gamma_{(x_n,s_n)}^{(y_n,t_n)}$ which converge uniformly to a geodesic $\gamma_{(x,s)}^{(y,t)}$. Then the geodesics $\gamma_{(x_n,s_n)}^{(y_n,t_n)}$ converge to $\gamma_{(x,s)}^{(y,t)}$ in the overlap sense, by which we mean that the set
  \begin{equation}
    \label{eq:155}
    \left\{r\in [s_n,t_n]\cap [s,t]\colon \gamma_{(x_n,s_n)}^{(y_n,t_n)}(r)=\gamma_{(x,s)}^{(y,t)}(r)\right\}
  \end{equation}
  is an interval whose endpoints converge to $s$ and $t$.
\end{proposition}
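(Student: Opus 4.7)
The plan is to decompose the claim into two parts: (a) for each large enough $n$, the coincidence set $C_n := \{r \in [s_n, t_n] \cap [s, t] : \gamma_{(x_n,s_n)}^{(y_n,t_n)}(r) = \gamma_{(x,s)}^{(y,t)}(r)\}$ is an interval, and (b) writing $C_n = [a_n, b_n]$, the endpoints satisfy $a_n \to s$ and $b_n \to t$ as $n \to \infty$. In what follows abbreviate $\gamma_n := \gamma_{(x_n,s_n)}^{(y_n,t_n)}$ and $\gamma := \gamma_{(x,s)}^{(y,t)}$.

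For part (a), closedness of $C_n$ follows from continuity of both paths. For convexity, suppose $r_1 < r_2$ are both in $C_n$ with common values $z_i := \gamma_n(r_i) = \gamma(r_i)$; then $\gamma_n|_{[r_1, r_2]}$ and $\gamma|_{[r_1, r_2]}$ are both geodesics from $(z_1, r_1)$ to $(z_2, r_2)$. I would show they coincide by combining Proposition \ref{prop:21} (sandwiching both between the unique leftmost and rightmost geodesics) with Proposition \ref{prop:18} (taking subsequential limits for rational approximations of $(z_1, r_1)$ and $(z_2, r_2)$), together with a splicing argument that concatenates $\gamma_n$ and $\gamma$ at coincidence points to manufacture further geodesics. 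Any hypothetical discrepancy $\gamma_n(r^*) \neq \gamma(r^*)$ for $r^* \in (r_1, r_2)$ should then force the existence of two almost disjoint geodesics between fixed rational endpoints, violating almost sure geodesic uniqueness.

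For part (b), the H\"older estimate of Proposition \ref{prop:17} forces both $\gamma_n$ and $\gamma$ to stay close to their endpoint values near $s$ and $t$. Combined with $x_n \to x$, $y_n \to y$ and uniform convergence, this yields $|\gamma_n(r) - \gamma(r)| \to 0$ near the endpoints uniformly in $n$. The interval property from (a), together with a subsequential stability argument via Proposition \ref{prop:18}, then produces coincidence times $a_n \to s$ and $b_n \to t$.

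\textbf{Main obstacle.} The chief difficulty lies in (a): the shared endpoints $(z_1, r_1)$ and $(z_2, r_2)$ are random, so the almost sure uniqueness of geodesics between fixed pairs does not apply directly. Bridging this requires reducing to a countable dense collection of fixed rational reference points where uniqueness holds, and then transferring the conclusion to our random pair using the endpoint stability afforded by Propositions \ref{prop:18} and \ref{prop:21}.
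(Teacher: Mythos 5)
The paper does not give a self-contained proof of this proposition; it cites \cite[Lemma B.12]{BSS22} and \cite[Lemma 3.3]{DSV22}, notes that those statements carry extra hypotheses on geodesic uniqueness, and then invokes \cite[Theorem 1]{Bha23} --- the non-existence of geodesic bubbles --- to remove those hypotheses. That theorem is exactly the ingredient your proposal lacks, and it cannot be recovered by the reduction you sketch.

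Concretely, in part (a) you observe that a discrepancy $\gamma_n(r^*) \neq \gamma(r^*)$ for some $r^* \in (r_1, r_2)$ with $\gamma_n(r_i) = \gamma(r_i)$ produces two distinct geodesics between the shared endpoints, and you claim this ``should then force the existence of two almost disjoint geodesics between fixed rational endpoints, violating almost sure geodesic uniqueness.'' What the discrepancy actually produces, after passing to a maximal disagreement subinterval, is a pair of almost disjoint geodesics sharing \emph{random} endpoints --- a geodesic bubble. This does not transfer to a contradiction with a.s.\ uniqueness at fixed rational pairs via Propositions \ref{prop:18} and \ref{prop:21}: since the geodesic between any fixed $(p;q)$ is a.s.\ unique, any bubble must occur on an exceptional (measure-zero) set of random endpoint pairs, and bubbles need not persist under perturbation of those endpoints, so ``endpoint stability'' gives you nothing. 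Ruling out bubbles simultaneously across all pairs is a genuine theorem, proved in \cite[Theorem 1]{Bha23} (with an alternative proof in \cite[Lemma 3.3]{Dau23+}) by a separate planarity/dimension argument. In fact the sentence immediately following the proposition in the paper explains that \cite[Lemma B.12]{BSS22} and \cite[Lemma 3.3]{DSV22} needed a uniqueness hypothesis to sidestep precisely this difficulty, and that citing \cite[Theorem 1]{Bha23} is what lets one drop it. Your part (b) inherits a similar issue --- uniform closeness plus subsequential precompactness does not by itself manufacture actual coincidence times; one again needs planarity together with bubble-freeness (or coalescence). The fix is simply to invoke the non-existence of geodesic bubbles rather than attempt to derive it from rational uniqueness.
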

We note that in \cite[Lemma B.12]{BSS22} and \cite[Lemma 3.3]{DSV22}, the above result is stated under some extra conditions related to geodesic uniqueness. However, by applying the result on the non-existence of geodesic bubbles established in \cite[Theorem 1]{Bha23} (see \cite[Lemma 3.3]{Dau23+} for an alternate proof), it can be seen that these extra conditions are not required. In fact, by using the result on the non-existence of geodesic bubbles mentioned above, one can also obtain the following approximation result for general geodesics, semi-infinite geodesics and interfaces.
\begin{proposition}
  \label{prop:23}
  The following hold with probability $1$.
  \begin{enumerate}
  \item For any fixed point $p=(x,s)$ and all points $q=(y,t)$ such that $(p;q)\in \RR_\uparrow^4$, any geodesic $\gamma_p^q$, and any point $t'$ satisfying $s<t'<t$, for all $\varepsilon>0$ small enough, there exists a neighbourhood $V_\varepsilon\ni (\gamma_p^q(t'),t')$ such that for all $q'\in V_\varepsilon$ and all geodesics $\gamma_p^{q'}$, we have $\gamma_{p}^{q'}\lvert_{[s,t'-\varepsilon]}=\gamma_p^q\lvert_{[s,t'-\varepsilon]}$.
    \item For all points $(p;q)=(x,s;y,t)\in \RR_\uparrow^4$, any geodesic $\gamma_p^q$, any points $s',t'$ satisfying $s<s'<t'<t$ and all $\varepsilon>0$ small enough, there exist neighbourhoods $U_\varepsilon \ni (\gamma_p^q(s'),s'),V_\varepsilon\ni (\gamma_p^q(t'),t')$, such that for all points $p'\in U_\varepsilon, q'\in V_\varepsilon$ and all geodesics $\gamma_{p'}^{q'}$, we have $\gamma_{p'}^{q'}\lvert_{[s'+\varepsilon,t'-\varepsilon]}=\gamma_p^q\lvert_{[s'+\varepsilon,t'-\varepsilon]}$.
  \item For all points $p=(y,t)$, any geodesic $\Gamma_p$, any $t'<t$, and any $\varepsilon>0$, there exists a neighbourhood $V_\varepsilon\ni (\Gamma_p(t'),t')$ with the property that for all $q\in V_\varepsilon$ and all geodesics $\Gamma_q$, we have $\Gamma_p\lvert_{(-\infty,t'-\varepsilon)}=\Gamma_q\lvert_{(-\infty,t'-\varepsilon)}$.
  \item For all points $p=(y,t)$, any interface $\Upsilon_p$, any $t'>t$, and any $\varepsilon>0$, there exists a neighbourhood $V_\varepsilon\ni (\Upsilon_p(t'),t')$ with the property that for all $q\in V_\varepsilon$ and all interfaces $\Upsilon_q$, we have $\Upsilon_p\lvert_{(t'+\varepsilon,\infty)}=\Upsilon_q\lvert_{(t'+\varepsilon,\infty)}$.
  \end{enumerate}
\end{proposition}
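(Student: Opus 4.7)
The plan is to prove all four statements through a single contradiction-and-subsequence argument, combining the geodesic compactness of Proposition \ref{prop:18}, the overlap convergence of Proposition \ref{prop:19}, and the non-existence of geodesic bubbles from \cite[Theorem 1]{Bha23} already invoked in the paragraph preceding the proposition. In each case, failure of the conclusion yields a sequence of geodesics that subsequentially converges to a geodesic sharing two distinct times of agreement with a fixed reference curve; the bubble-free property forces this limit to coincide with the reference curve, and the overlap statement upgrades this to the desired equality of the approximants themselves on the interior.

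For (2), from which (1) follows as the specialization $p_n \equiv p$, I would argue by contradiction. Assume there exist $\varepsilon > 0$, sequences $p_n \to p^* := (\gamma_p^q(s'), s')$ and $q_n \to q^* := (\gamma_p^q(t'), t')$, and geodesics $\gamma_{p_n}^{q_n}$ with $\gamma_{p_n}^{q_n}\lvert_{[s'+\varepsilon, t'-\varepsilon]} \ne \gamma_p^q\lvert_{[s'+\varepsilon, t'-\varepsilon]}$ for each $n$. Proposition \ref{prop:18} provides a subsequential uniform limit $\tilde\gamma$, which is a geodesic from $p^*$ to $q^*$. The segment $\gamma_p^q\lvert_{[s',t']}$ is also a geodesic from $p^*$ to $q^*$; since $\tilde\gamma$ and $\gamma_p^q\lvert_{[s',t']}$ meet at the two distinct times $s'$ and $t'$, the non-existence of geodesic bubbles forces them to coincide throughout $[s', t']$. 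Proposition \ref{prop:19} applied to the convergence $\gamma_{p_n}^{q_n} \to \gamma_p^q\lvert_{[s',t']}$ then yields an overlap interval whose endpoints tend to $s'$ and $t'$, hence containing $[s'+\varepsilon, t'-\varepsilon]$ for all large $n$---the desired contradiction.

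For (3), given $q_n \to q^* := (\Gamma_p(t'), t')$ and $0$-directed semi-infinite geodesics $\Gamma_{q_n}$, the compactness and overlap tools of Propositions \ref{prop:18} and \ref{prop:19} must be extended to the semi-infinite setting. I would use the transversal fluctuation bound of Proposition \ref{lem:12}, applied uniformly across the sequence, to see that for each $M > 0$ the spatial coordinates $\Gamma_{q_n}(-M)$ are tight; combining this with Proposition \ref{prop:18} applied to each finite segment $\Gamma_{q_n}\lvert_{[-M, t_n]}$ and a diagonal extraction over $M$ yields a subsequential uniform-on-compacts limit $\tilde\Gamma$, which the same fluctuation bound shows to be a $0$-directed semi-infinite geodesic from $q^*$. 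Since $\Gamma_p\lvert_{(-\infty, t']}$ is also such a geodesic, the non-existence of bubbles forces $\tilde\Gamma = \Gamma_p\lvert_{(-\infty, t']}$, and finite-window overlap convergence on each $[-M, t']$ then yields the desired agreement of $\Gamma_{q_n}$ and $\Gamma_p$ on $(-\infty, t'-\varepsilon)$ for all large $n$. Statement (4) is strictly analogous for upward interfaces, using the corresponding compactness and overlap properties for the dual tree $\cI_\uparrow$ from \cite{Bha23}.

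The central conceptual difficulty throughout lies in the potential non-uniqueness of geodesics at the random limiting endpoints; this is precisely what the non-existence of bubbles is used to control, since it precludes any subsequential limit from splitting away from the reference curve and then rejoining it.
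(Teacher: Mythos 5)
For parts (1) and (2), your approach is essentially the paper's: reduce to the uniqueness of $\gamma_p^q\lvert_{[s',t']}$ (resp.\ $\gamma_p^q\lvert_{[s,t']}$) as a geodesic between its endpoints, supplied by the non-existence of geodesic bubbles (\cite[Theorem 1]{Bha23}), and then run compactness (Proposition \ref{prop:18}) and overlap convergence (Proposition \ref{prop:19}) via a subsequence/contradiction argument. One note on phrasing: saying ``the non-existence of geodesic bubbles forces them to coincide'' because they ``meet at the two distinct times $s'$ and $t'$'' is misleading --- any two geodesics between the same endpoints agree at those two times, and that alone is no constraint. What actually gives uniqueness is that $s'$ and $t'$ are \emph{interior} times of the longer geodesic $\gamma_p^q$, so a second geodesic between $p^*$ and $q^*$ would, after concatenating with $\gamma_p^q\lvert_{[s,s']}$ and $\gamma_p^q\lvert_{[t',t]}$, create a bubble inside a geodesic from $p$ to $q$. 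This is a fixable imprecision, not a mistake.

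For parts (3) and (4) there is a genuine gap. The paper simply cites \cite[Lemma 21, Theorem 5]{Bha23}; you attempt to re-derive them, and your claim that ``the non-existence of bubbles forces $\tilde\Gamma = \Gamma_p\lvert_{(-\infty,t']}$'' is not correct as stated. Two downward $0$-directed semi-infinite geodesics emanating from the same point $q^*$ can simply diverge and never meet again, which is not a bubble, so the bubble-free property by itself gives nothing. What you are missing is the coalescence of all $0$-directed semi-infinite geodesics (a.s., since $0\notin\Xi_\downarrow$; this is recalled in Section \ref{sec:interf} and comes from \cite{BSS22}): coalescence forces $\tilde\Gamma$ and $\Gamma_p$ to agree for all sufficiently negative times, and only \emph{then} does applying the bubble-free property to the finite segment between the coalescence time and $q^*$ push the agreement all the way up to $t'$. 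The same issue, dualized, affects your sketch of (4). Packaging up this coalescence step together with the semi-infinite/interface analogues of Propositions \ref{prop:18} and \ref{prop:19} is exactly what the cited results from \cite{Bha23} do; if you want to re-prove them, the coalescence input must be made explicit.
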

\begin{proof}
  Parts (3), (4) in the above follow immediately from \cite[Lemma 21, Theorem 5]{Bha23}. For (1), by \cite[Theorem 1]{Bha23}, the geodesic $\gamma_p^{q}\lvert_{[s,t']}$ must be the unique geodesic between its endpoints, and similarly, in (2), the geodesic $\gamma_p^q\lvert_{[s',t']}$ must be the unique geodesic between its endpoints. With the above uniqueness at hand, the result is straightforward to obtain by using Propositions \ref{prop:18}, \ref{prop:19}.
\end{proof}
Now, in \cite{BSS22}, it was shown that there exists a random countable set $\Xi_\downarrow\subseteq \RR$ such that for all points $\theta\in \Xi^c_\downarrow$, all geodesics $\Gamma_p^\theta$ for $p\in \RR^2$ eventually coalesce with each other. In the following lemma, we argue that whenever we have two almost disjoint right-most geodesics emanating from a point along different directions $\psi<\phi$, then there must exist an angle $\theta_*\in [\psi,\phi]$ with the property that there exist two almost disjoint $\theta_*$-directed downward geodesics emanating from $p$.
\begin{lemma}
  \label{lem:18}
Almost surely, for any point $p=(y,t)\in \RR^2$ and angles $\psi<\phi\in \RR$ admitting almost disjoint geodesics $\overline{\Gamma}_p^\psi$ and $\overline{\Gamma}_p^\phi$, there exists an angle $\theta_*\in [\psi,\phi]$ and a downward $\theta_*$-directed geodesic $\Gamma_p^{\theta_*}$ satisfying
  \begin{equation}
    \label{eq:168}
    \overline{\Gamma}_p^\psi(s)\leq \Gamma_p^{\theta_*}(s)<\overline{\Gamma}_p^{\theta_*}(s)\leq \overline{\Gamma}_p^\phi(s)
  \end{equation}
for all $s< t$.
\end{lemma}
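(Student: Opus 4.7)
The plan is to exhibit $\theta_*$ as the supremum of angles $\theta \in [\psi,\phi]$ for which $\overline{\Gamma}_p^\theta$ coincides with $\overline{\Gamma}_p^\psi$ on a nontrivial initial interval $[s_0,t]$ with $s_0<t$, and then to take $\Gamma_p^{\theta_*}:=\underline{\Gamma}_p^{\theta_*}$, the leftmost $\theta_*$-directed geodesic from $p$. First I would define on $[\psi,\phi]$ the relation $\theta_1 \sim \theta_2$ meaning that $\overline{\Gamma}_p^{\theta_1}$ and $\overline{\Gamma}_p^{\theta_2}$ coincide on some initial interval as above. Using the monotonicity $\theta_1 \leq \theta_2 \Rightarrow \overline{\Gamma}_p^{\theta_1}(s) \leq \overline{\Gamma}_p^{\theta_2}(s)$ together with the no-bubble property of geodesics from \cite[Theorem 1]{Bha23}, one sees that $\sim$ is an equivalence relation whose classes are subintervals of $[\psi,\phi]$. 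The hypothesis that $\overline{\Gamma}_p^\psi$ and $\overline{\Gamma}_p^\phi$ are almost disjoint is exactly $\psi\not\sim\phi$, so $[\psi,\phi]$ splits into at least two $\sim$-classes, and the class $C_\psi$ of $\psi$ has a finite supremum $\theta_*\in [\psi,\phi)$.

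Second, I would identify the one-sided limits of $\theta\mapsto \overline{\Gamma}_p^\theta$ at $\theta_*$. Using Propositions \ref{prop:18} and \ref{prop:19}, for any sequence $\theta_n\uparrow \theta_*$ inside $C_\psi$ the curves $\overline{\Gamma}_p^{\theta_n}$ converge uniformly on compacta to the leftmost geodesic $\underline{\Gamma}_p^{\theta_*}$, while for $\theta_n\downarrow \theta_*$ they converge to $\overline{\Gamma}_p^{\theta_*}$. Each $\overline{\Gamma}_p^{\theta_n}$ with $\theta_n\in C_\psi$ coincides with $\overline{\Gamma}_p^\psi$ on some initial segment $[\ell_{\theta_n},t]$, the $\ell_{\theta_n}$ are monotone nondecreasing, and generically their limit $\ell_*$ is strictly less than $t$, in which case $\underline{\Gamma}_p^{\theta_*}$ inherits coincidence with $\overline{\Gamma}_p^\psi$ on $[\ell_*,t]$. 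On the other hand, sequences $\theta_n\downarrow \theta_*$ can be chosen outside $C_\psi$ (possible since $\theta_*<\phi$ and the class of $\phi$ lies strictly above $\theta_*$), so $\overline{\Gamma}_p^{\theta_n}$ are almost disjoint from $\overline{\Gamma}_p^\psi$, and one shows that $\overline{\Gamma}_p^{\theta_*}$ does not coincide with $\overline{\Gamma}_p^\psi$ on any nontrivial initial interval.

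Third, I would combine the two descriptions to obtain the conclusion. On $[\ell_*,t)$ we have $\underline{\Gamma}_p^{\theta_*}(s)=\overline{\Gamma}_p^\psi(s)<\overline{\Gamma}_p^{\theta_*}(s)$, while for $s<\ell_*$ an equality $\underline{\Gamma}_p^{\theta_*}(s_1)=\overline{\Gamma}_p^{\theta_*}(s_1)$ would, by \cite[Theorem 1]{Bha23} applied to the a.s.\ unique geodesic from $(\underline{\Gamma}_p^{\theta_*}(s_1),s_1)$ to $p$, force coincidence on the whole of $[s_1,t]$, contradicting the strict inequality already proved on $[\ell_*,t)$. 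Thus $\underline{\Gamma}_p^{\theta_*}(s)<\overline{\Gamma}_p^{\theta_*}(s)$ for all $s<t$, so taking $\Gamma_p^{\theta_*}=\underline{\Gamma}_p^{\theta_*}$ yields the almost disjointness needed in \eqref{eq:168}. The sandwiching inequalities $\overline{\Gamma}_p^\psi\leq \underline{\Gamma}_p^{\theta_*}\leq \overline{\Gamma}_p^{\theta_*}\leq \overline{\Gamma}_p^\phi$ follow at once from the monotonicity of $\theta\mapsto \overline{\Gamma}_p^\theta$ and $\theta\mapsto \underline{\Gamma}_p^\theta$ together with the non-crossing of downward semi-infinite geodesics at distinct angles from the common base point $p$.

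The main technical obstacle I anticipate is the degenerate case $\ell_*=t$, in which the splitting times $\ell_{\theta_n}$ escape to $t$ and the identification of $\underline{\Gamma}_p^{\theta_*}$ via coincidence with $\overline{\Gamma}_p^\psi$ collapses. To handle this I would rerun the entire argument symmetrically with $\theta_*'=\inf C_\phi$ in place of $\theta_*=\sup C_\psi$ and with the roles of the left/right one-sided limits and of $\overline{\Gamma}_p^\psi/\overline{\Gamma}_p^\phi$ interchanged; at least one of the two symmetric choices avoids the degenerate regime, which completes the proof.
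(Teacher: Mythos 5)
Your high-level strategy is the same as the paper's: locate a critical angle $\theta_*$ by a monotone dichotomy on $[\psi,\phi]$, identify the two one-sided limits as $\Gamma_p^{\theta_*}$ and $\overline{\Gamma}_p^{\theta_*}$, and argue that these limits are almost disjoint. The definitions of $\theta_*$ differ slightly --- you take $\sup$ of the class of $\theta$ with $\overline{\Gamma}_p^\theta\sim\overline{\Gamma}_p^\psi$ (coincidence on an initial segment), whereas the paper takes the $\sup$ of $\theta$ for which every $\Gamma_p^\theta$ is almost disjoint from $\overline{\Gamma}_p^\phi$ --- but both are serviceable bifurcation points.

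The real difference is in the final step, and this is where your proposal has a genuine gap. You deduce $\underline{\Gamma}_p^{\theta_*}(s)<\overline{\Gamma}_p^{\theta_*}(s)$ by writing $\underline{\Gamma}_p^{\theta_*}=\overline{\Gamma}_p^\psi$ on $[\ell_*,t)$ (and then propagating strict inequality by the no-bubble theorem); this only has content when $\ell_*<t$. You flag the degenerate case $\ell_*=t$ yourself but dispose of it with the bare assertion that ``at least one of the two symmetric choices avoids the degenerate regime.'' That claim is not justified: a priori nothing rules out both the $\psi$-side and $\phi$-side splitting times escaping to $t$ simultaneously, and you give no argument for why this cannot happen. (In fact one can show that $\ell_*=t$ is impossible: by the overlap-sense convergence $\overline{\Gamma}_p^{\theta_n}\to\underline{\Gamma}_p^{\theta_*}$ of Proposition~\ref{prop:19}, the overlap eventually covers each $\ell_n<t$, so $\underline{\Gamma}_p^{\theta_*}(\ell_n)=\overline{\Gamma}_p^{\theta_n}(\ell_n)=\overline{\Gamma}_p^\psi(\ell_n)$, forcing $\ell_*\le\ell_n<t$. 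But this argument is absent from your plan, and it is the content you would need, not the symmetry claim.)

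The paper avoids the issue entirely with a cleaner mechanism: it first shows that $\overline{\Gamma}_p^{\theta}$ and $\overline{\Gamma}_p^{\theta'}$ are almost disjoint whenever $\psi<\theta<\theta_*<\theta'<\phi$ (if they met, a concatenation would be a $\theta$-directed geodesic hitting $\overline{\Gamma}_p^\phi$, contradicting the definition of $\theta_*$), and then applies Proposition~\ref{prop:19} to \emph{both} one-sided sequences simultaneously: a common point of the two limits would be inherited by the approximants, contradicting their pairwise disjointness. This route never references a splitting time $\ell_*$ and therefore has no degenerate case. Your equivalence-class setup in fact also yields this pairwise disjointness (if $\overline{\Gamma}_p^{\theta}$ and $\overline{\Gamma}_p^{\theta'}$ met at $r<t$ with $\theta\in C_\psi$, $\theta'\notin C_\psi$, no-bubble would force $\theta'\sim\psi$), so you could simply substitute the paper's two-sided overlap argument for your $\ell_*$ decomposition and the plan would go through. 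As written, however, the degenerate case is not handled.

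One smaller point: you assert $\theta_*\in[\psi,\phi)$, but with your definition $\theta_*=\phi$ is possible (when $C_\psi=[\psi,\phi)$), so the boundary cases $\theta_*\in\{\psi,\phi\}$ need the separate treatment the paper gives them.
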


\begin{proof}
  We define the angle $\theta_*$ by
  \begin{equation}
    \label{eq:79}
    \theta_*=
    \sup\left\{
      \theta: \theta\in [\psi,\phi], \textrm{ all geodesics } \Gamma_p^\theta \textrm{ are almost disjoint with }\overline{\Gamma}_p^\phi
    \right\},
  \end{equation}
  and we note that the set considered above is non-empty as at least $\psi$ belongs to it. Further, by the above definition, we know that for any $\theta\in (\theta_*,\phi]$, there exists a geodesic $\Gamma_p^\theta$ intersecting non-trivially with $\overline{\Gamma}_p^\phi$. As a result of the above, it can be seen that, for all $\theta\in (\theta_*,\phi]$, the right-most geodesic $\overline{\Gamma}_p^\theta$ must agree with $\overline{\Gamma}_p^\phi$ precisely at an interval $[s,t]$ for some $s<t$.

 We assume that $\theta_*\in (\psi,\phi)$. At the end of the proof, it will be clear that the boundary cases are simpler. We now argue by contradiction that for any $\theta,\theta'$ satisfying $\psi<\theta<\theta_*<\theta'<\phi$, the geodesics $\overline{\Gamma}_p^{\theta},\overline{\Gamma}_p^{\theta'}$ must be almost disjoint. Suppose that for some $r<t$, we have $\overline{\Gamma}_p^\theta(r)=\overline{\Gamma}_p^{\theta'}(r)\neq \overline{\Gamma}_p^{\phi}(r)$, where the latter is true since $\overline{\Gamma}_p^\theta$ and $\overline{\Gamma}_p^{\phi}$ are disjoint. Then we can consider the concatenation of the geodesics $\overline{\Gamma}_p^\theta\lvert_{(-\infty,r]}$ and $\overline{\Gamma}_p^{\theta'}\lvert_{[r,t]}$ and this is a $\theta$-directed semi-infinite geodesic which non-trivially intersects $\overline{\Gamma}^\phi_p$. However, since $\theta<\theta_*$, this contradicts the definition of $\theta_*$ from \eqref{eq:79}.

  Now, we choose a sequence $\theta_n$ increasing to $\theta_*$ and a sequence $\theta_n'$ decreasing to $\theta_*$. By geodesic ordering along with the fact that uniform limits of geodesics are geodesics (Proposition \ref{prop:18}), it is not difficult to see that the geodesics $\overline{\Gamma}_p^{\theta_n}$ converge locally uniformly to a geodesic $\Gamma_p^{\theta^*}$. Similarly, the geodesics $\overline{\Gamma}_p^{\theta_n'}$ converge locally uniformly to the right-most geodesic $\overline{\Gamma}_p^{\theta_*}$. Now, we know that $\overline{\Gamma}_p^{\theta_n}$ and $\overline{\Gamma}_p^{\theta_n'}$ are almost disjoint for all $n$ and thus due to the above local uniform convergence along with Proposition \ref{prop:19}, we obtain that $\Gamma_p^{\theta_*}$ and $\overline{\Gamma}_p^{\theta_*}$ must be disjoint as well. This completes the proof when $\theta_*\in (\psi,\phi)$.

  Now, if $\theta_*=\psi$, we can similarly show that for any $\theta'\in (\psi,\phi]$, the geodesics $\overline{\Gamma}_p^\psi$ and $\overline{\Gamma}_p^{\theta'}$ must be almost disjoint. By then considering a sequence $\theta_n'$ decreasing to $\psi$ and following the same argument, we have the result. Similarly, in the case $\theta_*=\phi$, we know that $\overline{\Gamma}_p^\theta$ and $\overline{\Gamma}_p^\phi$ are almost disjoint for all $\theta\in [\psi,\phi)$, and thus the same argument works in this case as well.
  
\end{proof}
We note that an argument of the above type has appeared earlier in the literature (see Proposition 3.5 in the arXiv version of \cite{BGH21}).

As discussed in the introduction, the study of atypical stars, or the points where geodesic coalescence fails has received significant attention in the past few years. One of these results, namely \cite{Bha22}, which we now state, will be important to this work, and in particular, will be crucially used in the computation of the dimension in Theorem \ref{thm:3}.
\begin{proposition}[{\cite[Proposition 28]{Bha22}}]
  \label{lem:13}
Fix $\nu>0$, $\ualpha<\oalpha<0$ and $\kappa\in (0,1/2)$. Then for all $\delta$ small enough and for any $\theta$ with $|\theta|\leq \log^{1/2-\kappa} \delta^{-1}$, we have for all $s\in [\ualpha,\oalpha]$,
  \begin{equation}
    \label{eq:58}
    \PP(\exists x\in [\Gamma_{\0}(s)-\delta,\Gamma_{\0}(s)+\delta] \textrm{ admitting almost disjoint geodesics }  \Gamma_{(x,s)}, \Gamma_{(x,s)}^\theta)\leq \delta^{1-\nu}.
  \end{equation}
\end{proposition}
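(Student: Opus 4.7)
The plan is a three-step reduction. Firstly, assume without loss of generality that $\theta > 0$ (the case $\theta < 0$ is symmetric and $\theta = 0$ is direct). On the event of interest, at some $x \in [\Gamma_{\0}(s)-\delta, \Gamma_{\0}(s)+\delta]$, the right-most geodesics $\overline{\Gamma}^{0}_{(x,s)}$ and $\overline{\Gamma}^{\theta}_{(x,s)}$ are almost disjoint. Lemma \ref{lem:18} then produces an intermediate angle $\theta_* \in [0,\theta]$ and a direction-$\theta_*$ geodesic from $(x,s)$ almost disjoint from $\overline{\Gamma}^{\theta_*}_{(x,s)}$---that is, $(x,s)$ is a direction-$\theta_*$ atypical star. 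Since the atypical-star property is upper semicontinuous in $\theta_*$ (via Propositions \ref{prop:18} and \ref{prop:19}), discretizing $[0,\theta]$ into $\delta^{-o(1)}$ grid points and applying a union bound reduces matters to a fixed $\theta_*$.

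Secondly, I would apply the skew invariance of Proposition \ref{prop:14} with parameter $\theta_*$. Under the induced path shift $\gamma(t) \mapsto \gamma(t) - \theta_* t$, direction-$\theta_*$ geodesics in $\cL$ correspond to direction-$0$ geodesics in $\cL_{\theta_*}^{\mathrm{sk}} \stackrel{d}{=} \cL$, and $\Gamma_{\0}$ transforms into a direction-$(-\theta_*)$ semi-infinite geodesic $\tilde{\Gamma}_{\0}$ from $\0$ in this new landscape, with $\tilde{\Gamma}_{\0}(s) = \Gamma_{\0}(s) - \theta_* s$, and transversal fluctuations over $[\ualpha,\oalpha]$ controlled by Proposition \ref{lem:12}. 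The problem becomes: bound the probability that some $x \in [\tilde{\Gamma}_{\0}(s)-\delta, \tilde{\Gamma}_{\0}(s)+\delta]$ is a direction-$0$ atypical star in a copy of $\cL$.

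Thirdly, I would attack this core atypical-star density estimate via a Busemann-function argument: for angles $\psi < 0 < \phi$ both small, the difference of the corresponding Busemann functions $\cB^{\phi}(\cdot,s) - \cB^{\psi}(\cdot,s)$ is monotone in the spatial coordinate and develops a jump at each direction-$0$ atypical star at time $s$. One-point regularity (Proposition \ref{lem:20}) together with transversal fluctuation bounds then limit the probability of macroscopic jumps of this monotone difference inside a $\delta$-window. The main obstacle is that a naive union bound over a $\delta$-cover of the transversal fluctuation window of $\tilde{\Gamma}_{\0}$ gives only $\delta^{-\nu}$, whereas the target is $\delta^{1-\nu}$: one must exploit the joint structure of the atypical-star location and the reference-geodesic location, gaining an additional factor of $\delta$ via a resampling argument on the portion of the landscape strictly above time $s$ (which affects the atypical-star event but interacts only mildly with $\tilde{\Gamma}_{\0}(s)$). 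The logarithmic restriction $|\theta| \leq \log^{1/2-\kappa}\delta^{-1}$ is precisely what is needed so that the $\theta_*$-discretization error, together with the $\theta_*$-dependent spatial shift in Step 2, can be absorbed into the polynomial slack $\delta^{-\nu}$.
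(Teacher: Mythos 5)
The statement in question is a direct citation from the external reference \cite{Bha22} (Proposition 28), and the present paper supplies no proof of it; it is imported as a black-box input (the surrounding text in the paper explicitly says so). There is therefore nothing in the paper to compare your sketch against; one would have to consult \cite{Bha22} itself for the actual argument.

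As a standalone outline, your proposal is plausible in broad shape but leaves the entire content of the estimate unproved. Steps 1 and 2---intermediate-angle extraction via Lemma \ref{lem:18}, discretization in the angle, and skew invariance to center on direction $0$---are reasonable preprocessing, with two caveats: skew invariance by a random $\theta_*$ is only legitimate after you have already discretized, so the order of operations matters, and the claimed upper semicontinuity of the atypical-star event in the direction parameter is not an immediate consequence of Propositions \ref{prop:18} and \ref{prop:19} and would need its own justification. The real gap is Step 3. The improvement from the naive $\delta^{-\nu}$ to the target $\delta^{1-\nu}$ is precisely the nontrivial content of the proposition, and you defer it to an unspecified ``resampling argument on the portion of the landscape strictly above time $s$.'' But the reference geodesic $\Gamma_{\0}$ passes through time $s$ and depends on the landscape on both sides of that line, so the mild interaction you invoke is exactly what would need to be proved; and a Busemann-jump counting argument by itself, after unioning over a $\delta$-net of the transversal window, produces the $\delta^{-\nu}$ you already concede, not the $\delta^{1-\nu}$ target. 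You have located the obstruction, not overcome it; the proof in \cite{Bha22} is substantially more involved, and the proposition should be treated here as an imported input rather than something re-derivable in a short sketch.
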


\subsection{Interfaces in the directed landscape}
\label{sec:interf}
In exponential last passage percolation (LPP), an integrable model of random geometry on the planar lattice which is known \cite{RV21} to converge to the directed landscape, competition interfaces arise as regions of space in between two competing growth clusters \cite{FMP06}. In fact, as shown in \cite{Pim16}, such competition interfaces can also be interpreted as paths which are ``dual'' to semi-infinite geodesics, in the sense that they live on the dual lattice $(\ZZ^2)^*$ and do not cross the semi-infinite geodesics on the primal lattice $\ZZ^2$.

In the directed landscape, as established in \cite{Bha23}, the above duality takes the following form. Recall the $\theta$-directed semi-infinite geodesics as defined in the introduction. For any fixed $\theta\in \RR$, and in fact, for all $\theta\in \Xi_\downarrow^c$ for a random countable set $\Xi_\downarrow$, it is known that that all these geodesics coalesce \cite{BSS22}, in the sense that, almost surely, for any $p,q\in \RR^2$, any geodesics $\Gamma^\theta_p,\Gamma^\theta_q$ eventually merge. In fact, one can consider the ``geodesic tree'' $\cT^\theta_\downarrow$ defined by
\begin{equation}
  \label{eq:108}
  \cT_\downarrow^\theta=\bigcup_{p\in \RR^2}\inte(\Gamma_p^\theta),
\end{equation}
an object shown to in fact a.s.\ be a tree simultaneously for all $\theta\in \Xi_\downarrow^c$ in \cite{Bha23}. Now, for all $\theta\in \Xi_\downarrow^c$, one can consider the ``interface portrait'', which is the dual object defined by
\begin{equation}
  \label{eq:109}
  \cI_\uparrow^\theta= \bigcup_{\pi: \inte(\pi)\cap \cT_\downarrow^\theta=\emptyset}\inte(\pi),
\end{equation}
where the union is over all upward semi-infinite paths satisfying $\inte(\pi)\cap \cT_\downarrow^\theta=\emptyset$. In fact, if we have a point $p=(x,s)$ and a path $\pi \colon [s,\infty)\rightarrow \RR$ with $\pi(s)=x$ and $\inte(\pi)\cap \cT^\theta_\downarrow=\emptyset$, then we say that $\pi$ is an upward $\theta$-directed interface emanating from $p$ and denote it by $\Upsilon_p^\theta$. For any fixed $\theta\in \RR$ and fixed $p$, it can be shown that the path $\Upsilon_p^\theta$ is a.s. unique, but there do exist exceptional points $p$ where this is not true. In fact, as shown in \cite{Bha23} by using a duality present in discrete exponential LPP \cite{Pim16}, for any fixed $\theta\in \RR$, the interface portrait $\cI^\theta_\uparrow$ has the same law as the geodesic tree $\cT^\theta_\downarrow$ up to a reflection. In this work, we shall only need this duality for one semi-infinite geodesic, and we now state this. We note that we shall often work with $\theta=0$, and in this case, we omit $\theta$ from the notation. That is, the objects $\Gamma_p,\Upsilon_p,\cT_\downarrow,\cI_\uparrow$ shall refer to $0$-directed objects.

\begin{proposition}[{\cite[Proposition 2.8]{GZ22}, \cite[Lemma 4.20]{RV21}}]
  \label{prop:11}
 Almost surely, the curve $s\mapsto \Gamma_{\0}(-s)$ has the same law as the interface $\Upsilon_{\0}$. 
\end{proposition}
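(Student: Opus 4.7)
My plan is to deduce the distributional identity from a known duality between semi-infinite geodesics and competition interfaces in exponential last passage percolation (LPP), and then to push this identity through the scaling limit to the directed landscape.

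First I would work in exponential LPP on $\ZZ^2$, which, under the KPZ $1:2:3$ scaling, converges to $\cL$ in the joint topology used to identify continuum geodesics and interfaces \cite{DV21}. In this discrete model, one has on the one hand the semi-infinite geodesic tree in the diagonal direction, and on the other hand the competition interface separating the two growth clusters rooted at a pair of neighboring sites. Pimentel's duality \cite{Pim16} identifies the competition interface, viewed on the dual lattice and after a reflection across the anti-diagonal, in law with a semi-infinite geodesic of the same LPP; the identification uses the memoryless property of the exponential distribution, which makes the reflected/dual environment equal in law to the original one.

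Next I would transport this identity to the continuum. Under the scaling, the downward $0$-directed semi-infinite geodesic from $\0$ converges to $\Gamma_\0$, while the upward competition interface from $\0$ converges to an upward path from $\0$ which is disjoint from the downward geodesic tree, and hence is realized as the interface $\Upsilon_\0$ via the definition \eqref{eq:109}. The anti-diagonal reflection at the lattice level becomes the time reversal $(x,s)\mapsto(x,-s)$ in the continuum, so the discrete identity translates to $\{\Upsilon_\0(s)\}_{s\geq 0}\stackrel{d}{=}\{\Gamma_\0(-s)\}_{s\geq 0}$, which is exactly the claim.

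The main obstacle is verifying that the scaling limit of the discrete competition interface is genuinely the continuum object $\Upsilon_\0$ characterized by the non-intersection property with $\cT_\downarrow$, rather than some other path sharing only its one-dimensional marginals. This is essentially the content of \cite[Lemma 4.20]{RV21}: one needs joint convergence of the primal geodesic tree and the competition interface, together with the fact that the non-intersection property is preserved under the limit, to conclude that the limiting upward path satisfies \eqref{eq:109}. Once this identification is in place, the anti-diagonal reflection and the distributional equality on the lattice yield the result without further work.
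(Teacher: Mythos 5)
The paper does not prove Proposition 2.11 itself; it cites \cite[Proposition 2.8]{GZ22} and \cite[Lemma 4.20]{RV21}, and your sketch is a faithful reconstruction of the argument those references use: Pimentel's duality between semi-infinite geodesics and competition interfaces in exponential LPP, transported to the directed landscape through the scaling limit established in \cite{DV21}, with the key technical burden being the identification of the scaling limit of the discrete competition interface with the continuum object $\Upsilon_{\0}$. Your identification of this last step as the main obstacle is exactly right, and attributing it to \cite[Lemma 4.20]{RV21} is the correct attribution, so your plan matches the source of the cited result.

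One small imprecision worth flagging: in \cite{RV21}, the continuum interface is defined not directly through the non-intersection property \eqref{eq:109} but through the sign change of a difference/competition function (as in \eqref{eq:110}--\eqref{eq:112}, with the Busemann function as initial condition; see Lemma \ref{lem:33} in this paper). The non-intersection characterization of $\Upsilon_{\0}$ relative to $\cT_\downarrow$ is established later in \cite{Bha23}, so if you want to run your argument verbatim you should either use the difference-profile definition when identifying the scaling limit, or appeal to \cite{Bha23} for the equivalence of the two descriptions. Similarly, the ``anti-diagonal reflection'' at the lattice level corresponds in the continuum to the time-and-space reversal $(x,s)\mapsto(y,-t;x,-s)$ flip invariance of Proposition \ref{prop:14}, not merely $(x,s)\mapsto(x,-s)$, though this does not affect the conclusion since the one-marginal law of $\Upsilon_{\0}$ is invariant under spatial reflection. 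These are cosmetic adjustments; the overall strategy is sound and is the same one underlying the cited references.
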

We note that as a result of the above and Proposition \ref{prop:17}, the interface $\Upsilon_{\0}$ is a.s.\ locally $2/3-$ H\"older continuous. Throughout the paper, we shall also require the following rational approximation result from \cite{Bha23} which is an immediate consequence of Proposition \ref{prop:23}.
\begin{proposition}
  \label{prop:15}
  Almost surely, we have $\cT_\downarrow=\bigcup_{p\in \mathbb{Q}^2}\inte(\Gamma_p)$ and  $\cI_\uparrow =\bigcup_{p\in \mathbb{Q}^2}\inte (\Upsilon_p)$. Similarly, the geodesic frame $\cW$ from the statement of Theorem \ref{thm:3} is a.s.\ equal to the union of interiors of geodesics between rational points.
\end{proposition}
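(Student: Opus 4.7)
The plan is to derive all three equalities uniformly from Proposition \ref{prop:23}, whose parts (2), (3), and (4) already package exactly the stability needed: a geodesic or interface passing through a specified interior point is forced to coincide with every geodesic or interface based at sufficiently nearby endpoints, on an appropriate closed subinterval. In each of the three cases the inclusion $\supseteq$ is immediate from the defining unions of $\cT_\downarrow$, $\cI_\uparrow$, and $\cW$, so the content is the reverse inclusion, which I will obtain by producing, for any point in the interior of an arbitrary geodesic or interface, a geodesic or interface with \emph{rational} endpoints passing through that same point.

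For $\cT_\downarrow$, I would take an arbitrary $x = (\Gamma_p(s_0), s_0) \in \inte(\Gamma_p)$ with $p = (y,t)$ and $s_0 < t$, fix a strictly later time $t_1 \in (s_0, t)$, and apply Proposition \ref{prop:23}(3) with $\varepsilon \in (0, t_1 - s_0)$. This produces a neighbourhood $V_\varepsilon$ of $(\Gamma_p(t_1), t_1)$ such that every $q \in V_\varepsilon$ and every semi-infinite geodesic $\Gamma_q$ satisfy $\Gamma_q\lvert_{(-\infty, t_1 - \varepsilon)} = \Gamma_p\lvert_{(-\infty, t_1 - \varepsilon)}$; choosing any rational $q \in V_\varepsilon \cap \QQ^2$ then exhibits $x$ in $\inte(\Gamma_q)$, since the time coordinate of $q$ exceeds $s_0$ and $\Gamma_q(s_0) = \Gamma_p(s_0)$. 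The case of $\cI_\uparrow$ is the mirror image, using part (4) of Proposition \ref{prop:23} with a strictly \emph{earlier} reference time $t_1 < s_0$.

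For the geodesic frame $\cW$, I would take a point $x = (\gamma_p^q(r), r)$ in the interior of some geodesic $\gamma_p^q$ with $p = (x_0, s)$, $q = (y_0, t)$, sandwich $r$ between times $s < s' < r < t' < t$, and apply Proposition \ref{prop:23}(2) with $\varepsilon$ smaller than both $r - s'$ and $t' - r$. The resulting neighbourhoods $U_\varepsilon \ni (\gamma_p^q(s'), s')$ and $V_\varepsilon \ni (\gamma_p^q(t'), t')$ contain rational points $p'$ and $q'$, and any geodesic $\gamma_{p'}^{q'}$ agrees with $\gamma_p^q$ on $[s'+\varepsilon, t'-\varepsilon] \ni r$; shrinking the neighbourhoods as needed keeps the time coordinates of $p'$ and $q'$ strictly on either side of $r$, placing $x$ in $\inte(\gamma_{p'}^{q'})$.

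I do not anticipate a substantive obstacle: the argument is essentially a countable-density approximation married to the stability statements of Proposition \ref{prop:23}. The only bookkeeping to watch is ensuring the rational endpoints lie strictly on the correct side of the time slice containing $x$, so that $x$ remains an \emph{interior} (rather than endpoint) point of the approximating rational-endpoint object; this is precisely what the margin parameter $\varepsilon$ in each part of Proposition \ref{prop:23} affords.
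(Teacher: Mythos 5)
Your proposal is correct and follows the same route as the paper, which simply asserts that the statement is an immediate consequence of Proposition \ref{prop:23} (and cites \cite{Bha23} for it); you have merely written out the straightforward details of that deduction, using parts (2), (3), and (4) as appropriate for the geodesic frame, the tree $\cT_\downarrow$, and the interface portrait $\cI_\uparrow$.
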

Apart from the interfaces $\Upsilon_{p}$ which are dual to downward $0$-directed semi-infinite geodesics, we shall also need a more general notion of an interface emanating from a given initial condition, where by an initial condition, we mean a function $\fh\colon \RR\rightarrow \RR\cup \{-\infty\}$ which is not identically equal to $-\infty$, is upper semi-continuous and satisfies $\fh(x)\leq c(1+|x|)$ for some constant $c>0$. Such a notion of an interface has been defined in \cite{RV21}, and we now discuss this definition. First, we shall need the notion of geodesics to an initial condition which we now define. For any point $q=(y,t)$ with $t>0$, a geodesic from $(\fh,0)$ to $q$ can be defined as any path $\gamma_{(\fh,0)}^{q}\colon[0,t]\rightarrow \RR$ with $\gamma_{(\fh,0)}^{q}(t)=y$ satisfying
\begin{equation}
  \label{eq:30}
  \ell(\gamma_{(\fh,0)}^{q};\cL)+\fh(\gamma_{(\fh,0)}^{q}(0))=\sup_\xi\{\ell(\xi;\cL)+\fh(\xi(0))\},
\end{equation}
where the supremum is over all paths $\xi\colon[0,t]\rightarrow \RR$ satisfying $\xi(t)=y$. It can be shown that such geodesics always exist \cite[Lemma 3.2]{RV21} and in-fact, we always have well-defined left-most and right-most geodesics $\underline{\gamma}_{(\fh,0)}^q, \overline{\gamma}_{(\fh,0)}^q$. Further, it can be shown \cite[Lemma 3.7]{RV21} that for any fixed initial condition $\fh$ and any fixed point $q$, there a.s.\ exists a unique geodesic $\gamma_{(\fh,0)}^{q}$.

Now, for any $x\in \RR$, one can define \cite[Section 4.1]{RV21} the competition function $d_x\colon \RR\times (0,\infty)\rightarrow \RR$ by
\begin{equation}
  \label{eq:110}
  d_x(y,t)=\sup_{x'\geq x}\{\fh(x')+\cL(x',0;y,t)\}-\sup_{x'\leq x}\{\fh(x')+\cL(x',0;y,t)\},
\end{equation}
and it can be checked \cite[Proposition 4.1]{RV21} that the above is monotonically increasing in $y$ for every fixed $t$. Using the above, one can define \cite[Definition 4.2]{RV21} the left and right interfaces $\underline{\Upsilon}_{(x,0)}^\fh,\overline{\Upsilon}_{(x,0)}^\fh$ by
\begin{align}
  \label{eq:112}
  \underline{\Upsilon}_{(x,0)}^\fh(t)&=\inf\{y\in \RR\colon d_x(y,t)\geq 0\},\nonumber\\
  \overline{\Upsilon}_{(x,0)}^\fh(t)&=\sup\{y\in \RR\colon d_x(y,t)\leq 0\}
\end{align}
for $t>0$.

Now, throughout this section, we shall only consider points $x$ for which there exist points $\underline{x},\overline{x}$ satisfying $\underline{x}<x<\overline{x}$ and $\fh(\underline{x}),\fh(\overline{x})>-\infty$, and for this section, we call such points $x$ as \emph{interior} points. It can be shown that almost surely, for any fixed initial condition $\fh$ and for all interior points $x$, the objects $\underline{\Upsilon}_{(x,0)}^\fh,\overline{\Upsilon}_{(x,0)}^\fh$ are genuine continuous paths \cite[Proposition 4.4, Proposition 4.5]{RV21}. If it turns out that $\underline{\Upsilon}^\fh_{(x,0)}=\overline{\Upsilon}^\fh_{(x,0)}$ for some $x$, then we simply use $\Upsilon^\fh_{(x,0)}$ to denote this unique path, and in this paper, we shall mostly be concerned with the setting where this uniqueness holds. One important property is that interfaces corresponding to an initial condition $\fh$ tend to avoid geodesics to it, and we now state a result in this direction.
\begin{lemma}[{\cite[Lemma 5.7]{RV21}}]
  \label{lem:32}
  For any fixed initial condition $\fh$, the following a.s.\ hold for all interior $x\in \RR$ admitting a unique interface $\Upsilon_{(x,0)}^\fh$.
  \begin{enumerate}
  \item For any $q\notin \Upsilon_{(x,0)}^\fh$, all geodesics $\gamma_{(\fh,0)}^q$ never intersect $\Upsilon_{(x,0)}^\fh$
  \item For all points $q=(x,s) \in \inte(\Upsilon_{(x,0)}^\fh)$ and for any geodesic $\gamma_{(\fh,0)}^q$, there exists $r\in (0,s]$ such that we have $\gamma_{(\fh,0)}^q(s')=\Upsilon_{(x,0)}^\fh(s')$ for all $s'\in [r,s]$ and we have $\gamma_{(\fh,0)}^q\lvert_{(0,s)}\subseteq \cS_{\Upsilon_{(x,0)}^\fh}^*$ for exactly one $*\in \{\tL,\tR\}$.
  \end{enumerate}
\end{lemma}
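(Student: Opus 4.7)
The plan is to derive both parts from the monotonicity of the competition function $d_x(\cdot,t)$ defined in \eqref{eq:110}, combined with the standard monotonicity of starting points of geodesics to an initial condition: if $q_1=(y_1,t)$ and $q_2=(y_2,t)$ with $y_1<y_2$, then any geodesics $\gamma_{(\fh,0)}^{q_1},\gamma_{(\fh,0)}^{q_2}$ have starting points at time $0$ ordered in the same way. The latter is a planar non-crossing fact obtained by swapping prefixes at any putative crossing and invoking strict optimality of the $\cL$-length over paths. Monotonicity of $d_x(\cdot,t)$ is already noted after \eqref{eq:110}, and both facts are established as standard inputs in the setting of \cite{RV21}.

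For (1), take $q=(y,t)\notin \Upsilon_{(x,0)}^\fh$ and assume $y>\Upsilon_{(x,0)}^\fh(t)$; the other case is symmetric. The uniqueness hypothesis $\underline{\Upsilon}_{(x,0)}^\fh=\overline{\Upsilon}_{(x,0)}^\fh$ together with the monotonicity of $d_x(\cdot,t)$ yields $d_x(y,t)>0$, so any geodesic $\gamma_{(\fh,0)}^q$ must start at some $x_0>x$ at time $0$. If it met $\Upsilon_{(x,0)}^\fh$ at some $t_0\in (0,t]$ at $(\Upsilon_{(x,0)}^\fh(t_0),t_0)$, then the restriction $\gamma_{(\fh,0)}^q\lvert_{[0,t_0]}$ would be a geodesic from $(\fh,0)$ to that point, again starting at $x_0>x$. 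On the other hand, the definition of $\underline{\Upsilon}_{(x,0)}^\fh$ in \eqref{eq:112} produces a sequence $y_n\uparrow \Upsilon_{(x,0)}^\fh(t_0)$ with $d_x(y_n,t_0)<0$, so the associated geodesics start at points $\leq x$; a compactness argument for geodesics to the initial condition extracts a sublimit which is itself a geodesic from $(\fh,0)$ to $(\Upsilon_{(x,0)}^\fh(t_0),t_0)$ with starting point $\leq x$. Starting-point monotonicity now forces $x_0\leq x$, a contradiction. The main obstacle I expect here is handling the boundary case $x_0=x$ cleanly; the interior-point hypothesis on $x$ together with the uniqueness $\underline{\Upsilon}_{(x,0)}^\fh=\overline{\Upsilon}_{(x,0)}^\fh$ are what rule out such degeneracies and upgrade the relevant inequalities to strict ones.

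For (2), fix any geodesic $\gamma_{(\fh,0)}^q$ and set $T=\{s'\in [0,s]\colon \gamma_{(\fh,0)}^q(s')=\Upsilon_{(x,0)}^\fh(s')\}$. By continuity $T$ is relatively closed in $[0,s]$ and contains $s$. For any $t_0\in [0,s]\setminus T$, the restriction $\gamma_{(\fh,0)}^q\lvert_{[0,t_0]}$ is a geodesic to the point $(\gamma_{(\fh,0)}^q(t_0),t_0)$ which lies off $\Upsilon_{(x,0)}^\fh$, and applying part (1) to this restriction rules out any intersection with $\Upsilon_{(x,0)}^\fh$ on $[0,t_0]$; in particular $[0,t_0]\cap T=\emptyset$. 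This forces $T$ to be of the form $[r,s]$ for some $r\in (0,s]$, which gives the desired $r$. On the open interval $(0,r)$ the geodesic avoids $\Upsilon_{(x,0)}^\fh$ entirely, and by continuity lies strictly on one side $\cS^*_{\Upsilon_{(x,0)}^\fh}$ with $*\in\{\tL,\tR\}$.
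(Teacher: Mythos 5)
The paper does not supply its own proof of this lemma; it is cited verbatim from \cite[Lemma 5.7]{RV21}. So I am evaluating your argument on its own merits, and there is a genuine gap in part (1).

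Your setup is correct: $d_x(y,t)>0$ forces any geodesic $\gamma_{(\fh,0)}^q$ to start at some $x_0>x$, and the restriction to $[0,t_0]$ is a geodesic to the interface point $(\Upsilon_{(x,0)}^\fh(t_0),t_0)$; the approximation from the left does produce a geodesic to that same point starting at $\leq x$. But the final step, ``starting-point monotonicity now forces $x_0\leq x$,'' does not follow. The monotonicity you invoked compares geodesics to \emph{ordered} endpoints at a common time; here you have two geodesics to the \emph{same} endpoint $(\Upsilon_{(x,0)}^\fh(t_0),t_0)$, one starting at $x_0>x$ and one starting at $\leq x$. At an interface point, having geodesics with starting points on both sides of $x$ is exactly what one expects (indeed $d_x(\Upsilon_{(x,0)}^\fh(t_0),t_0)=0$), so there is no contradiction. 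In other words, the left-most geodesic to the interface point may start at $\leq x$ while another geodesic to that same point starts at $>x$, and monotonicity places no further constraint. The worry you flagged about the boundary case $x_0=x$ is a red herring --- that is already ruled out by strictness of $d_x(y,t)>0$ --- the real issue is the monotonicity step itself.

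The contradiction should instead come from the competition function and the reverse triangle inequality. Write $\fh^{\pm}$ for $\fh$ restricted to $[x,\infty)$ and $(-\infty,x]$ respectively, so that $d_x(y,t)=\cL(\fh^+,0;y,t)-\cL(\fh^-,0;y,t)$. Since $\gamma_{(\fh,0)}^q$ starts at $x_0>x$ it is also optimal for $\fh^+$, and if it passes through $(\Upsilon_{(x,0)}^\fh(t_0),t_0)$ then splitting at $t_0$ gives
\begin{equation*}
  \cL(\fh^+,0;y,t)=\fh(x_0)+\cL\bigl(x_0,0;\Upsilon_{(x,0)}^\fh(t_0),t_0\bigr)+\cL\bigl(\Upsilon_{(x,0)}^\fh(t_0),t_0;y,t\bigr).
\end{equation*}
Bounding the first two terms by $\cL(\fh^+,0;\Upsilon_{(x,0)}^\fh(t_0),t_0)$, using $d_x(\Upsilon_{(x,0)}^\fh(t_0),t_0)=0$ to replace $\fh^+$ by $\fh^-$, and then the reverse triangle inequality for $\cL(\fh^-,0;\cdot\,)$ gives $\cL(\fh^+,0;y,t)\leq\cL(\fh^-,0;y,t)$, contradicting $d_x(y,t)>0$. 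Your deduction of part (2) from part (1) is fine, modulo the observation that it does not by itself supply $r>0$; but since part (1) is where the real work lies, I would focus on replacing the monotonicity step by the length-splitting argument above.
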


We now state a lemma arguing that for points on an interface, there do exist left-most and right-most geodesics which are almost disjoint with the interface.

\begin{lemma}
  \label{lem:19}
Fix an initial condition $\fh$. Almost surely, for all interior $x\in \RR$ admitting a unique interface $\Upsilon_{(x,0)}^\fh$, and for all $s>0$ and $s'\in (0,s)$, we have
  \begin{equation}
    \label{eq:161}
    \underline{\gamma}_{(\fh,0)}^{(\Upsilon_{(x,0)}^\fh(s),s)}(s')< \Upsilon_{(x,0)}^\fh(s')<\overline{\gamma}_{(\fh,0)}^{(\Upsilon_{(x,0)}^\fh(s),s)}(s'),
  \end{equation}
 and further, we also have
  \begin{equation}
    \label{eq:162}
    \underline{\gamma}_{(\fh,0)}^{(\Upsilon_{(x,0)}^\fh(s),s)}(0)< x< \overline{\gamma}_{(\fh,0)}^{(\Upsilon_{(x,0)}^\fh(s),s)}(0).
  \end{equation}
\end{lemma}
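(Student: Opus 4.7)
My plan is to exploit the uniqueness of $\Upsilon_{(x,0)}^\fh$ at $x$ to produce maximizers $x^-<x<x^+$ of the function $x'\mapsto\fh(x')+\cL(x',0;q)$ on $\RR$, where $q=(\Upsilon_{(x,0)}^\fh(s),s)$. The associated fixed-endpoint geodesics $\gamma^\pm:=\gamma_{(x^\pm,0)}^q$ are then themselves geodesics from $(\fh,0)$ to $q$, and Lemma \ref{lem:32}(2) combined with the boundary behavior $\Upsilon_{(x,0)}^\fh(s')\to x$ as $s'\to 0^+$ will force $\gamma^-$ (respectively $\gamma^+$) to lie strictly in $\cS_{\Upsilon_{(x,0)}^\fh}^{\tL}$ (respectively $\cS_{\Upsilon_{(x,0)}^\fh}^{\tR}$) throughout $(0,s)$. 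Both \eqref{eq:161} and \eqref{eq:162} will then follow by pointwise comparison with the left-most and right-most geodesics from $(\fh,0)$ to $q$.

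To produce $x^-<x$, I will argue by contradiction: suppose no $x'<x$ is a maximizer of $x'\mapsto \fh(x')+\cL(x',0;q)$. Writing $L(y,s)=\sup_{x'\leq x}\{\fh(x')+\cL(x',0;y,s)\}$ and $R(y,s)=\sup_{x'\geq x}\{\fh(x')+\cL(x',0;y,s)\}$, so that $d_x(y,s)=R(y,s)-L(y,s)$, I use the spatial monotonicity of $y\mapsto \cL(x,0;y,s)-\cL(x',0;y,s)$ for $x'<x$, combined with upper semi-continuity of $\fh$ and continuity of $\cL$, to show $L(y,s)=\fh(x)+\cL(x,0;y,s)$ on a one-sided $y$-neighborhood of $\Upsilon_{(x,0)}^\fh(s)$. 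A parallel analysis on the right then forces $d_x(\cdot,s)\equiv 0$ on a non-degenerate $y$-interval around $\Upsilon_{(x,0)}^\fh(s)$, contradicting $\underline{\Upsilon}_{(x,0)}^\fh(s)=\overline{\Upsilon}_{(x,0)}^\fh(s)$.

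Given such an $x^-<x$, I set $\gamma^-:=\gamma_{(x^-,0)}^q$, which is a geodesic from $(\fh,0)$ to $q$ since $x^-$ achieves the overall supremum. By Lemma \ref{lem:32}(2), $\gamma^-$ lies strictly in one of $\cS_{\Upsilon_{(x,0)}^\fh}^{\tL}$ or $\cS_{\Upsilon_{(x,0)}^\fh}^{\tR}$ throughout $(0,s)$; since $\gamma^-(0)=x^-<x=\lim_{s'\to 0^+}\Upsilon_{(x,0)}^\fh(s')$, continuity of the two paths forces this side to be $\tL$. The pointwise inequality $\underline{\gamma}_{(\fh,0)}^q(s')\leq \gamma^-(s')$ on $[0,s]$ then yields the left halves of both \eqref{eq:161} and \eqref{eq:162}; the right halves are obtained symmetrically via the maximizer $x^+>x$ and the corresponding geodesic $\gamma^+$.

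The main obstacle is the second paragraph: converting uniqueness of the interface into the strict existence of maximizers on either side of $x$. The subtlety is that upper semi-continuity of $\fh$ permits sequences $x'_n\to x^-$ with $\fh(x'_n)+\cL(x'_n,0;q)\uparrow \fh(x)+\cL(x,0;q)$ without any actual maximizer in $\{x'<x\}$, so the argument cannot rely solely on pointwise stability of the sup; it must combine local stability near $x$ with the monotonicity of the spatial difference profile of $\cL$ to genuinely produce a $y$-interval on which $d_x\equiv 0$.
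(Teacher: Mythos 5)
Your approach takes a genuinely different route from the paper's. The paper approximates $q=(\Upsilon_{(x,0)}^\fh(s),s)$ by nearby points $q_n^{\tL},q_n^{\tR}$ strictly off the interface, applies Lemma~\ref{lem:32}(1) to get \emph{strict} avoidance for the geodesics to those points, and then passes to the limit using overlap convergence (Propositions~\ref{prop:18},~\ref{prop:19}); you instead try to directly produce maximizers $x^-<x<x^+$ of $x'\mapsto\fh(x')+\cL(x',0;q)$ and then compare with geodesics from $(x^\pm,0)$. The difficulty you flag at the end is a genuine gap, and the monotonicity argument you propose to close it produces a $y$-interval on the wrong side of $y_0:=\Upsilon_{(x,0)}^\fh(s)$. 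Concretely, assume $\fh(x)>-\infty$ and that no $x'<x$ maximizes the supremum defining $L(y_0,s)$, so $x$ is the unique left maximizer at $y_0$. Since $y\mapsto\cL(x,0;y,s)-\cL(x',0;y,s)$ is non-decreasing for $x'<x$, the dominance of $x$ over every $x'<x$ propagates from $y_0$ only to $y\geq y_0$, giving $L(y,s)=\fh(x)+\cL(x,0;y,s)$ for $y\geq y_0$; the parallel analysis for $R$ gives $R(y,s)=\fh(x)+\cL(x,0;y,s)$ only for $y\leq y_0$. These two identities live on opposite sides of $y_0$ and together merely reproduce the defining sign conditions $d_x(y,s)\geq 0$ for $y\geq y_0$ and $d_x(y,s)\leq 0$ for $y\leq y_0$, which hold anyway; they do not give $d_x(\cdot,s)\equiv 0$ on a nondegenerate interval, and hence no contradiction with $\underline{\Upsilon}_{(x,0)}^\fh(s)=\overline{\Upsilon}_{(x,0)}^\fh(s)$. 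To reach the contradiction you need $L(y,s)=\fh(x)+\cL(x,0;y,s)$ on a \emph{left} $y$-neighborhood of $y_0$ (so that $d_x\geq 0$ there, while uniqueness forces $d_x<0$ strictly below $y_0$). That is precisely what the paper extracts from overlap convergence: if $\underline{\gamma}_{(\fh,0)}^q(0)=x$, then $\underline{\gamma}_{(\fh,0)}^{(y_0-1/n,s)}(0)=x$ for large $n$ as well. Monotonicity, used in the only direction available to you, cannot supply this.

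A secondary concern, even granting the existence of $x^-<x$: Lemma~\ref{lem:32}(2) introduces a time $r\in(0,s]$ at which $\gamma_{(\fh,0)}^q$ may begin to coincide with the interface, and the strict one-sidedness should be read as holding on $(0,r)$ rather than all of $(0,s)$; as written the two clauses of the lemma conflict when $r<s$. Without separately arguing that $r=s$ for your $\gamma^{\pm}$, the comparison $\underline{\gamma}_{(\fh,0)}^q(s')\leq\gamma^-(s')$ yields only the weak inequality $\underline{\gamma}_{(\fh,0)}^q(s')\leq\Upsilon_{(x,0)}^\fh(s')$ on $[r,s)$. The paper sidesteps this entirely by never invoking Lemma~\ref{lem:32}(2): it works only with geodesics to points off the interface, where Lemma~\ref{lem:32}(1) gives strict avoidance outright, and transfers the strict inequality to $q$ by overlap convergence.
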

\begin{proof}
We first prove \eqref{eq:161}. For any $s>0$, let $q=(\Upsilon_{(x,0)}^{\fh}(s),s)$ and define the sequences $q_n^{\tL}=(\Upsilon_{(x,0)}^{\fh}(s)-1/n,s)$ and $q_n^{\tR}=(\Upsilon_{(x,0)}^{\fh}(s)+1/n,s)$. By a precompactness argument employing Proposition \ref{prop:18}, it can be seen that the geodesics $\underline{\gamma}_{(\fh,0)}^{q_n^{\tL}}$,  $\overline{\gamma}_{(\fh,0)}^{q_n^{\tL}}$ must converge subsequentially in the uniform topology to the geodesics $\underline{\gamma}_{(\fh,0)}^{q}, \overline{\gamma}_{(\fh,0)}^{q}$ respectively. Further, by Proposition \ref{prop:19}, the above convergence must also occur in the overlap sense.

Now, by invoking (1) in Lemma \ref{lem:32}, we know that for all $n$ and $s'\in (0,s]$,
 \begin{equation}
   \label{eq:156}
      \underline{\gamma}_{(\fh,0)}^{q_n^{\tL}}(s')<\Upsilon_{(x,0)}^{\fh}(s')< \overline{\gamma}_{(\fh,0)}^{q_n^{\tR}}(s').
    \end{equation}
    On taking the limit $n\rightarrow \infty$ along the above-mentioned subsequence and using the overlap sense above, we obtain that we must have
  \begin{equation}
    \label{eq:137}
    \underline{\gamma}_{(\fh,0)}^{q}(s')<\Upsilon_{(x,0)}^{\fh}(s')< \overline{\gamma}_{(\fh,0)}^{q}(s')
  \end{equation}
  for all $s'\in (0,t)$. Indeed, simply taking a limit of \eqref{eq:156} yields \eqref{eq:137} with the $<$ signs replaced by $\leq $ signs, and then one can note that these have to be strict inequalities as otherwise, due to the overlap sense convergence, the inequalities in \eqref{eq:156} would not be strict.

  We now come to \eqref{eq:161}, and we just prove the first inequality, since the proof of the latter is analogous. First, we note that if we had $\underline{\gamma}_{(\fh,0)}^{(\Upsilon_{(x,0)}^\fh(s),s)}(0)>x$, then we must have
  \begin{equation}
    \label{eq:163}
    \sup_{x'\leq x}\{\fh(x')+\cL(x',0;\Upsilon_{(x,0)}^\fh(s),s)\}< \sup_{x'\geq x}\{\fh(x')+\cL(x',0;\Upsilon_{(x,0)}^\fh(s),s)\},
  \end{equation}
  thereby contradicting the very definition of the interface $\Upsilon_{(x,0)}^\fh$ from \eqref{eq:112}. Finally, if we had $\underline{\gamma}_{(\fh,0)}^{(\Upsilon_{(x,0)}^\fh(s),s)}(0)=x$, then by using Propositions \ref{prop:18}, \ref{prop:19}, it can be seen that for all large enough $n\in \NN$, we must also have $\underline{\gamma}_{(\fh,0)}^{(\Upsilon_{(x,0)}^\fh(s)-n^{-1},s)}(0)=x$. This would imply that $d_x(\cdot,s)$ is zero on an interval and would contradict the assumed uniqueness of the interface $\Upsilon_{(x,0)}^\fh$.
\end{proof}

Also, the work \cite{RV21} provides a basic condition which implies the uniqueness of the above-mentioned interface.

\begin{proposition}[{\cite[Proposition 4.9]{RV21}}]
  \label{prop:16}
 Let $B$ be a two sided Brownian motion with diffusivity $2$ independent of $\cL$. For a fixed initial condition $\fh$, a point $x$ is said to be a polar point for $\fh$ if for some interval $[a,b]$ satisfying $x\in [a,b]$ and $\sup_{y\in[a,b]}\fh(y)>-\infty$, we have
  \begin{equation}
    \label{eq:113}
    \PP(\fh(x)+B(x)=\sup_{y\in [a,b]}\{\fh(y) + B(y)\})>0.
  \end{equation}
 Then almost surely, we have a unique interface $\Upsilon_{(x,0)}^\fh$ for all $x$ which are not polar for $\fh$.
\end{proposition}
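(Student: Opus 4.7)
The plan is to recognize that non-uniqueness of the interface at $x$ is equivalent to the monotone function $d_x(\cdot,t)$ from \eqref{eq:110} admitting a non-degenerate plateau at height zero for some $t>0$. Over such a plateau $[y_1,y_2]$, for every $y\in[y_1,y_2]$ the function $x'\mapsto F_{y,t}(x'):=\fh(x')+\cL(x',0;y,t)$ attains its supremum both on $(-\infty,x]$ and on $[x,\infty)$ at strictly separated points $x^L<x<x^R$ with equal values; in particular $x$ is itself an argmax. The task is to rule out this scenario almost surely whenever $x$ is non-polar for $\fh$.

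First I would reduce to a countable family of slices. Because $d_x(\cdot,t)$ is monotone non-decreasing for each $t$, a plateau at height $0$ has positive Lebesgue length; combined with joint continuity of $d_x(y,t)$ in $(y,t)$ (an easy consequence of the continuity of $\cL$ plus the parabolic tail estimate of Proposition \ref{prop:7}, which confines the suprema in \eqref{eq:110} to bounded regions), any such plateau contains a point $(y_0,t_0)$ with rational coordinates in its interior. It therefore suffices to show, for each fixed rational $t_0>0$ and $y_0\in\RR$, that the event that $x$ is simultaneously an argmax of $F_{y_0,t_0}$ restricted to $(-\infty,x]$ and to $[x,\infty)$ has probability zero.

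The crucial input is the Brownian local absolute continuity of the first marginal of $\cL$: on any compact interval $[a,b]$, after subtracting the smooth parabola $(x'-y_0)^2/t_0$, the law of $x'\mapsto \cL(x',0;y_0,t_0)$ is mutually absolutely continuous with that of a two-sided Brownian motion of diffusivity $2$. A Girsanov-type comparison that absorbs the parabolic drift and the fixed $\fh$ then shows that the probability that $x$ is an argmax of $F_{y_0,t_0}$ on a compact interval $[a,b]\ni x$ with $\sup_{[a,b]}\fh>-\infty$ is comparable to the probability appearing in the polar-point definition \eqref{eq:113}, and therefore vanishes by the non-polar hypothesis. This gives the claim at the level of each compact window.

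The main obstacle will be passing from compact windows to the full line. This is handled again by Proposition \ref{prop:7}: the parabolic decay $\cL(x',0;y_0,t_0)\sim -(x'-y_0)^2/t_0$ as $|x'|\to\infty$, combined with the at most linear growth of $\fh$, forces any argmax of $F_{y_0,t_0}$ to lie in a random but a.s.\ bounded interval around $y_0$; an exhaustion by nested compacts $[-n,n]$ and a union bound collapse the statement on each window into a statement on all of $\RR$. A secondary subtlety is that absolute continuity of $\cL(\cdot,0;y_0,t_0)\lvert_{[a,b]}$ with respect to Brownian motion must tolerate conditioning on the values of the landscape outside $[a,b]$; this can be finessed by working with a countable dense family of windows and a Markov-type description of the Brownian marginal, so that no real extra difficulty arises beyond the careful bookkeeping of Radon-Nikodym derivatives.
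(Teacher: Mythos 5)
The paper does not prove Proposition~\ref{prop:16}; it is cited directly from \cite[Proposition 4.9]{RV21}, so there is no in-house proof to compare against. Evaluating your argument on its own terms, there is a genuine gap at the reduction step, and it is the step on which the whole proof rests.

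You observe, correctly, that non-uniqueness of the interface at $x$ is equivalent to the monotone profile $d_x(\cdot,t)$ having a non-degenerate plateau at height $0$ for some $t>0$, and that for each $y$ in that plateau the two restricted suprema of $F_{y,t}(x')=\fh(x')+\cL(x',0;y,t)$ over $(-\infty,x]$ and $[x,\infty)$ coincide. From this you conclude ``in particular $x$ is itself an argmax,'' and then aim to kill the event ``$x$ is simultaneously an argmax of $F_{y_0,t_0}$ on $(-\infty,x]$ and on $[x,\infty)$'' by Brownian absolute continuity. But $d_x(y,t)=0$ only asserts $\sup_{x'\le x}F_{y,t}(x')=\sup_{x'\ge x}F_{y,t}(x')$; the common value is attained at some $x^L\le x$ and some $x^R\ge x$, and nothing forces $F_{y,t}(x)$ to equal that supremum. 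Generically $x^L<x<x^R$ strictly, with $F_{y,t}(x)$ strictly below the maximum, and then $x$ is not an argmax of $F_{y,t}$ on either half-line. The event you propose to bound ($x$ is a two-sided argmax) is strictly stronger than the plateau event you actually need to rule out. The events you would have to exclude are of the form ``$F_{y,t}$ has two tied global maximizers straddling $x$, simultaneously for every $y$ in a nondegenerate interval''; controlling those cannot be done by a pointwise polar-point comparison at $x$ alone, and the polar-point hypothesis is precisely a statement about $x$ being an argmax, not about ties straddling $x$.

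This is not a bookkeeping slip that the ``Girsanov-type comparison'' paragraph can absorb: the absolute-continuity step is only invoked to transfer a probability-zero statement about $x$ being an argmax of a Brownian perturbation of $\fh$, and you have not shown that non-uniqueness forces $x$ to be an argmax of anything. A correct proof (as in \cite{RV21}) has to exploit the geometry of the plateau as $t\downarrow 0$ — the argmaxes $x^L(y,t),x^R(y,t)$ collapse onto $x$ along the interface — and extract from that collapse a positive-probability argmax-at-$x$ event for the Busemann-type initial data, which is where the polar-point condition genuinely enters. As written, your argument skips exactly that passage.
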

Since points $x$ where $\fh(x)=-\infty$ are always non-polar, one has the following immediate consequence.
\begin{lemma}
  \label{lem:24}
  Let $\fh$ be an initial condition which is not identically equal to $-\infty$. Then one has a unique interface $\Upsilon_{(x,0)}^\fh$ for all interior points $x$ where $\fh(x)=-\infty$.
\end{lemma}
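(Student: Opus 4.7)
The plan is to reduce the claim directly to Proposition \ref{prop:16}: it suffices to check that any interior point $x$ with $\fh(x) = -\infty$ is \emph{non-polar} for $\fh$ in the sense of that proposition. Thus I must show that for every interval $[a,b]$ containing $x$ with $\sup_{y\in[a,b]}\fh(y) > -\infty$, one has
\begin{equation*}
\PP\bigl(\fh(x) + B(x) = \sup_{y\in[a,b]}\{\fh(y) + B(y)\}\bigr) = 0,
\end{equation*}
where $B$ is a two-sided Brownian motion of diffusivity $2$.

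The key observation is that the left-hand side of the equality inside the probability is deterministically $-\infty$. Indeed, since $B(x) \in \RR$ almost surely and $\fh(x) = -\infty$, we have $\fh(x) + B(x) = -\infty$ almost surely. On the other hand, the right-hand side is almost surely finite: by hypothesis there exists $y_0 \in [a,b]$ with $\fh(y_0) > -\infty$ (for instance, one may take a point approaching the supremum, or use upper semi-continuity of $\fh$ together with compactness of $[a,b]$ to find a maximizer), and then
\begin{equation*}
\sup_{y\in[a,b]}\{\fh(y) + B(y)\} \ge \fh(y_0) + B(y_0) > -\infty \quad \text{a.s.}
\end{equation*}
Consequently, the equality event has probability zero, and therefore $x$ is non-polar for $\fh$.

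Applying Proposition \ref{prop:16}, we almost surely obtain a unique interface $\Upsilon_{(x,0)}^\fh$ at every non-polar point, and in particular at every interior point $x$ with $\fh(x) = -\infty$. This finishes the argument; there is no real obstacle beyond unpacking the definition of polarity.
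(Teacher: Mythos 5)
Your argument is correct and is essentially the same as the paper's, which simply asserts that points where $\fh(x)=-\infty$ are always non-polar and then invokes Proposition \ref{prop:16}; you have merely unpacked the one-line reason (the left side of the equality in the polarity condition is deterministically $-\infty$ while the right side is a.s.\ finite).
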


\subsection{Busemann functions }
\label{sec:busemann}
Apart from using distances $\cL(p;q)$ for points $(p;q)\in \RR_\uparrow^4$, we shall also need to use ``distances to infinity'', and these can be formalized by Busemann functions, a notion originally studied in geometry \cite{Bus12} and subsequently introduced to first passage percolation in \cite{New95,Hof05}. We recall that for any fixed $\theta\in \RR$, almost surely, any two geodesics $\Gamma^\theta_p,\Gamma^\theta_q$ eventually coalesce \cite{RV21,GZ22,BSS22}. As a result, the Busemann function $\cB_\theta$ can be defined as
\begin{equation}
  \label{eq:100}
  \cB_\theta(p;q)=\cL(z;p)-\cL(z;q),
\end{equation}
where $z$ is any point satisfying $z\in \Gamma^\theta_p\cap \Gamma^\theta_q$ for some geodesics $\Gamma^\theta_p,\Gamma^\theta_q$. It is not difficult to show that the above notion is well-defined regardless of the precise choice of $\Gamma^\theta_p,\Gamma^\theta_q$ and $z$. Further, $\cB_\theta$ can be shown to be a.s.\ continuous \cite[Theorem 5.1]{BSS22} in both the parameters $p$ and $q$. Often, we shall work with the Busemann function restricted to a points having a fixed temporal coordinate $s$, and thus, we define $\cB_\theta^s\colon \RR\rightarrow \RR$ by
\begin{equation}
  \label{eq:102}
  \cB_\theta^s(x)=\cB_\theta( (x,s), (0,s)).
\end{equation}
The following precise description of the law of the above shall be useful to us.
\begin{proposition}[{see e.g.\ \cite[Lemma 4.4]{GZ22}}]
  \label{prop:22}
  For any fixed $s,\theta\in \RR$, the process $\cB_\theta^s$ is a two-sided Brownian motion with drift $2\theta$.
\end{proposition}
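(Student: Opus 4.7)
The strategy is to reduce the statement to the case $(\theta,s)=(0,0)$ using the symmetries of $\cL$ collected in Proposition~\ref{prop:14}, and then to identify the law of $\cB_0^0$ via a scaling-limit argument built from the coalescence of $0$-directed geodesics and the local Brownian behaviour of $\cL$ at a fixed time.

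Translation invariance immediately reduces us to $s=0$. For the reduction to $\theta=0$, the plan is to apply skew invariance with parameter $-\theta$: set $\widetilde\cL:=\cL^{\mathrm{sk}}_{-\theta}$, so that $\widetilde\cL\stackrel{d}{=}\cL$. Checking from the defining formula that a $0$-directed downward semi-infinite geodesic $\widetilde\Gamma$ in $\widetilde\cL$ from $(y,t)$ corresponds to the $\theta$-directed geodesic $\Gamma(r):=\widetilde\Gamma(r)-\theta r$ in $\cL$ from $(y-\theta t,t)$, and then expanding the $0$-Busemann function $\widetilde\cB_0^0$ of $\widetilde\cL$ at a shared coalescence point via the skew formula, a short computation yields the identity
\begin{equation*}
  \widetilde\cB_0^0(x) \;=\; \cB_\theta^0(x) - 2\theta x.
\end{equation*}
Since $\widetilde\cL\stackrel{d}{=}\cL$ forces $\widetilde\cB_0^0\stackrel{d}{=}\cB_0^0$, this identity propagates a diffusivity-$2$ Brownian motion law for $\cB_0^0$ into a diffusivity-$2$ Brownian motion law with drift $2\theta$ for $\cB_\theta^0$, as required.

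It remains to handle the case $(\theta,s)=(0,0)$. Fix $R>0$. Using coalescence of $0$-directed geodesics on the tree $\cT_\downarrow$ together with the uniform transversal fluctuation estimate of Proposition~\ref{lem:12}, one shows that almost surely there is a (random) threshold $T^\ast=T^\ast(R)$ such that for all $T\geq T^\ast$ and all $x\in[-R,R]$, the point $z_T:=(\Gamma_{(0,0)}(-T),-T)$ lies on every geodesic $\Gamma_{(x,0)}$. Consequently
\begin{equation*}
  \cB_0^0(x) \;=\; \cL(z_T;(x,0))-\cL(z_T;(0,0)) \qquad\text{for all } x\in[-R,R],\; T\geq T^\ast.
\end{equation*}
Applying the KPZ $1\!:\!2\!:\!3$ scaling of Proposition~\ref{prop:14} with $q=T^{1/3}$ rewrites the right-hand side in terms of a unit-scale directed landscape evaluated on the shrinking interval $[-T^{-2/3}R,T^{-2/3}R]$. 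The standard local Brownian absolute continuity (with diffusivity $2$) of the spatial profile $y\mapsto\cL(u,-1;y,0)$, which descends from the Brownian structure of the Airy line ensemble, then shows that the rescaled difference converges in distribution on compacts, as $T\to\infty$, to a two-sided Brownian motion of diffusivity $2$. Since the left-hand side is independent of $T$, this identifies the law of $\cB_0^0$.

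The main obstacle is this last identification step: transferring the known local Brownian behaviour of a single spatial slice of $\cL$ through the KPZ rescaling so as to pin down the full finite-dimensional distributions of the Busemann limit (rather than merely qualitative Brownian-type behaviour). The two symmetry reductions and the coalescence-based stabilization of the Busemann approximation are routine given the preliminaries already collected in the paper.
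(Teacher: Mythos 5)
The paper does not prove this proposition; it cites it to \cite[Lemma 4.4]{GZ22}, where it is ultimately inherited from the exact stationary structure of the exponential LPP prelimit. Your two symmetry reductions are correct: translation invariance handles $s\neq 0$, and the skew identity $\widetilde\cB_0^0(x)=\cB_\theta^0(x)-2\theta x$ (with $\widetilde\cL=\cL_{-\theta}^{\mathrm{sk}}$) is a valid computation, as is the coalescence-based representation $\cB_0^0(x)=\cL(z_T;(x,0))-\cL(z_T;(0,0))$ for $T\geq T^\ast(R)$. These are genuinely routine.

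The problem is the step you yourself flag as the "main obstacle," and it is not a small finishing touch but the entire content of the statement. Absolute continuity of the spatial profile $y\mapsto\cL(u,-1;y,0)$ with respect to a diffusivity-$2$ Brownian motion on compacts does not by itself let you conclude that a $T^{1/3}$-rescaled increment over a $T^{-2/3}$-window converges in law to that Brownian motion; you would additionally need that the relevant Radon--Nikodym derivative tends to $1$ in the shrinking-window limit, uniformly enough to accommodate the fact that the base point $z_T=(\Gamma_{(0,0)}(-T),-T)$ is random and highly correlated with the whole landscape (so the KPZ scaling identity, which is distributional at fixed arguments, cannot be applied directly to the pinned quantity). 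One also needs asymptotic independence between the location of $z_T$ and the local spatial increments near $(0,0)$ as $T\to\infty$. None of this is provided, and proving it from scratch is at least as hard as the proposition itself; in practice the exact Brownian law of Busemann increments comes from the Burke-type stationarity of the prelimiting exponential LPP (random walk with i.i.d.\ exponential-difference increments), carried through to the scaling limit, rather than from a soft absolute-continuity argument on the limiting object. So the reduction framework is fine, but the heart of the proof is missing.
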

In fact, there is a precise description (\cite{Bus21}, \cite[Theorem 3.3, Appendix D]{BSS22}) of the joint law of $(\cB^s_{\theta_1},\dots,\cB^s_{\theta_n})$ for any fixed $s$ and finitely many angles $\theta_1<\dots<\theta_n$. However, for the present paper, we shall only require the above simpler Proposition \ref{prop:22}.

We now record an easy lemma relating the interface $\Upsilon^\theta_{\0}$ defined earlier as a path starting from $0$ and dual to the tree $\cT^\theta_\downarrow$ to the interface defined using the initial condition given by the Busemann function.
\begin{lemma}[{see e.g.\ \cite[Lemma 31]{Bha23}}]
  \label{lem:33}
 For any fixed $\theta\in \RR$, we a.s.\ have a unique interface $\Upsilon_{\0}^{\cB_\theta^0}$ and further, we have the equality $\Upsilon^\theta_{\0}=\Upsilon_{\0}^{\cB_\theta^0}$.
\end{lemma}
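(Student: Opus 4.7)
The plan is to establish the two claims (uniqueness of the interface and its identification with $\Upsilon_\0^\theta$) in turn.

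For uniqueness, I would verify the non-polarity hypothesis of Proposition~\ref{prop:16} at $x = 0$ for the initial condition $\fh = \cB_\theta^0$. By Proposition~\ref{prop:22}, $\cB_\theta^0$ is a two-sided Brownian motion with drift $2\theta$, and in particular $\cB_\theta^0(0) = 0$. For the independent Brownian motion $B$ of Proposition~\ref{prop:16} and any interval $[a,b] \ni 0$ with $a < b$ and $\sup_{y\in [a,b]}\cB_\theta^0(y) > -\infty$ (automatic by continuity), the event in \eqref{eq:113} reduces to $\{\cB_\theta^0(y) + B(y) \leq 0 \textrm{ for all } y \in [a,b]\}$. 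Since $\cB_\theta^0 + B$ restricted to either $[0,b]$ (if $b > 0$) or $[a,0]$ (if $a < 0$) is a Brownian motion with drift starting from $0$, Khintchine's law of the iterated logarithm at the origin gives that $\cB_\theta^0 + B$ a.s.\ takes strictly positive values arbitrarily close to $0$ inside $[a,b]$, so the above event has probability zero. Hence $0$ is non-polar, and Proposition~\ref{prop:16} yields the desired a.s.\ uniqueness.

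For the equality, the plan is to re-express the competition function $d_0$ from \eqref{eq:110} in terms of $\theta$-directed semi-infinite geodesics. Fix any $q = (y, t)$ with $t > 0$ and pick a reference point $z = (\zeta, \tau)$ with $\tau$ sufficiently negative so that $z$ lies on some $\theta$-directed geodesic from $q$ (possible by coalescence). Unfolding the Busemann function via \eqref{eq:100} yields
\[
\cB_\theta^0(x') + \cL(x',0;q) = \cL(z;(x',0)) + \cL(x',0;q) - \cL(z;\0) \leq \cL(z;q) - \cL(z;\0),
\]
with equality iff $(x', 0)$ lies on some geodesic from $z$ to $q$, i.e., on some $\theta$-directed geodesic $\Gamma_q^\theta$. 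Hence the argmax of $x' \mapsto \cB_\theta^0(x') + \cL(x',0;q)$ coincides with the set $X_q$ of crossings of the line $\{t=0\}$ by $\theta$-directed geodesics from $q$. It follows that $d_0(q) > 0$ iff $X_q \subset (0,\infty)$, $d_0(q) < 0$ iff $X_q \subset (-\infty,0)$, and $d_0(q) = 0$ iff $X_q$ either contains $0$ or straddles it. On the other hand, the defining condition $\inte(\Upsilon_\0^\theta) \cap \cT_\downarrow^\theta = \emptyset$ prevents $\theta$-directed geodesics from crossing $\Upsilon_\0^\theta$, so $\Upsilon_\0^\theta$ is precisely the boundary between points whose $\theta$-directed geodesics cross $\{t=0\}$ strictly to the right of $\0$ and those crossing strictly to the left. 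Matching these two characterizations identifies $\Upsilon_\0^{\cB_\theta^0}$ with $\Upsilon_\0^\theta$.

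The main subtlety I anticipate lies in handling the (at most countable) set of $q$ admitting multiple $\theta$-directed geodesics with distinct $\{t = 0\}$-crossings, so that $X_q$ has more than one element; at such $q$ the left and right interfaces $\underline{\Upsilon}_\0^{\cB_\theta^0}, \overline{\Upsilon}_\0^{\cB_\theta^0}$ of \eqref{eq:112} might a priori differ, and the uniqueness obtained in the first step is exactly what is required to collapse them to a single path agreeing with $\Upsilon_\0^\theta$.
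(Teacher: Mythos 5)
Your proposal is essentially correct, and it follows the natural and (to my knowledge) standard route for this identification -- the paper itself does not prove Lemma~\ref{lem:33} but cites it from \cite[Lemma~31]{Bha23}, so there is no in-text proof to compare against. The uniqueness step via non-polarity (Proposition~\ref{prop:16}) together with the law of the iterated logarithm at the origin is clean and correct, and the re-expression of $d_0$ in terms of $\{t=0\}$-crossings of $\theta$-directed geodesics is the right mechanism for the equality.

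Two points of imprecision that you should tighten. First, the identity $\cB_\theta^0(x') = \cL(z;(x',0)) - \cL(z;\0)$ used in your display requires $z$ to lie on the \emph{common} part of $\theta$-directed geodesics from $(x',0)$ and from $\0$, not merely on some $\theta$-directed geodesic from $q$; as written, your choice of $z$ only guarantees the latter. The fix is to note that the supremum over $x'$ of $\cB_\theta^0(x') + \cL(x',0;q)$ is achieved in a compact interval (by the parabolic decay from Proposition~\ref{prop:7} combined with the linear growth of $\cB_\theta^0$), and then, by coalescence of $\theta$-directed geodesics (together with the transversal-fluctuation control of Proposition~\ref{lem:12}), one can take $\tau$ negative enough that $z$ lies on the coalesced geodesic tree, hence on $\Gamma_{(x',0)}^\theta \cap \Gamma_\0^\theta$ for all $x'$ in that interval. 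With that choice, the geodesic $\gamma_z^q$ is also a restriction of $\Gamma_q^\theta$, and your argmax characterization goes through. Second, the assertion that $\Upsilon_\0^\theta$ is ``precisely the boundary'' between $\{d_0>0\}$ and $\{d_0<0\}$ needs a sentence of justification: one should appeal to (the semi-infinite analogue of) Lemma~\ref{lem:32} and Lemma~\ref{lem:19}, which give that for $q$ strictly off $\Upsilon_\0^\theta$ all $\theta$-geodesics from $q$ hit $\{t=0\}$ strictly on the corresponding side of $\0$, while for $q\in\inte(\Upsilon_\0^\theta)$ the left- and right-most such geodesics bracket $\0$. Once those two points are fixed, the uniqueness from your first step collapses $\underline{\Upsilon}_\0^{\cB_\theta^0}$ and $\overline{\Upsilon}_\0^{\cB_\theta^0}$ to the single path $\Upsilon_\0^\theta$, as you indicate.
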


\subsection{Disjoint optimizers and the extended directed landscape}
\label{sec:disj-optim}
We now discuss results from the work \cite{DZ21} on disjoint optimizers in the directed landscape, and later, Theorem \ref{thm:2} will be obtained as an application of these results. To begin, we provide the definition of the extended directed landscape.

\begin{definition}[{\cite[Definition 1.1]{DZ21}}]
  For $k\in \NN$, define the set $\RR_{\leq}^k=\{\mathbf{x}\in \RR^k: x_1\leq \dots \leq x_k\}$. Now, for $\mathbf{x},\mathbf{y}\in \RR_{\leq}^k$ for some $k\in \NN$, we define
  \begin{equation}
    \label{eq:157}
    \cL^*(\mathbf{x},s;\mathbf{y},t)=\sup_{\xi_1,\dots ,\xi_k}\sum_{i=1}^k\ell(\xi_i;\cL),
  \end{equation}
  where the above supremum is over pairwise almost disjoint paths $\xi_i$ from $(x_i,s)$ to $(y_i,t)$.
\end{definition}
Just as $\cL$ satisfies the reverse triangle inequality, we also have the following result.
\begin{proposition}[{\cite[Proposition 6.9]{DZ21}}]
  \label{prop:12}
  Fix $k\in \NN$. Almost surely, for all $\mathbf{x},\mathbf{z},\mathbf{y}\in \RR_{\leq}^k$ and all $s<r<t$, we have
  \begin{equation}
    \label{eq:158}
    \cL^*(\mathbf{x},s;\mathbf{y},t)\geq \cL^*(\mathbf{x},s;\mathbf{z},t)+ \cL^*(\mathbf{z},r;\mathbf{y},t).
  \end{equation}
\end{proposition}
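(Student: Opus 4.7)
The plan is to realize the right-hand side by a concrete family of near-optimal almost-disjoint paths that can be concatenated into a competitor for the left-hand side. For any $\varepsilon>0$, use the definition of $\cL^*$ to choose almost-disjoint paths $\xi_1,\dots,\xi_k$ from $(x_i,s)$ to $(z_i,r)$ with $\sum_i \ell(\xi_i;\cL)\ge \cL^*(\mathbf{x},s;\mathbf{z},r)-\varepsilon/2$, and almost-disjoint paths $\eta_1,\dots,\eta_k$ from $(z_i,r)$ to $(y_i,t)$ with $\sum_i \ell(\eta_i;\cL)\ge \cL^*(\mathbf{z},r;\mathbf{y},t)-\varepsilon/2$. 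Since $\mathbf{x},\mathbf{z}\in\RR^k_{\le}$ and the $\xi_i$ are almost disjoint, a standard non-crossing argument (two continuous almost-disjoint paths with endpoints in the prescribed order cannot cross in the interior without creating an interior intersection) forces $\xi_i(r')<\xi_j(r')$ for all $i<j$ and all $r'\in(s,r)$, and analogously for the $\eta_j$ on $(r,t)$.

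Form the concatenations $\pi_i:=\xi_i\cdot\eta_i$, which are paths from $(x_i,s)$ to $(y_i,t)$ of total length $\sum_i \ell(\pi_i;\cL)=\sum_i \ell(\xi_i;\cL)+\sum_i \ell(\eta_i;\cL)$. In the generic case where the entries of $\mathbf{z}$ are strictly increasing, the ordering above makes the $\pi_i$ pairwise almost disjoint (they can only meet at the prescribed endpoints), so they compete in the supremum defining $\cL^*(\mathbf{x},s;\mathbf{y},t)$, yielding
\begin{equation*}
\cL^*(\mathbf{x},s;\mathbf{y},t)\ \ge\ \cL^*(\mathbf{x},s;\mathbf{z},r)+\cL^*(\mathbf{z},r;\mathbf{y},t)-\varepsilon,
\end{equation*}
and letting $\varepsilon\downarrow 0$ gives the inequality in this case.

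The main obstacle I anticipate is the case when $\mathbf{z}$ has repeated entries, say $z_i=z_{i+1}$: the paths $\pi_i$ and $\pi_{i+1}$ then share the interior point $(z_i,r)$ and fail to be almost disjoint in the strict sense used throughout the paper. To handle this I would perturb: replace $\mathbf{z}$ by $\mathbf{z}^{(n)}\in\RR^k_{\le}$ with strictly increasing entries and $\mathbf{z}^{(n)}\to\mathbf{z}$. Applying the already established strict case gives $\cL^*(\mathbf{x},s;\mathbf{y},t)\ge \cL^*(\mathbf{x},s;\mathbf{z}^{(n)},r)+\cL^*(\mathbf{z}^{(n)},r;\mathbf{y},t)$, and the continuity of $\cL$ together with the precompactness of families of (almost-disjoint) near-optimizers, in the style of Proposition~\ref{prop:18}, lets one pass to the limit in $n$: a subsequential limit of near-optimal families for $\mathbf{z}^{(n)}$ is an almost-disjoint family attaining a total length at least $\liminf_n \cL^*(\mathbf{x},s;\mathbf{z}^{(n)},r)$, so the right-hand side is lower semicontinuous in $\mathbf{z}$, which is enough for this lower bound.

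Finally, to upgrade from the fixed-parameter version to the simultaneous almost-sure statement ``for all $\mathbf{x},\mathbf{z},\mathbf{y}$ and $s<r<t$'', I would apply the above on a countable dense set of tuples (where only countably many null events need to be discarded) and then extend by the same lower-semicontinuity input; since the inequality is a $\ge$ bound on a supremum, only continuity from below is needed for this density step.
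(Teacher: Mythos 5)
First, a point of orientation: the paper does not prove this proposition — it simply cites \cite[Proposition 6.9]{DZ21} — so I cannot compare your route to the paper's. I can only review the argument on its own merits, and I think the skeleton (concatenate near-optimizers; handle ties by perturbing $\mathbf z$) is the natural one. (You also correctly read through the typo in the paper's display, where $\cL^*(\mathbf{x},s;\mathbf{z},t)$ should be $\cL^*(\mathbf{x},s;\mathbf{z},r)$.) The strict-$\mathbf z$ case is fine: almost-disjoint continuous paths with ordered endpoints cannot cross by the intermediate value theorem, so (after relabelling any paths sharing a common endpoint at time $s$) the concatenations $\pi_i=\xi_i\cdot\eta_i$ are pairwise almost disjoint and feed into the supremum defining $\cL^*(\mathbf x,s;\mathbf y,t)$.

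The gap is in the limiting step. You want to pass from
\[
\cL^*(\mathbf x,s;\mathbf y,t)\;\geq\;\cL^*(\mathbf x,s;\mathbf z^{(n)},r)+\cL^*(\mathbf z^{(n)},r;\mathbf y,t)
\]
to the same inequality with $\mathbf z$, which requires \emph{lower} semicontinuity of the map $\mathbf z\mapsto \cL^*(\mathbf x,s;\mathbf z,r)+\cL^*(\mathbf z,r;\mathbf y,t)$ along $\mathbf z^{(n)}\to\mathbf z$. But the precompactness argument you sketch runs in the opposite direction: extracting a subsequential limit of near-optimal families for $\mathbf z^{(n)}$ and noting it is a competitor for $\mathbf z$ yields $\cL^*(\mathbf x,s;\mathbf z,r)\geq \limsup_n \cL^*(\mathbf x,s;\mathbf z^{(n)},r)$ — an \emph{upper} semicontinuity statement (and even that needs care, since $\ell(\cdot;\cL)$ is only upper semicontinuous under uniform convergence of paths, and one must check the limit family stays almost disjoint, which Proposition~\ref{prop:18}/\ref{prop:19} give only for geodesics). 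What you actually need, $\liminf_n\cL^*(\mathbf x,s;\mathbf z^{(n)},r)\ge\cL^*(\mathbf x,s;\mathbf z,r)$, would require producing competitors for each $\mathbf z^{(n)}$ from a fixed near-optimal family for $\mathbf z$, which is a different argument. The clean fix is to invoke the simultaneous a.s.\ continuity of $\cL^*$ recorded as Proposition~\ref{prop:24} (from \cite{DV21}, not \cite{DZ21}, so there is no circularity), which gives both semicontinuities at once.

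Your final paragraph about upgrading from fixed to simultaneous parameters via a countable dense set is not needed: the whole concatenation argument is pathwise (it only uses that $\cL^*$ is a supremum, so near-optimal almost-disjoint families exist for every parameter tuple on a single realization), and once you have the a.s.\ continuity event from Proposition~\ref{prop:24} and the a.s.\ existence event from Proposition~\ref{prop:20}, the inequality for \emph{all} $(\mathbf x,\mathbf z,\mathbf y,s,r,t)$ already holds on the intersection of those two full-measure events.
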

As is the case for $\cL$, the extended landscape is also a.s.\ continuous.
\begin{proposition}[{\cite[Lemma 5.6,Lemma 6.5]{DV21}}]
  \label{prop:24}
  Almost surely, for any fixed $k\in \NN$, the function $(\mathbf{x},s;\mathbf{y},t)\mapsto \cL^*(\mathbf{x},s;\mathbf{y},t)$ is continuous for all $s<t$ and $\mathbf{x},\mathbf{y}\in \RR_{\leq}^k$.
\end{proposition}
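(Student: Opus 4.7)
The plan is to establish continuity of $\cL^*$ by verifying both lower and upper semi-continuity along any sequence $(\mathbf{x}_n,s_n;\mathbf{y}_n,t_n)\to(\mathbf{x},s;\mathbf{y},t)$ with $\mathbf{x}_n,\mathbf{y}_n\in\RR_{\leq}^k$ and $s_n<t_n$. A preliminary step is to show the supremum in \eqref{eq:157} is attained. One can restrict to collections of almost disjoint $\cL$-geodesics: replacing each path $\xi_i$ by the geodesic between its endpoints only increases the sum, and the ordering $\xi_i\leq\xi_j$ for $i<j$ (equivalent to almost disjointness under the given endpoint order) is preserved under the left-most/right-most choice from Proposition \ref{prop:21}. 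Applying Proposition \ref{prop:17} uniformly produces an equicontinuity estimate; Arzelà-Ascoli and Proposition \ref{prop:18} then yield a uniform subsequential limit which is itself a collection of almost disjoint geodesics realising the supremum.

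For lower semi-continuity, fix optimal almost disjoint geodesics $\xi_1^*,\dots,\xi_k^*$ for the limit endpoints. For each large $n$, build competitors $\tilde\xi_i^n$ from $(x_i^n,s_n)$ to $(y_i^n,t_n)$ by splicing short nearly-vertical segments of vanishing time-length $\delta_n\to 0$ at the endpoints of $\xi_i^*$ and running $\xi_i^*$ on the intermediate subinterval. Choosing the splicing scale to dominate the endpoint discrepancies while still vanishing, one arranges that the $\tilde\xi_i^n$ remain pointwise ordered and hence almost disjoint. Continuity of $\cL$ implies $\ell(\tilde\xi_i^n;\cL)\to\ell(\xi_i^*;\cL)$ for each $i$, so $\liminf_n\cL^*(\mathbf{x}_n,s_n;\mathbf{y}_n,t_n)\geq\sum_i\ell(\xi_i^*;\cL)=\cL^*(\mathbf{x},s;\mathbf{y},t)$.

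For upper semi-continuity, take optimal almost disjoint geodesics $\xi_1^n,\dots,\xi_k^n$ for $(\mathbf{x}_n,s_n;\mathbf{y}_n,t_n)$, which exist by the first paragraph. Proposition \ref{prop:17}, applied with the uniform control on endpoints along the convergent sequence, gives an equicontinuity bound, so along a subsequence, each $\xi_i^n$ converges uniformly on any closed subinterval of $(s,t)$ to a path $\xi_i$, and the limit is a geodesic from $(x_i,s)$ to $(y_i,t)$ by Proposition \ref{prop:18}. The pointwise ordering $\xi_i^n\leq\xi_j^n$ for $i<j$ passes to the limit. If the limiting $\xi_i$ are almost disjoint, then $\limsup_n\cL^*(\mathbf{x}_n,s_n;\mathbf{y}_n,t_n)=\sum_i\ell(\xi_i;\cL)\leq\cL^*(\mathbf{x},s;\mathbf{y},t)$. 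If two consecutive limits coincide on an interior interval, apply a swap argument: at the first (and last) time they merge, cut both paths and interchange their tails; since both are $\cL$-geodesics between the same endpoint pairs, this operation preserves the total length, and iterating yields an almost disjoint collection with the same total length, giving the same bound.

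The main obstacle I anticipate is this last swap step: ruling out that the limiting geodesics $\xi_i$ coincide on non-trivial interior intervals while still being distinct geodesics, and cleanly producing an almost disjoint collection of the same total length. This is where the no-bubble result \cite[Theorem 1]{Bha23} (together with Propositions \ref{prop:19} and \ref{prop:21}) is pivotal, since it forces any two geodesics that overlap on an interval to agree on the maximal such interval, which makes the swap operation unambiguous and ensures that only finitely many swaps are needed to untangle the $k$ limits. Once these geometric inputs are in place, the remainder of the argument is a routine combination of continuity of $\cL$ and uniform convergence.
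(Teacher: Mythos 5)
The paper does not prove this proposition: it is quoted directly from \cite[Lemma 5.6, Lemma 6.5]{DV21}, where continuity of $\cL^*$ is established by passing to the limit from prelimiting Brownian last passage quantities rather than by the geometric compactness-and-surgery route you sketch. So your route is necessarily different from the paper's (which has none), but it also has genuine gaps.

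First, you assert in the preliminary step (and again for lower semi-continuity) that the pointwise ordering $\xi_i\leq\xi_j$ is ``equivalent to almost disjointness under the given endpoint order.'' This is false: two ordered paths can be tangent, i.e.\ touch or even coincide on interior time intervals while never crossing, and such a pair is not almost disjoint. This matters at several places. In the existence step, replacing each $\xi_i$ by the left-most (or right-most) geodesic between its endpoints preserves the ordering but can create interior coincidences, so you have not produced a competing almost-disjoint collection; the existence of disjoint optimizers is precisely the content of \cite[Proposition 8.1]{DZ21} (Proposition~\ref{prop:20} here), which is nontrivial and should be invoked rather than re-derived in a sentence. In the lower semi-continuity step, the splicing must produce \emph{strictly} ordered paths on the open interval, which requires an argument using the positive interior gap of the almost-disjoint $\xi_i^*$ on a compact core; the inference ``pointwise ordered and hence almost disjoint'' does not supply this.

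Second, for upper semi-continuity you treat the almost-disjoint optimizers $\xi_1^n,\dots,\xi_k^n$ as geodesics and appeal to Propositions~\ref{prop:17} and \ref{prop:18}. But the paths in a disjoint optimizer are in general \emph{not} $\cL$-geodesics: for instance, if $x_i=x_{i+1}$ and $y_i=y_{i+1}$ and the geodesic between those endpoints is unique, at most one of the two optimizer paths can be the geodesic. Since $\RR_\leq^k$ allows equal coordinates and the approximating sequences may have degenerating endpoints, this case cannot be excluded. The transversal fluctuation and precompactness estimates you use are stated for geodesics only, so the equicontinuity and limiting-geodesic claims do not follow as written.

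Third, the swap argument as you describe it does not work. If $\xi_i\leq\xi_{i+1}$ merge on an interior interval $[r_1,r_2]$, then ``cut at $r_1$ and interchange tails'' produces paths from $(x_i,s)$ to $(y_{i+1},t)$ and from $(x_{i+1},s)$ to $(y_i,t)$, scrambling the endpoint pairing; these are no longer competitors for $\cL^*(\mathbf{x},s;\mathbf{y},t)$, and moreover the swapped pair \emph{crosses} at $r_1$, so the ordering is destroyed. Swapping at both $r_1$ and $r_2$ is a no-op since the paths already agree on $[r_1,r_2]$. The untangling problem here is one of removing \emph{tangencies}, not crossings, and that is exactly the delicate part; it cannot be dispatched by tail-swapping, and the no-bubble result does not by itself produce an almost-disjoint collection of the same total length with the correct endpoints. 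A correct treatment would either invoke the (nontrivial) fact that the supremum in \eqref{eq:157} over almost disjoint paths coincides with the supremum over merely ordered paths, or prove the continuity directly from the prelimiting description of $\cL^*$ as in \cite{DV21}.
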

We now state the main result of \cite{DZ21}.
\begin{proposition}[{\cite[Proposition 8.1]{DZ21}}]
  \label{prop:20}
  Almost surely, for all $k\in \NN$, all points $\mathbf{x},\mathbf{y}\in \RR_{\leq}^k$ and $s<t$, there exists a disjoint optimizer $\{\xi_i\}_{i=1}^k$ from $(\mathbf{x},s)$ to $(\mathbf{y},t)$, in the sense that the supremum in \eqref{eq:157} is attained by a collection of almost disjoint paths $\{\xi_i\}_{i=1}^k$.
\end{proposition}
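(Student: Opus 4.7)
My plan is to prove existence of a disjoint optimizer first for fixed parameters $(k, \mathbf{x}, \mathbf{y}, s, t)$ via a compactness argument on an almost-maximizing sequence, and then upgrade to the simultaneous statement using the continuity of $\cL^*$ from Proposition \ref{prop:24} together with approximation by rational parameters.

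For fixed $k, \mathbf{x}, \mathbf{y}, s, t$, I would take a maximizing sequence $\{\xi_i^{(n)}\}_{i=1}^k$ of almost-disjoint $k$-tuples of paths from $(\mathbf{x}, s)$ to $(\mathbf{y}, t)$ whose total length $\sum_i \ell(\xi_i^{(n)}; \cL)$ converges to $\cL^*(\mathbf{x}, s; \mathbf{y}, t)$. The first step is to show this sequence is precompact in the uniform topology. Uniform transversal fluctuations of each $\xi_i^{(n)}$ follow from near-optimality: a path straying far out of a large rectangle picks up a very negative parabolic term via Proposition \ref{prop:7}, contradicting that the total length is close to the finite supremum. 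Uniform $2/3^{-}$-Hölder regularity follows by noting that if some $\xi_i^{(n)}$ were too oscillatory on a sub-interval, replacing it there with the geodesic between the same endpoints (together with a vanishingly small spatial shift to preserve almost-disjointness from the other $k-1$ paths) would strictly increase the total length, by invoking the uniform Hölder bound from Proposition \ref{prop:17}. An Arzelà--Ascoli argument then extracts a subsequential uniform limit $\{\xi_i\}_{i=1}^k$, and the ordering $\xi_1 \leq \cdots \leq \xi_k$ is inherited from the corresponding ordering of almost-disjoint $k$-tuples in the pre-limit.

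The main obstacle is that the limit $\{\xi_i\}$ need not itself be almost disjoint: adjacent paths $\xi_i, \xi_{i+1}$ could coincide on a non-degenerate interval $[a,b] \subseteq (s,t)$. To handle this, I would show that on any such interval one can split the coinciding pair into $\varepsilon$-separated perturbations, losing only $O(\varepsilon^{1/2-\nu})$ in total length by applying the local Brownian regularity from Proposition \ref{lem:20} at scale $\varepsilon$ around both endpoints $a$ and $b$. Sending $\varepsilon \to 0$ shows the supremum is attained by an almost-disjoint $k$-tuple, with the convergence $\sum_i \ell(\xi_i^{(n)}; \cL) \to \sum_i \ell(\xi_i; \cL)$ along the subsequence following from continuity of $\cL$ together with the defining formula \eqref{eq:97}. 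For the simultaneity over $(\mathbf{x}, \mathbf{y}, s, t)$ and $k$, I would first establish the conclusion almost surely for each $k$ and each rational choice of parameters by the above argument, take a countable union, and then extend to arbitrary parameters by rational approximation: given $(\mathbf{x}^{(n)}, \mathbf{y}^{(n)}, s^{(n)}, t^{(n)}) \to (\mathbf{x}, \mathbf{y}, s, t)$ with associated disjoint optimizers, the same compactness argument extracts a limiting $k$-tuple, and optimality for the limiting parameters is then read off from the continuity statement in Proposition \ref{prop:24}. The preservation of almost-disjointness under limits is the delicate point throughout, and the perturbation argument sketched above is where the local Brownian regularity of $\cL$ enters essentially.
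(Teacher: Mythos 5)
This proposition is imported verbatim from \cite[Proposition 8.1]{DZ21}; the paper does not prove it but simply cites it, so there is no internal argument to compare against. Evaluating your proof on its own terms, there is a genuine gap at exactly the point you flag as delicate, namely the passage from a subsequentially convergent maximizing sequence of almost-disjoint $k$-tuples to an almost-disjoint optimizer.

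The compactness step is sound: transversal fluctuation control and a H\"older bound give precompactness, length is upper semicontinuous under uniform convergence (for any partition one passes to the limit term by term using the continuity of $\cL$), and one can arrange the pre-limit tuples to be ordered by a standard non-crossing exchange argument. The problem is entirely in the final step where the limit tuple $\{\xi_i\}$ may have $\xi_i=\xi_{i+1}$ on a non-degenerate interval. Your proposed fix is circular. For each $\varepsilon>0$ you produce an $\varepsilon$-separated, hence almost-disjoint, $k$-tuple whose total length is within $O(\varepsilon^{1/2-\nu})$ of $\cL^*(\mathbf{x},s;\mathbf{y},t)$; but such near-optimal almost-disjoint tuples exist tautologically by the very definition of $\cL^*$ as a supremum over almost-disjoint tuples, so this step produces no new information. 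And sending $\varepsilon\to 0$ does not yield an attaining tuple: the $\varepsilon$-separated paths converge back to the original (possibly overlapping) limit $\{\xi_i\}$, which is precisely the configuration you were trying to rule out. What one actually needs is a structural argument that a subsequential limit of ordered almost-disjoint near-optimizers is itself almost disjoint on the interior of $(s,t)$, or an a priori construction of a disjoint optimizer that bypasses this issue; either way the local Brownian increment bound of Proposition~\ref{lem:20} alone is not the right tool. This is the heart of the matter in \cite{DZ21}, whose treatment passes through the prelimiting Brownian last passage structure and the ordered monotone coupling of multi-path optimizers rather than through a direct compactness-and-perturbation argument inside the continuum object.

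The same gap re-enters in your simultaneity upgrade: after extracting a limit of disjoint optimizers along a rational approximating sequence of parameters, you again need the limit to be almost disjoint, and the argument you offer does not establish that. A secondary, smaller concern is the step where you locally replace an oscillatory path by a geodesic plus a ``vanishingly small spatial shift''; making this precise while controlling both the length change under the shift and the almost-disjointness with the other $k-1$ paths is nontrivial and would itself need an argument, though it is plausible with more care.
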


\section{Estimates for the frequency of intersections of geodesics and interfaces}
\label{sec:proof-thm-dim}
In this section, we shall prove the following estimate on the frequency of intersections of the interface $\Upsilon_{\mathbf{0}}$ with geodesics $\gamma_p^q$. Recall that we always set $\varepsilon_n=2^{-n}$.
\begin{proposition}
  \label{prop:10}
  Almost surely, for all $p=(y',t'),q=(y,t)\notin \Upsilon_{\mathbf{0}}$ with $0<t'< t$, and any geodesic $\gamma_p^q$, we have
\begin{equation}
    \label{eq:74}
    \lim_{n\rightarrow \infty}\frac{\log\left( \#\{i\in [\![t'\varepsilon_n^{-1},t\varepsilon_n^{-1}]\!]:\gamma_p^q(s)=\Upsilon_{\mathbf{0}}(s) \textrm{ for some } s\in [i\varepsilon_n,(i+1)\varepsilon_n]\}\right)}{\log \varepsilon_n^{-1}}=0.
  \end{equation}
\end{proposition}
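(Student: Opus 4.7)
The plan follows the strategy of Section \ref{sec:outline}. The first step is a reduction to a pointwise estimate. By Proposition \ref{prop:23}(2), the count in \eqref{eq:74} for a general $(p;q) \in \RR_\uparrow^4$ can be controlled by the count for an appropriate rational neighbourhood of $(p;q)$, so taking a countable union reduces us to fixed rational $p, q$, for which the geodesic $\gamma_p^q$ is unique. The curves $\gamma_p^q$ and $\Upsilon_{\mathbf{0}}$ are both locally $2/3-$ Hölder continuous (Proposition \ref{prop:17}, combined with Proposition \ref{prop:11} for $\Upsilon_{\mathbf{0}}$), so a dyadic interval $[i\varepsilon_n, (i+1)\varepsilon_n]$ can only contain an intersection time if $|\gamma_p^q(i\varepsilon_n) - \Upsilon_{\mathbf{0}}(i\varepsilon_n)| \leq \varepsilon_n^{2/3-\kappa}$, apart from a regularity event of superpolynomially small probability. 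Markov's inequality on the number of such intervals together with a Borel-Cantelli argument along the dyadic scales $n$ then reduces the proposition to showing that, for each fixed $s$ and any $\nu > 0$,
\[
 \PP\bigl(|\gamma_p^q(s) - \Upsilon_{\mathbf{0}}(s)| \leq \delta\bigr) \leq \delta^{3/2 - \nu}
\]
for all $\delta$ small enough.

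For this pointwise estimate, I use the ``independent randomness'' reduction sketched in Section \ref{sec:outline}. By Lemma \ref{lem:33} we have $\Upsilon_{\mathbf{0}} = \Upsilon_{\mathbf{0}}^{\cB_0^0}$, where Proposition \ref{prop:22} identifies $\cB_0^0$ as a two-sided Brownian motion of diffusivity $2$. Proposition \ref{prop:14} provides, for each $\theta \in \RR$, a distributional identity $(\cL, \Upsilon_{\mathbf{0}}) \stackrel{d}{=} (\cL_\theta^{\mathrm{sk}}, \Upsilon_{\mathbf{0}}^{\cB_\theta^0})$, with an accompanying affine transformation of the geodesic endpoints. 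Averaging this identity in $\theta$ against the uniform measure on $[-M, M]$ and applying Fubini reduces the pointwise estimate to the analogous bound for $\Upsilon_{\mathbf{0}}^{\fh_X}$, where $X \sim \mathrm{Unif}[-M, M]$ is sampled independently of $\cL$ and $\fh_X$ is a (suitably truncated) Brownian initial condition with drift $2X$; the truncation at a level $K$ that is later taken to infinity ensures that all geodesics to $\fh_X$ are well-defined.

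Once $X$ is independent of $\cL$, I apply the $F_\delta' \cap G_\delta'$ decomposition from \eqref{eq:138}. On a regularity event whose complement has superpolynomially small probability (via Proposition \ref{lem:20}), the event $\{|\gamma_p^q(s) - \Upsilon_{\mathbf{0}}^{\fh_X}(s)| \leq \delta\}$ is contained in $F_\delta' \cap G_\delta'$, where $F_\delta'$ asks for an $x$ within distance $\delta$ of $\gamma_p^q(s)$ admitting two almost disjoint geodesics to $\fh_X$, and $G_\delta'$ asks for a competition-type function (linear in $X$) evaluated at $\gamma_p^q(s)$ to have absolute value at most $\delta^{1/2-\nu}$. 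For $F_\delta'$, Lemmas \ref{lem:19} and \ref{lem:18} translate the existence of two almost disjoint geodesics to $\fh_X$ into the existence of two almost disjoint downward $\theta$-directed semi-infinite geodesics from $(x, s)$ for some $\theta$ with $|\theta|$ controlled by Proposition \ref{lem:12}, after which Proposition \ref{lem:13} yields $\PP(F_\delta') \leq \delta^{1-o(1)}$. For $G_\delta'$, the event has the form $\{X \in [a - \delta^{1/2-\nu}, a + \delta^{1/2-\nu}]\}$ for some $\cL$-measurable $a$, and the independence and uniform law of $X$ give $\PP(G_\delta' \mid \cL) \leq C \delta^{1/2-\nu}/M$. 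Combining gives the required $\delta^{3/2-O(\nu)}$ bound.

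The main obstacle I anticipate is implementing the skew-invariance step cleanly: the original interface $\Upsilon_{\mathbf{0}}^{\cB_0^0}$ uses an initial condition that is a function of $\cL$, while the averaged version requires an initial condition $\fh_X$ independent of $\cL$, and one must carefully track the endpoint shifts under the skew map, the truncation level $K \to \infty$, and the fact that $\fh_X$ may induce polar points for the interface (so that Proposition \ref{prop:16} and Lemma \ref{lem:24} are invoked). A secondary technical point is the passage, inside the $F_\delta'$ analysis, from ``almost disjoint geodesics to $\fh_X$'' to the ``almost disjoint semi-infinite geodesics emanating from $(x, s)$'' setup required by Proposition \ref{lem:13}; this uses Lemmas \ref{lem:19} and \ref{lem:18} but must be done uniformly in the parameter $\theta$ extracted there.
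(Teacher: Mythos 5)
Your high-level strategy (local density bound, Hölder-regularity reduction to a mesh-count, independent uniform drift $X$ to turn a conditional-density estimate into a simple independence argument, skew-invariance to remove $X$) matches the paper's, but there is a concrete gap in the step where you bound $\PP(F_\delta')$ via Proposition~\ref{lem:13}, and this gap is where the paper's main technical device appears.

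Proposition~\ref{lem:13} (imported from \cite{Bha22}) bounds the probability that some $x$ in the $\delta$-neighbourhood of $\Gamma_{\0}(s)$ admits almost disjoint semi-infinite geodesics. The centre of the interval is the value at time $s$ of a \emph{semi-infinite} geodesic emanating from a fixed point. Your $F_\delta'$ has the centre $\gamma_p^q(s)$, the value of a \emph{finite} geodesic, and Proposition~\ref{lem:13} does not apply to that interval---the distribution of $\gamma_p^q(s)$ (and, more importantly, the way the event couples with the geometry near $\gamma_p^q(s)$) is not that of $\Gamma_{\0}(s)$, and \cite{Bha22} gives no bound for the finite-geodesic analogue. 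You flag the semi-infinite transition as a ``secondary technical point'' to be handled via Lemmas~\ref{lem:19} and~\ref{lem:18}, but those lemmas only convert ``almost disjoint geodesics to $\fh_X$'' into ``almost disjoint $\theta$-directed semi-infinite geodesics from $(x,s)$''; they do not change the fact that your interval is centred at a finite-geodesic location, which is the actual obstruction.

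The paper's resolution is to introduce the landscape $\wcL$ obtained by independently resampling $\cL$ below time $0$, and to prove the central estimate (Proposition~\ref{lem:1}) for the semi-infinite $\wcL$-geodesic $\wGamma_q$, whose restriction to negative times is independent of $\cL$. Since $\wGamma_q$ genuinely is a semi-infinite geodesic in a landscape with the same law as $\cL$, Proposition~\ref{lem:13} applies by translation invariance. The estimate is then converted into an almost-sure mesh-count bound (Lemma~\ref{lem:5}), and only afterwards transferred to finite geodesics $\gamma_{(x_0,0)}^q$ via the positive conditional density of $\wGamma_q(0)$ given $\cL$ (Lemma~\ref{lem:7}), followed by the $K\to\infty$ step (Lemma~\ref{lem:8}), the passage to $\Upsilon_{\0}^X$ (Lemma~\ref{lem:16}), rational approximation (Lemma~\ref{lem:34}), and only then skew-invariance (Lemma~\ref{lem:21}). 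Working at the level of almost-sure statements rather than probability estimates also sidesteps a secondary issue in your ordering: applying skew invariance at the probability level makes the geodesic endpoints $\pi_X(p),\pi_X(q)$ random, so the ``fixed $p,q$'' estimate is no longer the right object.
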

We now use transversal fluctuation estimates to reduce Proposition \ref{prop:10} to a statement about the closeness of $\gamma_p^q$ and $\Upsilon_{\mathbf{0}}$ at a deterministic mesh of times, and then spend the remainder of this section proving this statement.
\begin{proposition}
  \label{lem:9}
  Almost surely, for any fixed $\beta>0$ and for all $p=(y',t'),q=(y,t)\notin \Upsilon_{\mathbf{0}}$ with $0<t'< t$, and any geodesic $\gamma_p^q$, we have
  \begin{equation}
    \label{eq:52}
    \limsup_{n\rightarrow \infty}\frac{\log\left(\#\{i\in [\![t'\varepsilon_n^{-1},t\varepsilon_n^{-1}]\!]:|\gamma_p^q(i\varepsilon_n)-\Upsilon_{\mathbf{0}}(i\varepsilon_n)|\leq \varepsilon_n^{2/3-\beta}\}\right)}{\log \varepsilon_n^{-1}}\leq 3\beta/2.
  \end{equation}
\end{proposition}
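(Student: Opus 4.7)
The plan is to reduce Proposition \ref{lem:9} to the pointwise bound
\begin{equation}
\PP\bigl(|\gamma_p^q(s)-\Upsilon_{\mathbf{0}}(s)|\leq \delta\bigr)\leq \delta^{3/2-\nu}
\label{eq:pointwise}
\end{equation}
holding, for every fixed $\nu>0$, for all sufficiently small $\delta>0$ and uniformly in $s$ over any compact sub-interval of $(t',t)$, for fixed rational $p,q$. Given \eqref{eq:pointwise}, choosing $\delta=\varepsilon_n^{2/3-\beta}$ and summing over the $\lesssim \varepsilon_n^{-1}$ mesh indices $i\in [\![t'\varepsilon_n^{-1},t\varepsilon_n^{-1}]\!]$ shows that the expected count of close indices is at most $\varepsilon_n^{-3\beta/2-\eta}$, with $\eta=\nu(2/3-\beta)$ arbitrarily small. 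Markov together with Borel-Cantelli, applied along a countable family of rational pairs $(\beta',\epsilon)$ with $\beta'>\beta$ and $\epsilon>0$, then produces the claimed $\limsup\leq 3\beta/2$ bound almost surely, for every real $\beta>0$ simultaneously, at the fixed rational $p,q$; here monotonicity of the mesh count in $\beta$ is what converts the rational-$\beta'$ statement into the real-$\beta$ statement.

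To pass from fixed rational $p,q$ to all $p,q\notin \Upsilon_{\mathbf{0}}$ and all geodesics $\gamma_p^q$, I would combine the stability statements in Proposition \ref{prop:23} with the $2/3-$ H\"older continuity of geodesics (Proposition \ref{prop:17}) and of $\Upsilon_{\mathbf{0}}$ (via Propositions \ref{prop:11} and \ref{prop:17}). Outside an arbitrarily small neighbourhood of its endpoints, a generic geodesic $\gamma_p^q$ coincides with a geodesic $\gamma_{p'}^{q'}$ between nearby rational endpoints, so the mesh count for $\gamma_p^q$ on the interior portion is dominated by the rational one at the price of slightly enlarging $\beta$; the remaining boundary indices contribute nothing for large $n$, because the assumption $p,q\notin \Upsilon_{\mathbf{0}}$ together with continuity of $\Upsilon_{\mathbf{0}}$ forces $|\gamma_p^q(s)-\Upsilon_{\mathbf{0}}(s)|$ to be bounded away from zero in a time neighbourhood of $t'$ and $t$, eventually exceeding $\varepsilon_n^{2/3-\beta}$.

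The estimate \eqref{eq:pointwise} is the heart of the proof and would follow the skew-invariance strategy sketched in Section \ref{sec:outline}. By Lemma \ref{lem:33}, $\Upsilon_{\mathbf{0}}=\Upsilon_{\mathbf{0}}^{\mathcal{B}_0^0}$, and by Proposition \ref{prop:22} the initial condition $\mathcal{B}_0^0$ is a two-sided Brownian motion with diffusivity $2$. I would couple $\mathcal{L}$ with an auxiliary $X\sim\mathrm{Unif}[-M,M]$ independent of $\mathcal{L}$, and use Proposition \ref{prop:14} to reduce to controlling the event $\{|\gamma_p^q(s)-\Upsilon_{\mathbf{0}}^{\mathfrak{h}_X}(s)|\leq\delta\}$ where $\mathfrak{h}_X$ is an initial condition incorporating an added drift $2X$. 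On the complement of a superpolynomially unlikely regularity event controlled via Proposition \ref{lem:20}, this event decomposes as $F'_\delta\cap G'_\delta$ in the spirit of \eqref{eq:138}: $F'_\delta$ is an almost-disjoint semi-infinite geodesics event, bounded by $\delta^{1-o(1)}$ using Proposition \ref{lem:13}, while $G'_\delta$ is of the form $\{X\in J(\mathcal{L})\}$ for an $\mathcal{L}$-measurable interval $J$ of length $\lesssim\delta^{1/2-\nu}$, whose conditional probability given $\mathcal{L}$ is at most $\delta^{1/2-\nu}/M$ by independence. Multiplying yields the desired $\delta^{3/2-o(1)}$ bound.

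The main obstacle lies in making the application of Proposition \ref{lem:13} rigorous: because the Brownian initial condition $\mathfrak{h}_X$ is unbounded, one cannot directly identify $F'_\delta$ with the two-semi-infinite-geodesics event of \eqref{eq:58}. As indicated in Section \ref{sec:outline}, the fix is to introduce truncated initial conditions $\mathfrak{h}_X^K$ for a growing parameter $K$, to compare the interface $\Upsilon_{\mathbf{0}}^{\mathfrak{h}_X^K}$ with the extremal semi-infinite geodesics emerging from the endpoints of the truncation using Lemmas \ref{lem:19} and \ref{lem:32}, and to verify that the tails of $\mathfrak{h}_X$ outside $[-K,K]$ are negligible for the geodesic $\gamma_p^q$ with overwhelming probability via Proposition \ref{prop:7}. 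By contrast the Markov plus Borel-Cantelli step and the rational-approximation upgrade in the first two paragraphs are essentially routine.
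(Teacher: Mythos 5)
Your high-level strategy matches the paper's, including the skew-invariance trick with an auxiliary $X\sim\mathrm{Unif}[-M,M]$, the truncation to $\fh_X^K$, the decomposition into an almost-disjoint-geodesics event and a density event, and the regularity control. However, there is a genuine gap in how you intend to invoke Proposition~\ref{lem:13}. That proposition estimates the probability of finding almost disjoint semi-infinite geodesics emanating from a point in a $\delta$-window centred on the \emph{semi-infinite} geodesic $\Gamma_{\0}(s)$; it says nothing about windows centred on a finite geodesic $\gamma_p^q(s)$. Your plan aims directly at the pointwise bound $\PP(|\gamma_p^q(s)-\Upsilon_{\0}(s)|\leq\delta)\leq\delta^{3/2-\nu}$, but you have no way to produce a semi-infinite geodesic passing near $\gamma_p^q(s)$ that is usable in Proposition~\ref{lem:13}, because the natural candidate $\Gamma_q$ is $\cL$-measurable and its position at time $0$ does not carry the kind of density one needs to compare with the deterministic endpoint $p$.

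The paper's resolution of this is the one piece your sketch omits: it introduces the resampled landscape $\widetilde{\cL}$, obtained by independently resampling $\cL$ below the line $\{t=0\}$, and proves Proposition~\ref{lem:1} for the $\widetilde{\cL}$-semi-infinite geodesic $\widetilde{\Gamma}_q$ in place of $\gamma_p^q$. Because the Busemann function $\widetilde{\cB}_0^0$ is an independent Brownian motion given $\cL$, the location $\widetilde{\Gamma}_q(0)$ has full support conditionally on $\cL$ (Lemma~\ref{lem:7}), and this density argument is what allows passing from the almost-sure limsup bound for $\widetilde{\Gamma}_q$ (Lemma~\ref{lem:5}) to the same bound for $\gamma_{(x_0,0)}^q$ (Lemma~\ref{lem:6}). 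A related consequence is that the paper applies the skew-invariance step (Lemma~\ref{lem:21}) to the almost-sure limsup statement \emph{after} Borel--Cantelli rather than to a pointwise probability bound, which is the more natural order because skewing moves the endpoints of $\gamma_p^q$ by an $X$-dependent amount and the limsup functional is compatible with this, whereas tracking the endpoint shift inside a pointwise probability estimate is awkward. Without the $\widetilde{\cL}$/density device, your derivation of \eqref{eq:pointwise} does not go through as written.
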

  \begin{proof}[Proof of Proposition \ref{prop:10} assuming Proposition \ref{lem:9}]
    First, by Proposition \ref{prop:17}, we know that on an almost sure set, the geodesics $\gamma_p^q$ are all H\"older $2/3-$ regular for all $(p,q)\in \RR_\uparrow^4$. As a consequence, almost surely, for any fixed $\beta>0$ and any $p=(y',t'),q=(y,t)\notin \Upsilon_{\0}$ with $0<t'<t$, and all $n$ large enough, we have
    \begin{equation}
      \label{eq:165}
      \sup_{s,s'\in [t',t], |s-s'|\leq \varepsilon_n}|\gamma_p^q(s)- \gamma_p^q(s')|\leq  \varepsilon_n^{2/3-\beta}/2,
    \end{equation}
  and we now fix a $\beta>0$ and points $p,q$ as above as above for the rest of the proof. By Propositions \ref{prop:17}, \ref{prop:11}, we know that almost surely, the interface $\Upsilon_{\mathbf{0}}$ is locally $2/3-$ H\"older continuous as well, and thus we have
    \begin{equation}
      \label{eq:75}
      \sup_{s,s'\in [t',t], |s-s'|\leq \varepsilon_n}|\Upsilon_{\mathbf{0}}(s)- \Upsilon_{\mathbf{0}}(s')|\leq  \varepsilon_n^{2/3-\beta}/2
    \end{equation}
    for all $n$ large enough. As an immediate consequence, for all $n$ large enough, we have the inclusions
       \begin{align}
      \label{eq:101}
     & \{i\in  [\![t'\varepsilon_n^{-1},t\varepsilon_n^{-1}]\!]: \gamma_p^q(s)=\Upsilon_{\0}(s) \textrm{ for some } s\in [i\varepsilon_n,(i+1)\varepsilon_n]\}\nonumber\\
      &\subseteq  \{i\in  [\![t'\varepsilon_n^{-1},t\varepsilon_n^{-1}]\!]: \gamma_p^q(s),\Upsilon_{\0}(s)\in [\gamma_p^q(i\varepsilon_n)-\varepsilon_n^{2/3-\beta},\gamma_p^q(i\varepsilon_n)+\varepsilon_n^{2/3-\beta}] \textrm{ for all } s\in [i\varepsilon_n,(i+1)\varepsilon_n]\cap[t',t]\}\nonumber\\
&\subseteq  \{i\in  [\![t'\varepsilon_n^{-1},t\varepsilon_n^{-1}]\!]: |\gamma_p^q(i\varepsilon_n)-\Upsilon_{\mathbf{0}}(i\varepsilon_n)|\leq \varepsilon_n^{2/3-\beta}\}.
       \end{align}
    By using this along with Proposition \eqref{lem:9}, we obtain that the limsup of the quantity in \eqref{eq:74} is at most $3\beta/2$. Since $\beta$ was arbitrary, we obtain that the limsup must in fact a.s.\ be $0$ and this completes the proof.
  \end{proof}

  In fact, by using the H\"older continuity argument presented above, we can use Proposition \ref{lem:9} to complete the proof of Theorem \ref{thm:3} as follows.
  \begin{proof}[Proof of Theorem \ref{thm:3} assuming Proposition \ref{lem:9}]
First, we show that almost surely, for all $(p;q)=(x,s;y,t)\in \RR_\uparrow^4$ and any geodesic $\gamma_p^q$, the set of times $r$ for which $\gamma_p^q(r)=\Upsilon_{\0}(r)$ occurs, a.s.\ has Hausdorff dimension zero. It is easy to see that it is sufficient to assume $s\geq 0$ in the above, and we do assume so. Now, we note that as a consequence of (2) in Proposition \ref{prop:23}, on an almost sure set, for any $\varepsilon>0$, we can find points $\widetilde{p},\widetilde{q}=(\widetilde{x},\widetilde{s}),(\widetilde{y},\widetilde{t})\notin \Upsilon_{\0}$ with $0<\widetilde{s}<\widetilde{t}$ such that for any geodesic $\gamma_{\widetilde{p}}^{\widetilde{q}}$, we have $\gamma_{\widetilde{p}}^{\widetilde{q}}\lvert_{[s+\varepsilon,t-\varepsilon]}=\gamma_{p}^{q}\lvert_{[s+\varepsilon,t-\varepsilon]}$. As a result, by Proposition \ref{prop:10}, we immediately obtain that the Hausdorff dimension of the set of times $r\in [s+\varepsilon,t-\varepsilon]$ for which $\gamma_{p}^{q}(r)=\Upsilon_{\0}(r)$ is almost surely zero. Now, by sending $\varepsilon$ to zero and using the countable stability of Hausdorff dimension, we obtain the desired result.

   The next task is to show that $\cI_\uparrow \cap \cW$ is a.s.\ an infinite set satisfying $\dim_{\mathrm{KPZ}}(\cI_\uparrow\cap \cW)=0$, where the latter refers to the Hausdorff dimension with respect to $d_{\mathrm{KPZ}}$. Just by a topological argument where we choose $p_n,q_n$ to be on different sides of $\Upsilon_{\0}$, it is not difficult to see that there must almost surely exist infinitely many points $(p_n;q_n)\in \RR_\uparrow^4$, such that any geodesics $\gamma_{p_n}^{q_n}$ intersect $\Upsilon_{\0}$ at different times, and as a result, we obtain that $\cI_\uparrow\cap \cW$ must almost surely be an infinite set.

It remains to establish that $\dim_{\mathrm{KPZ}}(\cI_\uparrow \cap \cW)=0$ almost surely. By the countable stability of Hausdorff dimension along with the rational approximation result Proposition \ref{prop:15}, it suffices to fix rational points $p=(x_0,s_0),q=(y_0,t_0)$ with $0<s_0<t_0$ and establish that $\dim_{\mathrm{KPZ}}(\gamma_p^q\cap \Upsilon_{\0})=0$ almost surely. Now, consider the set defined by
    \begin{equation}
      \label{eq:103}
      \{s\in [s_0,t_0], \gamma_p^q(s)=\Upsilon_{\0}(s)\},
    \end{equation}
    and let $N_n$ denote the minimum number of boxes of size $ \varepsilon_n^{2/3}\times \varepsilon_n$ required to cover the set \eqref{eq:103}. By using the definition of $d_{\mathrm{KPZ}}$, it is easy to see that we need only establish the a.s.\ convergence
    \begin{equation}
      \label{eq:105}
      \lim_{n\rightarrow \infty}\frac{\log N_n}{\log \varepsilon_n^{-1}} =0.
    \end{equation}
    In fact, instead of working with $N_n$ directly, we first fix $\beta>0$ and define $N^{\beta}_n$ as the minimum number of boxes of size $ (2\varepsilon^{2/3-\beta})\times \varepsilon_n$ required to cover the set in \eqref{eq:103}. Now, by the inclusions in \eqref{eq:101} along with Proposition \ref{lem:9} and the fact that $p,q\notin \Upsilon_{\0}$ almost surely, which holds since $p,q$ are fixed points, we know that almost surely,
    \begin{equation}
      \label{eq:106}
      \limsup_{n\rightarrow \infty} \frac{\log N_n^\beta}{\log \varepsilon_n^{-1}}\leq 3\beta/2
    \end{equation}
 
 The task now is to use \eqref{eq:106} to obtain \eqref{eq:105}, but this is easy. Indeed, since any $(2\varepsilon_n^{2/3-\beta})\times \varepsilon_n$ box can be deterministically covered by $2\varepsilon_n^{-\beta}$ many $ \varepsilon_n^{2/3}\times \varepsilon_n$ boxes, we immediately have $N_n\leq 2\varepsilon_n^{-\beta} N_n^\beta$ for all $n$. As a result, a.s.\ for any fixed $\beta>0$, we obtain
    \begin{equation}
      \label{eq:104}
      \limsup_{n\rightarrow \infty}\frac{\log N_n}{\log \varepsilon_n^{-1}}\leq 3\beta/2 + \beta=5\beta/2.
    \end{equation}
    Now, since $\beta>0$ is arbitrary, the above limit must in fact a.s.\ be equal to zero, and this yields \eqref{eq:105}, thereby completing the proof.
  \end{proof}
In the remainder of this section, we seek to prove Proposition \ref{lem:9}. As mentioned in Section \ref{sec:outline}, instead of attempting to prove it directly, we shall take a more indirect route. Indeed, rather than working with the interface $\Upsilon_{\mathbf{0}}$ directly, we shall first establish corresponding estimates for the paths $\Upsilon_{\mathbf{0}}^{X,K}\coloneqq \Upsilon_{\0}^{\fh_X^K}$ defined as interfaces (see Section \ref{sec:interf}) to certain compactly supported initial conditions $\fh_X^K$ with $K>0$ being a parameter that will be sent to $\infty$ at the end. Also, instead of working with finite geodesics $\gamma_p^q$, we shall first argue for semi-infinite geodesics for the landscape $\wcL$ formed by independently resampling the portion of $\cL$ below the time line $0$. We shall use a $\sim$ in the superscript to signify objects  for the landscape $\wcL$; for instance, $\wGamma_q$ denotes a downward $0$-directed semi-infinite $\wcL$-geodesic emanating from $q$. We now state the central estimate that will be the focus of this section, and we provide its proof in the next subsection. Subsequently, we shall use it to prove Proposition \ref{lem:9}.

\begin{proposition}
  \label{lem:1}
  Fix $K,M,\nu>0$, $\oalpha>\ualpha>0$, and a point $q=(y_0,t_0)$ with $t_0>\oalpha$. Let $X\sim \mathrm{Unif}([-M,M])$ be a random variable independent of $\sigma(\cL,\wcL)$. Define the initial condition $\fh_X^K(x)=\cB_0^0(x)+2Xx$ for all $|x|\in [K^{-1},K]$ and $-\infty$ otherwise and use $\Upsilon^{X,K}_{\mathbf{0}}$ to denote the a.s.\ unique (Lemma \ref{lem:24}) upward $\cL$-interface emanating from $\0$ for the initial condition given by $\fh_X^K$. Then, uniformly for all $s\in [\ualpha,\oalpha]$ and all $\delta$ small enough, we have
  \begin{equation}
    \label{eq:9}
    \PP( |\wGamma_q(s)-\Upsilon^{X,K}_{\mathbf{0}}(s)|\leq \delta)\leq \delta^{3/2-\nu}.
  \end{equation}
\end{proposition}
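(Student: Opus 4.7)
The plan is to follow the skew-invariance and independence strategy outlined in Section \ref{sec:outline} for the example $\LL_X$, now adapted to the interface $\Upsilon^{X,K}_{\mathbf{0}}$. Define the competition function
\begin{equation*}
d^{X,K}(y, s) \coloneqq \sup_{x \in [K^{-1}, K]}\!\big\{\fh_X^K(x) + \cL(x, 0; y, s)\big\} - \sup_{x \in [-K, -K^{-1}]}\!\big\{\fh_X^K(x) + \cL(x, 0; y, s)\big\},
\end{equation*}
so that by \eqref{eq:112}, $\Upsilon^{X,K}_{\mathbf{0}}(s)$ is the unique zero of $d^{X,K}(\cdot, s)$. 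Proposition \ref{lem:20} combined with a union bound over $x$ in a finite $K^{O(1)}$-net of $\mathrm{supp}(\fh_X^K)$ supplies a regularity event $E_\delta$ of probability $o(\delta^{2})$ on whose complement $d^{X,K}(\cdot, s)$ oscillates by at most $2\delta^{1/2 - \nu/4}$ over $[\wGamma_q(s) - \delta, \wGamma_q(s) + \delta]$. Hence $\{|\wGamma_q(s) - \Upsilon^{X,K}_{\mathbf{0}}(s)| \leq \delta\} \cap E_\delta^c \subseteq F_\delta \cap G_\delta$, where $F_\delta$ is a $\wcL$-atypical star event defined below and $G_\delta \coloneqq \{|d^{X,K}(\wGamma_q(s), s)| \leq 2\delta^{1/2 - \nu/4}\}$.

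To bound $\PP(F_\delta)$, Lemma \ref{lem:19} applied to $\Upsilon^{X,K}_{\mathbf{0}}$ supplies almost disjoint $\cL$-geodesics from $p = (\Upsilon^{X,K}_{\mathbf{0}}(s), s)$ ending in the two components of $\mathrm{supp}(\fh_X^K)$ at time $0$; since $\cL = \wcL$ on $\{t \geq 0\}$ these are also almost disjoint $\wcL$-geodesics. Extending them below time $0$ as $\wcL$-semi-infinite geodesics in fixed directions $\theta_+ > 0 > \theta_-$ chosen far enough apart that the two extensions cannot meet again, and invoking Lemma \ref{lem:18} at $p$, produces an angle $\theta_* \in [\theta_-, \theta_+]$ for which two almost disjoint $\theta_*$-directed $\wcL$-semi-infinite geodesics emanate from $p$, a point within $\delta$ of $\wGamma_q(s)$. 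The transversal fluctuation estimate Proposition \ref{lem:12} keeps $\theta_*$ uniformly bounded in $\delta$ on a high-probability event. Letting $F_\delta$ denote this event, Proposition \ref{lem:13} applied to $\wcL$ (after the time translation from Proposition \ref{prop:14} sending $\wGamma_q$ to $\wGamma_{\0}$ at negative times), combined with a union bound over a polylogarithmic grid of angles in $[\theta_-, \theta_+]$, yields $\PP(F_\delta) \leq \delta^{1 - \nu/4}$ for all $\delta$ small enough.

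For $\PP(G_\delta)$, the envelope theorem applied to each sup shows that $X \mapsto d^{X,K}(y, s)$ is strictly increasing with almost-everywhere derivative at least $4K^{-1}$, so $\{X : |d^{X,K}(\wGamma_q(s), s)| \leq 2\delta^{1/2-\nu/4}\}$ is contained in an interval of length at most $K\delta^{1/2-\nu/4}$. Since $X \sim \mathrm{Unif}([-M, M])$ is independent of $\sigma(\cL, \wcL)$, to which $\wGamma_q(s)$ belongs, we obtain $\PP(G_\delta \mid \cL, \wcL) \leq K \delta^{1/2 - \nu/4}/(2M)$. The event $F_\delta$ being $\sigma(\cL, \wcL)$-measurable, the tower property yields $\PP(F_\delta \cap G_\delta) \leq (K/(2M))\, \delta^{1/2 - \nu/4}\, \PP(F_\delta) \leq \delta^{3/2 - \nu}$ for small $\delta$, which together with $\PP(E_\delta) = o(\delta^2)$ completes the proof. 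The main technical obstacle is arranging the extension of the almost disjoint finite $\cL$-geodesics into $\wcL$-semi-infinite geodesics that remain almost disjoint below time $0$ (achieved by separating $\theta_\pm$ sufficiently so that two extensions with distinct asymptotic directions cannot coalesce) and verifying that the Lemma \ref{lem:18} output angle $\theta_*$ lies in a range uniformly bounded in $\delta$, so that the discretization over $\theta$ does not spoil the $\delta^{1 - \nu/4}$ bound.
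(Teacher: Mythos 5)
Your overall architecture matches the paper's (Lemmas \ref{lem:17}, \ref{lem:14}, \ref{lem:10} together give the $\delta^{1-\nu}\cdot\delta^{1/2-\nu}$ split, with the final independence/monotonicity argument in \eqref{eq:69}--\eqref{eq:71}), and the $E_\delta$, $G_\delta$ steps and the $\PP(G_\delta\,\vert\,\cL,\wcL)$ computation are essentially correct. However, your route from the almost-disjoint finite $\cL$-geodesics to an atypical-star event accessible to Proposition \ref{lem:13} has a genuine gap.

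Specifically, the step ``Extending them below time $0$ as $\wcL$-semi-infinite geodesics in fixed directions $\theta_+>0>\theta_-$ \ldots and invoking Lemma \ref{lem:18} at $p$'' does not work as stated. The concatenation of a finite $\cL$-geodesic from $p$ to $(x_i,0)$ with a $\theta_\pm$-directed $\wcL$-semi-infinite geodesic emanating from $(x_i,0)$ is, in general, \emph{not} an $\wcL$-semi-infinite geodesic from $p$: the two pieces are geodesics on $[0,s]$ and $(-\infty,0]$ separately, but the defining variational property can fail across the junction at time $0$ because the pieces emanate from different points. (Contrast this with the concatenation in the paper's proof of Lemma \ref{lem:14}, which is a geodesic only because both pieces emanate from the same point $(x,s)$ and meet at a common intermediate time.) Consequently you have not produced almost-disjoint \emph{right-most} $\theta_\pm$-directed semi-infinite geodesics from $p$, which is what Lemma \ref{lem:18} requires as input, nor does the output of Lemma \ref{lem:18} (almost-disjoint $\theta_*$-directed geodesics for a random $\theta_*$) directly fit the hypotheses of Proposition \ref{lem:13}, which needs one geodesic to be $0$-directed with the other at a fixed angle $\theta$ of size $O(\log^{1/2-\kappa}\delta^{-1})$.

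The paper resolves this differently: rather than concatenating, it works with the \emph{genuine} right-most semi-infinite geodesics $\overline{\wGamma}^{\pm\theta_\delta}_{(x,s)}$ at angles $\theta_\delta=\log^{9/20}\delta^{-1}$ growing with $\delta$, and truncates on the event $\cC_\delta$ in \eqref{eq:62}, which forces $\overline{\wGamma}^{-\theta_\delta}_{(x,s)}(0)\leq -K$ and $\overline{\wGamma}^{\theta_\delta}_{(x,s)}(0)\geq K$ uniformly over $|x|\leq m_\delta+\delta$. Geodesic ordering then sandwiches the almost-disjoint finite geodesics $\overline{\gamma}_{(\mp K,0)}^{(x,s)}$ between $\overline{\wGamma}^{\pm\theta_\delta}_{(x,s)}\lvert_{[0,s]}$, forcing the semi-infinite ones to be almost disjoint on $[0,s]$, and a short contradiction argument extends this to all of $(-\infty,s]$. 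Then the output of Lemma \ref{lem:18} is turned into the hypotheses of Proposition \ref{lem:13} not by a union bound over a grid of angles, but by a two-way case split on the sign of the random angle $\psi$, yielding almost-disjointness of $\overline{\wGamma}_{(x,s)}$ with $\overline{\wGamma}^{-\theta_\delta}_{(x,s)}$ or with $\overline{\wGamma}^{\theta_\delta}_{(x,s)}$. Note also that fixed angles $\theta_\pm$ are insufficient for the $\cC_\delta$ argument: $|x|$ ranges over the interval $[-m_\delta-\delta,m_\delta+\delta]$ with $m_\delta=\log^{2/5}\delta^{-1}$ (to cover the transversal fluctuation of $\wGamma_q$), so reaching past $\pm K$ at time $0$ from any such $x$ requires $\theta_\delta$ to grow polylogarithmically, which is exactly why Proposition \ref{lem:13} is stated with the flexibility $|\theta|\leq\log^{1/2-\kappa}\delta^{-1}$.
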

For notational convenience, for an initial condition $\fh$ and a set $A\subseteq \RR$, we define $\fh\lvert_A\colon \RR\rightarrow \RR\cup \{-\infty\}$ by $\fh\lvert_A(x)=\fh(x)$ for $x\in A$ and $\fh\lvert_A(x)=\infty$ for $x\notin A$. In the remainder of this section, we shall frequently work with the initial conditions $\fh_X^K\lvert_{(-\infty,0]}$ and $\fh_X^K\lvert_{[0,\infty)}$. %
\subsection{The proof of Proposition \ref{lem:1}}
\label{sec:proof-prop}

The goal now is to provide the proof of Proposition \ref{lem:1}. We shall frequently work in the setting of the above-mentioned proposition, that is, we shall often have the positive constants $\ualpha,\oalpha,K,M,\nu$ as in Proposition \ref{lem:1} satisfying $\oalpha>\ualpha>0$. The following lemma is not difficult but important for us. 

\begin{lemma}
  \label{lem:17}
  Fix $\ualpha,\oalpha,K,M>0$ and $q=(y_0,t_0)$ with $t_0>\oalpha$. For any $s\in [\ualpha,\oalpha]$ and for all $\delta>0$, we have
  \begin{equation}
    \label{eq:77}
    \{|\wGamma_q(s)-\Upsilon_{\mathbf{0}}^{X,K}(s)|\leq \delta\} \subseteq \{\exists x\in [\wGamma_q(s)-\delta,\wGamma_q(s)+\delta] :  \overline{\gamma}_{(-K,0)}^{(x,s)}, \overline{\gamma}_{(K,0)}^{(x,s)} \textrm{ are almost disjoint}\}.         
  \end{equation}
\end{lemma}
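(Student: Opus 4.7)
The strategy is to take the particular point $x := \Upsilon^{X,K}_{\mathbf{0}}(s)$ as the witness for the event on the right-hand side. Under the hypothesis $|\wGamma_q(s)-\Upsilon_{\mathbf{0}}^{X,K}(s)| \leq \delta$ this $x$ automatically lies in $[\wGamma_q(s)-\delta,\wGamma_q(s)+\delta]$, so the task reduces to showing that for this specific $x$, the right-most geodesics $\overline{\gamma}_{(-K,0)}^{(x,s)}$ and $\overline{\gamma}_{(K,0)}^{(x,s)}$ are almost disjoint.

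Because $\fh_X^K(0) = -\infty$, Lemma \ref{lem:24} produces a unique interface $\Upsilon_{(0,0)}^{\fh_X^K}$, which is exactly $\Upsilon_{\mathbf{0}}^{X,K}$, and so $(x,s)$ lies on it. I then invoke Lemma \ref{lem:19} with $\fh=\fh_X^K$ at the interior point $0$: this produces the left-most initial condition geodesic $\underline{\gamma}_{(\fh_X^K,0)}^{(x,s)}$ starting at some $x_L<0$, and the right-most one $\overline{\gamma}_{(\fh_X^K,0)}^{(x,s)}$ starting at some $x_R>0$. Since $\fh_X^K$ equals $-\infty$ outside $[-K,-K^{-1}] \cup [K^{-1},K]$, these starting positions must satisfy $x_L \in [-K,-K^{-1}]$ and $x_R \in [K^{-1},K]$. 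Restricting to the time interval $[0,s]$, these become honest $\cL$-geodesics from $(x_L,0)$ and $(x_R,0)$ to $(x,s)$, and their extremal (left-most vs.\ right-most) nature forces them to be almost disjoint.

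I then propagate this almost disjointness outward using the standard planar ordering of geodesics in the directed landscape: since $-K \leq x_L < x_R \leq K$, any geodesic from $(-K,0)$ to $(x,s)$ lies pointwise at or below any geodesic from $(x_L,0)$ to $(x,s)$, and the analogous statement holds on the right. Combining this with the strict inequality at interior times furnished by the almost disjointness of the middle pair, I obtain
\begin{equation*}
    \overline{\gamma}_{(-K,0)}^{(x,s)}(r) \leq \underline{\gamma}_{(\fh_X^K,0)}^{(x,s)}(r) < \overline{\gamma}_{(\fh_X^K,0)}^{(x,s)}(r) \leq \overline{\gamma}_{(K,0)}^{(x,s)}(r)
\end{equation*}
for all $r \in (0,s)$. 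Combined with $\overline{\gamma}_{(-K,0)}^{(x,s)}(0) = -K < K = \overline{\gamma}_{(K,0)}^{(x,s)}(0)$ and the common endpoint at time $s$, this establishes that the two outer right-most geodesics meet only at their common endpoint and are therefore almost disjoint.

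The step I expect to demand the most care is the application of Lemma \ref{lem:19}: as stated it concerns a deterministic initial condition, whereas $\fh_X^K$ is measurable with respect to $\sigma(\cL,X)$. The natural remedy is to condition on $X$ and view $\fh_X^K$ as a Brownian-type function plus a linear drift, checking that the almost sure content of Lemma \ref{lem:19} continues to hold for this specific random initial condition; everything else in the argument is a clean consequence of planarity and the geodesic ordering supplied by Proposition \ref{prop:21}.
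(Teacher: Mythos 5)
Your strategy is the same as the paper's: take $x=\Upsilon^{X,K}_{\0}(s)$ as witness, bracket the interface by a pair of almost disjoint initial-condition geodesics starting on either side of $0$, and propagate the separation outward to $\overline{\gamma}_{(\mp K,0)}^{(x,s)}$ by geodesic ordering. The worry you raise at the end is a non-issue: $\fh_X^K$ is measurable with respect to $\sigma(X,\cL\lvert_{\{t<0\}})$, which is independent of the landscape above time $0$ where everything else in the lemma lives, so the deterministic-$\fh$ lemmas apply after conditioning.

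Where your argument actually has a gap is in the ordering step on the left. You choose $\underline{\gamma}_{(\fh_X^K,0)}^{(x,s)}$, whose restriction to $[0,s]$ is the \emph{left}-most finite geodesic from $(x_L,0)$ to $(x,s)$, and you then claim $\overline{\gamma}_{(-K,0)}^{(x,s)}(r)\leq \underline{\gamma}_{(\fh_X^K,0)}^{(x,s)}(r)$ from "standard planar ordering" because $-K\leq x_L$. But the standard monotone-in-start-point ordering supplied by Proposition \ref{prop:21} is a comparison of right-most with right-most (or left-most with left-most); it does not directly compare a right-most geodesic from $(-K,0)$ against a left-most one from $(x_L,0)$. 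That stronger "any geodesic vs.\ any geodesic, common endpoint" ordering does hold when $-K<x_L$ strictly, but only via a non-crossing/bubble argument resting on \cite[Theorem 1]{Bha23}, which you do not invoke; and it fails outright in the degenerate case $x_L=-K$, since then you are asking the right-most geodesic from $(-K,0)$ to lie below the left-most one from the same point. The paper sidesteps both problems by taking its left bracketing path to be $\gamma_1=\overline{\gamma}_{(\fh_X^K\lvert_{(-\infty,0]},0)}^{(x,s)}$, the \emph{right}-most geodesic to the restricted initial condition; its restriction to $[0,s]$ is then the right-most geodesic from $(\gamma_1(0),0)$ to $(x,s)$, so $\overline{\gamma}_{(-K,0)}^{(x,s)}\leq \gamma_1$ is an honest right-most vs.\ right-most ordering that degenerates to equality if $\gamma_1(0)=-K$. (On the right side your choice $\overline{\gamma}_{(\fh_X^K,0)}^{(x,s)}$ coincides with the paper's $\gamma_2$ and has no such issue.) To repair your write-up you should replace $\underline{\gamma}_{(\fh_X^K,0)}^{(x,s)}$ by the right-most geodesic to $\fh_X^K\lvert_{(-\infty,0]}$, noting via Lemma \ref{lem:32}(2) that it still lies weakly to the left of the interface on $(0,s)$, which is enough for almost disjointness from the strictly-right $\gamma_2$.
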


\begin{proof}
It suffices to establish that the geodesics $\overline{\gamma}_{(-K,0)}^{(\Upsilon_{\0}^{X,K}(s),s)}$ and $\overline{\gamma}_{(K,0)}^{(\Upsilon_{\0}^{X,K}(s),s)}$ are almost disjoint, and this is what we shall prove.  We now define $\gamma_1\coloneqq \overline{\gamma}_{(\fh_X^K\lvert_{(-\infty,0]},0)}^{(\Upsilon_{\0}^{X,K}(s),s)}$ and  $\gamma_2\coloneqq \overline{\gamma}_{(\fh_X^K\lvert_{[0,\infty)},0)}^{(\Upsilon_{\0}^{X,K}(s),s)}$. Due to the definition of the interface $\Upsilon_{\0}^{X,K}$, it is easy to see that both $\gamma_1,\gamma_2$ are also geodesics to the initial condition $\fh_X^K$. Now, we also know that $\fh_X^K(x)=-\infty$ for all $|x|\notin [K^{-1},K]$, and thus, we must have,
  \begin{equation}
    \label{eq:115}
    \gamma_1(0)\in [-K,-K^{-1}], \gamma_2(0)\in [K^{-1},K].
  \end{equation}
  By the first inclusion above along with (2) in Lemma \ref{lem:32}, we know that
  \begin{equation}
    \label{eq:166}
    \gamma_1(s')\leq \Upsilon_{\0}^{X,K}(s')
  \end{equation}
  for all $s'\in (0,s)$. Further, by the right-most nature of $\gamma_2$ along with \eqref{eq:161} from Lemma \ref{lem:19}, we know that
  \begin{equation}
    \label{eq:167}
    \Upsilon_{\0}^{X,K}(s')< \gamma_2(s')
  \end{equation}
  for all $s'\in (0,s)$.
  By combining \eqref{eq:115}, \eqref{eq:166} and \eqref{eq:167}, we obtain that $\gamma_1,\gamma_2$ are almost disjoint. Finally, by using \eqref{eq:115} along with geodesic ordering, we note that $\overline{\gamma}_{(-K,0)}^{(\Upsilon_{\0}^{X,K}(s),s)}$ must be to the left of $\gamma_1$ and $\overline{\gamma}_{(K,0)}^{(\Upsilon_{\0}^{X,K}(s),s)}$ must be to the right of $\gamma_2$. Thus, since $\gamma_1,\gamma_2$ are almost disjoint, we obtain that $\overline{\gamma}_{(-K,0)}^{(\Upsilon_{\0}^{X,K}(s),s)},\overline{\gamma}_{(K,0)}^{(\Upsilon_{\0}^{X,K}(s),s)}$ are almost disjoint as well, and this completes the proof.
\end{proof}

We now give a $\delta^{1-o(1)}$ estimate on the probability of the event appearing in the right hand side of the above lemma.
\begin{lemma}
  \label{lem:14}
   Fix $\ualpha,\oalpha,K,M,\nu>0$ and $q=(y_0,t_0)$ with $t_0>\oalpha$. Then uniformly over all $s\in [\ualpha,\oalpha]$, and for all small enough $\delta>0$, we have
  \begin{equation}
    \label{eq:59}
    \PP(\exists x\in [\wGamma_q(s)-\delta,\wGamma_q(s)+\delta] :  \overline{\gamma}_{(-K,0)}^{(x,s)}, \overline{\gamma}_{(K,0)}^{(x,s)}) \textrm{ are almost disjoint})\leq \delta^{1-\nu}. 
  \end{equation}
\end{lemma}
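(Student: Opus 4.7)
The plan is to relate the event to the star event for semi-infinite geodesics controlled by Proposition~\ref{lem:13}, carrying out the reduction inside the resampled landscape $\wcL$ so that the finite geodesics $\overline{\gamma}_{(\pm K,0)}^{(x,s)}$ (which live above time $0$, where $\wcL$ and $\cL$ coincide) can be extended to semi-infinite geodesics using the independent randomness of $\wcL$ below time $0$.

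First, I would use the translation invariance of $\cL$ together with the distributional equality $\wcL\stackrel{d}{=}\cL$ to standardize the setup: applying the translation by $-(y_0,t_0)$ sends $\wGamma_q$ to a $0$-directed semi-infinite geodesic $\Gamma_\0$ from the origin in a landscape with the law of $\cL$, sends $s\in[\ualpha,\oalpha]$ to $s':=s-t_0\in[\ualpha-t_0,\oalpha-t_0]\subset(-\infty,0)$, and sends $(\pm K,0)$ to points $p_L',p_R'$ at time $-t_0<s'$. It thus suffices to bound
\[
\PP\bigl(\exists\, x\in[\Gamma_\0(s')-\delta,\Gamma_\0(s')+\delta]:\overline{\gamma}_{p_L'}^{(x,s')}\text{ and }\overline{\gamma}_{p_R'}^{(x,s')}\text{ are almost disjoint}\bigr)\leq\delta^{1-\nu}.
\]

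Next, for each candidate $x$, I would extend the two almost disjoint finite geodesics to semi-infinite $\cL$-geodesics $\sigma^L,\sigma^R$ from $(x,s')$ by concatenating with $\cL$-semi-infinite geodesics from $p_L',p_R'$ in two distinct directions $\psi<\phi$ (for instance, $\psi=-1$ and $\phi=1$). Above time $-t_0$, $\sigma^L$ and $\sigma^R$ remain almost disjoint by hypothesis; below $-t_0$, by choosing $\psi<\phi$ well-separated and invoking the transversal fluctuation estimate Proposition~\ref{lem:12}, one rules out unwanted intersections of the semi-infinite portions up to an event of superpolynomially small probability in $\delta$. Thus one obtains two almost disjoint semi-infinite $\cL$-geodesics from $(x,s')$ in directions $\psi<\phi$, and by a perturbation/right-most comparison argument in the spirit of Lemma~\ref{lem:18}, one extracts an angle $\theta_*\in[\psi,\phi]$ admitting two almost disjoint $\theta_*$-directed semi-infinite geodesics from $(x,s')$.

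Finally, since $|\theta_*|\leq\max(|\psi|,|\phi|)=1$, we have $|\theta_*|\leq\log^{1/2-\kappa}\delta^{-1}$ for all $\delta$ small, so Proposition~\ref{lem:13} applied uniformly for $s'\in[\ualpha-t_0,\oalpha-t_0]$ (together with a standard discretization in $\theta_*$) yields the desired $\delta^{1-\nu}$ bound. The main technical obstacle will be the middle step: carefully arguing that the almost disjointness of the finite geodesics through $p_L',p_R'$ upgrades to almost disjointness of right-most semi-infinite geodesics $\overline{\Gamma}_{(x,s')}^\psi,\overline{\Gamma}_{(x,s')}^\phi$ of the form required by Lemma~\ref{lem:18}, which requires carefully controlling the semi-infinite extensions below $p_L',p_R'$ to rule out coalescence or crossings via ordering and transversal fluctuation arguments.
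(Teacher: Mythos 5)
Your overall strategy agrees with the paper's---translate so that $q\mapsto\mathbf{0}$ (placing $s$ at negative time, as Proposition~\ref{lem:13} requires), convert the almost-disjointness of the two finite geodesics into almost-disjointness of a pair of semi-infinite geodesics in distinct directions, use Lemma~\ref{lem:18} to extract an intermediate star angle $\theta_*$, and then invoke Proposition~\ref{lem:13}. However, the middle step, which you yourself flag as the ``main technical obstacle,'' contains a genuine gap that cannot be repaired as stated.

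The crucial problem is the choice of \emph{fixed} angles $\psi=-1$, $\phi=1$. In order to upgrade almost-disjointness of the finite geodesics to $(\pm K,0)$ (or their translated counterparts) to almost-disjointness of the semi-infinite $\pm$-directed geodesics, one must at least guarantee that the semi-infinite geodesics pass to the left of $-K$ and the right of $K$ at time $0$; only then does geodesic ordering sandwich them on the correct sides. But the point $(x,s)$ where these geodesics emanate is located within distance $\delta$ of $\wGamma_q(s)$, and $\wGamma_q(s)$ is not bounded: it has Gaussian-like tails with positive probability mass at every level. With fixed $\psi,\phi$ the event that $\Gamma^{\psi}_{(x,s)}(0)>-K$ (say) has probability bounded away from $0$, \emph{uniformly in $\delta$}. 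Your claim that the bad events have ``superpolynomially small probability in $\delta$'' is false here: these events do not depend on $\delta$ at all when $\psi,\phi$ are fixed, so the truncation you need fails. This is exactly why the paper's proof introduces a slowly growing truncation level $m_\delta=\log^{2/5}\delta^{-1}$ for $|\wGamma_q(s)|$ and, correspondingly, slowly growing angles $\theta_\delta=\log^{9/20}\delta^{-1}$, calibrated so that $\theta_\delta\cdot\ualpha\gg m_\delta+K$. That growth is what makes the event $\cC_\delta$ fail with superpolynomially small probability, and it respects the hypothesis $|\theta|\le\log^{1/2-\kappa}\delta^{-1}$ of Proposition~\ref{lem:13}.

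There is a second, more structural difficulty: you propose to build $\sigma^L,\sigma^R$ by concatenating a finite geodesic with a semi-infinite one. Such a concatenation is not, in general, a geodesic, so it cannot feed into Lemma~\ref{lem:18}, which requires two genuine right-most semi-infinite geodesics of the form $\overline{\Gamma}_p^\psi,\overline{\Gamma}_p^\phi$. The paper does not concatenate at all; it compares the semi-infinite $\overline{\wGamma}_{(x,s)}^{\pm\theta_\delta}$ directly against the finite geodesics via ordering on $[0,s]$, and then rules out any intersection of the two semi-infinite geodesics at a time $r<0$ by a separate contradiction argument (if they met at time $r<0$, splicing the lower half of one onto the upper half of the other would produce a $(-\theta_\delta)$-directed geodesic that violates the definition of $\cC_\delta$). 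Your ``discretization in $\theta_*$'' at the end is also unnecessary once things are set up correctly: after Lemma~\ref{lem:18} gives a random $\theta_*\in[-\theta_\delta,\theta_\delta]$, geodesic ordering with the fact that $0\in[-\theta_\delta,\theta_\delta]$ immediately reduces the event to one of $\cE_\delta^{\theta_\delta}$ or $\cE_\delta^{-\theta_\delta}$, to which Proposition~\ref{lem:13} applies with the fixed angles $\pm\theta_\delta$.
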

\begin{proof}
  For this proof, we set $\theta_\delta=\log^{9/20}\delta^{-1}$ and $m_\delta=\log^{2/5}\delta^{-1}$. Now, we begin by defining a few typical events which we shall truncate on.
  \begin{align}
    \label{eq:62}
    \cA_\delta&=\{ \sup_{t\in [\ualpha, \oalpha]}|\wGamma_q(t)|\leq m_\delta\},\nonumber\\
    \cC_\delta&=\{\wGamma^{-\theta_\delta}_{(m_\delta+\delta,s)}(0)\leq -K\}\cap \{ \wGamma^{\theta_\delta}_{(-m_\delta-\delta,s)}(0)\geq K\} \nonumber\\
    &= \{\wGamma^{-\theta_\delta}_{(x,s)}(0)\leq -K \textrm{ and } \wGamma^{\theta_\delta}_{(x,s)}(0)\geq K\textrm { for all } |x|\leq m_\delta+\delta \textrm{ and all geodesics } \wGamma^{-\theta_\delta}_{(x,s)}, \wGamma^{\theta_\delta}_{(x,s)} \}.
  \end{align}
  By Proposition \ref{lem:12}, we know that, uniformly over the choice of $s\in [\ualpha,\oalpha]$, the probability $1-\PP(\cA_\delta\cap \cC_\delta)$ decays superpolynomially to $0$ as $\delta\rightarrow 0$.

 Now, for $\theta\in \RR$, define the event $\cE_\delta^\theta$ by
  \begin{equation}
    \label{eq:78}
  \cE_\delta^\theta=\{  \exists x\in [\wGamma_q(s)-\delta,\wGamma_q(s)+\delta] \textrm{ admitting almost disjoint geodesics }  \wGamma_{(x,s)}, \wGamma_{(x,s)}^\theta\}.
\end{equation}
 We now claim that the following holds (see Figure \ref{fig:angles}),
  \begin{align}
    \label{eq:63}
    &\cA_\delta\cap \cC_\delta \cap \{\exists x\in [\wGamma_q(s)-\delta,\wGamma_q(s)+\delta] :  \overline{\gamma}_{(-K,0)}^{(x,s)}, \overline{\gamma}_{(K,0)}^{(x,s)} \textrm{ are almost disjoint}\}\nonumber\\
    &\subseteq \cA_\delta\cap \cC_\delta\cap (\cE_{\delta}^{\theta_\delta}\cup \cE_{\delta}^{-\theta_\delta}).
  \end{align}
  and the goal now is to give a careful justification of the above. First, note that, if on the event $\cA_\delta\cap \cC_\delta$, it happens to be the case that the geodesics $\overline{\gamma}_{(-K,0)}^{(x,s)}$ and $\overline{\gamma}_{(K,0)}^{(x,s)}$ are almost disjoint for some $x\in [\wGamma_q(s)-\delta,\wGamma_q(s)+\delta]$, then by geodesic ordering, the right-most geodesics $\overline{\wGamma}_{(x,s)}^{-\theta_\delta}\lvert_{[0,s]}$ and $\overline{\wGamma}_{(x,s)}^{\theta_\delta}\lvert_{[0,s]}$ must be almost disjoint as well. Not only this, in fact, the entire geodesics $\overline{\wGamma}_{(x.s)}^{-\theta_\delta}$ and $\overline{\wGamma}_{(x.s)}^{\theta_\delta}$ must be almost disjoint. To see this, note that if there were $r<0$ for which $\overline{\wGamma}_{(x.s)}^{-\theta_\delta}(r)=\overline{\wGamma}_{(x.s)}^{\theta_\delta}(r)$, then we could consider the concatenation $\wGamma^\circ_{(x,s)}$ of $\overline{\wGamma}_{(x,s)}^{-\theta_\delta}\lvert_{(-\infty,r]}$ and $\overline{\wGamma}_{(x,s)}^{\theta_\delta}\lvert_{[r,s]}$, and this would be a $(-\theta_\delta)$-directed downward semi-infinite geodesic emanating from $(x,s)$. However, we have $\wGamma^\circ_{(x,s)}(0)=\overline{\wGamma}_{(x,s)}^{\theta_\delta}(0)\geq K$ at time $0$ by \eqref{eq:62}, but this contradicts the definition of $\cC_\delta$ which stipulates that $\wGamma_{(x,s)}^\circ(0)\leq  -K$ since $\wGamma_{(x,s)}^\circ$ is a $(-\theta_\delta)$-directed geodesic and since $|x|\leq m_\delta+\delta$. Thus, we have established the inclusion
  \begin{align}
    \label{eq:170}
     &\cA_\delta\cap \cC_\delta \cap \{\exists x\in [\wGamma_q(s)-\delta,\wGamma_q(s)+\delta] :  \overline{\gamma}_{(-K,0)}^{(x,s)}, \overline{\gamma}_{(K,0)}^{(x,s)} \textrm{ are almost disjoint}\}\nonumber\\
    &\subseteq \cA_\delta\cap \cC_\delta\cap \{\exists x\in [\wGamma_q(s)-\delta,\wGamma_q(s)+\delta] :  \overline{\wGamma}_{(x,s)}^{-\theta_\delta}, \overline{\wGamma}_{(x,s)}^{\theta_\delta} \textrm{ are almost disjoint}\}.
  \end{align}
  \begin{figure}
  \centering
  \includegraphics[width=0.8\linewidth]{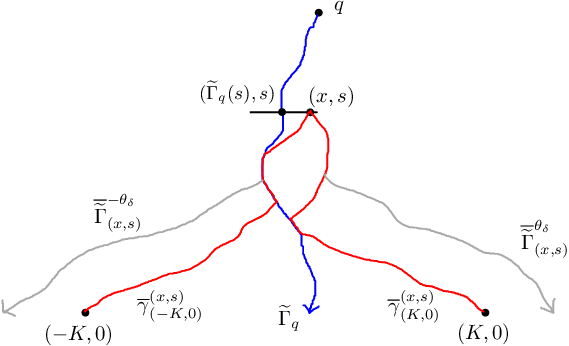}
  \caption{The geodesics involved in the proof of Lemma \ref{lem:14}: If the point $(x,s)$ admits almost disjoint geodesics $\overline{\gamma}_{(-K,0)}^{(x,s)},\overline{\gamma}_{(K,0)}^{(x,s)}$, then on the event $\cA_\delta\cap \cC_\delta$, the geodesics $\overline{\wGamma}_{(x,s)}^{\theta_\delta}$ and $\overline{\wGamma}_{(x,s)}^{-\theta_\delta}$ must also be almost disjoint as noted in \eqref{eq:170}.}
  \label{fig:angles}
\end{figure}
  Now, on the event that $\overline{\wGamma}_{(x,s)}^{-\theta_\delta}$ and $\overline{\wGamma}_{(x,s)}^{\theta_\delta}$ are almost disjoint, we can use Lemma \ref{lem:18} to obtain the existence of an angle $\psi\in [-\theta_\delta,\theta_\delta]$ and a downward $\psi$-directed $\wcL$-geodesic $\wGamma_{(x,s)}^\psi$ satisfying
  \begin{equation}
    \label{eq:169}
    \overline{\wGamma}_{(x,s)}^{-\theta_\delta}(r)\leq \wGamma_{(x,s)}^\psi(r)<\overline{\wGamma}_{(x,s)}^\psi(r)\leq\overline{\wGamma}_{(x,s)}^{\theta_\delta}(r)
  \end{equation}
  for all $r<s$.
Now, if $0\geq \psi$, then we obtain by geodesic ordering that the geodesics $\overline{\wGamma}_{(x,s)}, \overline{\wGamma}_{(x,s)}^{-\theta_\delta}$ are almost disjoint, thereby implying the occurrence of the event $\cE_\delta^{-\theta_\delta}$. Instead, if we had $0<\psi$, then we obtain by geodesic ordering that the geodesics $\underline{\wGamma}_{(x,s)},\overline{\wGamma}_{(x,s)}^{\theta_\delta}$ are almost disjoint, and this implies the occurrence of the event $\cE_\delta^{\theta_\delta}$. As a result of the above and \eqref{eq:170}, we have now established the inclusion in \eqref{eq:63}.

  Now, in view of \eqref{eq:63}, to complete the proof, it suffices to show that for all $\delta$ small enough, we have
  \begin{equation}
    \label{eq:60}
    \PP((\cA_\delta\cap \cC_\delta)^c)+\PP(\cA_\delta\cap \cC_\delta\cap (\cE_{\delta}^{\theta_\delta}\cup \cE_{\delta}^{-\theta_\delta}))\leq \delta^{1-\nu}.
  \end{equation}
  As we noted just after \eqref{eq:62}, we already know that uniformly in the choice of $s\in [\ualpha,\oalpha]$, the probability $1-\PP(\cA_\delta\cap \cC_\delta)$ decays superpolynomially as $\delta\rightarrow 0$, and thus it suffices to show that for all $\delta$ small enough and uniformly over the above choice of $s$, we have
  \begin{equation}
    \label{eq:61}
    \PP(\cE_{\delta}^{\theta_\delta}), \PP(\cE_{\delta}^{-\theta_\delta})\leq \delta^{1-\nu},
  \end{equation}
  but this is precisely the content of Proposition \ref{lem:13}.
  
\end{proof}
The above result yields a good estimate on the probability of the event in the right hand side in Lemma \ref{lem:17}. The goal now is to obtain a good estimate on the probability of the event $\{|\wGamma_q(s)-\Upsilon_{\mathbf{0}}^{X,K}(s)|\leq \delta\}$ conditioned on the above event, and to do so, we shall first need to obtain a $1/2-$ H\"older regularity estimate on the difference profile $\cD_s^{\lambda,K}$ defined for any $s>0$ by
\begin{equation}
  \label{eq:176}
  \cD_s^{\lambda,K}(x)=\cL(\fh_\lambda^K\lvert_{[0,\infty)},0;x,s)-\cL(\fh_\lambda^K\lvert_{(-\infty,0]},0;x,s).
\end{equation}
As is usual for such spatial difference profiles (see for e.g.\ \cite[Proposition 4.1]{RV21}), $\cD_s^{\lambda,K}$ is an increasing function, in the sense that $\cD_s^{\lambda,K}(y)\geq \cD_s^{\lambda,K}(x)$ a.s.\ for all $y\geq x$. We now state the basic H\"older regularity estimate that we shall require.
\begin{lemma}
  \label{lem:10}
 Fix $\ualpha,\oalpha,K,M,\nu>0$ and $q=(y_0,t_0)$ with $t_0>\oalpha$. Then uniformly over the choice of $s\in [\ualpha,\oalpha]$, the probability
  \begin{equation}
    \label{eq:55}
   \PP\left( \sup_{|\lambda|\leq M}\{\cD^{\lambda,K}_s(\wGamma_q(s)+\delta)-\cD^{\lambda,K}_s(\wGamma_q(s)-\delta)\}\geq \delta^{1/2-\nu}\right)
 \end{equation}
 decays superpolynomially as $\delta\rightarrow 0$.
\end{lemma}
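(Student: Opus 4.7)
The plan is to reduce the fluctuation of $\cD_s^{\lambda,K}$ over a $\delta$-interval to a uniform spatial H\"older estimate for $\cL(x',0;\cdot,s)$ as $x'$ ranges over the compact support $[-K,-K^{-1}]\cup[K^{-1},K]$ of $\fh_\lambda^K$, and then obtain the latter by combining Proposition \ref{lem:20} with a union bound on a polynomial mesh. The key conceptual point is that the dependence on $\lambda$, on $\cB_0^0$, and on the random location $\wGamma_q(s)$ can all be stripped out before any probabilistic estimate is made.

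First I would write $\cD_s^{\lambda,K}=A_\lambda-B_\lambda$ where $A_\lambda(x)=\sup_{x'\in[K^{-1},K]}\{\cB_0^0(x')+2\lambda x'+\cL(x',0;x,s)\}$ and $B_\lambda(x)$ is the analogous supremum over $x'\in[-K,-K^{-1}]$. Evaluating each side of the defining supremum at the other side's maximiser yields the standard bound $|A_\lambda(x_0+\delta)-A_\lambda(x_0-\delta)|\leq \sup_{x'\in[K^{-1},K]}|\cL(x',0;x_0+\delta,s)-\cL(x',0;x_0-\delta,s)|$, and likewise for $B_\lambda$. The right-hand sides do not depend on $\lambda$ or on $\cB_0^0$, so
\begin{equation*}
\sup_{|\lambda|\leq M}|\cD_s^{\lambda,K}(x_0+\delta)-\cD_s^{\lambda,K}(x_0-\delta)|\leq 2\sup_{x'\in [-K,-K^{-1}]\cup[K^{-1},K]}|\cL(x',0;x_0+\delta,s)-\cL(x',0;x_0-\delta,s)|
\end{equation*}
for every $x_0\in\RR$. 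This reduces the lemma to a purely $\cL$-based estimate in which $\lambda$ no longer appears.

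Next I would truncate on the event $\cA_\delta=\{|\wGamma_q(s)|\leq \log\delta^{-1}\}$. By Proposition \ref{lem:12}, $\PP(\cA_\delta^c)$ decays superpolynomially as $\delta\to 0$, uniformly in $s\in[\ualpha,\oalpha]$. On $\cA_\delta$, the random centre $x_0=\wGamma_q(s)$ lies in the polylogarithmically large interval $I_\delta=[-\log\delta^{-1},\log\delta^{-1}]$, so it suffices to show that
\begin{equation*}
\PP\!\left(\sup_{\substack{x_0\in I_\delta\\ x'\in[-K,-K^{-1}]\cup[K^{-1},K]}}|\cL(x',0;x_0+\delta,s)-\cL(x',0;x_0-\delta,s)|\geq \tfrac{1}{2}\delta^{1/2-\nu}\right)
\end{equation*}
decays superpolynomially in $\delta$, uniformly in $s\in[\ualpha,\oalpha]$.

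Finally, this uniform $1/2-$ H\"older estimate is obtained by a straightforward discretisation argument. After rescaling to time $1$ via Proposition \ref{prop:14} (which is harmless since $s\in[\ualpha,\oalpha]$ is in a compact set bounded away from $0$), Proposition \ref{lem:20} provides a superpolynomially small tail bound on the $\delta$-scale fluctuation of $\cL(x',0;\cdot,s)$ at any single base point $(x',x_0)$. Covering $I_\delta\times ([-K,-K^{-1}]\cup[K^{-1},K])$ by a mesh of $O(\delta^{-C}\log\delta^{-1})$ points at spacing $\delta^{10}$ and taking a union bound handles all the base points simultaneously, with the polynomial mesh size absorbed into the superpolynomial decay; the mesh gaps in the $x'$-direction can be filled in by a second application of Proposition \ref{lem:20} with the roles of the two spatial coordinates interchanged. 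I expect no structural obstacle here: the main technical point is simply the bookkeeping of the two-variable union bound and the scaling argument for the $s$-dependence, while the crucial structural step is the opening inequality that strips out the $\lambda$ and Busemann dependence and makes the estimate purely a statement about the regularity of $\cL$.
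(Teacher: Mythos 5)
Your proof is correct and follows essentially the same route as the paper's: truncate on $|\wGamma_q(s)|$ being polylogarithmically small via Proposition \ref{lem:12}, observe that the $\lambda$-drift and Busemann contributions cancel in the increment of $\cD_s^{\lambda,K}$, reduce to a uniform spatial H\"older estimate for $\cL(x',0;\cdot,s)$ over a compact range of $x'$, and prove that estimate by discretising and applying Proposition \ref{lem:20} together with the symmetries in Proposition \ref{prop:14}, absorbing the polynomial mesh size into the superpolynomial decay. Two cosmetic remarks: (i) you make the standard ``difference of suprema at the other side's maximiser'' argument explicit up front, whereas the paper leaves the $\lambda$-stripping implicit and carries the $\sup_{|\lambda|\le M}$ through to its intermediate bound; and (ii) your mesh at spacing $\delta^{10}$ and the ``second application with the coordinates interchanged'' are unnecessary — Proposition \ref{lem:20} is already two-sided in both spatial coordinates, so a mesh at spacing $\delta$ (as the paper uses) suffices and no second application is needed — but neither point affects correctness.
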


\begin{proof}

  By Proposition \ref{lem:12}, we know that there exists a constant $C>0$ such that with superpolynomially high probability, we have $|\wGamma_q(s)|\leq \log^{1/2}\delta^{-1}$.  By using this along with the definition of $\cD^{\lambda,K}_s$, it is easy to see that it suffices to establish that both
  \begin{align}
    \label{eq:81}
    &\PP(\sup_{|x|\leq \log^{1/2}\delta^{-1},|\lambda|\leq M}|\cL(\fh_\lambda^K\lvert_{(-\infty,0]},0;x,s)-\cL(\fh_\lambda^K\lvert_{(-\infty,0]},0;x+2\delta,s)|\geq \delta^{1/2-\nu}),\nonumber\\
    &\PP(\sup_{|x|\leq \log^{1/2}\delta^{-1},|\lambda|\leq M}|\cL(\fh_\lambda^K\lvert_{[0,\infty)},0;x,s)-\cL(\fh_\lambda^K\lvert_{[0,\infty)},0;x+2\delta,s)|\geq \delta^{1/2-\nu})    
  \end{align}
  decay to zero superpolynomially as $\delta\rightarrow 0$. By symmetry, we just establish the latter. To see this, we divide the interval $[K^{-1},K]$ into intervals $I_\delta$ of size $2\delta$ and also divide the set $[-\log^{1/2}\delta^{-1}-2\delta,\log^{1/2}\delta^{-1}+2\delta]$ into intervals $J_\delta$ of size $2\delta$ each.  By Proposition \ref{lem:20} along with a skew invariance (Proposition \ref{prop:14}) argument, it can be seen that uniformly for all intervals $I_\delta,J_\delta$, the probability
  \begin{equation}
    \label{eq:68}
    \PP(\sup_{x\in I_\delta, y\in J_\delta,y'\in J_\delta}|\cL(x,0;y,s)-\cL(x,0;y',s)|\geq \delta^{1/2-\nu})
  \end{equation}
  decays superpolynomially in $\delta$, and then we take a union bound over the $O(\delta^{-1})$ intervals $I_\delta$ and the $O(\delta^{-1} \log^{1/2}\delta^{-1})$ many intervals $J_\delta$. As a result, we have shown that
  \begin{equation}
    \label{eq:107}
   \PP(\bigcup_{\textrm{intervals } I_\delta,J_\delta} \{\sup_{x\in I_\delta, y\in J_\delta,y'\in J_\delta}|\cL(x,0;y,s)-\cL(x,0;y',s)|\geq \delta^{1/2-\nu}\})
 \end{equation}
 decays superpolynomially as $\delta\rightarrow 0$ and it is easy to see that this immediately implies that the probability in \eqref{eq:81} does so as well.
\end{proof}

We are now ready to complete the proof of Proposition \ref{lem:1}.
\begin{proof}[Proof of Proposition \ref{lem:1}]
  For notational convenience, we define the events $\cE_{s,\delta},\cA_{s,\delta}$ by
  \begin{align}
    \label{eq:141}
    \cE_{s,\delta}&= \{|\wGamma_q(s)-\Upsilon_{\mathbf{0}}^{X,K}(s)|\leq \delta\}= \{\cD_s^{X,K}(x)=0 \textrm{ for some } x\in [\wGamma_q(s)-\delta,\wGamma_q(s)+\delta]\},\nonumber\\
    \cA_{s,\delta}&= \{\exists x\in [\wGamma_q(s)-\delta,\wGamma_q(s)+\delta] :  \overline{\gamma}_{(-K,0)}^{(x,s)}, \overline{\gamma}_{(K,0)}^{(x,s)}) \textrm{ are almost disjoint}\},
  \end{align}
   and by Lemma \ref{lem:17}, we know that $\cE_{s,\delta}\subseteq \cA_{s,\delta}$. Define the event $\cH_{s,\delta}$ by
  \begin{equation}
    \label{eq:14}
    \cH_{s,\delta}=\{\sup_{|\lambda|\leq M}\{\cD^{\lambda,K}_s(\wGamma_q(s)+\delta)-\cD^{\lambda,K}_s(\wGamma_q(s)-\delta)\}\geq \delta^{1/2-\nu}\},
  \end{equation}
  and note that by Lemma \ref{lem:10}, $\PP(\cH_{s,\delta})$ decays superpolynomially as $\delta\rightarrow 0$. Now, we write
  \begin{equation}
    \label{eq:13}
    \PP(\cE_{s,\delta})\leq \PP(\cH_{s,\delta})+\PP(\cA_{s,\delta}\cap \cH_{s,\delta}^c)\PP(\cE_{s,\delta} \lvert \cA_{s,\delta}\cap \cH_{s,\delta}^c).
  \end{equation}
  By Lemma \ref{lem:14}, we know that $\PP(\cA_{s,\delta})\leq \delta^{1-\nu}$ for all small enough $\delta$. By using this, we obtain that
  \begin{equation}
    \label{eq:15}
    \PP(\cE_{s,\delta})\leq \PP(\cH_{s,\delta})+ \delta^{1-\nu} \PP(\cE_{s,\delta} \lvert \cA_{s,\delta}\cap \cH_{s,\delta}^c).
  \end{equation}
  We now control $\PP(\cE_{s,\delta}\lvert \cL,\wcL)$. We first note that for any $\lambda_1<\lambda_2\in [-M,M]$ and for all points $(y,t)$ with $t>0$, we have
  \begin{align}
    \label{eq:69}
    \cL( \fh_{\lambda_2}^K\lvert_{[0,\infty)},0; y,t)&\geq \cL( \fh_{\lambda_1}^K\lvert_{[0,\infty)},0; y,t) + 2K^{-1} (\lambda_2-\lambda_1)\nonumber\\
    \cL( \fh_{\lambda_2}^K\lvert_{(-\infty,0]},0; y,t)&\leq \cL( \fh_{\lambda_1}^K\lvert_{(-\infty,0]},0; y,t) - 2K^{-1} (\lambda_2-\lambda_1),                                                             
  \end{align}
  and by subtracting these, we immediately obtain that a.s.\ for all $\lambda_2>\lambda_1$ and all points $(y,t)$ with $t>0$, we have
  \begin{equation}
    \label{eq:111}
    \cD_t^{\lambda_2,K}(y)\geq \cD_t^{\lambda_1,K}(y)+4K^{-1} (\lambda_2-\lambda_1).
  \end{equation}
  As a result of the above, we obtain that on the event $\cH_{s,\delta}^c$, 
  \begin{equation}
    \label{eq:70}
    \mathrm{Leb}\left(\{\lambda\in [-M,M]: \cD_s^{\lambda,K}(x)-\cD_s^{\lambda,K}(x)=0 \textrm{ for some } x\in [\wGamma_q(s)-\delta,\wGamma_q(s)+\delta]\}\right)\leq \delta^{1/2-\nu}/(4K^{-1}),
  \end{equation}
  where we use $\mathrm{Leb}$ to denote the Lebesgue measure.
  Thus, on the event $\cH_{s,\delta}^c$, we have
  \begin{align}
    \label{eq:71}
    \PP(\cE_{s,\delta}\lvert \cL,\wcL)&\leq \PP(X\in \{\lambda: \cD_s^{\lambda,K}(x)=0 \textrm{ for some } x\in [\wGamma_q(s)-\delta,\wGamma_q(s)+\delta]\big \lvert \cL,\wcL)\nonumber\\
                                      &= (2M)^{-1}\mathrm{Leb}\left( \lambda\in [-M,M]: \cD_s^{\lambda,K}(x)=0 \textrm{ for some } x\in [\wGamma_q(s)-\delta,\wGamma_q(s)+\delta]\right)\nonumber\\
    &\leq (2M)^{-1}(K/4)\delta^{1/2-\nu},
  \end{align}
  where we have used that $X$ is independent of $\sigma(\cL,\wcL)$ to obtain the second line.
  Now, on using \eqref{eq:71}, we obtain that
  \begin{equation}
    \label{eq:17}
    \PP(\cE_{s,\delta} \lvert \cA_{s,\delta}\cap \cH_{s,\delta}^c)\leq  KM^{-1}\delta^{1/2-\nu}/8,
  \end{equation}
 and by combining this with \eqref{eq:15}, we have for all small enough $\delta$,
  \begin{equation}
    \label{eq:18}
    \PP(\cE_{s,\delta})\leq \PP(\cH_{s,\delta})+ KM^{-1}\delta^{1-\nu} \delta^{1/2-\nu}/8\leq \delta^{3/2-2\nu},
  \end{equation}
  where in the above, we use the superpolynomial decay of $\PP(\cH_{s,\delta})$ from Lemma \ref{lem:10}. Since $\nu$ is arbitrary, we can replace it by $\nu/2$, and this completes the proof.
\end{proof}

\subsection{Using Proposition \ref{lem:1} to obtain Proposition \ref{lem:9}}
\label{sec:using}
The task now is to use Proposition \ref{lem:1} to prove Proposition \ref{lem:9}. For brevity, for $t'<t$ and paths $\psi,\xi$ defined on an interval containing $[t',t]$, we introduce the notation
\begin{equation}
  \label{eq:84}
  N^{n,\beta}_{t',t}(\xi,\psi)=\#\{i\in [\![t'\varepsilon_n^{-1},t\varepsilon_n^{-1}]\!]: |\xi(i\varepsilon_n)-\psi(i\varepsilon_n)|\leq \varepsilon_n^{2/3-\beta}\}.
\end{equation}
For the remainder of this section, we shall gradually prove stronger and stronger results, and at the end, this shall yield Proposition \ref{lem:9}. To begin, we note the following immediate consequence of Proposition \ref{lem:1}.
\begin{lemma}
  \label{lem:5}
Fix $\oalpha,\ualpha,K,M,\beta>0$ and a point $q=(y_0,t_0)$ with $t_0> \oalpha$. Almost surely, we have 
  \begin{equation}
    \label{eq:85}
    \limsup_{n\rightarrow \infty} \frac{\log N^{n,\beta}_{\ualpha,\oalpha}(\Upsilon_{\mathbf{0}}^{X,K},\wGamma_q)}{\log \varepsilon_n^{-1}}\leq 3\beta/2.
  \end{equation}
 
\end{lemma}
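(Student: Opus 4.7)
The plan is a standard first-moment and Borel--Cantelli argument on top of Proposition \ref{lem:1}. The trivial bound $N^{n,\beta}_{\ualpha,\oalpha}(\Upsilon_{\0}^{X,K},\wGamma_q) \leq (\oalpha - \ualpha)\varepsilon_n^{-1}$ already settles the regime $\beta \geq 2/3$, since $3\beta/2 \geq 1$ there, so one may assume $0 < \beta < 2/3$ below.

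Set $\delta_n = \varepsilon_n^{2/3 - \beta}$, which tends to zero as $n \to \infty$. For any fixed small $\nu > 0$, Proposition \ref{lem:1} applied with $\delta = \delta_n$ gives, uniformly over $s \in [\ualpha, \oalpha]$ and for all $n$ large enough,
\begin{equation*}
\PP\bigl(|\wGamma_q(s) - \Upsilon_{\0}^{X,K}(s)| \leq \delta_n\bigr) \leq \delta_n^{3/2-\nu} = \varepsilon_n^{1 - 3\beta/2 - \nu(2/3 - \beta)}.
\end{equation*}
Since the mesh $\{i\varepsilon_n : i \in [\![\ualpha\varepsilon_n^{-1}, \oalpha\varepsilon_n^{-1}]\!]\}$ has $O(\varepsilon_n^{-1})$ points and all of its values lie in $[\ualpha,\oalpha]$, summing the above estimate over $i$ yields
\begin{equation*}
\EE\bigl[N^{n,\beta}_{\ualpha,\oalpha}(\Upsilon_{\0}^{X,K}, \wGamma_q)\bigr] \leq C \, \varepsilon_n^{-3\beta/2 - \nu(2/3-\beta)}.
\end{equation*}

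For arbitrary $\eta > 0$, choose $\nu$ small enough so that $\nu(2/3 - \beta) < \eta/2$. Markov's inequality then produces
\begin{equation*}
\PP\bigl(N^{n,\beta}_{\ualpha,\oalpha}(\Upsilon_{\0}^{X,K}, \wGamma_q) \geq \varepsilon_n^{-3\beta/2 - \eta}\bigr) \leq C \, \varepsilon_n^{\eta - \nu(2/3-\beta)} \leq C\,\varepsilon_n^{\eta/2},
\end{equation*}
which is summable in $n$ because $\varepsilon_n = 2^{-n}$. Borel--Cantelli gives $N^{n,\beta}_{\ualpha,\oalpha}(\Upsilon_{\0}^{X,K},\wGamma_q) < \varepsilon_n^{-3\beta/2 - \eta}$ almost surely for all $n$ large enough, and intersecting over a countable sequence $\eta_k \downarrow 0$ produces the claimed $\limsup$ bound $3\beta/2$.

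There is no genuine obstacle at this stage: all the substantive probabilistic work was absorbed into Proposition \ref{lem:1}, and the present lemma is a routine bootstrap of that uniform pointwise estimate. The only mild book-keeping point is matching the exponents $(2/3-\beta)(3/2-\nu)$ against the desired rate $3\beta/2$, which is arranged by sending $\nu \downarrow 0$ after $\eta$ is fixed.
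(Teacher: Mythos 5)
Your proposal is correct and follows essentially the same route as the paper: apply Proposition \ref{lem:1} pointwise with $\delta = \varepsilon_n^{2/3-\beta}$, take a union bound over the $O(\varepsilon_n^{-1})$ mesh points in $[\ualpha,\oalpha]$ to control $\EE[N^{n,\beta}_{\ualpha,\oalpha}]$, and run Markov plus Borel--Cantelli before sending the auxiliary exponent $\nu$ to zero. Your explicit treatment of the regime $\beta \geq 2/3$ (where $\delta_n$ does not shrink and Proposition \ref{lem:1} would not apply, but the trivial count $\varepsilon_n^{-1}$ already suffices) is a small extra care that the paper leaves implicit; otherwise the two arguments coincide.
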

\begin{proof}
  We apply Proposition \ref{lem:1} with $\delta=\varepsilon^{2/3-\beta}_n$ and with $s$ taking values in the discrete set $[\![\ualpha \varepsilon_n^{-1}, \oalpha \varepsilon_n^{-1}]\!]$. Note that the above discrete set has cardinality at most $(\oalpha-\ualpha)\varepsilon_n^{-1}$. By invoking Proposition \ref{lem:1} and taking a union bound over the above set, we obtain that for any fixed $\nu>0$ and for all $\delta$ small enough,
  \begin{align}
    \label{eq:114}
    \EE [N^{n,\beta}_{\ualpha,\oalpha}(\Upsilon_{\mathbf{0}}^{X,K},\wGamma_q)]&\leq (\oalpha-\ualpha)\varepsilon_n^{-1}\times (\varepsilon_n^{2/3-\beta})^{3/2-\nu}\nonumber\\
   &\leq (\oalpha-\ualpha)\varepsilon_n^{-3\beta/2-2\nu/3+\nu\beta}.
  \end{align}
  Thus, by a simple Borel-Cantelli argument, we obtain that almost surely,
  \begin{equation}
    \label{eq:116}
     \limsup_{n\rightarrow \infty} \frac{\log N^{n,\beta}_{\ualpha,\oalpha}(\Upsilon_{\mathbf{0}}^{X,K},\wGamma_q)}{\log \varepsilon_n^{-1}}\leq 3\beta/2+2\nu/3-\nu\beta.
   \end{equation}
   The proof is now completed by noting that $\nu>0$ was chosen arbitrarily.
\end{proof}
The first goal is to obtain an analogous statement where the semi-infinite geodesic $\wGamma_q$ in the above result is replaced by a geodesic $\gamma_p^q$ starting from a fixed point $p=(x_0,0)$ with $x_0\neq 0$. To do so, we shall require the following basic density estimate on $\wGamma_q(0)$.

\begin{lemma}
  \label{lem:7}
  Fix $q=(y_0,t_0)$ with $t_0>0$ and fix $\delta>0,x_0\in \RR$. Almost surely, we have
  \begin{equation}
    \label{eq:44}
    \PP(\wGamma_q(0)\in (x_0-\delta,x_0+\delta)\lvert \cL)>0.
  \end{equation}
\end{lemma}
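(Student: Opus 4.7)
The plan is to identify $\wGamma_q(0)$ with the argmax of an independent Brownian motion plus a function of $\cL$, and then use the support of Brownian motion on a compact window together with a tail estimate. First I would let $\wcB_0^0(w) := \wcB_0((w,0);(0,0))$ denote the $0$-direction Busemann function of $\wcL$ restricted to the time line $0$. By Proposition \ref{prop:22} applied to $\wcL$ in place of $\cL$, the process $\wcB_0^0$ is a two-sided Brownian motion of diffusivity $2$, and since it is determined by $\wcL$ on the half-space $\{s \leq 0\}$ (through the coalescence of downward $0$-directed geodesics there), which is the portion of $\wcL$ obtained by independent resampling, $\wcB_0^0$ is independent of $\cL$. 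Combining the composition law for $\wcL$ applied at the time line $s=0$ with this coalescence property, and using that $\wcL(w,0;q) = \cL(w,0;q)$ since $q$ lies above time $0$, a standard Busemann argument yields
$$\wGamma_q(0) \;=\; \arg\max_{w \in \RR}\bigl[\wcB_0^0(w) + \cL(w,0; q)\bigr].$$
This reduces the lemma to the following deterministic claim: for every realization of $\cL$ in a full-measure event, the distribution of the argmax of $B + f$, where $f(w) := \cL(w,0;q)$ is fixed and $B$ is an independent two-sided Brownian motion of diffusivity $2$, assigns positive probability to $(x_0 - \delta, x_0 + \delta)$.

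Next I would fix such a realization, set $B := \wcB_0^0$, and note that by Proposition \ref{prop:7}, $f$ is continuous with $f(w) + (w-y_0)^2/t_0 = o(w^2)$ as $|w| \to \infty$, so $\max_\RR f < \infty$ and one can choose $L > |x_0|+\delta+1$ with $f(w) \leq -w^2/(2 t_0)$ for $|w|\geq L$. Putting $K := 10 + \max_\RR f - f(x_0)$, I would pick a continuous test function $\phi : [-L, L] \to \RR$ equal to $0$ on $[x_0 - \delta/4, x_0 + \delta/4]$, equal to $-2K$ on $[-L, L] \setminus (x_0 - \delta/2, x_0 + \delta/2)$, and linearly interpolated in between, and consider the events
$$\Omega_1 := \bigl\{\sup_{w \in [-L, L]} |B(w) - \phi(w)| < 1\bigr\}, \quad \Omega_2 := \bigl\{\sup_{|w|\geq L}[B(w) - w^2/(2 t_0)] \leq f(x_0) - 2\bigr\}.$$
The Wiener support theorem on $[-L, L]$ gives $\PP(\Omega_1) > 0$. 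Since $\sup_{|w|\geq L}[B(w) - w^2/(2 t_0)]$ is a.s.\ finite and tends to $-\infty$ as $L \to \infty$, after enlarging $L$ if necessary, and using the conditional independence of $B\lvert_{[-L,L]}$ and $B\lvert_{\RR\setminus[-L,L]}$ given the endpoint values $B(\pm L)$ (which are constrained to a bounded range by $\Omega_1$), we also have $\PP(\Omega_1 \cap \Omega_2) > 0$. On $\Omega_1 \cap \Omega_2$, the estimates $B + f \leq -2K + 1 + \max f < f(x_0) - 1 \leq B(x_0) + f(x_0)$ on $[-L, L] \setminus (x_0 - \delta/2, x_0 + \delta/2)$ and $B + f \leq f(x_0) - 2$ on $|w| \geq L$ show that the global maximum of $B + f$ is attained strictly inside $(x_0 - \delta, x_0 + \delta)$, hence $\wGamma_q(0) \in (x_0 - \delta, x_0 + \delta)$, yielding the lemma.

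The principal obstacle is not the Brownian-support manipulation (which is standard once the compact window and bump function have been chosen) but the Busemann-theoretic reduction in the first step: rigorously establishing the argmax representation for $\wGamma_q(0)$, and, crucially, the independence of $\wcB_0^0$ from $\cL$. This independence is the whole motivation for introducing the resampled landscape $\wcL$ in this section, and its formal justification goes through the Busemann construction via coalescence of $0$-directed downward semi-infinite geodesics from Section \ref{sec:busemann}, together with the composition-law decomposition of $\wcL(z;q)$ at the time line $s=0$ for $z$ far below.
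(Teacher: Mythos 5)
Your argument follows essentially the same route as the paper: identify $\wGamma_q(0)$ with the argmax of $\wcB_0^0 + \cL(\cdot,0;q)$, exploit the independence of $\wcB_0^0$ from $\cL$ coming from the resampling, and invoke a full-support property of Brownian motion. However, there is a concrete bug in the support-theorem step. Your test function $\phi$ equals $-2K$ on all of $[-L,L]\setminus(x_0-\delta/2,x_0+\delta/2)$, and in particular $\phi(0)=-2K$ whenever $0\notin(x_0-\delta/2,x_0+\delta/2)$ (which is precisely the case of interest: the lemma is applied with $x_0\neq 0$). Since $B=\wcB_0^0$ is a two-sided Brownian motion anchored at $B(0)=0$, the event $\Omega_1=\{\sup_{[-L,L]}|B-\phi|<1\}$ forces $|B(0)-\phi(0)|=2K<1$, which is impossible; hence $\PP(\Omega_1)=0$ as written, and the proof collapses.

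The fix is easy and leaves the structure intact: instead of a $\phi$ that is uniformly very negative away from $x_0$, take $\phi$ to be a nonnegative bump supported in $(x_0-\delta/2,x_0+\delta/2)$ with a peak of height $K>\max_\RR f - f(x_0)$ at $x_0$ and $\phi\equiv 0$ elsewhere on $[-L,L]$. Then $\phi(0)=0$, $\PP(\Omega_1)>0$, and on $\Omega_1\cap\Omega_2$ one has $B+f\leq 1+\max f<K-1+f(x_0)\leq B(x_0)+f(x_0)$ on $[-L,L]\setminus(x_0-\delta/2,x_0+\delta/2)$ and $B+f\leq f(x_0)-2$ for $|w|\geq L$, so the argmax lies inside $(x_0-\delta,x_0+\delta)$ as desired. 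For comparison, the paper sidesteps this issue entirely by conditioning on $(\cL,f_\delta^c)$, where $f_\delta^c=\wcB_0^0\lvert_{(x_0-\delta,x_0+\delta)^c}$, and observing that the conditional law of $\wcB_0^0$ on $(x_0-\delta,x_0+\delta)$ is a Brownian bridge whose maximum exceeds any $(\cL,f_\delta^c)$-measurable threshold with positive probability. That conditioning automatically respects the constraint $B(0)=0$ and avoids the need for any explicit test function or tail event, so it is both shorter and less error-prone than the Wiener-support argument you set up.
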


\begin{proof}
  Recall that $\wGamma_q\lvert_{[0,\infty)}$ can be thought of as the geodesic from $q$ to the initial condition $\wcB_0^0$, where the latter denotes the Busemann function \eqref{eq:102} defined using the landscape $\wcL$. Also, since $\wcL$ was defined by independently resampling the part of $\cL$ below the zero time line, $\wcB_0^0$ is independent of $\cL$. Now, we define the initial conditions $f_\delta=\wcB_0^0\lvert_{(x_0-\delta,x_0+\delta)}$ and $f_\delta^c=\wcB_0^0\lvert_{(x_0-\delta,x_0+\delta)^c}$. Defining $\cL(\fh,0;q)\coloneqq \sup_{x\in \RR}\{\fh(x)+\cL(x,0;q)\}$ for an initial condition $\fh$, it is easy to see that we have
  \begin{equation}
    \label{eq:45}
    \{\wGamma_q(0)\in (x_0-\delta,x_0+\delta)\}=\{\cL(f_\delta,0;q)> \cL(f_\delta^c,0;q)\}. %
  \end{equation}
  In view of the above, it suffices to show that almost surely,
  \begin{equation}
    \label{eq:46}
    \PP(\cL(f_\delta,0;q)> \cL(f_\delta^c,0;q)\lvert \cL,f_\delta^c)>0.
  \end{equation}
  To see this, we note that $\cL(f_\delta,0;q)\geq \max_{|x-x_0|\leq \delta}\widetilde{\cB}_0^0(x)+ \min_{|x-x_0|\leq \delta}\cL(x,0;q)$. Since $\wcB_0^0$ is simply a two-sided Brownian motion (Proposition \ref{prop:22}) independent of $\cL$, we immediately obtain
  \begin{equation}
    \label{eq:47}
    \PP( \max_{|x-x_0|\leq \delta}\wcB_0^0(x)> \cL(f_\delta^c,0;q)-\min_{|x-x_0|\leq \delta}\cL(x,0;q)\lvert \cL, f_\delta^c)>0,
  \end{equation}
  and this completes the proof.
\end{proof}
With the above at hand, we can now obtain the following lemma.
\begin{lemma}
  \label{lem:6}
  Fix $\oalpha,\ualpha,K,M,\beta>0$, a point $p=(x_0,0)$ with $x_0\neq 0$, and a point $q=(y_0,t_0)$ with $t_0>\oalpha$. Almost surely, we have %

\begin{equation}
    \label{eq:41}
    \limsup_{n\rightarrow \infty} \frac{N^{n,\beta}_{\ualpha,\oalpha}(\Upsilon_{\mathbf{0}}^{X,K},\gamma_p^q)}{\log \varepsilon_n^{-1}}\leq 3\beta/2.
  \end{equation}
  
\end{lemma}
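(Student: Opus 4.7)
The plan is to transfer the bound already supplied by Lemma \ref{lem:5}, which controls $N^{n,\beta}_{\ualpha,\oalpha}(\Upsilon^{X,K}_{\mathbf 0},\wGamma_q)$, over to the finite geodesic $\gamma_p^q$. Since $\wcL$ agrees with $\cL$ above the time line $0$, the restriction $\wGamma_q\lvert_{[0,t_0]}$ is itself an $\cL$-geodesic between its endpoints $(\wGamma_q(0),0)$ and $q$. The idea is therefore to show that whenever $\wGamma_q(0)$ lands sufficiently close to $x_0$, the restriction $\wGamma_q\lvert_{[\ualpha,\oalpha]}$ coincides with $\gamma_p^q\lvert_{[\ualpha,\oalpha]}$, making the two counts equal; Lemma \ref{lem:7} will then supply the strictly positive $\cL$-conditional probability of such closeness, which is enough to push the almost-sure bound through.

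The central technical input will be a stability claim: almost surely on $\cL$, there exists a random threshold $\delta_\star=\delta_\star(\cL)>0$ such that any $\cL$-geodesic from $(x,0)$ to $q$ with $|x-x_0|<\delta_\star$ agrees with $\gamma_p^q$ throughout $[\ualpha,\oalpha]$. I plan to obtain this by applying Proposition \ref{prop:23}(2) to $\gamma_p^q$ with intermediate times $s'\in(0,\ualpha)$ and $t'\in(\oalpha,t_0)$ and a sufficiently small $\varepsilon$, which furnishes a neighbourhood $U_\varepsilon\ni(\gamma_p^q(s'),s')$ inside which every geodesic ending at $q$ must stick to $\gamma_p^q$ on $[\ualpha,\oalpha]$. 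A standard compactness argument using Proposition \ref{prop:18} together with the a.s.\ uniqueness of $\gamma_p^q$ (valid since $p$ and $q$ are fixed) then shows that once $|x-x_0|$ is small enough, the value at time $s'$ of any $\cL$-geodesic from $(x,0)$ to $q$ is forced into $U_\varepsilon$, which yields such a $\delta_\star$.

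To conclude, write $Z$ for the limsup in the statement of the lemma and $\widetilde Z$ for the corresponding limsup with $\wGamma_q$ in place of $\gamma_p^q$. Then $Z$ is $\sigma(\cL,X)$-measurable, while $\widetilde Z\leq 3\beta/2$ a.s.\ by Lemma \ref{lem:5}, and the stability claim forces $Z=\widetilde Z$ on the event $\{|\wGamma_q(0)-x_0|<\delta_\star\}$. If $\PP(Z>3\beta/2)>0$, I will decompose this $\sigma(\cL,X)$-event over the countable family $\{\delta_\star>1/k\}_{k\in\NN}$ to find a fixed $k$ with $\PP(Z>3\beta/2,\,\delta_\star>1/k)>0$; on this event, Lemma \ref{lem:7} (combined with the independence of $X$ from $(\cL,\wcL)$) provides strictly positive conditional probability that $|\wGamma_q(0)-x_0|<1/(2k)<\delta_\star$, producing a positive-probability event on which $\widetilde Z=Z>3\beta/2$, contradicting Lemma \ref{lem:5}. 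The main obstacle I anticipate is making this last conditioning step fully rigorous: because $\delta_\star$ is itself $\cL$-measurable and random while Lemma \ref{lem:7} is formulated for deterministic $\delta$, the countable decomposition into rational thresholds is essential, and some care with measurability of $\delta_\star$ and the conditional form of the density estimate will be required.
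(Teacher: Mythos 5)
Your proposal is correct and mirrors the paper's own argument: both rely on the same three ingredients (stability of $\gamma_p^q$ under small perturbations of the starting point via a.s.\ uniqueness together with Propositions~\ref{prop:18} and~\ref{prop:19}, the bound for $\wGamma_q$ from Lemma~\ref{lem:5}, and the conditional density estimate of Lemma~\ref{lem:7}) and both close by observing that the relevant limsup is $\sigma(\cL,X)$-measurable while $\wGamma_q(0)$ carries independent randomness. Your invocation of Proposition~\ref{prop:23}(2) and the explicit decomposition over $\{\delta_\star>1/k\}$ are slightly more elaborate renditions of the paper's terser statement of the same steps, but they are not a different route.
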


\begin{proof}

 Observe that almost surely, there must exist a random $\delta>0$ for which, for any geodesic $\gamma_{(x,0)}^q$ with $|x-x_0|\leq \delta$, we have $\gamma_{(x,0)}^q(s)=\gamma_p^q(s)$ for all $s\in [\ualpha,\oalpha]$. This follows by the a.s.\ uniqueness of the geodesic $\gamma_p^q$ along with Propositions \ref{prop:18}, \ref{prop:19}. As a result of this observation, it suffices to show that for any fixed $\delta>0$, there almost surely exists an $x$ with $|x-x_0|<\delta$ and a geodesic $\gamma_{(x,0)}^q$ for which
  \begin{equation}
    \label{eq:43}
     \limsup_{n\rightarrow \infty} \frac{N^{n,\beta}_{\ualpha,\oalpha}(\Upsilon_{\mathbf{0}}^{X,K},\gamma_{(x,0)}^q)}{\log \varepsilon_n^{-1}}\leq 3\beta/2.
  \end{equation}
 Now, we note that the geodesic $\gamma_p^q$ is measurable with respect to $\cL$ and the interface $\Upsilon_{\mathbf{0}}^{X,K}$ is measurable with respect to $\sigma(\cL,X)$, where we recall that $X$ is independent of $\sigma(\cL,\wcL)$. Thus, as a consequence, \eqref{eq:43} follows immediately on invoking Lemma \ref{lem:5} and Lemma \ref{lem:7}. 
\end{proof}
The goal now is to get rid of the constant $K$ in the above results. For this purpose, we define the initial condition $\fh_X^\infty(x)=\cB_0^0(x)+2Xx$ for all $x\in \RR$, and define $\Upsilon_{\mathbf{0}}^{X,\infty}$ as the a.s.\ unique interface (see Proposition \ref{prop:16}) emanating from the point $\mathbf{0}$ corresponding to the above initial condition. We now have the following result. 
\begin{lemma}
  \label{lem:8}
    Fix $\oalpha,\ualpha,M,\beta>0$, a point $p=(x_0,0)$ with $x_0\neq 0$, and a point $q=(y_0,t_0)$ with $t_0>\oalpha$. Almost surely, we have
  \begin{equation}
    \label{eq:48}
     \limsup_{n\rightarrow \infty} \frac{N^{n,\beta}_{\ualpha,\oalpha}(\Upsilon_{\mathbf{0}}^{X,\infty},\gamma_p^q)}{\log \varepsilon_n^{-1}}\leq 3\beta/2.
  \end{equation}
\end{lemma}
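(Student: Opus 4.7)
The plan is to deduce Lemma \ref{lem:8} from Lemma \ref{lem:6} via an approximation argument as $K \to \infty$, exploiting that the truncated initial conditions $\fh_X^K$ recover $\fh_X^\infty$ in the large-$K$ limit. I would define the event
\[
E_K := \bigl\{\Upsilon_{\mathbf{0}}^{X,K}(t) = \Upsilon_{\mathbf{0}}^{X,\infty}(t) \text{ for all } t \in [\ualpha, \oalpha]\bigr\},
\]
and reduce the lemma to showing $\PP(E_K) \to 1$ as $K \to \infty$. Indeed, on $E_K$ the two $N^{n,\beta}_{\ualpha,\oalpha}$ quantities in the statement of Lemma \ref{lem:6} and \eqref{eq:48} coincide, so by Lemma \ref{lem:6} the limsup bound holds almost surely on $E_K$; taking $K \to \infty$ then yields $\PP(\text{limsup} \leq 3\beta/2) \geq \lim_K \PP(E_K) = 1$, which is exactly the claim.

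To show $\PP(E_K) \to 1$, the key observation is that $\Upsilon_{\mathbf{0}}^{X,K}(t) = \Upsilon_{\mathbf{0}}^{X,\infty}(t)$ holds whenever the argmaxes defining the one-sided suprema in the competition function at $(\Upsilon_{\mathbf{0}}^{X,\infty}(t), t)$ lie in $[K^{-1}, K]$ and $[-K, -K^{-1}]$ respectively. Indeed, on this event the competition functions associated to $\fh_X^K$ and $\fh_X^\infty$ agree and both vanish at $(\Upsilon_{\mathbf{0}}^{X,\infty}(t), t)$, and by monotonicity of the competition function in the spatial variable together with the a.s.\ uniqueness of both interfaces (Lemma \ref{lem:24} for $\fh_X^K$ since $\fh_X^K(0) = -\infty$, and Proposition \ref{prop:16} for $\fh_X^\infty$ since the Brownian-like maximum of $\fh_X^\infty + B$ on any interval a.s.\ lies strictly in its interior, making $0$ non-polar), the interfaces must coincide at $t$. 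For each fixed $t$, Lemma \ref{lem:19} places these argmaxes strictly on opposite sides of $0$ (they are precisely the starting points of the right-most and left-most geodesics from $(\Upsilon_{\mathbf{0}}^{X,\infty}(t), t)$ to $\fh_X^\infty$), while the parabolic decay of $\cL$ (Proposition \ref{prop:7}) combined with the linear growth of $\fh_X^\infty$ ensures they are a.s.\ finite.

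The upgrade to simultaneous control in $t \in [\ualpha, \oalpha]$ would proceed via a compactness argument based on the precompactness and overlap-sense convergence of geodesics (Propositions \ref{prop:18} and \ref{prop:19}) together with continuity of $\Upsilon_{\mathbf{0}}^{X,\infty}$. The main obstacle lies here: if a sequence $t_n \to t^*$ in $[\ualpha, \oalpha]$ were to have the corresponding right-most geodesic endpoints degenerating to $0$, a subsequential geodesic limit would produce a geodesic from $(\Upsilon_{\mathbf{0}}^{X,\infty}(t^*), t^*)$ to $\fh_X^\infty$ starting exactly at $0$. Ruling out such an intermediate argmax---beyond what Lemma \ref{lem:19} already excludes for the strict left-most and right-most geodesics---appears to require a finer non-degeneracy argument leveraging the Brownian character of $\fh_X^\infty$ near $0$ and the independence of $X$ from $\cL$. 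The corresponding uniform upper bound on the geodesic endpoints, by contrast, follows routinely from parabolic decay together with the boundedness of $\Upsilon_{\mathbf{0}}^{X,\infty}$ on $[\ualpha, \oalpha]$.
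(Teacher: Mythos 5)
Your high-level strategy is the same as the paper's: reduce Lemma~\ref{lem:8} to Lemma~\ref{lem:6} by showing that $\Upsilon_{\mathbf{0}}^{X,K}$ and $\Upsilon_{\mathbf{0}}^{X,\infty}$ a.s.\ agree on $[\ualpha,\oalpha]$ for all large $K$, which is exactly the paper's reduction. The single-$t$ argument you sketch is also correct: Lemma~\ref{lem:19} gives strict separation of the left-most and right-most geodesic starting points from $0$, and parabolic decay gives finiteness. The issue is the step you yourself flag as the ``main obstacle,'' namely the simultaneous control over $t\in[\ualpha,\oalpha]$: the compactness-plus-overlap argument you propose genuinely does not close, because a subsequential geodesic limit could land at $0$ and nothing in Lemma~\ref{lem:19} rules out a third geodesic (strictly between the left-most and right-most) starting exactly at $0$. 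You acknowledge this and gesture at a ``finer non-degeneracy argument,'' so the proof as written is incomplete.

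The paper avoids this entirely via a monotonicity observation rather than compactness. Fix the left-most and right-most geodesics $\underline{\gamma}_\ualpha,\overline{\gamma}_\ualpha$ and $\underline{\gamma}_\oalpha,\overline{\gamma}_\oalpha$ from $(\Upsilon_{\mathbf{0}}^{X,\infty}(\ualpha),\ualpha)$ and $(\Upsilon_{\mathbf{0}}^{X,\infty}(\oalpha),\oalpha)$ to $\fh_X^\infty$; all four start strictly off of $0$ by Lemma~\ref{lem:19}. The key point is that the starting points of the corresponding geodesics from $(\Upsilon_{\mathbf{0}}^{X,\infty}(s),s)$ move \emph{outward monotonically} as $s$ increases (geodesics to a common initial condition, ordered by Lemma~\ref{lem:19} at each fixed time, remain ordered at time $0$ by the merging property of geodesic trees). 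Consequently, for every $s\in[\ualpha,\oalpha]$ these starting points are bounded away from $0$ by $\min(|\underline{\gamma}_\ualpha(0)|,|\overline{\gamma}_\ualpha(0)|)$ and bounded in modulus by $\max(|\underline{\gamma}_\oalpha(0)|,|\overline{\gamma}_\oalpha(0)|)$. Choosing $K$ so that $[K^{-1},K]$ captures this range makes the geodesics to $\fh_X^\infty$ at all intermediate times also geodesics to $\fh_X^K$, so $\cD_s^{X,K}(\Upsilon_{\mathbf{0}}^{X,\infty}(s))=0$ on $[\ualpha,\oalpha]$ and the two interfaces coincide there. This gives the simultaneous control with a deterministic argument conditioned on the endpoints, rather than a limiting argument at interior times, and is the piece your proof is missing.
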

\begin{proof}
  In view of Lemma \ref{lem:6}, it suffices to show that for all $K$ large enough, we have $\Upsilon_{\mathbf{0}}^{X,\infty}(s)=\Upsilon_{\mathbf{0}}^{X,K}(s)$ for all $s\in [\ualpha,\oalpha]$. To see this, consider the left-most and right-most geodesics $\underline{\gamma}_{\ualpha}$ and $\overline{\gamma}_{\ualpha}$ from the point $(\Upsilon_{\mathbf{0}}^{X,\infty}(\ualpha),\ualpha)$ to the initial condition $\fh_X^\infty$. Since $\Upsilon_{\mathbf{0}}^{X,\infty}$ is defined as the interface from $\fh_X^\infty$ emanating from $\0$, we know by Lemma \ref{lem:19} that $\underline{\gamma}_{\ualpha}$ and $\overline{\gamma}_{\ualpha}$ must be almost disjoint and must satisfy $\underline{\gamma}_{\ualpha}(0)<0<\overline{\gamma}_{\ualpha}(0)$. We similarly define $\underline{\gamma}_{\oalpha}$ and $\overline{\gamma}_{\oalpha}$. Now, for any $K$ satisfying $K^{-1}\leq \min(|\underline{\gamma}_{\ualpha}(0)|,|\overline{\gamma}_{\ualpha}(0)|)$ and  $K\geq \max(|\underline{\gamma}_{\oalpha}(0)|,|\overline{\gamma}_{\oalpha}(0)|)$, we note that for all $s\in [\ualpha,\oalpha]$, any right-most geodesic from $(\Upsilon_0^{X,\infty}(s),s)$ to $\fh_X^\infty$ is still a geodesic to the initial condition $\fh_X^K$. Similarly, for all $s\in [\ualpha,\oalpha]$, any left-most geodesic from $(\Upsilon_0^{X,\infty}(s),s)$ to $\fh_X^\infty$ is still a geodesic to the initial condition $\fh_X^K$. Due to this, it is not difficult to see that $\cD_s^{X,K}(\Upsilon_{\mathbf{0}}^{X,\infty}(s))=0$ for all such $s$ and thus $\Upsilon_{\mathbf{0}}^{X,\infty}\lvert_{[\ualpha,\oalpha]}=\Upsilon_{\mathbf{0}}^{X,K}\lvert_{[\ualpha,\oalpha]}$ for all such $K$, and this completes the proof.
\end{proof}
Now, we transition from the interface $\Upsilon_{\0}^{X,\infty}$ to the interface $\Upsilon_{\0}^X$.
\begin{lemma}
  \label{lem:16}
  Fix $\oalpha,\ualpha,M,\beta>0$, a point $p=(x_0,0)$ with $x_0\neq 0$, and a point $q=(y_0,t_0)$ with $t_0>\oalpha$. Almost surely, we have
  \begin{equation}
    \label{eq:72}
     \limsup_{n\rightarrow \infty} \frac{N^{n,\beta}_{\ualpha,\oalpha}(\Upsilon_{\mathbf{0}}^{X},\gamma_p^q)}{\log \varepsilon_n^{-1}}\leq 3\beta/2.
  \end{equation}

\end{lemma}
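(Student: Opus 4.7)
The plan is to reduce Lemma \ref{lem:16} to Lemma \ref{lem:8} via the skew invariance of the directed landscape (Proposition \ref{prop:14}). By Lemma \ref{lem:33}, the $X$-directed interface $\Upsilon_\0^X$ in $\cL$ coincides with the interface $\Upsilon_\0^{\cB_X^0}$ corresponding to the initial condition given by the $X$-directed Busemann function at time $0$.

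I would first pass to the conjugate landscape $\widetilde{\cL} = \cL_{-X}^{\mathrm{sk}}$. Conditional on any realization of $X$, Proposition \ref{prop:14} gives $\widetilde{\cL} \stackrel{d}{=} \cL$, and since this conditional distribution does not depend on $X$, it follows that $\widetilde{\cL}$ is independent of $X$. Under the skew transformation, a path $\phi$ in $\cL$ corresponds to the path $r \mapsto \phi(r) + Xr$ in $\widetilde{\cL}$: this transformation maps $X$-directed downward semi-infinite geodesics of $\cL$ to $0$-directed ones in $\widetilde{\cL}$, and correspondingly sends the $X$-directed interface $\Upsilon_\0^X$ of $\cL$ to the $0$-directed interface $\widetilde{\Upsilon}_\0$ of $\widetilde{\cL}$ via $\widetilde{\Upsilon}_\0(r) = \Upsilon_\0^X(r) + Xr$. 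Similarly, the finite geodesic $\gamma_p^q$ in $\cL$ corresponds to a geodesic $\widetilde{\gamma}$ in $\widetilde{\cL}$ from $(x_0, 0)$ to $(y_0 + X t_0, t_0)$, with $\widetilde{\gamma}(r) = \gamma_p^q(r) + Xr$. The $Xr$ shifts cancel, producing the key pathwise identity $|\Upsilon_\0^X(r) - \gamma_p^q(r)| = |\widetilde{\Upsilon}_\0(r) - \widetilde{\gamma}(r)|$ for all $r$; hence $N^{n,\beta}_{\ualpha,\oalpha}(\Upsilon_\0^X, \gamma_p^q) = N^{n,\beta}_{\ualpha,\oalpha}(\widetilde{\Upsilon}_\0, \widetilde{\gamma})$, and it suffices to establish the limsup bound for the right-hand side in $\widetilde{\cL}$.

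With this reduction in hand, I would apply Lemma \ref{lem:8} to $\widetilde{\cL}$. Since $\widetilde{\cL} \stackrel{d}{=} \cL$ and $X$ is independent of $\widetilde{\cL}$, the random variable $X$ can play the role of the independent uniform variable required by that lemma. Moreover, by Fubini and the independence of $X$ and $\widetilde{\cL}$, for almost every fixed value $x$ of $X$ the bound for the fixed endpoint $(y_0 + xt_0, t_0)$ transfers upon integration to the random endpoint $(y_0 + Xt_0, t_0)$.

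The main obstacle I anticipate is the slight mismatch between our target $\widetilde{\Upsilon}_\0$ (the interface in $\widetilde{\cL}$ for the initial condition $\widetilde{\cB}_0^0$) and the interface $\widetilde{\Upsilon}_\0^{X, \infty}$ (initial condition $\widetilde{\cB}_0^0 + 2Xz$) that Lemma \ref{lem:8} directly controls. To bridge this gap I would either (a) apply Lemma \ref{lem:8} in $\widetilde{\cL}$ with a fresh independent uniform variable $X'$ on a shrinking interval $[-M', M']$ and take $M' \downarrow 0$, using continuity of the interface in its initial condition to pass the limsup bound to the limiting configuration $X' = 0$; or (b) redo the proof of Proposition \ref{lem:1} directly in the new setup, replacing the linear perturbation $2\lambda z$ with an analogous monotonicity property of the Busemann family across directions. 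In either approach the truncation argument from the end of Lemma \ref{lem:8} — showing that $\widetilde{\Upsilon}_\0|_{[\ualpha, \oalpha]}$ only depends on the initial condition on some bounded set $[-K, K]$ — would enter as a preliminary reduction, after which only the joint law of $X$ and the initial condition on this compact set matters.
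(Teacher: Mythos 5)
Your proposal has a genuine gap. The skew-transformation reduction in your first paragraph is itself a correct computation, but it reduces the problem to establishing the limsup bound for the $0$-directed interface $\widetilde{\Upsilon}_{\mathbf{0}}$ paired with a geodesic whose upper endpoint $(y_0+Xt_0,t_0)$ is random. This is essentially the statement of Lemma~\ref{lem:21} (combined with the random-endpoint issue handled in Lemma~\ref{lem:34}), i.e.\ you have skipped ahead to the target of the \emph{next} step of the paper without actually crossing the gap that Lemma~\ref{lem:16} is meant to cross. The mismatch you identify between $\widetilde{\Upsilon}_{\mathbf{0}}$ and $\widetilde{\Upsilon}_{\mathbf{0}}^{X,\infty}$ is precisely the crux, and neither of your proposed fixes resolves it. Fix (a), sending $M'\downarrow 0$, fails for two reasons: an a.s.\ $\limsup$ bound is not a continuous functional of the underlying curve, so it does not pass to the limiting configuration $X'=0$; and, more fundamentally, the mechanism behind Proposition~\ref{lem:1} is the positive density of $X$ (see \eqref{eq:71}), which degenerates entirely as $M'\to 0$ — in that limit you are back to proving the result for a fixed initial condition, which is what you were trying to avoid. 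Fix (b) is not a fix, it is a request to reprove the proposition. There is also a secondary circularity in the Fubini step: Lemma~\ref{lem:8} requires a fixed $q$, and you cannot condition on $X=x$ to freeze the endpoint $(y_0+xt_0,t_0)$ because the interface $\widetilde{\Upsilon}_{\mathbf{0}}^{X,\infty}$ in Lemma~\ref{lem:8} also depends on $X$, so freezing it destroys the density argument.

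The paper's actual proof is far shorter and does not use skew invariance at this stage. By Lemma~\ref{lem:33}, $\Upsilon_{\mathbf{0}}^X$ is the interface for the initial condition $\cB_X^0$. Conditionally on $X$, Proposition~\ref{prop:22} says $\cB_X^0$ is a two-sided Brownian motion with drift $2X$, which is exactly the conditional law of $\fh_X^\infty=\cB_0^0+2X\cdot$; moreover both are measurable with respect to $\cL$ below time $0$ and hence independent of $\cL\lvert_{t>0}$, which determines $\gamma_p^q$. Therefore the joint law of $(\Upsilon_{\mathbf{0}}^X,\gamma_p^q,X)$ equals that of $(\Upsilon_{\mathbf{0}}^{X,\infty},\gamma_p^q,X)$ and the a.s.\ bound in Lemma~\ref{lem:8} transfers verbatim. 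The skew-invariance device that you deploy is deferred in the paper to Lemma~\ref{lem:21}, where it is used to remove the auxiliary variable $X$ after the $\Upsilon_{\mathbf{0}}^X$ bound is already in hand.
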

\begin{proof}
  By Lemma \ref{lem:33}, we note that $\Upsilon_{\mathbf{0}}^{X}$ is the interface starting from $\mathbf{0}$ for the initial condition given by $\cB_X^0$ which has precisely the same law (Proposition \ref{prop:22}) as $\fh_X^\infty$, and $\Upsilon_0^{X,\infty}$ is the interface emanating from $\mathbf{0}$ for $\fh_X^\infty$. The proof is now completed by invoking Lemma \ref{lem:8}.
\end{proof}
Now, we use an approximation argument to go from working with a fixed point $q$ to a more general class of points.
\begin{lemma}
  \label{lem:34}
  Fix $\oalpha,\ualpha,M,\beta>0$ and a point $p=(x_0,0)$ with $x_0\neq 0$. Then almost surely, simultaneously for all points $q=(y,t)\notin \Upsilon_{\0}^X$ with $t>\oalpha$, and all geodesics $\gamma_p^q$, we have
  \begin{equation}
    \label{eq:171}
     \limsup_{n\rightarrow \infty} \frac{N^{n,\beta}_{\ualpha,\oalpha}(\Upsilon_{\mathbf{0}}^{X},\gamma_p^q)}{\log \varepsilon_n^{-1}}\leq 3\beta/2.
  \end{equation}
\end{lemma}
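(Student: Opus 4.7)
The plan is to upgrade Lemma \ref{lem:16}, which treats a single fixed endpoint $q$, to the desired simultaneous statement by a standard countable-approximation argument based on the strong approximation property of geodesics in Proposition \ref{prop:23}(1). Concretely, I would fix a countable dense set $\mathcal{Q}\subseteq \{(y_0,t_0)\in \RR^2: t_0>\oalpha\}$ of rational points. Applying Lemma \ref{lem:16} at each $q'\in \mathcal{Q}$ and taking the (countable) intersection of the corresponding almost-sure events gives a single almost-sure event $\Omega_0$ on which
\begin{equation}
\limsup_{n\to\infty}\frac{\log N^{n,\beta}_{\ualpha,\oalpha}(\Upsilon_{\mathbf{0}}^X,\gamma_p^{q'})}{\log\varepsilon_n^{-1}}\leq 3\beta/2
\end{equation}
for every $q'\in \mathcal{Q}$ and every geodesic $\gamma_p^{q'}$.

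Next, working on $\Omega_0$ intersected with the almost-sure event of Proposition \ref{prop:23}(1), I would argue that any $q=(y,t)$ with $t>\oalpha$ and any geodesic $\gamma_p^q$ can be locally replaced on $[\ualpha,\oalpha]$ by a geodesic to a rational endpoint. Choose an intermediate time $t'\in (\oalpha,t)$. By Proposition \ref{prop:23}(1), for all sufficiently small $\varepsilon>0$ there exists a neighbourhood $V_\varepsilon\ni (\gamma_p^q(t'),t')$ with the property that every geodesic $\gamma_p^{q'}$ from $p$ to any $q'\in V_\varepsilon$ satisfies $\gamma_p^{q'}\lvert_{[0,t'-\varepsilon]}=\gamma_p^q\lvert_{[0,t'-\varepsilon]}$. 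Take $\varepsilon<t'-\oalpha$ and pick $q'\in V_\varepsilon\cap \mathcal{Q}$, which is possible since $\mathcal{Q}$ is dense. Then every geodesic from $p$ to $q'$ agrees with $\gamma_p^q$ on an interval containing $[\ualpha,\oalpha]$.

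In particular, for such a $q'$ there is a geodesic $\gamma_p^{q'}$ with $\gamma_p^{q'}(s)=\gamma_p^q(s)$ for all $s\in [\ualpha,\oalpha]$, so $N^{n,\beta}_{\ualpha,\oalpha}(\Upsilon_{\mathbf{0}}^X,\gamma_p^q)=N^{n,\beta}_{\ualpha,\oalpha}(\Upsilon_{\mathbf{0}}^X,\gamma_p^{q'})$ for every $n$, and the bound for $q'$ transfers verbatim to $q$. This yields the conclusion simultaneously for all $q$ with $t>\oalpha$ and all geodesics $\gamma_p^q$ on $\Omega_0$. The argument is entirely soft and I do not foresee any substantive obstacle; the only technical point is checking that Proposition \ref{prop:23}(1) is strong enough to replace \emph{every} geodesic $\gamma_p^q$ (not just a distinguished one) by a geodesic to the rational approximant, which is exactly what the ``for all geodesics $\gamma_p^{q'}$'' clause in that proposition provides. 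Note that the hypothesis $q\notin \Upsilon_{\mathbf{0}}^X$ plays no role in the approximation above; it is a harmless assumption included to streamline the later passage from this Minkowski-type bound to the Hausdorff-dimension statement of Theorem \ref{thm:3}.
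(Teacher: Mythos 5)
Your proposal is correct and follows essentially the same route as the paper's proof: approximate the endpoint by a rational point via Proposition~\ref{prop:23}(1) and apply Lemma~\ref{lem:16}, with the union of almost-sure events over a countable set of rational approximants taken implicitly. The only substantive difference is that you make the countable-intersection step and the choice of the intermediate time $t'\in(\oalpha,t)$ explicit, whereas the paper compresses both.

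Your side-remark about the redundancy of the hypothesis $q\notin \Upsilon_{\0}^X$ is accurate and slightly sharper than the paper's phrasing. The paper's proof does invoke $q\notin \Upsilon_{\0}^X$ (together with $x_0\neq 0$) to extract a $\delta>0$ with $\gamma_p^q(s)\neq \Upsilon_{\0}^X(s)$ on $[0,\delta]\cup[t-\delta,t]$ before applying Proposition~\ref{prop:23}(1), but as you observe this is not actually needed here: since the count $N^{n,\beta}_{\ualpha,\oalpha}$ only samples the geodesic on the fixed compact interval $[\ualpha,\oalpha]$ and $t>\oalpha$, any $\delta<t-\oalpha$ suffices, independently of whether $q$ lies on the interface. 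The hypothesis is what propagates into Lemmas~\ref{lem:21}, \ref{lem:22} and becomes genuinely load-bearing only in the proof of Proposition~\ref{lem:9}, where one truncates the boundary intervals near $t'$ and $t$ using exactly this disjointness; the paper's proof of Lemma~\ref{lem:34} simply mirrors that later argument rather than being the minimal one.
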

\begin{proof}

  Since $p,q\notin \Upsilon^X_{\mathbf{0}}$ almost surely and since $\gamma_p^q$ and $\Upsilon^X_{\mathbf{0}}$ are continuous, we can find a $\delta>0$ such that $\gamma_p^q(s)\neq \Upsilon^X_{\mathbf{0}}(s)$ for all $s\in [0,\delta]\cup [t-\delta,t]$. We can now choose (see (1) in Proposition \ref{prop:23}) a rational point $\widetilde q$ close to $q$ such that $\gamma_{p}^{\widetilde q}(s)=\gamma_p^q(s)$ for all $s\in [0,t-\delta]$, and as a result, the quantity on the left hand side in \eqref{eq:171} does not change if we replace $q$ by $\widetilde q$. Using the above observation and applying Lemma \ref{lem:16} now completes the proof.
\end{proof}
  By a skew-invariance argument, we can now get rid of the extra randomness coming from $X$.
  \begin{lemma}
    \label{lem:21}
    Fix $\oalpha,\ualpha,\beta>0$ and a point $p=(x_0,0)$ with $x_0\neq 0$. Then almost surely, for all points $q=(y,t)\notin \Upsilon_{\0}$ with $t>\oalpha$, and all geodesics $\gamma_p^q$, we have
    \begin{equation}
      \label{eq:86}
           \limsup_{n\rightarrow \infty} \frac{N^{n,\beta}_{\ualpha,\oalpha}(\Upsilon_{\mathbf{0}},\gamma_p^q)}{\log \varepsilon_n^{-1}}\leq 3\beta/2.
    \end{equation}
  \end{lemma}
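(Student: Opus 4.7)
The plan is to deduce Lemma~\ref{lem:21} from Lemma~\ref{lem:34} via skew invariance. Fix $M = 1$ and let $X \sim \mathrm{Unif}([-M, M])$ be independent of $\sigma(\cL, \wcL)$; set $\cL^* := \cL^{\mathrm{sk}}_X$ and $\wcL^* := \wcL^{\mathrm{sk}}_X$. By Proposition~\ref{prop:14}, applying the deterministic skew map with a fixed $\theta$ to both landscapes jointly gives a copy of $(\cL, \wcL)$, so the conditional law of $(\cL^*, \wcL^*)$ given $X$ does not depend on $X$; thus $(\cL^*, \wcL^*) \stackrel{d}{=} (\cL, \wcL)$ and $X$ is independent of $(\cL^*, \wcL^*)$. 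These are exactly the hypotheses required to apply Lemma~\ref{lem:34} inside the landscape $\cL^*$ with the same extra randomness $X$, which yields: almost surely, simultaneously for every $q = (y, t) \notin \Upsilon_\0^X(\cL^*)$ with $t > \oalpha$ and every $\cL^*$-geodesic $\gamma_p^q$,
\[ \limsup_n \frac{N^{n,\beta}_{\ualpha, \oalpha}(\Upsilon_\0^X(\cL^*), \gamma_p^q(\cL^*))}{\log \varepsilon_n^{-1}} \leq \frac{3\beta}{2}. \]

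Next I would translate this statement back to $\cL$ through the pathwise bijection $\pi \mapsto \widetilde\pi$ given by $\widetilde\pi(r) := \pi(r) + X r$. The explicit formula $\cL^{\mathrm{sk}}_X(x, s; y, t) = \cL(x + X s, s; y + X t, t) + X^2(t - s) + 2 X (y - x)$ shows that lengths of paths in $\cL^*$ and $\cL$ differ only by a deterministic term depending solely on the endpoints, so $\cL^*$-geodesics are carried bijectively to $\cL$-geodesics. A slope-shift computation combined with the duality inherent in the definition of $\cI^\theta_\uparrow$ identifies the image of the $X$-directed upward interface $\Upsilon_\0^X(\cL^*)$ emanating from $\0$ as the $0$-directed upward interface in $\cL$ emanating from $\0$, which by uniqueness equals $\Upsilon_\0(\cL)$. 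Likewise, a finite $\cL^*$-geodesic from $p = (x_0, 0)$ to $q = (y, t)$ is sent to the $\cL$-geodesic from $p$ (unchanged since $X \cdot 0 = 0$) to $q' := (y + X t, t)$.

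Because the transformation is a time-dependent horizontal shift, spatial distances are preserved on every time slice, so $N^{n,\beta}_{\ualpha, \oalpha}(\Upsilon_\0^X(\cL^*), \gamma_p^q(\cL^*)) = N^{n,\beta}_{\ualpha, \oalpha}(\Upsilon_\0(\cL), \gamma_p^{q'}(\cL))$; moreover $q \mapsto q'$ is a bijection of each horizontal line with $q \notin \Upsilon_\0^X(\cL^*) \Leftrightarrow q' \notin \Upsilon_\0(\cL)$. Hence, as $q$ ranges over the admissible set of Lemma~\ref{lem:34}, the image $q'$ ranges over exactly the admissible set of Lemma~\ref{lem:21}. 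Since the translated conclusion depends only on $\cL$, a Fubini argument eliminates the auxiliary variables $X$ and $\wcL$ and gives the desired statement. The only real subtlety in this plan is matching the direction conventions correctly to identify the image of $\Upsilon_\0^X(\cL^*)$ as $\Upsilon_\0(\cL)$; I would verify this by combining the skew-induced slope shift with Proposition~\ref{prop:11} and the duality between $\cI^\theta_\uparrow$ and $\cT^\theta_\downarrow$. Otherwise the argument is pure bookkeeping built on top of Lemma~\ref{lem:34}.
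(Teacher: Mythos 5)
Your proposal is correct and follows essentially the same route as the paper: both proofs hinge on the skew invariance, the $\pi_X$-induced correspondence between $\Upsilon_\0^X$ in one landscape and $\Upsilon_\0^0$ in the other, the invariance of $N^{n,\beta}$ under a time-dependent horizontal shear, and the independence of $X$ from the relevant landscapes; you simply run the transfer in the opposite direction (apply Lemma~\ref{lem:34} in $\cL^*=\cL_X^{\mathrm{sk}}$ and map forward via $\pi_X$, rather than applying it in $\cL$ and mapping via $\pi_X^{-1}$ before invoking $\cL_X^{\mathrm{sk}}\stackrel{d}{=}\cL$). Your direction bookkeeping $\pi_X(\Upsilon_\0^X(\cL^*))=\Upsilon_\0(\cL)$ is the correct one — indeed, an interface $\Upsilon_\0^\theta$ drifts with asymptotic slope $-\theta$, so $\pi_X$ lowers the interface's superscript by $X$ just as it lowers the geodesic direction by $X$ — and you were right to flag this as the one real subtlety, which the paper delegates to \cite[Lemma 7]{Bha23}.
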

  \begin{proof}
    In this proof, we shall use the skew invariance of the directed landscape from Proposition \ref{prop:14}, and we recall the landscape $\cL_\theta^{\mathrm{sk}}$ defined therein for any fixed $\theta\in \RR$. Now, using that $X$ is independent of $\cL$, we also know that $\cL^{\mathrm{sk}}_{X}\stackrel{d}{=}\cL$. Locally, we now use $\Upsilon_{\mathbf{0}}^{0,\mathrm{sk}}$ to denote the $0$-directed interface emanating from $\mathbf{0}$ for $\cL^{\mathrm{sk}}_{X}$. Defining the function $\pi_\theta(x,s)=(x+\theta s,s)$, it can be seen (see \cite[Lemma 7]{Bha23}) that we almost surely have the equality $\pi_X^{-1}(\Upsilon_{\mathbf{0}}^X)=\Upsilon_{\mathbf{0}}^{0,\mathrm{sk}}$ and that for any $\cL$-geodesic $\gamma_p^q$, the path
    \begin{equation}
      \label{eq:173}
      \gamma_{\pi_X^{-1}(p)}^{\pi_X^{-1}(q),\mathrm{sk}}\coloneqq \pi_X^{-1}(\gamma_p^q)
    \end{equation}
    is an $\cL_X^{\mathrm{sk}}$-geodesic between the points $\pi_X^{-1}(p)$ and $\pi_X^{-1}(q)$ and vice-versa. Now, Lemma \ref{lem:21} along with the above yields that almost surely, for all points $p,q$ as in the statement of the lemma, we have
    \begin{equation}
      \label{eq:172}
     \limsup_{n\rightarrow \infty} \frac{N^{n,\beta}_{\ualpha,\oalpha}(\Upsilon_{\mathbf{0}}^{0,\mathrm{sk}},\gamma_{\pi_X^{-1}(p)}^{\pi_X^{-1}(q),\mathrm{sk}})}{\log \varepsilon_n^{-1}}=\limsup_{n\rightarrow \infty} \frac{N^{n,\beta}_{\ualpha,\oalpha}(\Upsilon_{\mathbf{0}}^{X},\gamma_p^q)}{\log \varepsilon_n^{-1}}\leq  3\beta/2.
    \end{equation}
Now, we note that $\pi_X^{-1}(p)=p$ and that $\pi_X^{-1}(q)\notin \Upsilon^{0,\mathrm{sk}}_{\0}$ if and only if $q\notin \Upsilon_{\0}^X$. Finally, since $\cL_\theta^{\mathrm{sk}}\stackrel{d}{=} \cL$ (Proposition \ref{prop:14}), the proof is complete. %
  \end{proof}

  By taking a union over rational $\ualpha<\oalpha$ and invoking the translation invariance of the directed landscape (Proposition \ref{prop:14}), the above immediately implies the following result.
  \begin{lemma}
    \label{lem:22}

    Almost surely, simultaneously for all rational points $w=(x,t')$, all rational points $p=(y',t')\neq w$ and all points $q=(y,t)\notin \Upsilon_{\0}$ with $t>t'$, and all $s',s$ satisfying $t'<s'<s<t$, we have for all geodesics $\gamma_p^q$,
    \begin{equation}
      \label{eq:90}
       \limsup_{n\rightarrow \infty} \frac{N^{n,\beta}_{s',s}(\Upsilon_{w},\gamma_p^q)}{\log \varepsilon_n^{-1}}\leq 3\beta/2.
     \end{equation}
  \end{lemma}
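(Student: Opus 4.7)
\textbf{Proof proposal for Lemma \ref{lem:22}.} The plan is to upgrade Lemma \ref{lem:21} via a countable union argument using the translation invariance of $\cL$ (Proposition \ref{prop:14}). First, fix a rational point $w=(x,t')$ and a rational point $p=(y',t')\neq w$. Let $\tau_w(x',s')=(x'-x,s'-t')$; by translation invariance the random field $\widehat \cL\coloneqq \cL\circ(\tau_w^{-1}\otimes \tau_w^{-1})$ has the same law as $\cL$. Under $\tau_w$, the interface $\Upsilon_w$ of $\cL$ corresponds to the interface emanating from $\0$ for $\widehat \cL$, and for any geodesic $\gamma_p^q$ of $\cL$, the image $\tau_w(\gamma_p^q)$ is a geodesic for $\widehat \cL$ from $\tau_w(p)=(y'-x,0)$ to $\tau_w(q)=(y-x,t-t')$. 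Note that $y'-x\neq 0$ because $p\neq w$. Thus Lemma \ref{lem:21} applied to $\widehat \cL$, with the roles of $p$, $\ualpha$, $\oalpha$ played by $(y'-x,0)$ and arbitrary rationals $\widehat\ualpha<\widehat\oalpha>0$, yields that almost surely, for all $q\notin \Upsilon_w$ with $t>t'+\widehat\oalpha$ and all geodesics $\gamma_p^q$,
\begin{equation*}
\limsup_{n\to\infty}\frac{N^{n,\beta}_{t'+\widehat\ualpha,\,t'+\widehat\oalpha}(\Upsilon_w,\gamma_p^q)}{\log \varepsilon_n^{-1}}\leq 3\beta/2.
\end{equation*}

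Second, intersect the above almost sure events over the countable collection of all rational quadruples $(w,p,\widehat\ualpha,\widehat\oalpha)$ with $w,p$ as above and $0<\widehat\ualpha<\widehat\oalpha\in \QQ$; the intersection is still almost sure. On this intersection, given any $s',s$ with $t'<s'<s<t$, pick rationals $\widehat\ualpha,\widehat\oalpha$ with $0<\widehat\ualpha<s'-t'\leq s-t'<\widehat\oalpha<t-t'$. Since $[\![s'\varepsilon_n^{-1},s\varepsilon_n^{-1}]\!]\subseteq [\![(t'+\widehat\ualpha)\varepsilon_n^{-1},(t'+\widehat\oalpha)\varepsilon_n^{-1}]\!]$ for all $n$, the monotonicity
\begin{equation*}
N^{n,\beta}_{s',s}(\Upsilon_w,\gamma_p^q)\leq N^{n,\beta}_{t'+\widehat\ualpha,\,t'+\widehat\oalpha}(\Upsilon_w,\gamma_p^q)
\end{equation*}
is immediate from the definition \eqref{eq:84}. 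Taking $\limsup_n$ and dividing by $\log\varepsilon_n^{-1}$ gives the desired bound.

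There is no essential obstacle here; the only point requiring mild care is ensuring that the almost sure event coming from each application of Lemma \ref{lem:21} can be transported back to the original landscape via the measure-preserving bijection induced by $\tau_w$, and that the enumeration of parameters stays countable. Both are immediate: $\tau_w$ is a deterministic bijection between events, and rational quadruples form a countable set. (I note that the hypothesis in the lemma statement reads $q\notin \Upsilon_{\0}$, but the natural condition coming from the proof is $q\notin \Upsilon_w$, since in the shifted landscape the interface from $\0$ pulls back to $\Upsilon_w$; this mild inconsistency does not affect the argument.)
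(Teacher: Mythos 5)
Your proposal matches the paper's own one-sentence proof exactly: the paper derives Lemma \ref{lem:22} from Lemma \ref{lem:21} by a countable intersection over rational parameters together with the translation invariance of $\cL$ (Proposition \ref{prop:14}), which is precisely what you do. Your remark that the hypothesis on $q$ should naturally read $q\notin\Upsilon_w$ rather than $q\notin\Upsilon_{\0}$ is correct and worth recording; in the paper's later application (the proof of Proposition \ref{lem:9}) the point $\widetilde q$ is a fixed rational, so both conditions hold a.s.\ and the discrepancy is harmless there. One shared subtlety, present in both your argument and the paper's: $N^{n,\beta}$ evaluates the two paths at the mesh points $i\varepsilon_n$, but after translating by the rational time $t'$, the pulled-back count from Lemma \ref{lem:21} evaluates $\Upsilon_w$ and $\gamma_p^q$ at the shifted points $i\varepsilon_n + t'$, which are of the form $j\varepsilon_n$ only when $t'$ is a dyadic rational. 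This is easily repaired—either restrict $w$ to dyadic-rational time coordinates (which suffices for the approximation in Proposition \ref{lem:9}), or use the local $2/3-$ H\"older continuity of $\Upsilon_w$ and $\gamma_p^q$ to absorb the sub-mesh offset into a slight change of $\beta$—but both proofs pass over it without comment.
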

We are finally ready to complete the proof of Proposition \ref{lem:9}.
\begin{proof}[Proof of Proposition \ref{lem:9}]
  \begin{figure}
    \centering
    \includegraphics[width=0.4\linewidth]{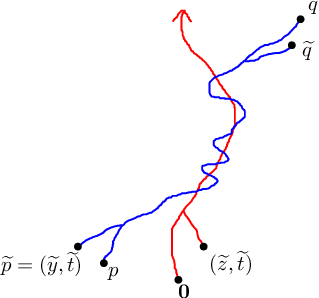}
    \caption{Proof of Proposition \ref{lem:9}; Here, the red curves represent the interfaces $\Upsilon_{\0}$ and $\Upsilon_{(\widetilde{z},\widetilde{t})}$ while the blue curves represent the geodesics $\gamma_p^q$ and $\gamma_{\widetilde{p}}^{\widetilde{q}}$. The rational points $\widetilde{p}$ and $(\widetilde{z},\widetilde{t})$ are chosen on the same time line with the property that the set of times where $\gamma_{\widetilde{p}}^{\widetilde{q}}$ and $\Upsilon_{(\widetilde{z},\widetilde{t})}$ intersect is equal to the set of times that $\gamma_{p}^{q}$ and $\Upsilon_{\0}$ intersect. This allows us to invoke Lemma \ref{lem:22} and complete the proof.}
        \label{fig:sameline1}
  \end{figure}
 A summary of the proof can be found in Figure \ref{fig:sameline1}. Given $p=(y',t'),q=(y,t)\notin \Upsilon_{\0}$ with $0<t'<t$ and a geodesic $\gamma_p^q$, we can first find a rational $\delta>0$ such that $\gamma_p^q(r)\neq \Upsilon_{\0}(r)$ for all $r\in [t',t'+\delta]\cup [t-\delta,t]$. Further, we can use (2) in Proposition \ref{prop:23} to find rational points $\widetilde{p}=(\widetilde{y},\widetilde{t}), \widetilde{q}$ such that
  \begin{equation}
    \label{eq:175}
    \gamma_{\widetilde{p}}^{\widetilde{q}}\lvert_{[t'+\delta,t-\delta]}=\gamma_{p}^q\lvert_{[t'+\delta,t-\delta]}.
  \end{equation}
 Now, by (4) in Proposition \ref{prop:23}, we choose a rational $\widetilde{z}$ close enough to $\Upsilon_{\0}(\widetilde{t})$ such that we have
  \begin{equation}
    \label{eq:174}
    \Upsilon_{(\widetilde{z},\widetilde{t})}\lvert_{[t'+\delta,\infty)}= \Upsilon_{\0}\lvert_{[t'+\delta,\infty)}.
  \end{equation}

  Now, for any choice of the rationals $\delta,\widetilde{y},\widetilde{t}$ satisfying the above, and for all $n$ large enough such that
  \begin{equation}
    \label{eq:118}
    \varepsilon_n^{2/3-\beta}<\inf_{t\in[t',t'+\delta]\cup [t-\delta,t]}|\gamma_p^q(t)- \Upsilon_{\0}(t)|,
  \end{equation}
  we have
  \begin{equation}
    \label{eq:117}
   N^{n,\beta}_{t',t}(\Upsilon_{\0},\gamma_{ p}^{q})= N^{n,\beta}_{t'+\delta,t-\delta}(\Upsilon_{(\widetilde z, \widetilde t)},\gamma_{\widetilde{p}}^{\widetilde{q}}),
  \end{equation}
 and using this along with Lemma \ref{lem:22} completes the proof.

\end{proof}
\section{Constructing good paths which intersect $\Upsilon_{\0}$ finitely often}
\label{sec:good-paths}
The goal of this section is to prove the following result.
\begin{proposition}
  \label{prop:6}
  Fix points $p=(x_0,s_0)$ and $q=(y_0,t_0)$ with $0<s_0<t_0$. Then almost surely, there exists a sequence of paths $\gamma_n$ from $p$ to $q$ such that we have the following:
  \begin{enumerate}
  \item For all $n$, the times $s$ for which $\gamma_n(s)=\Upsilon_{\0}(s)$ form a finite set.
  \item We have the convergence $\ell(\gamma_n;\cL)\rightarrow \ell(\gamma_p^q;\cL)$ as $n\rightarrow \infty$.
  \end{enumerate}
\end{proposition}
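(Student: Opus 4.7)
The plan is to implement the local modification strategy outlined in Section \ref{sec:outline}, with Proposition \ref{prop:10} as the key ingredient controlling the total length defect. For each $n$, partition $[s_0,t_0]$ into the dyadic intervals $I_{n,i}=[i\varepsilon_n,(i+1)\varepsilon_n]$ and call $I_{n,i}$ \emph{bad} if it contains a time $s$ with $\gamma_p^q(s)=\Upsilon_{\0}(s)$ and \emph{good} otherwise. Since $p,q$ are fixed, we almost surely have $p,q\notin \Upsilon_{\0}$, so Proposition \ref{prop:10} guarantees that the number of bad intervals is $\varepsilon_n^{-o(1)}$. The path $\gamma_n$ will coincide with $\gamma_p^q$ outside the union of bad intervals.

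For each bad interval $I=I_{n,i}$ with $r=i\varepsilon_n$, set $\underline{s}_I=\inf\{s\in I:\gamma_p^q(s)=\Upsilon_{\0}(s)\}$ and $\overline{s}_I=\sup\{s\in I:\gamma_p^q(s)=\Upsilon_{\0}(s)\}$, and let $*_I\in\{\tL,\tR\}$ denote the side of $\Upsilon_{\0}$ containing $(\gamma_p^q(r),r)$ (with trivial adjustments if $r=\underline{s}_I$). By \eqref{eq:88}, the semi-infinite geodesic $\pi^{*_I}:=\underline{\Gamma}_{(\Upsilon_{\0}(\overline{s}_I),\overline{s}_I)}$ when $*_I=\tL$, or its right-most counterpart when $*_I=\tR$, has interior contained in $\cS_{\Upsilon_{\0}}^{*_I}$ on the time window $[0,\overline{s}_I)$. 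I would then replace $\gamma_p^q|_{[r,\overline{s}_I]}$ by the concatenation of a finite $\cL$-geodesic from $(\gamma_p^q(r),r)$ to $(\pi^{*_I}(\underline{s}_I),\underline{s}_I)$ on $[r,\underline{s}_I]$, followed by $\pi^{*_I}|_{[\underline{s}_I,\overline{s}_I]}$ on $[\underline{s}_I,\overline{s}_I]$.

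For the length comparison, note that by Propositions \ref{prop:11} and \ref{prop:17} the paths $\gamma_p^q$, $\Upsilon_{\0}$, and each $\pi^{*_I}$ are uniformly $2/3{-}$ H\"older on $[s_0,t_0]$; combining this with the equalities $\gamma_p^q(\overline{s}_I)=\Upsilon_{\0}(\overline{s}_I)=\pi^{*_I}(\overline{s}_I)$ gives $|\gamma_p^q(r)-\pi^{*_I}(\underline{s}_I)|\leq \varepsilon_n^{2/3-o(1)}$ via the triangle inequality. Applying the uniform bound of Proposition \ref{prop:7} to each of the four landscape values in the single-interval defect
\begin{equation*}
\bigl|\cL(\gamma_p^q(r),r;\pi^{*_I}(\underline{s}_I),\underline{s}_I)-\cL(\gamma_p^q(r),r;\gamma_p^q(\underline{s}_I),\underline{s}_I)\bigr|+\bigl|\ell(\pi^{*_I}|_{[\underline{s}_I,\overline{s}_I]};\cL)-\cL(\gamma_p^q(\underline{s}_I),\underline{s}_I;\gamma_p^q(\overline{s}_I),\overline{s}_I)\bigr|,
\end{equation*}
the parabolic contribution $(\varepsilon_n^{2/3-o(1)})^2/\varepsilon_n$ and the noise contribution $\varepsilon_n^{1/3-o(1)}$ combine to give a per-interval bound of $O(\varepsilon_n^{1/3-o(1)})$. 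Summing over the $\varepsilon_n^{-o(1)}$ bad intervals yields a total defect of $\varepsilon_n^{1/3-o(1)}\to 0$; together with the trivial upper bound $\ell(\gamma_n;\cL)\leq \ell(\gamma_p^q;\cL)=\cL(p;q)$, this gives (2).

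The main obstacle is condition (1). The segment $\pi^{*_I}|_{[\underline{s}_I,\overline{s}_I]}$ meets $\Upsilon_{\0}$ only at its top endpoint by \eqref{eq:88}, but the finite $\cL$-geodesic used on $[r,\underline{s}_I]$, even though both its endpoints lie in the closed side $\overline{\cS}_{\Upsilon_{\0}}^{*_I}$, could \emph{a priori} cross $\Upsilon_{\0}$ along the way, possibly on an infinite set. To rule this out I would insert a small temporal buffer, shifting the transition time from $\underline{s}_I$ to a carefully chosen $\underline{s}'_I\in(r,\underline{s}_I)$ at which $(\pi^{*_I}(\underline{s}'_I),\underline{s}'_I)$ is strictly interior to $\cS_{\Upsilon_{\0}}^{*_I}$ by more than the transversal fluctuation granted by the random variable $S$ of Proposition \ref{prop:17} on a window of length $\underline{s}'_I-r$, and then append the extra segment $\pi^{*_I}|_{[\underline{s}'_I,\underline{s}_I]}$ before handing off to $\pi^{*_I}|_{[\underline{s}_I,\overline{s}_I]}$. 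Verifying that this buffer can be chosen while keeping the length defect within $O(\varepsilon_n^{1/3-o(1)})$ per interval, and cleanly treating boundary cases (when $r=\underline{s}_I$, or when $\gamma_p^q$ enters and exits $I$ on different sides of $\Upsilon_{\0}$) are the most delicate technical pieces of the construction.
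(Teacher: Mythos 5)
Your high-level plan matches the paper's — a local modification at bad dyadic intervals, with the number of bad intervals controlled by Proposition \ref{prop:10} and the length defect controlled via Propositions \ref{prop:7} and \ref{prop:17}. However, there are two intertwined gaps, both stemming from the same structural choice: you modify each bad interval $I=I_{n,i}$ individually, starting the reconnecting geodesic at $r=i\varepsilon_n$, rather than grouping consecutive bad intervals and starting the reconnection from the left endpoint of the preceding \emph{good} interval.

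First, the quantity $\underline{s}_I-r$ can be arbitrarily small, and can even equal zero if $I_{n,i-1}$ is also bad with an intersection time at $i\varepsilon_n$; you dismiss this as a ``trivial adjustment,'' but it is not. When $\underline{s}_I-r$ is tiny, the side $*_I$ may be undefined, and more seriously your per-interval length estimate fails. The reconnecting geodesic's endpoint $\pi^{*_I}(\underline{s}_I)$ is displaced from $\Upsilon_{\0}(\underline{s}_I)=\gamma_p^q(\underline{s}_I)$ by an amount controlled only by the H\"older modulus of $\pi^{*_I}$ over the interval $[\underline{s}_I,\overline{s}_I]$, hence of order $\varepsilon_n^{2/3-o(1)}$ — \emph{not} of order $(\underline{s}_I-r)^{2/3-o(1)}$. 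The parabolic contribution to the defect is then of order $\varepsilon_n^{4/3-o(1)}/(\underline{s}_I-r)$, which is uncontrolled. Your write-up implicitly substitutes $\varepsilon_n$ for $\underline{s}_I-r$. The paper avoids this by grouping consecutive bad intervals into blocks $J_j$ and starting the reconnection at time $(j-1)\varepsilon_n$, where $I_{j-1}$ is good; this guarantees both that $\gamma_p^q((j-1)\varepsilon_n)\ne\Upsilon_{\0}((j-1)\varepsilon_n)$ (so $*_j$ is well-defined) and that the reconnection window has length at least $\varepsilon_n$.

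Second, for condition (1) you correctly identify the danger that the reconnecting finite geodesic may cross $\Upsilon_{\0}$, but the buffer idea is both underdeveloped and does not close this gap: moving both endpoints strictly into the open side does not by itself prevent the geodesic between them from crossing a wildly oscillating $\Upsilon_{\0}$, and shrinking the reconnection window to $[r,\underline{s}'_I]\subsetneq[r,\underline{s}_I]$ only worsens the parabolic blowup just described. The paper's mechanism is cleaner and uses no buffer: because the reconnection starts at time $(j-1)\varepsilon_n$ and $\gamma_p^q\lvert_{[(j-1)\varepsilon_n,\underline{s}_j)}$ lies strictly in $\cS_{\Upsilon_{\0}}^{*_j}$, while the reconnecting geodesic $\psi_j^{*_j}$ shares its starting point with $\gamma_p^q$ and has its endpoint $(\pi_j^{*_j}(\underline{s}_j),\underline{s}_j)$ on the $*_j$ side of $\gamma_p^q(\underline{s}_j)$, geodesic ordering (using the a.s.\ uniqueness of $\gamma_p^q$) forces $\psi_j^{*_j}$ to stay on the $*_j$ side of $\gamma_p^q$, hence inside $\cS_{\Upsilon_{\0}}^{*_j}$. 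Once you reorganize your construction to group consecutive bad intervals and always start from the preceding good interval, both issues disappear simultaneously and the buffer is unnecessary.
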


The proof of the above is technically complicated, and involves a local modification argument as outlined in Section \ref{sec:outline}. Before beginning the construction of the paths $\gamma_n$ appearing in Proposition \ref{prop:6}, we first present a technical uniform estimate on the transversal fluctuation and lengths of semi-infinite geodesics (see condition (1) in Section \ref{sec:outline}), and this will be important in the proof of Proposition \ref{prop:6}. We note that in this estimate and also later in this section, we shall often use $C$s, possibly with subscripts, to denote \textit{random constants}, by which we mean positive random variables measurable with respect to $\cL$-- the crucial point being that these variables will have no dependence on $n$.

\begin{lemma}
  \label{lem:25}
  Fix $M>0,\nu>0$ and consider the set $K=[-M,M]^2\subseteq \RR^2$. Almost surely, for a random constant $C$, we have for all $(y,t)\in K$ and all geodesics $\Gamma_{(y,t)}$,
  \begin{align}
    \label{eq:128}
    \sup_{[s,s']\subseteq  [-2M,t]}|\Gamma_{(y,t)}(s)-\Gamma_{(y,t)}(s')|&\leq C|s'-s|^{2/3-\nu}\\
    \label{eq:1281}
    \sup_{[s,s']\subseteq [-2M,t]}|\ell(\Gamma_{(y,t)}\lvert_{[s,s']};\cL)|&\leq C|s'-s|^{1/3-\nu}.
    \end{align}
\end{lemma}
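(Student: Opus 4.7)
The strategy is to reduce both \eqref{eq:128} and \eqref{eq:1281} to statements about finite geodesics with endpoints in a compact region, by first obtaining a uniform transversal fluctuation bound for the semi-infinite geodesics $\Gamma_{(y,t)}$ with $(y,t) \in K$, and then invoking Propositions \ref{prop:17} and \ref{prop:7} respectively. The preparatory step is to establish the existence of an $\cL$-measurable random constant $C_0$ such that almost surely, for all $(y,t) \in K$, all geodesics $\Gamma_{(y,t)}$, and all $s \in [-2M-1, t]$, we have $|\Gamma_{(y,t)}(s)| \leq C_0$. To prove this, fix $R > 2M + 2$ and consider the reference geodesics $\underline{\Gamma}_{(-R, R)}$ and $\overline{\Gamma}_{(R, R)}$. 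By Proposition \ref{lem:12} with $\theta = 0$, each of these has almost surely bounded transversal fluctuation on $[-2M-1, R]$. Using the monotonicity of left-most and right-most geodesics in their starting point, the family $\{\Gamma_{(x, R)} : x \in [-R, R]\}$ is sandwiched between these two references on $[-2M-1, R]$. To handle starting points $(y, t) \in K$ not on the time line $R$, I would invoke the a.s.\ coalescence of $0$-directed semi-infinite geodesics (since $0 \notin \Xi_\downarrow$ almost surely, as $\Xi_\downarrow$ is countable) together with planarity: every such $\Gamma_{(y,t)}$ must eventually coincide with one of the reference geodesics, and a sandwiching argument then propagates the transversal fluctuation bound upward to all of $[-2M-1, t]$.

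Granted the uniform transversal fluctuation bound, the H\"older estimate \eqref{eq:128} follows from Proposition \ref{prop:17} applied to sub-geodesics of $\Gamma_{(y,t)}$. Indeed, for $[s, s'] \subseteq [-2M, t]$, the restriction $\Gamma_{(y,t)}\lvert_{[s,s']}$ is a finite geodesic with endpoints in the compact box $[-C_0, C_0] \times [-2M-1, M]$. Applying Proposition \ref{prop:17} at the dyadic midpoint $r^* = (s+s')/2$ gives
\begin{equation*}
\bigl| \Gamma_{(y,t)}(r^*) - \tfrac{1}{2}(\Gamma_{(y,t)}(s) + \Gamma_{(y,t)}(s')) \bigr| \leq S \bigl(\tfrac{s'-s}{2}\bigr)^{2/3} \log^3\!\bigl(1 + (s'-s)^{-1}(C_0 + 2M + 1)\bigr),
\end{equation*}
with the polylogarithmic correction uniform thanks to the preparatory step. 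A standard dyadic Kolmogorov--Chentsov iteration then absorbs the logarithmic factor into the $\nu$-loss in the H\"older exponent and yields \eqref{eq:128}. For the length bound \eqref{eq:1281}, I would use that sub-paths of geodesics are geodesics to write $\ell(\Gamma_{(y,t)}\lvert_{[s,s']};\cL) = \cL(\Gamma_{(y,t)}(s), s; \Gamma_{(y,t)}(s'), s')$, and then invoke Proposition \ref{prop:7}. The parabolic main term $(\Gamma_{(y,t)}(s) - \Gamma_{(y,t)}(s'))^2/(s'-s)$ is bounded by $C^2(s'-s)^{1/3-2\nu}$ via the already-established H\"older estimate, and the fluctuation term in Proposition \ref{prop:7} contributes $N(s'-s)^{1/3}$ times logarithms that are uniformly controlled by the preparatory step. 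Combining and readjusting $\nu$ and $C$ yields \eqref{eq:1281}.

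The main obstacle is the uniform transversal fluctuation bound in the preparatory step: Proposition \ref{lem:12} gives only a pointwise-in-$q$ estimate, and extending it to a uniform bound over the compact family $\{\Gamma_{(y,t)} : (y,t) \in K\}$ requires a careful combination of monotonicity of right-most/left-most geodesics with the coalescence structure of $0$-directed semi-infinite geodesics, together with the continuity/stability results of Proposition \ref{prop:23}. Once the preparatory step is in place, the remainder of the argument is essentially a quantitative repackaging of the already-available finite-geodesic estimates.
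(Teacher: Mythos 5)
Your overall strategy matches the paper's proof: both arguments reduce \eqref{eq:128} and \eqref{eq:1281} to the compact-parameter finite-geodesic estimates of Propositions \ref{prop:17} and \ref{prop:7} by first establishing a uniform-in-$(y,t)$ transversal fluctuation bound for $\Gamma_{(y,t)}$ on $[-2M,t]$ via sandwiching by reference semi-infinite geodesics. Your applications of Propositions \ref{prop:17} and \ref{prop:7} once that bound is in hand are correct, and your explicit mention of a dyadic Kolmogorov--Chentsov iteration is a reasonable elaboration of a step the paper treats as immediate.

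The gap is in the preparatory step, which you correctly flag as the crux but whose sketch does not work as written. You fix a \emph{deterministic} $R>2M+2$ and use $\underline{\Gamma}_{(-R,R)}$, $\overline{\Gamma}_{(R,R)}$ as references, then invoke coalescence of $\Gamma_{(y,t)}$ with these references and claim the bound ``propagates upward.'' Two problems. First, with $R$ fixed, the event that (say) $\underline{\Gamma}_{(-R,R)}(t)>M$ for some $t\in[-M,M]$ has positive probability, so the family $\{\Gamma_{(y,t)}:(y,t)\in K\}$ is \emph{not} a.s.\ sandwiched between the two references at the relevant starting times; you would need $R$ to be a random (and therefore landscape-measurable) quantity chosen large relative to the fluctuation of the references, or a countable union over deterministic $R$. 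Second, the coalescence-plus-propagation idea does not give what you need: the coalescence time of $\Gamma_{(y,t)}$ with a reference can easily lie below $-2M-1$, and above that time $\Gamma_{(y,t)}$ can escape to one side of a reference geodesic, so coalescence alone produces at best a one-sided control rather than a two-sided bound. The paper instead takes references $\Gamma_{(\pm m, M)}$ based on the top time line of $K$ with $m$ chosen \emph{randomly} large enough (using Proposition \ref{lem:12}) so that $\Gamma_{(-m,M)}(r)<-M$ and $\Gamma_{(m,M)}(r)>M$ for all $r\in[-2M,M]$; this forces the strict ordering at the starting time $t$ of every $\Gamma_{(y,t)}$ with $(y,t)\in K$, and planarity/no-bubbles then preserves the ordering downward over all of $[-2M,t]$, giving the two-sided bound without appealing to coalescence. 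Your argument would go through if you replaced the fixed $R$ by this random choice of reference starting points and replaced the coalescence step by the ordering-at-time-$t$ plus planarity argument.
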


\begin{proof}
   By a simple transversal fluctuation argument (e.g.\ by using Proposition \ref{lem:12}), it can be seen that for large enough $m\in \NN$, we have
  \begin{equation}
    \label{eq:127}
    \Gamma_{(-m,M)}(r)< \Gamma_{(y,t)}(r)< \Gamma_{(m,M)}(r)
  \end{equation}
  for all $r\in [-2M,t]$, points $(y,t)\in K$ and geodesics $\Gamma_{(y,t)}$.
  As a result of this observation, to obtain \eqref{eq:128}, it suffices to establish that for any fixed $L>0$, there exists a random constant $C$ such that for all $|x|\leq L$, all points $(y,t)\in K$ and all geodesics $\gamma_{(x,-2M)}^{(y,t)}$, we almost surely have
  \begin{equation}
    \label{eq:123}
     \sup_{[s,s']\subseteq [-2M,t]}|\gamma_{(x,-2M)}^{(y,t)}(s)-\gamma_{(x,-2M)}^{(y,t)}(s')|\leq C|s'-s|^{2/3-\nu},
  \end{equation}
   but this follows immediately by Proposition \ref{prop:17}. The task now is to obtain \eqref{eq:1281}. In view of the above, we note that it suffices to show that for some random constant $C_1$, for all $|x|\leq L$, all points $(y,t)\in K$ and all geodesics $\gamma_{(x,-2M)}^{(y,t)}$, we almost surely have
  \begin{equation}
    \label{eq:129}
    \sup_{[s,s']\subseteq [-2M,t]}|\ell(\gamma_{(x,-2M)}^{(y,t)}\lvert_{[s,s']};\cL)|\leq C_1|s'-s|^{1/3-\nu}.
  \end{equation}
 Now, by noting that $\ell(\gamma_{(x,-2M)}^{(y,t)}\lvert_{[s,s']};\cL)$ is simply equal to $\cL(\gamma_{(x,-2M)}^{(y,t)}(s),s;\gamma_{(x,-2M)}^{(y,t)}(s'),s')$, we can use Proposition \ref{prop:7} and \eqref{eq:123} to obtain that almost surely for random constants $C_2,C_3,C_4$, 
\begin{align}
  \label{eq:126}
  |\ell(\gamma_{(x,-2M)}^{(y,t)}\lvert_{[t-\delta,t]};\cL)|&\leq (C_2|s'-s|^{2/3-\nu})^2/|s'-s|+C_3|s'-s|^{1/3-\nu}\nonumber\\
  &\leq C_4|s'-s|^{1/3-2\nu}.
 \end{align}
The proof is now completed by replacing $\nu$ by $\nu/2$.
\end{proof}

We are now ready to start constructing the paths $\gamma_n$ and we shall always work with a fixed $(p;q)=(x_0,s_0;y_0,t_0)\in \RR_\uparrow^4$ as in the statement of Proposition \ref{prop:6}. For $i\in [\![\lfloor s_0\varepsilon_n^{-1}\rfloor ,\lfloor t_0 \varepsilon_n^{-1}\rfloor ]\!]$, define the interval $I_i=[i\varepsilon_n,(i+1)\varepsilon_n]$, and note that we have suppressed the dependency on $n$ in the notation here. We say that an interval $I_i$ as above is bad if the event
    \begin{equation}
      \label{eq:23}
      \{\gamma_p^q(s)=\Upsilon_{\0}(s) \textrm{ for some } s\in I_i\}
    \end{equation}
    occurs, and we shall use $\cI_n$ to denote the set of $i$ such that $I_i$ is bad. If an interval $I_i$ is not bad, then we simply call it good. Now, since $p,q$ are fixed points, we have $p,q\notin \Upsilon_{\0}$ almost surely. As a result, it is easy to see that for all $n$ large enough, the intervals $I_{\lfloor s_0\varepsilon_{n}^{-1}\rfloor}, I_{\lfloor t_0\varepsilon_{n}^{-1}\rfloor}$ are both good, and throughout this section, we shall always implicitly assume that $n$ is taken to be large enough so that the above holds. Now, as an immediate consequence of Proposition \ref{prop:10}, we have the following result.
    \begin{lemma}
      \label{lem:26}
      Almost surely, we have $\frac{\log \#\cI_n}{\log \varepsilon_n^{-1}}\rightarrow 0$ as $n\rightarrow \infty$.
    \end{lemma}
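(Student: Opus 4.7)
The plan is to derive this lemma as a direct consequence of Proposition \ref{prop:10}, with essentially no new analytic content. First, I note that because $p=(x_0,s_0)$ and $q=(y_0,t_0)$ are deterministic points while $\Upsilon_{\0}$ is a random path whose one-dimensional marginal at any fixed time is (by Proposition \ref{prop:11} combined with Proposition \ref{lem:12}) a continuous random variable, we have $p,q\notin \Upsilon_{\0}$ almost surely. This fact has already been recorded in the paragraph preceding the lemma statement, so on the corresponding almost sure set the hypotheses of Proposition \ref{prop:10} are met for our specific choice of $p$, $q$, and for any geodesic $\gamma_p^q$.

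Next, I would reconcile the discrete range appearing in the definition of $\cI_n$ with the range appearing in Proposition \ref{prop:10}. The set $\cI_n$ is defined using indices $i\in[\![\lfloor s_0\varepsilon_n^{-1}\rfloor,\lfloor t_0\varepsilon_n^{-1}\rfloor]\!]$, while Proposition \ref{prop:10} counts indices in $[\![s_0\varepsilon_n^{-1},t_0\varepsilon_n^{-1}]\!]$. These two index sets differ by at most two boundary indices, and hence their cardinalities differ by an additive $O(1)$, which is negligible when one takes $\log(\cdot)/\log\varepsilon_n^{-1}$ and passes to the limit $n\to\infty$.

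Putting these two observations together, the quantity in \eqref{eq:74} with our fixed $(p;q)$ is, up to an $O(1)$ additive error inside the logarithm, equal to $\log(\#\cI_n)$. Dividing by $\log\varepsilon_n^{-1}$ and invoking Proposition \ref{prop:10} yields $\log(\#\cI_n)/\log\varepsilon_n^{-1}\to 0$ almost surely, which is the content of the lemma. Since the real work has been carried out in Proposition \ref{prop:10}, there is no substantive obstacle here; the only thing requiring care is the bookkeeping for the boundary indices, which is entirely routine.
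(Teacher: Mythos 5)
Your proposal is correct and follows the paper's route exactly: the paper gives no separate argument and simply states Lemma \ref{lem:26} to be an immediate consequence of Proposition \ref{prop:10}, relying (as you do) on the facts that $p,q\notin\Upsilon_{\0}$ almost surely for fixed $p,q$ and that the index sets agree up to an $O(1)$ boundary discrepancy. One small remark: citing Proposition \ref{prop:11} together with Proposition \ref{lem:12} does not by itself give that the one-dimensional marginal of $\Upsilon_{\0}$ is atomless, since Proposition \ref{lem:12} is only a tail bound; the atomlessness of the law of $\Gamma_{\0}(-s_0)$ (hence of $\Upsilon_{\0}(s_0)$ via Proposition \ref{prop:11}) is a standard but separate fact, which the paper also takes as given without proof.
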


    We now split the set of bad intervals $I_i$ into collections of consecutive intervals. Indeed, we define a set $\cJ_n\subseteq \cI_n$ such that $j\in \cJ_n$ if and only $I_j$ is bad and $I_{j-1}$ is good.
 Now, we define $j^*=\min_{i\geq j}\{I_{i+1} \textrm{ is good}\}$.
 Finally, we define $J_j$ for $j\in \cJ_n$ by
    \begin{equation}
      \label{eq:24}
      J_j=\bigcup_{i\in [\![j,j^*]\!]}I_i.
    \end{equation}
It is easy to see that we have the following result.
    \begin{lemma}
      \label{lem:27}
   For different values of $j\in \cJ_n$, the intervals $J_j$ are disjoint. Also, we have the equality $\bigcup_{j\in \cJ_n}J_j=\bigcup_{i\in \cI_n} I_i$
    \end{lemma}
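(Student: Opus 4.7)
The plan is to verify both claims by a direct combinatorial argument, reading off what the definitions of $\cJ_n$, $j^*$ and $J_j$ force. The first observation I would record is the following structural description of $J_j$ for $j\in\cJ_n$: every $I_i$ with $j\le i\le j^*$ is bad. Indeed, $I_j$ is bad since $j\in\cJ_n\subseteq\cI_n$; and by the definition $j^*=\min_{i\ge j}\{I_{i+1}\text{ is good}\}$, for every $i$ with $j\le i<j^*$ the interval $I_{i+1}$ must be bad (otherwise $i$ would be a smaller candidate than $j^*$), so $I_{j+1},\dots,I_{j^*}$ are all bad. This description is the only nontrivial ingredient.

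For the disjointness, fix $j<j'$ in $\cJ_n$. Since $j'\in\cJ_n$ the interval $I_{j'-1}$ is good, while by the structural description above, $I_i$ is bad for every $i\in[\![j,j^*]\!]$. Hence $j'-1\notin[\![j,j^*]\!]$, and since $j'-1\ge j$, we must have $j'-1>j^*$, i.e.\ $j'\ge j^*+2$. Consequently $J_{j'}\subseteq[j'\varepsilon_n,\infty)\subseteq[(j^*+2)\varepsilon_n,\infty)$ lies strictly to the right of $J_j\subseteq(-\infty,(j^*+1)\varepsilon_n]$, so $J_j\cap J_{j'}=\emptyset$.

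For the equality $\bigcup_{j\in\cJ_n}J_j=\bigcup_{i\in\cI_n}I_i$, the inclusion $\subseteq$ is immediate from the structural description: each $I_i$ appearing in the union defining $J_j$ is bad, i.e.\ $i\in\cI_n$. For $\supseteq$, take any $i\in\cI_n$ and let
\[
j=\min\{k\le i\colon I_k,I_{k+1},\dots,I_i\text{ are all bad}\}.
\]
Since $n$ is large enough that $I_{\lfloor s_0\varepsilon_n^{-1}\rfloor}$ is good, we have $j>\lfloor s_0\varepsilon_n^{-1}\rfloor$, and by minimality of $j$ the interval $I_{j-1}$ is good, so $j\in\cJ_n$. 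Moreover, since $I_{j+1},\dots,I_i$ are all bad, for every $k$ with $j\le k<i$ the interval $I_{k+1}$ is bad, so $j^*\ge i$. Thus $i\in[\![j,j^*]\!]$ and $I_i\subseteq J_j$, giving the reverse inclusion.

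This argument is entirely elementary, so no real obstacle is anticipated; the only mild subtlety is making sure that the assumption (stated just before Lemma~\ref{lem:26}) that $I_{\lfloor s_0\varepsilon_n^{-1}\rfloor}$ is good for large $n$ is used to guarantee that the ``$j$'' produced above is well defined and lies in $\cJ_n$ (so that $j-1$ is a legitimate index with $I_{j-1}$ good, rather than the boundary of the index range).
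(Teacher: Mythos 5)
Your proof is correct and complete; the paper itself gives no proof of Lemma~\ref{lem:27}, merely prefacing it with ``It is easy to see,'' so you are supplying the expected elementary verification. The structural observation that $I_j,\dots,I_{j^*}$ are all bad, the deduction $j'\ge j^*+2$ for distinct $j<j'$ in $\cJ_n$ (which is exactly what rules out the shared-endpoint case between consecutive $I_i$'s, since the intervals are closed), and the minimal-index construction for the reverse inclusion are all the standard way one would fill in the detail, and your care in using the standing assumption that $I_{\lfloor s_0\varepsilon_n^{-1}\rfloor}$ and $I_{\lfloor t_0\varepsilon_n^{-1}\rfloor}$ are good is appropriate --- the latter is also what guarantees $j^*$ is finite, a point you could mention explicitly for completeness.
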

    With the intervals $J_j$ at hand, we are now ready to define the paths $\gamma_n$, and we refer the reader to Figure \ref{fig:mod} for a depiction of this construction. First, we require that $\gamma_n$ and $\gamma_p^q$ agree outside $\bigcup_{j\in \cJ_n}(I_{j-1}\cup J_j)$, that is, for $s\in (\bigcup_{j\in \cJ_n}(I_{j-1}\cup J_j))^c\cap [s_0,t_0]$, we simply define \begin{equation}
  \label{eq:25}
  \gamma_n(s)=\gamma_p^q(s).
\end{equation}
\begin{figure}
  \centering
  \includegraphics[width=0.5\linewidth]{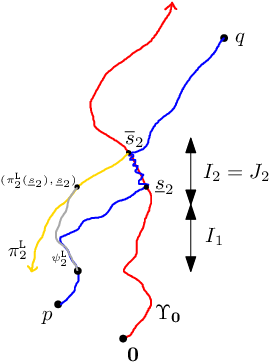}
  \caption{An overview of the definition of $\gamma_n$: Consider the shown setting where $n=2$ and thus $\varepsilon_n=1/4$. The blue path is the a.s.\ unique geodesic $\gamma_p^q$ and the red path denotes the interface $\Upsilon_{\0}$. Here, there are only five intervals $I_0,I_1,I_2,I_3,I_4$ out of which only the interval $I_2$ is bad. Everywhere except on $I_1\cup J_2$, we set $\gamma_n$ to be equal to $\gamma_{p}^{q}$. To define $\gamma_n$ on $I_1\cup J_2$, we first define the golden path to be the left-most downward $0$-directed geodesic emanating from $(\Upsilon_{\0}(\overline{s}_2),\overline{s}_2)$ and note that this stays to the left of $\Upsilon_{\0}$. Further, we consider a geodesic (grey) $\psi_2^{*_2}=\psi_2^{\tL}$ from $(\gamma_p^q(\varepsilon_n),\varepsilon_n)$ to $(\pi_2^{\tL}(\underline{s}_2),\underline{s}_2)$ and this geodesic stays to the left of the blue path by geodesic ordering. Now, we define $\gamma_n$ on $I_1\cup J_2$ by first following the grey geodesic till time $\underline{s}_2$ followed by using the golden path till time $\overline{s}_2$ and then subsequently tracing the blue path.}
\label{fig:mod}
\end{figure}
As a result, by the definition of good intervals, for all $s\in (\bigcup_{j\in \cJ_n}(I_{j-1}\cup J_j))^c\cap [s_0,t_0]$, we have $\gamma_n(s)\neq \Upsilon_{\0}(s)$.

The task now is to fix $j\in \cJ_n$ and define $\gamma_n(s)$ for $s\in I_{j-1}\cup J_j$. To do so, we define the times $\overline{s}_j,\underline{s}_j$ by
\begin{align}
  \label{eq:26}
  \overline{s}_j= \sup_{t\in J_j}\{\gamma_p^q(t)=\Upsilon_{\0}(t)\} ,\nonumber\\
  \underline{s}_j= \inf_{t\in J_j}\{\gamma_p^q(t)=\Upsilon_{\0}(t)\},
\end{align}
and we note that $\gamma_p^q\lvert_{[ (j-1)\varepsilon_n,\underline{s}_j)}\subseteq \cS_{\Upsilon_{\0}}^{*_j}$ for exactly one $*_j\in \{\tL,\tR\}$ since the interval $I_{j-1}$ is good.
Now, we consider the two paths $\pi_j^{\tL}$ and $\pi_j^{\tR}$ defined as
\begin{equation}
  \label{eq:119}
  \pi_j^{\tL}=\underline{\Gamma}_{(\Upsilon_{\0}(\overline{s}_j),\overline{s}_j)}\lvert_{[0,\overline{s}_j]}, \pi_j^{\tR}=\overline{\Gamma}_{(\Upsilon_{\0}(\overline{s}_j),\overline{s}_j)}\lvert_{[0,\overline{s}_j]}
\end{equation}
As a consequence of Lemmas \ref{lem:33}, \ref{lem:19}, we crucially note that $\pi_j^\tL\lvert_{[0,\overline{s}_j)}\subseteq \cS_{\Upsilon_{\0}}^{\tL}$ and $\pi_j^\tR\lvert_{[0,\overline{s}_j)}\subseteq \cS_{\Upsilon_{0}}^{\tR}$. We are now ready to define $\gamma_n(s)$ for $s\in I_{j-1}\cup J_j$ with $j\in \cJ_n$.%

\subsubsection*{\textbf{Defining $\gamma_n$ on $I_{j-1}\cup J_j$}}
Let $\psi_j^{*_j}$ be a geodesic from $(\gamma_p^q((j-1)\varepsilon_n),(j-1)\varepsilon_n)$ to $(\pi_j^{*_j}(\underline{s}_j),\underline{s}_j)$. Since $\gamma_p^q\lvert_{[ (j-1)\varepsilon_n,\underline{s}_j)}\subseteq \cS_{\Upsilon_{\0}}^{*_j}$ and since the geodesic $\gamma_p^q$ is a.s.\ unique, by geodesic ordering, we have $\psi_j^{*_j}\subseteq \cS_{\Upsilon_{\0}}^{*_j}$ as well. With the above in mind, we now define $\gamma_n(t)$ for $t\in I_{j-1}\cup J_j$ by
\begin{equation}
  \label{eq:28}
  \gamma_n(t)=
  \begin{cases}
    \gamma_p^q(t) &t>\overline{s}_j,\\
    \pi_j^{*_j}(t) &t\in  [\underline{s}_j,\overline{s}_j],\\
    \psi_j^{*_j}(t) &t<\underline{s}_j.
  \end{cases}
\end{equation}
This completes the definition of the paths $\gamma_n$. %
The following lemma is now immediate from the way in which $\gamma_n$ has been defined.
\begin{lemma}
  \label{lem:28}
  We have the equality
  \begin{equation}
    \label{eq:142}
    \{s\in [s_0,t_0]: \gamma_n(s)=\Upsilon_{\0}(s)\}=\bigcup_{j\in \cJ_n} \{\overline{s}_j\},
  \end{equation}
and thus the set on the left hand side above is a.s.\ finite for all $n$.
\end{lemma}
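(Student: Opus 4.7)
The plan is to verify the equality by a direct case analysis on $s \in [s_0, t_0]$, using the piecewise definition \eqref{eq:25}, \eqref{eq:28} of $\gamma_n$. First I will decompose $[s_0,t_0]$ as the unmodified region $U_n \coloneqq [s_0,t_0] \setminus \bigcup_{j \in \cJ_n}(I_{j-1} \cup J_j)$ together with the modified blocks $I_{j-1} \cup J_j$ for $j \in \cJ_n$. By Lemma \ref{lem:27} the blocks $J_j$ are pairwise disjoint, and since each $I_{j-1}$ is good by the very definition of $\cJ_n$, it cannot be contained in any $J_{j'}$; hence these blocks are pairwise disjoint and we have a genuine partition.

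On $U_n$ we have $\gamma_n = \gamma_p^q$ by \eqref{eq:25}, and every such $s$ lies in a good interval, so $\gamma_n(s)\neq \Upsilon_{\0}(s)$ by definition of goodness; thus $U_n$ contributes no intersection times. For a fixed block $I_{j-1} \cup J_j$ I split according to the three cases in \eqref{eq:28}. When $t > \overline{s}_j$ inside $J_j$, $\gamma_n(t)=\gamma_p^q(t)$ and the supremum definition of $\overline{s}_j$ precludes any further equality with $\Upsilon_{\0}$ on $J_j$. When $t \in [\underline{s}_j, \overline{s}_j]$, the key input is the containment $\pi_j^{*_j}\lvert_{[0,\overline{s}_j)} \subseteq \cS_{\Upsilon_{\0}}^{*_j}$ recorded just before \eqref{eq:28} (a consequence of Lemmas \ref{lem:33} and \ref{lem:19}), so $\gamma_n(t)\neq \Upsilon_{\0}(t)$ for all $t<\overline{s}_j$, while $\gamma_n(\overline{s}_j)=\pi_j^{*_j}(\overline{s}_j)=\Upsilon_{\0}(\overline{s}_j)$ by the construction of $\pi_j^{*_j}$, yielding exactly the intersection time $\overline{s}_j$. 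Finally, when $t < \underline{s}_j$, $\gamma_n(t)=\psi_j^{*_j}(t)$, and the observation $\psi_j^{*_j}\subseteq \cS_{\Upsilon_{\0}}^{*_j}$ made immediately after the definition of $\psi_j^{*_j}$ forbids intersection.

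Combining the cases gives the claimed equality, and finiteness of the right-hand side is immediate from the inclusion $\cJ_n \subseteq [\![\lfloor s_0 \varepsilon_n^{-1}\rfloor, \lfloor t_0 \varepsilon_n^{-1}\rfloor]\!]$. There is no real obstacle beyond the bookkeeping; the only borderline concern is shared endpoints of consecutive dyadic intervals (e.g.\ $t=(j-1)\varepsilon_n$), but any such point lies in the good interval $I_{j-1}$ and hence cannot be an intersection time, confirming that the continuity of the piecewise construction introduces no spurious intersections.
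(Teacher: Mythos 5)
Your case analysis is correct and is exactly the unwinding the paper has in mind when it declares the lemma ``immediate from the way in which $\gamma_n$ has been defined'': the three cases of \eqref{eq:28} together with \eqref{eq:25} and the recorded containments $\pi_j^{*_j}\lvert_{[0,\overline{s}_j)}\subseteq \cS_{\Upsilon_{\0}}^{*_j}$ and $\psi_j^{*_j}\subseteq\cS_{\Upsilon_{\0}}^{*_j}$ force the intersection set to be precisely $\{\overline{s}_j\}_{j\in\cJ_n}$. Your remark on shared endpoints is the right sanity check (and is consistent because $\psi_j^{*_j}$ is pinned to $\gamma_p^q((j-1)\varepsilon_n)$ at the left endpoint, which is a time in the good interval $I_{j-1}$).
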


In view of the above lemma and the statement of Proposition \ref{prop:6}, our aim now is to establish the following two results.
\begin{lemma}
  \label{lem:29}
  Almost surely, as $n\rightarrow \infty$, we have
  \begin{equation}
    \label{eq:143}
    |\sum_{j\in \cJ_n} \ell(\gamma_p^q\lvert_{[ (j-1)\varepsilon_n,\overline{s}_j]};\cL)|\rightarrow 0
  \end{equation}
\end{lemma}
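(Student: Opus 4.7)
The plan is to combine a H\"older bound on the lengths of short segments of $\gamma_p^q$ with the sub-polynomial bound on $\#\cI_n$ from Lemma \ref{lem:26}, and then conclude via H\"older's inequality. Writing $\ell_j \coloneqq \overline{s}_j - (j-1)\varepsilon_n$ for $j \in \cJ_n$, the idea is to first bound each summand by $C\ell_j^{1/3-\nu}$ for a random constant $C$, and then estimate $\sum_j \ell_j^{1/3-\nu}$ by exploiting both a bound on the total length $\sum_j \ell_j$ and on the cardinality $\#\cJ_n$.

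First, I would fix $\nu \in (0,1/3)$ and establish the uniform estimate
\[
\bigl|\ell(\gamma_p^q\lvert_{[a,b]};\cL)\bigr| = \bigl|\cL(\gamma_p^q(a),a;\gamma_p^q(b),b)\bigr| \leq C|b-a|^{1/3-\nu}, \qquad [a,b]\subseteq [s_0,t_0],
\]
for a random constant $C$ depending on $\cL$ (and on $p,q$). This follows by combining Proposition \ref{prop:17} (whose $(2/3-\nu/2)$-H\"older estimate on the a.s.\ unique geodesic $\gamma_p^q$ forces $(\gamma_p^q(a)-\gamma_p^q(b))^2/(b-a) = O(|b-a|^{1/3-\nu})$) with Proposition \ref{prop:7}, which bounds $|\cL(x,s;y,t)+(x-y)^2/(t-s)|$ by a random constant times $|b-a|^{1/3}$ on the compact region $[s_0,t_0]$.

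Next, I would carry out the combinatorial bookkeeping. Let $N_j$ denote the number of intervals $I_i$ contained in $J_j$, so that $\ell_j \leq (N_j+1)\varepsilon_n$. Lemma \ref{lem:27} then yields
\[
\sum_{j\in\cJ_n}\ell_j \leq (\#\cI_n + \#\cJ_n)\varepsilon_n \leq 2\,\#\cI_n\,\varepsilon_n,
\]
and Lemma \ref{lem:26} gives $\#\cJ_n \leq \#\cI_n \leq \varepsilon_n^{-o(1)}$ almost surely, hence $\sum_j \ell_j \leq \varepsilon_n^{1-o(1)}$. Applying H\"older's inequality with conjugate exponents $p = 1/(2/3+\nu)$ and $q = 1/(1/3-\nu)$,
\[
\sum_{j\in\cJ_n}\ell_j^{1/3-\nu} \leq (\#\cJ_n)^{2/3+\nu}\Bigl(\sum_{j\in\cJ_n}\ell_j\Bigr)^{1/3-\nu} \leq \varepsilon_n^{-o(1)}\cdot \varepsilon_n^{(1/3-\nu)(1-o(1))} = \varepsilon_n^{1/3-\nu - o(1)} \longrightarrow 0.
\]
Combining this with the length bound from the previous paragraph and using the triangle inequality yields the lemma.

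The main obstacle is really just the length estimate in the second paragraph: Proposition \ref{prop:7} controls $\cL$ only modulo the parabolic term $(x-y)^2/(t-s)$, which has to be neutralised using the H\"older regularity of $\gamma_p^q$ from Proposition \ref{prop:17}. Once that estimate is in place, the remaining argument is essentially combinatorial, and the strict positivity of the exponent $1/3-\nu$ (which is available because $1/3 > 0$ and $\nu$ can be chosen arbitrarily small) is exactly what drives the decay as $n\to\infty$.
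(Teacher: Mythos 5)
Your proof is correct and follows essentially the same approach as the paper: both arguments rest on the same $1/3-$ H\"older estimate for segment lengths of $\gamma_p^q$ (which the paper phrases as $1/3-$ H\"older regularity of $s\mapsto\cL(\gamma_p^q(s),s;q)$, an algebraically equivalent formulation since $\ell(\gamma_p^q\lvert_{[a,b]};\cL)=f(a)-f(b)$), combined with the sub-polynomial bound on $\#\cI_n$ from Lemma~\ref{lem:26}. The only cosmetic difference is in the combinatorial step: the paper uses subadditivity of $x\mapsto x^{1/3-\nu}$ to split each $(|J_j|+\varepsilon_n)^{1/3-\nu}$ into a sum over constituent intervals $|I_i|^{1/3-\nu}$, whereas you apply H\"older's inequality to $\sum_j \ell_j^{1/3-\nu}$ directly; both yield a bound of order $\varepsilon_n^{1/3-o(1)}$.
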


\begin{lemma}
  \label{lem:30}
  Almost surely, as $n\rightarrow \infty$, we have
  \begin{equation}
    \label{eq:144}
    |\sum_{j\in \cJ_n} \ell (\gamma_n\lvert_{[ (j-1)\varepsilon_n,\overline{s}_j]};\cL)|\rightarrow 0.
  \end{equation}
\end{lemma}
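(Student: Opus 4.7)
The plan is to decompose the length $\ell(\gamma_n\lvert_{[(j-1)\varepsilon_n,\overline{s}_j]};\cL)$ into the contribution from the geodesic $\psi_j^{*_j}$ and the contribution from the semi-infinite geodesic piece $\pi_j^{*_j}\lvert_{[\underline{s}_j,\overline{s}_j]}$, and bound each sum separately. Writing $A_j=(\gamma_p^q((j-1)\varepsilon_n),(j-1)\varepsilon_n)$ and $B_j=(\pi_j^{*_j}(\underline{s}_j),\underline{s}_j)$, the geodesic piece contributes $\cL(A_j;B_j)$ and the semi-infinite piece has length $\ell(\pi_j^{*_j}\lvert_{[\underline{s}_j,\overline{s}_j]};\cL)$. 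The key inputs will be Lemma \ref{lem:26} (so $\#\cI_n, \#\cJ_n\le \varepsilon_n^{-o(1)}$ and in particular $|J_j|\le \#\cI_n\cdot \varepsilon_n\le \varepsilon_n^{1-o(1)}$), the uniform H\"older/length estimate Lemma \ref{lem:25} applied to the semi-infinite geodesics emanating from the (random but uniformly bounded) base points $(\Upsilon_{\0}(\overline{s}_j),\overline{s}_j)$, the H\"older regularity of $\gamma_p^q$ and $\Upsilon_{\0}$ (Propositions \ref{prop:17} and \ref{prop:11}), and the uniform parabolic bound for $\cL$ (Proposition \ref{prop:7}).

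For the semi-infinite piece, Lemma \ref{lem:25} applied to $\pi_j^{*_j}$ gives, for any fixed $\nu>0$ and a random constant $C$, the bound $|\ell(\pi_j^{*_j}\lvert_{[\underline{s}_j,\overline{s}_j]};\cL)|\le C|\overline{s}_j-\underline{s}_j|^{1/3-\nu}\le C(\#\cI_n\,\varepsilon_n)^{1/3-\nu}=\varepsilon_n^{1/3-\nu-o(1)}$. Summing over $\#\cJ_n\le \varepsilon_n^{-o(1)}$ indices yields a total of $\varepsilon_n^{1/3-\nu-o(1)}\to 0$.

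For the geodesic piece, first estimate $|A_j-B_j|$. The time difference is $\underline{s}_j-(j-1)\varepsilon_n\in[\varepsilon_n,2\varepsilon_n]$. For the spatial difference, use $\gamma_p^q(\underline{s}_j)=\Upsilon_{\0}(\underline{s}_j)$ and $\pi_j^{*_j}(\overline{s}_j)=\Upsilon_{\0}(\overline{s}_j)$ together with the triangle inequality and H\"older regularity of $\gamma_p^q$, $\Upsilon_{\0}$, and $\pi_j^{*_j}$ (the last via Lemma \ref{lem:25}), obtaining
\begin{equation*}
|\gamma_p^q((j-1)\varepsilon_n)-\pi_j^{*_j}(\underline{s}_j)|\le C\varepsilon_n^{2/3-\nu}+2C|J_j|^{2/3-\nu}\le C\varepsilon_n^{2/3-o(1)}.
\end{equation*}
Applying Proposition \ref{prop:7} then gives $|\cL(A_j;B_j)|\le (C\varepsilon_n^{2/3-o(1)})^2/\varepsilon_n+N\varepsilon_n^{1/3}\log^{O(1)}(\varepsilon_n^{-1})=O(\varepsilon_n^{1/3-o(1)})$. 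Summing over $\#\cJ_n\le \varepsilon_n^{-o(1)}$ indices again yields a total of $\varepsilon_n^{1/3-o(1)}\to 0$, and combining the two estimates proves the lemma.

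The main obstacle is the potential blow-up of the parabolic term $(x_{A_j}-x_{B_j})^2/(t_{B_j}-t_{A_j})$ in Proposition \ref{prop:7}: the time separation is only of order $\varepsilon_n$, while the spatial separation has a contribution of order $|J_j|^{2/3-\nu}$ coming from how far $\pi_j^{*_j}$ may travel over a single bad run $J_j$. This is the reason we must use the \emph{subpolynomial} (not polynomial) bound on $\#\cI_n$ from Proposition \ref{prop:10}; without the sharp zero-dimensional estimate, $|J_j|$ could in principle be comparable to $\varepsilon_n^{\alpha}$ for some $\alpha<1$, which would produce a spatial separation significantly larger than $\varepsilon_n^{2/3}$ and break the estimate. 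A secondary technical point, addressed by the uniform statement of Lemma \ref{lem:25}, is that we need the semi-infinite geodesic regularity to hold simultaneously for the random base points $(\Upsilon_{\0}(\overline{s}_j),\overline{s}_j)$, which all lie in a fixed compact set.
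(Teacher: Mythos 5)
Your proposal is correct and follows essentially the same route as the paper: the same decomposition of $\gamma_n\lvert_{[(j-1)\varepsilon_n,\overline{s}_j]}$ into $\psi_j^{*_j}$ and $\pi_j^{*_j}\lvert_{[\underline{s}_j,\overline{s}_j]}$, the length bound on the semi-infinite piece from Lemma~\ref{lem:25}, a H\"older chain through $\Upsilon_{\0}$ to estimate the spatial separation of the endpoints of $\psi_j^{*_j}$, Proposition~\ref{prop:7} to bound its length, and Lemma~\ref{lem:26} (equivalently Proposition~\ref{prop:10}) to control the sum. The only cosmetic differences are that you chain through $\underline{s}_j$ rather than $\overline{s}_j$, and you bound the final sum by $\#\cJ_n$ times a uniform per-term bound instead of using the $\ell^1\geq\ell^p$ inequality as the paper does; both yield the same $\varepsilon_n^{1/3-o(1)}$ decay.
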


Indeed, we first use the above two results to quickly complete the proof of Proposition \ref{prop:6}.
\begin{proof}[Proof of Proposition \ref{prop:6} assuming Lemmas \ref{lem:29}, \ref{lem:30}]
  As a consequence or Lemma \ref{lem:28}, we already know that for all $n$, the set of times $s$ for which $\gamma_n(s)=\Upsilon_{\0}(s)$ is finite. Thus, we need only show that $\ell(\gamma_n;\cL)\rightarrow \ell(\gamma_p^q;\cL)$ as $n\rightarrow \infty$. Now, from the definition of $\gamma_n$, we know that $\gamma_n$ and $\gamma_p^q$ only possibly differ on the set $\bigcup_{j\in \cJ_n}[ (j-1)\varepsilon_n,\overline{s}_j]$, and as a result, we know that
  \begin{align}
    \label{eq:145}
    |\ell(\gamma_p^q;\cL)-  \ell(\gamma_n;\cL)|&=|\sum_{j\in \cJ_n} \ell(\gamma_p^q\lvert_{ [ (j-1)\varepsilon_n,\overline{s}_j]};\cL)- \sum_{j\in \cJ_n} \ell (\gamma_n\lvert_{ [ (j-1)\varepsilon_n,\overline{s}_j]};\cL)|\nonumber\\
    &\leq |\sum_{j\in \cJ_n} \ell(\gamma_p^q\lvert_{ [ (j-1)\varepsilon_n,\overline{s}_j]};\cL)|+ |\sum_{j\in \cJ_n} \ell (\gamma_n\lvert_{[ (j-1)\varepsilon_n,\overline{s}_j]};\cL)|,
  \end{align}
  and thus the required convergence now follows by invoking Lemma \ref{lem:29} and Lemma \ref{lem:30}. This completes the proof.
\end{proof}
Now, we seek to establish Lemma \ref{lem:29} and Lemma \ref{lem:30}, and we begin with the former.
\begin{proof}[Proof of Lemma \ref{lem:29}]
 By the a.s.\ H\"older continuity of the geodesic $\gamma_p^q$ (see Proposition \ref{prop:17}) along with Proposition \ref{prop:7}, it is not difficult to see that the function $f(s)=\cL(\gamma_p^q(s),s;q)$ is almost surely $1/3-$ H\"older regular. %
Thus, there exists a random constant $C>0$ such that for any fixed $\nu>0$ and all $n$ large enough, we almost surely have
\begin{align}
  \label{eq:29}
  \sum_{j\in \cJ_n} \ell(\gamma_p^q\lvert_{ [ (j-1)\varepsilon_n,\overline{s}_j]};\cL)&=\sum_{j\in \cJ_n}|f((j-1)\varepsilon_n)-f(\overline{s}_j )|\nonumber\\
                                                              &\leq C\sum_{j\in \cJ_n}( |J_j|+ \varepsilon_n)^{1/3-\nu}\nonumber\\
  &=C\sum_{j\in \cJ_n}(\sum_{i:I_i\subseteq J_j}|I_i|+\varepsilon_n)^{1/3-\nu}\nonumber\\
                                                              &\leq C\sum_{j\in \cJ_n}\sum_{i: I_i\subseteq J_j}|I_i|^{1/3-\nu}+ C\sum_{j\in \cJ_n}\varepsilon_n^{1/3-\nu}\nonumber\\
  &=C(\#\cI_n+\#\cJ_n)\varepsilon_n^{1/3-\nu}\nonumber\\
                                                              &\leq 2C \varepsilon_n^{1/3-\nu}\#\cI_n\nonumber\\
  &\leq 2C\varepsilon_n^{1/3-2\nu},                                                            \end{align}
where we used Lemma \ref{lem:26} to obtain the last inequality. Since $\varepsilon_n\rightarrow 0$ as $n\rightarrow \infty$, this completes the proof.
\end{proof}
We now provide the proof of Lemma \ref{lem:30}, and this is more involved.
  \begin{proof}[Proof of Lemma \ref{lem:30}]

  We first show that a.s.\ for any fixed $\nu>0$, for all $j\in \cJ_n$ and all $n$ large enough, we have
\begin{equation}
  \label{eq:31}
  |\ell(\gamma_n\lvert_{[ (j-1)\varepsilon_n,\overline{s}_j]};\cL)|\leq C_1|J_j|^{1/3-\nu} + C_2\varepsilon_n^{-1}|J_j|^{4/3-2\nu},
\end{equation}
for some random constants $C_1, C_2$. %

Since $\Upsilon_{\0}\lvert_{[0,t_0]}$ is a compact set, we know by Lemma \ref{lem:25} that for a random constant $C_1$,
\begin{equation}
  \label{eq:36}
  |\ell(\pi_{j}^{*_j}\lvert_{  [\underline{s}_j,\overline{s}_j]};\cL)|\leq C_1(\overline{s}_j-\underline{s}_j)^{1/3-\nu}\leq C_1|J_j|^{1/3-\nu},
\end{equation}
and further, by using \eqref{eq:128} from Lemma \ref{lem:25}, we know that for a random constant $C_2$, 
\begin{equation}
  \label{eq:130}
  |\pi_j^{*_j}(\underline{s}_j)-\Upsilon_{\0}(\overline{s}_j)|=|\pi_j^{*_j}(\underline{s}_j)-\pi_j^{*_j}(\overline{s}_j)|\leq C_2 ( \overline{s}_j-\underline{s}_j)^{2/3-\nu}\leq C_2|J_j|^{2/3-\nu}.
\end{equation}
Similarly, by using the $2/3-$ H\"older continuity of $\gamma_p^q$, for a random constant $C_3$, we have
\begin{equation}
  \label{eq:131}
  |\gamma_p^q((j-1)\varepsilon_n)-\Upsilon_{\0}(\overline{s}_j)|=|\gamma_p^q((j-1)\varepsilon_n)-\gamma_p^q(\overline{s}_j)|\leq C_3 ( \overline{s}_j-(j-1)\varepsilon_n)^{2/3-\nu}\leq C_3(|J_j|+\varepsilon_n)^{2/3-\nu}.
\end{equation}
On combining \eqref{eq:130} and \eqref{eq:131}, we obtain that
\begin{equation}
  \label{eq:132}
  |\pi_j^{*_j}(\underline{s}_j)-\gamma_p^q((j-1)\varepsilon_n)|\leq  (C_2+C_3)(|J_j|+\varepsilon_n)^{2/3-\nu}\leq 2(C_2+C_3)|J_j|^{2/3-\nu}.
\end{equation}

Now, recalling that $\psi_j^{*_j}$ is defined to be a geodesic from $(\gamma_p^q((j-1)\varepsilon_n),(j-1)\varepsilon_n)$ to $(\pi_j^{*_j}(\underline{s}_j),\underline{s}_j)$, we can use \eqref{eq:132} along with Proposition \ref{prop:7} to obtain that for some random constants $C_4,C_5$,
\begin{align}
  \label{eq:37}
  |\ell(\psi_{j}^{*_j}; \cL)|&\leq C_4(\underline{s}_j-(j-1)\varepsilon_n)^{1/3-\nu}+ (\underline{s}_j-(j-1)\varepsilon_n)^{-1}\left(2(C_2+C_3)|J_j|^{2/3-\nu}\right)^2\nonumber\\
  &\leq C_4(|J_j|+\varepsilon_n)^{1/3-\nu}+C_5\varepsilon_n^{-1}|J_j|^{4/3-2\nu},
\end{align}
where we have used the deterministic inequalities $|J_j|+\varepsilon_n\geq \underline{s}_j-(j-1)\varepsilon_n\geq \varepsilon_n$. As a result, by combining the above and \eqref{eq:36}, we obtain that for some random constants $C_6,C_7$,
\begin{align}
  \label{eq:38}
  |\ell(\gamma_n\lvert_{[ (j-1)\varepsilon_n,\overline{s}_j]};\cL)|&\leq |\ell(\pi_{j}^{*_j}\lvert_{ [\underline{s}_j,\overline{s}_j]};\cL)|+ |\ell(\psi_{j}^{*_j}; \cL)|\nonumber\\
  &\leq C_6(|J_j|+\varepsilon_n)^{1/3-\nu} + C_7\varepsilon_n^{-1}|J_j|^{4/3-2\nu}\nonumber\\
  &\leq 2C_6|J_j|^{1/3-\nu} + C_7\varepsilon_n^{-1}|J_j|^{4/3-2\nu},
\end{align}
and this completes the proof of \eqref{eq:31}.

Finally, we now use \eqref{eq:31} to complete the proof of the lemma. Indeed, we can write for any fixed $\nu\in (0,1/6)$,
\begin{align}
  \label{eq:39}
  \sum_{j\in \cJ_n} |\ell (\gamma_n\lvert_{[ (j-1)\varepsilon_n,\overline{s}_j]};\cL)|  &\leq \sum_{j\in \cJ_n} C_1|J_j|^{1/3-\nu}+ C_2\sum_{j\in \cJ_n}\varepsilon_n^{-1}|J_j|^{4/3-2\nu}\nonumber\\
  &\leq C_1\sum_{j\in \cJ_n}\sum_{i:I_I\subset J_j}|I_i|^{1/3-\nu}+C_2\varepsilon_n^{-1}(\sum_{j\in \cJ_n} |J_j|)^{4/3-2\nu}\nonumber\\
                                                                   &= C_1\varepsilon_n^{1/3-\nu}\#\cI_n+ C_2\varepsilon_n^{-1}  (\varepsilon_n\#\cI_n)^{4/3-2\nu}\nonumber\\
  &= C_1\varepsilon_n^{1/3-\nu}\#\cI_n+ C_2\varepsilon_n^{1/3-2\nu}  (\#\cI_n)^{4/3-2\nu}\nonumber\\
                                                                   &\leq C_3\varepsilon_n^{1/3-3\nu}.                                               
\end{align}
To obtain the second line above, we have simply used that $0<1/3-\nu< 1< 4/3-2\nu$. Finally, the last line was obtained by invoking Lemma \ref{lem:26}. Since $\varepsilon_n\rightarrow 0$ as $n\rightarrow \infty$, this finishes the proof.

\end{proof}

\section{Completion of the proofs of the main results}
\label{sec:finish-main}
Since the proof of Theorem \ref{thm:3} was already completed in Section \ref{sec:proof-thm-dim}, it remains to complete the proofs of Theorem \ref{thm:1} and Theorem \ref{thm:2}, and we now begin with the former.
\begin{proof}[Proof of Theorem \ref{thm:1}]
  We seek to establish the a.s.\ equality $\cL_{\Upsilon_{\0}}^g(p;q)=\cL(p;q)$. Now, by definition, we know that
  \begin{equation}
    \label{eq:147}
    \cL_{\Upsilon_{\0}}^g(p;q)\leq \cL(p;q),
  \end{equation}
   and thus we need only establish the reverse inequality. Recall that Proposition \ref{prop:6} shows that almost surely, there exist paths $\gamma_n$ from $p$ to $q$ which intersect $\Upsilon_{\0}$ only finitely many times and further satisfy $\cL(\gamma_n;\cL)\rightarrow \ell(\gamma_p^q;\cL)=\cL(p;q)$ as $n\rightarrow \infty$. Now, we recall the definition of $\cL_{\Upsilon_{\0}}^g$ and note that since $\gamma_n$ intersects $\Upsilon_{\0}$ only finitely many times, it yields a valid partition in the supremum over partitions present in the definition of $\cL_{\Upsilon_{\0}}^g(p;q)$ (see Definition \ref{def:recons}). As a result, we obtain that
  \begin{equation}
    \label{eq:146}
    \cL_{\Upsilon_{\0}}^g(p;q)\geq \ell(\gamma_n;\cL)
  \end{equation}
  for all $n$, and since the right-hand side almost surely converges to $\cL(p;q)$, we obtain that $\cL_{\Upsilon_{\0}}^g(p;q)\geq \cL(p;q)$ almost surely. Thus, on combining with \eqref{eq:147}, we have shown that $\cL_{\Upsilon_{\0}}^g(p;q)=\cL(p;q)$, and this completes the proof.

\end{proof}

We now use the results from \cite{DZ21} summarised in Section \ref{sec:disj-optim} to prove Theorem \ref{thm:2}.
\begin{proof}[Proof of Theorem \ref{thm:2}]
  Suppose that the geodesic $\gamma=\gamma_{(0,0)}^{(0,1)}$ is metrically removable. We show that in this case, for all rational points $p=(x,s),q=(y,t)$ with $0<s<t<1$, we must almost surely have
  \begin{equation}
    \label{eq:150}
    \cL^*((\gamma(s),x),s;(\gamma(t),y),t)=\cL(\gamma(s),s;\gamma(t),t)+\cL(p;q)
  \end{equation}

Indeed, since $\gamma$ has been assumed to be metrically removable, almost surely, for all $p,q$ as in the above, we must have $\cL_\gamma^g(p;q)=\cL(p;q)$. As a result, for any $\delta>0$, there must exist a path $\psi$ from $p$ to $q$ for which
  \begin{equation}
    \label{eq:94}
    \ell(\psi;\cL)\geq \cL(p;q)-\delta,
  \end{equation}
 and further there exist $m\in \NN$ and times $\cR=\{s<r_1\dots<r_{m}<t\}$ such that $\psi(r)=\gamma(r)$ if and only if $r\in \cR$. Thus, we note that for any $i\in [\![1,m-1]\!]$, $\psi\lvert_{[r_i,r_{i+1}]}$ and $\gamma\lvert_{[r_i,r_{i+1}]}$ are almost disjoint paths from $(\gamma(r_{i}),r_i)$ to $(\gamma(r_{i+1}),r_{i+1})$. As a consequence, we obtain that
  \begin{equation}
    \label{eq:92}
    \cL^*((\gamma(r_i),\gamma(r_i)),r_i;(\gamma(r_{i+1}),\gamma(r_{i+1})),r_{i+1})\geq \ell(\gamma\lvert_{[r_i,r_{i+1}]};\cL)+ \ell(\psi\lvert_{[r_i,r_{i+1}]};\cL).
  \end{equation}
 Now, by using the reverse triangle inequality satisfied by the extended directed landscape (Proposition \ref{prop:12}), we obtain that
  \begin{align}
    \label{eq:151}
    \cL^*((\gamma(s),x),s;(\gamma(t),y),t)&\geq \cL^*((\gamma(s),x),s;(\gamma(r_1),\gamma(r_1)),r_1)\nonumber\\
    &+\sum_{i=1}^{m-1}\cL^*((\gamma(r_i),\gamma(r_i)),r_i;(\gamma(r_{i+1}),\gamma(r_{i+1})),r_{i+1})\nonumber\\
    &+ \cL^*((\gamma(r_m),\gamma(r_m)),r_m;(\gamma(t),y),t)
  \end{align}
  On using \eqref{eq:92}, \eqref{eq:151} and further noting that $\gamma\lvert_{[s,r_1]}$ and $\psi\lvert_{[s,r_1]}$ are almost disjoint, and that $\gamma\lvert_{[r_m,t]}$ and $\psi\lvert_{[r_m,t]}$ are almost disjoint as well, we obtain
  \begin{align}
    \label{eq:152}
    \cL^*((\gamma(s),x),s;(\gamma(t),y),t)&\geq  (\ell(\gamma\lvert_{[s,r_1]};\cL)+ \ell(\psi\lvert_{[s,r_1]};\cL))+\sum_{i=1}^{m-1}\left(\ell(\gamma\lvert_{[r_i,r_{i+1}]};\cL)+ \ell(\psi\lvert_{[r_i,r_{i+1}]};\cL)\right)\nonumber\\
    &+ (\ell(\gamma\lvert_{[r_m,t]};\cL)+ \ell(\psi\lvert_{[r_m,t]};\cL)),
  \end{align}
  and this is, of course, the same as
  \begin{equation}
    \label{eq:153}
    \cL^*((\gamma(s),x),s;(\gamma(t),y),t)\geq \ell(\gamma\lvert_{[s,t]};\cL)+\ell(\psi;\cL)\geq \cL(\gamma(s),s;\gamma(t),t)+\cL(p;q)-\delta,
  \end{equation}
  where we have used \eqref{eq:94} to obtain the second inequality. Since $\delta$ was arbitrary, we have now shown \eqref{eq:150}.

  Now, we choose a sequence of rational points $p_n,q_n$ such that $p_n\rightarrow (0,0), q_n\rightarrow (0,1)$ as $n\rightarrow \infty$. Using \eqref{eq:150} for these points along with the continuity of the extended directed landscape (Proposition \ref{prop:24}), we obtain that
  \begin{equation}
    \label{eq:154}
    \cL^*((0,0),0;(0,0),1)=2\cL(0,0;0,1).
  \end{equation}
  However, this in conjunction with Proposition \ref{prop:20} implies that there must exist two almost disjoint geodesics from $(0,0)$ to $(0,1)$, but this is a contradiction since $\gamma=\gamma_{(0,0)}^{(0,1)}$ is a.s.\ unique. As a result, our assumption that $\gamma$ is metrically removable must have been false, and this completes the proof.
\end{proof}

\section{An open question}
\label{sec:open}
Recall the setting of $\gamma$-LQG mentioned briefly in the introduction. The statement of Theorem \ref{thm:3} now suggests the following natural question.
\begin{question}
  Are geodesics in $\gamma$-LQG metrically removable?
\end{question}
Recall that the proof of Theorem \ref{thm:3} was based on from \cite{DZ21}. At least at an intuitive level, undertaking a corresponding study of disjoint optimizers for $\gamma$-LQG seems more delicate than for the directed landscape due to the non-trivial correlations present for the Gaussian free field, in the sense that the environment just off of a $\gamma$-LQG geodesic might not be as ``bad'' as is the case for the environment just off of a directed landscape geodesic (see for e.g.\ \cite{MSZ21,DSV22}, \cite[Section 5.1]{BSS19}, \cite[Lemma 4.17]{GH23}).

\printbibliography
\end{document}